\newtheorem{thm}{Theorem}[section]
\newtheorem{lemma}[thm]{Lemma}
\newtheorem{prop}[thm]{Proposition}
\newtheorem{cor}[thm]{Corollary}
\newtheorem{prop-conj}[thm]{Proposition-Conjecture}
\newtheorem{thmalph}{Theorem}
\newtheorem{conjalph}[thmalph]{Conjecture}
\theoremstyle{definition}
\newtheorem{defn}[thm]{Definition}
\theoremstyle{definition}
\newtheorem{rmk}[thm]{Remark}
\theoremstyle{definition}
\theoremstyle{definition}
\newtheorem{assumption}[thm]{Assumption}
\theoremstyle{definition}
\theoremstyle{definition}
\theoremstyle{definition}
\theoremstyle{definition}
\newtheorem{eg}[thm]{Example}
\DeclareFontFamily{U}{wncy}{}
    \DeclareFontShape{U}{wncy}{m}{n}{<->wncyr10}{}
    \DeclareSymbolFont{mcy}{U}{wncy}{m}{n}
    \DeclareMathSymbol{\Sha}{\mathord}{mcy}{"58}
\newcommand{\Q}{\mathbb{Q}}
\newcommand{\Qb}{\overline{\mathbb{Q}}}
\newcommand{\Z}{\mathbb{Z}}
\newcommand{\CC}{\mathbb{C}}
\newcommand{\RR}{\mathbb{R}}
\newcommand{\Qlb}{\overline{\mathbb{Q}}_\ell}
\newcommand{\Qpb}{\overline{\mathbb{Q}}_p}
\newcommand{\Fp}{\mathbb{F}_p}
\DeclareMathOperator{\Hom}{Hom}
\DeclareMathOperator{\End}{End}
\DeclareMathOperator{\Aut}{Aut}
\DeclareMathOperator{\Sym}{Sym}
\DeclareMathOperator{\Ad}{Ad}
\DeclareMathOperator{\ad}{ad}
\DeclareMathOperator{\rec}{rec}
\DeclareMathOperator{\im}{im}
\DeclareMathOperator{\coker}{coker}
\DeclareMathOperator{\Spec}{Spec}
\DeclareMathOperator{\cO}{\mathcal O}
\DeclareMathOperator{\Frac}{Frac}
\DeclareMathOperator{\Lie}{Lie}
\newcommand{\gal}[1]{\Gamma_{#1}} % an absolute Galois group
\newcommand{\Gal}{\mathrm{Gal}} % a relative Galois group
\newcommand{\into}{\hookrightarrow}
\newcommand{\onto}{\twoheadrightarrow}
\newcommand{\mc}{\mathcal}
\newcommand{\mf}{\mathfrak}
\newcommand{\mr}{\mathrm}
\newcommand{\mbf}{\mathbf}
\newcommand{\mbb}{\mathbb}
\newcommand{\br}{\bar{\rho}} %to sync with lifting symplectic
\newcommand{\fg}{\mathfrak{g}}
\newcommand{\fgder}{\mathfrak{g}^{\mathrm{der}}}
\DeclareMathOperator{\Lift}{\mathrm{Lift}}
\newcommand{\kbar}{\bar{\kappa}}
\newcommand{\tF}{\widetilde{F}}
\newcommand{\wt}{\widetilde}
\newcommand{\wh}{\widehat}
\newcommand{\vpi}{\varpi}
\newcommand{\un}[1]{\underline{#1}}
\newcommand{\ov}{\overline}
\title{Lifting and automorphy of reducible mod $p$  Galois representations over global fields}
\thanks{We would like to thank Jack Thorne  for
  helpful  correspondence. We are very grateful to the referee for a particularly careful reading of our paper and for many helpful comments. N.F. was supported by the DAE, Government of India, PIC
  12-R\&D-TFR-5.01-0500. C.K.  would like to thank TIFR, Mumbai for
  its hospitality, in periods when some of the work was carried
  out. S.P. was supported by NSF grants DMS-1700759, DMS-1752313, and DMS-2120325.}
\begin{document}
\author[N.~Fakhruddin]{Najmuddin Fakhruddin}
\address{School of Mathematics, Tata Institute of Fundamental Research, Homi Bhabha Road, Mumbai 400005, INDIA}
\email{naf@math.tifr.res.in}
\author[C.~Khare]{Chandrashekhar Khare}
\address{UCLA Department of Mathematics, Box 951555, Los Angeles, CA 90095, USA}
\email{shekhar@math.ucla.edu}
\author[S.~Patrikis]{Stefan Patrikis}
\address{Department of Mathematics, Ohio State University, 100 Math Tower, 231 West 18th Ave., Columbus, OH 43210, USA}
\email{patrikis.1@osu.edu}

\begin{abstract}
We prove the modularity of most reducible, odd representations $\br: \gal{\Q} \to \mr{GL}_2(k)$ with $k$ a finite field of characteristic an odd  prime  $p$.
This is an analogue  of Serre's celebrated  modularity conjecture (which concerned  irreducible, odd representations $\br: \gal{\Q} \to \mr{GL}_2(k)$) for reducible, odd  representations. Our proof lifts $\br$ to an irreducible geometric $p$-adic representation $\rho$  which is known to arise from a newform by  results of Skinner--Wiles and Pan. We likewise prove automorphy of many reducible representations $\br \colon \gal{F} \to \mr{GL}_n(k)$ when $F$ is a global function field of characteristic different from $p$, by establishing a $p$-adic lifting theorem and invoking the work of L.~Lafforgue. Crucially, in both cases we show that the actual representation $\br$, rather than just its semisimplification, arises from reduction of the geometric representation attached to a cuspidal automorphic representation. Our main theorem establishes a geometric lifting result for mod $p$  representations  $\br:\gal{F} \to G(k)$ of Galois groups of  global  fields $F$, valued in reductive groups $G(k)$, and assumed to be odd when $F$ is a number field. Thus we find that lifting theorems, combined with automorphy lifting results pioneered by Wiles in the number field case and the results in the global Langlands correspondence proved by Drinfeld and L.~Lafforgue in the function field case,  give the only known method to access  modularity of mod $p$  Galois representations both in reducible and irreducible cases.

\end{abstract}

\maketitle

\section{Introduction}

 Serre's modularity conjecture
\cite{serre:conjectures}  asserts that  an odd, irreducible, continuous
representation $\br \colon \Gal(\Qb/\Q) \to \mr{GL}_2(k)$ with $k$ a finite
field of characteristic $p$ arises from a newform, and was proved in \cite{khare-wintenberger:serre1}.  Wiles in the
introduction of his paper on Fermat's Last Theorem  \cite{wiles:fermat} raises the question of whether one can
prove an  analogue of Serre's conjecture in the reducible case, going beyond
the well-known  conclusion of modularity of an odd reducible $\br$ up to
semisimplification (see Lemma \ref{GL2-easy}). Wiles notes on \cite[page 445]{wiles:fermat}: {\it Even in the reducible case not much is known about the problem in the form which we have described it, and  in that case it should be  observed that one must also choose the lattice carefully as only the semisimplification of $\overline \rho_{f,\lambda}=\rho_{f,\lambda} \pmod{\lambda}$ is independent of the choice of lattice in $K_{f,\lambda}^2$.}

We resolve this problem in Theorem \ref{mainappintro} below, under a mild hypothesis  that rules out  some cases when   a twist of  $\br$ has Serre weight $p-1$, making use of developments of Wiles's method of modularity lifting and developments of Ramakrishna's method of lifting Galois representations. Our work improves on the results    of Hamblen and Ramakrishna in
\cite{ramakrishna-hamblen}. We follow their strategy of proving automorphy (or  synonymously modularity)  of reducible, odd  representations  $\br: \Gal(\Qb/\Q) \to \mr{GL}_2(k)$ by lifting $\br$ to a geometric $p$-adic representation  which is known to arise from a newform by results of Skinner--Wiles in  \cite{skinner-wiles:reducible}.  Our improvements to  \cite{ramakrishna-hamblen}   arise from extending   the technology for  lifting mod $p$ Galois representations,  developed in our earlier paper \cite{fkp:reldef} in the case of irreducible Galois representations, to the case of reducible Galois representations (see Theorem \ref{mainthmintro}). 

To expand on our results, we first make the problem more
explicit. %Wiles's question in \cite{wiles:fermat}.
Let $\Gamma_{\Q}= \Gal(\Qb/\Q)$. Given an odd representation
$\br : \gal{\Q} \to \mr{GL}_2(k)$, is there a newform
$f \in S_r(\Gamma_1(N))$ of some level $N \geq 1$ and weight
$r \geq 2$, with associated Galois representation
$\rho_{f,\iota}:\gal{\Q} \to \mr{GL}_2(E)$ ($E=E_{f,\iota}$ the
completion of $E_f=\Q(a_n(f))$, the Hecke field of $f$, at an
embedding $\iota: E_f \to \overline{\Q}_p$, with valuation ring
$\cO$), and a lattice in $E^2$ stable under
$\rho_{f,\iota}(\gal{\Q})$ such that the resulting integral model
$\gal{\Q} \to \mr{GL}_2(\cO)$ of the representation $\rho_{f , \iota}$
reduces (via some homomorphism $\mc{O} \to \bar{k}$) to $\br$? We may
summarize this by the following diagram:

\[
\xymatrix{
& \mr{GL}_2(\cO) \ar[d] \\
\gal{\Q} \ar@{-->}[ur]^{\rho_{f,\iota}} \ar[r]_-{\br} & \mr{GL}_2(\bar{k}). 
}
\]

Let us recall some standard facts, due to Ribet and Serre, about lattices in
$E^2$ that are stabilized by $\gal{\Q}$ (see proof of Proposition
\ref{finite} below).  The semisimplification of the residual
representation arising from reducing a
$\rho_{f, \iota}(\gal{\Q})$-stable lattice of $E^2$ is independent of
the lattice. When $\rho_{f,\iota}$ is residually irreducible, then up
to scaling by elements of $E^{\times}$ there is a unique lattice in
$E^2$ that is stabilized by $\gal{\Q}$. In the residually reducible
case, the lattice stabilized by the irreducible representation
$\rho_{f,\iota}$ (which always exists) is never unique up to scaling,
although there are only finitely many such stable lattices up to
scaling since $\rho_{f, \iota}$ is irreducible. We say that $\br$ (in
the irreducible and reducible cases) arises from $f$, or $\br $ arises
from $\rho_{f,\iota}$, if we have the relationship summarized in the
diagram above. In the case when $\rho_{f,\iota}$ is residually
reducible, it gives rise to finitely many residual representations
$\br$, and there are at least two non-isomorphic $\br$ that arise from
$f$.

We note in passing that, unlike in the case of Serre's conjecture for
irreducible representations $\br$, where he makes precise the minimal
weight and level of a newform $f$ that gives rise to $\br$, in the
case of reducible mod $p$ representations $\br$ it is not reasonable
to ask that we can choose a newform $f$ giving rise to $\br$ to be in
$S_r(\Gamma_1(N))$, with $r$, $N$ being the Serre weight $k(\br)$ and
Artin conductor $N(\br)$ of $\br$. We expand on this further in the
introduction below and later in Proposition \ref{finite}.

\subsection{Automorphy of reducible mod $p$ Galois representations}

We recall the result of Hamblen and Ramakrishna that proved modularity
for many odd reducible $\br: \gal{\Q} \to \mr{GL}_2(k)$. We denote by
$\kbar$ the mod $p$ cyclotomic character.

\begin{thmalph}[Theorem 2 of \cite{ramakrishna-hamblen}]\label{hrthm}
Let $p \geq 3$ be a prime, $k$ a finite field of characteristic $p$,  and let $\br \colon \gal{\Q} \to \mr{GL}_2(k)$ 
%$\br \colon \gal{\Q, {\mc{S}}} \to \mr{GL}_2(k)$
be a continuous indecomposable representation of the form
\[
\br \sim \begin{pmatrix}
\bar{\chi} & \ast \\
0 & 1
\end{pmatrix}.
\]
Assume that
\begin{enumerate}
\item $\bar{\chi}(c)=-1$, $\bar{\chi} \neq \kbar^{\pm 1}, \bar{\chi}^2 \neq 1$.
\item $\br|_{\gal{\Q_p}}$ is either ramified or unramified but  not of the form \[
\br|_{\gal{\Q_p}} \sim \begin{pmatrix}
1 & \ast \\
0 & 1
\end{pmatrix}.
\]
\item $k$ is spanned as vector space over $\mathbb F_p$   by the values of $\bar \chi$.
\end{enumerate}
Then $\br$  arises from a newform $f$ of some level $N$ and weight $r \geq 2$.
%, for some positive integers $N$ and $k$ with $k \geq 2$.
\end{thmalph}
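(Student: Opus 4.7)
The plan is to prove this in two stages, matching the general paradigm the paper lays out: first produce a characteristic zero geometric lift of $\br$, then apply a residually reducible modularity lifting theorem to recognize that lift as modular. Since the residual semisimplification $\bar{\chi} \oplus 1$ is automorphic by class field theory, the payoff of the whole argument is to upgrade this triviality to modularity of the \emph{indecomposable} $\br$ itself via a characteristic zero lift.

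For the lifting step, I would follow the Ramakrishna method. Fix local deformation conditions $\mc{L}_v$ for $v \in S$: at $p$, an ordinary/crystalline condition of Hodge--Tate weights $\{0,1\}$ (possible precisely because hypothesis (2) excludes the trivial ordinary situation where no such condition of the right dimension exists); at the other primes of $S$, a minimally ramified condition; and at the archimedean place, the odd condition, which is consistent with $\bar{\chi}(c)=-1$. One then enlarges $S$ to $T = S \cup Q$ by a finite set of auxiliary Ramakrishna primes $Q$, each of which is chosen so that the prescribed local condition there both (i) kills a chosen global obstruction class in $H^2(\gal{\Q,S}, \ad^0(\br))$ by a compatible local class, and (ii) does not enlarge the dual Selmer group. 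The Chebotarev-type search for such primes needs the image of $\br$ to be large enough to separate cohomology classes; hypotheses (1) and (3)---that $\bar{\chi}$ is not $\kbar^{\pm 1}$ or quadratic, and that $k$ is generated by its values---are exactly what guarantee the requisite nonvanishing of $H^0$'s and the surjectivity in the key Chebotarev argument. After finitely many such augmentations, all obstructions are annihilated and one obtains a lift $\rho : \gal{\Q,T} \to \mr{GL}_2(\cO)$ which is geometric, ordinary at $p$ of weight $2$, and ramified at $T$ in a controlled (in particular, Steinberg or minimally ramified) way.

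For the modularity step, one feeds $\rho$ into a residually reducible modularity lifting theorem of Skinner--Wiles type. Here the hypotheses $\bar{\chi} \neq 1, \kbar^{\pm 1}, \bar{\chi}^2 \neq 1$, together with ordinariness at $p$ and the oddness condition, put us squarely in the range where such an $R=T$ theorem applies; the conclusion is that $\rho \cong \rho_{f,\iota}$ for some newform $f$ of weight $\geq 2$ and some level $N$ divisible by the primes of $T$. Reducing modulo the maximal ideal and restricting to a $\rho_{f,\iota}$-stable lattice whose reduction is exactly $\br$ (rather than just $\bar{\chi} \oplus 1$) then exhibits $\br$ as arising from $f$.

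The hard part is unquestionably the first step, and within it, the simultaneous management of the global obstructions and of the delicate local deformation problem at $p$. Condition (2) is essential precisely because in the excluded case the ordinary local deformation ring acquires extra components and the tangent space dimension count behind the Ramakrishna calculus breaks down; this is the analogue, in the reducible setting, of the role that regularity of $\br|_{\gal{\Q_p}}$ plays in the irreducible case. The other subtle point is that $\ad^0(\br)$ is not irreducible, so the Chebotarev search for Ramakrishna primes has to be performed on each Jordan--H\"older constituent (the characters $\bar{\chi}, \bar{\chi}^{-1}$ and the trivial character of $\ad^0(\br)$) separately, which is exactly why hypothesis (1) forbids $\bar{\chi}$ from colliding with $\kbar^{\pm 1}$ or with $\bar{\chi}^{-1}$; otherwise two of the constituents would fuse and one could no longer freely adjust obstruction classes in each piece. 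Once these local-at-$p$ and constituent-by-constituent issues are resolved, the rest of the argument is the by-now-standard marriage of Ramakrishna lifting with residually reducible $R=T$.
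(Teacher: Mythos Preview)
Your high-level two-step outline---produce a geometric ordinary lift of $\br$, then invoke Skinner--Wiles---matches exactly what the paper attributes to Hamblen--Ramakrishna, and your reading of why Skinner--Wiles applies at the end is correct.

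The gap is in the lifting step. What you describe---enlarging $S$ by auxiliary primes chosen one at a time so that each simultaneously kills an obstruction class in $H^2$ and does not enlarge dual Selmer---is the classical Ramakrishna method for \emph{irreducible} $\br$. That method does not work here, and its failure is precisely the point of the Hamblen--Ramakrishna paper: when $\br$ is reducible the adjoint module $\br(\mf{sl}_2)$ is non-semisimple, and the \v{C}ebotarev calculus that lets one annihilate Selmer and dual Selmer by a single well-chosen prime breaks down. The mechanism Hamblen--Ramakrishna actually use (building on \cite{klr}) is the \emph{doubling method}: one first produces a mod $\vpi^2$ (and then inductively mod $\vpi^N$) lift with prescribed local behavior by constructing auxiliary cocycles $h^{(v)}$ ramified at trivial primes $v$ and playing \emph{pairs} of tuples $(v_n),(v'_n)$ of such primes against one another via global duality, with a limiting/density argument to guarantee a compatible pair exists. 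Only after this mod $\vpi^N$ lift is secured does one pass to characteristic zero by a separate deformation-theoretic step. Hypothesis (3) enters not to ``separate Jordan--H\"older constituents'' in a Ramakrishna search, but to control linear disjointness of the fixed fields $K_{h^{(v)}}$, $K_{\eta^{(v)}}$ of these auxiliary cocycles so that the doubling argument runs; this is also why the present paper, with its more robust doubling machinery (\S\ref{doublingsection}), can drop (3) entirely.

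So your outline is right in spirit but elides the one genuinely new idea---doubling---that makes the reducible case tractable. The lifting step as you have written it would not go through.
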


The strategy of \cite{ramakrishna-hamblen} is to lift $\br$ to a
geometric representation $\rho:\gal{\Q} \to \mr{GL}_2(\cO)$ that is
ordinary at $p$ of Hodge--Tate weights $(r-1,0)$ for an integer
$r \geq 2$ and then appeal to results of Skinner--Wiles
\cite{skinner-wiles:reducible} to show that $\rho \cong \rho_{f,\iota}$ for
a newform $f \in S_r(\Gamma_1(N))$. We note that there are infinitely
many odd $\bar{\chi}$ %(as ${\mc{S}}$ varies) 
such that $\bar{\chi}^2 = 1$. Also, because of
an observation of Berger and Klosin
\cite{berger-klosin:residualmodularity} (see Lemma \ref{BK} below),
condition (3) in Theorem
\ref{hrthm} % \cite[Theorem 2]{ramakrishna-hamblen}
is quite restrictive, ruling out infinitely many non-isomorphic $\br$
that arise from extensions defined over $\overline k$, of $1$ by a
fixed character $\bar{\chi}$, and that factor through the Galois group
$\Gamma$ of a fixed finite extension of $\Q$, in the case that
$\dim_{\bar k} H^1(\Gamma, {\bar k}({\bar \chi}))>1$. This observation
also rules out using the lifting method of
\cite{khare-wintenberger:serre0}, which relies on potential
automorphy, in the residually reducible case. When this method works
it produces minimal lifts, which by Lemma \ref{BK} would be too few to
account for the plethora of non-isomorphic $\br$ parametrized by
$H^1(\Gamma, {\bar k}({\bar \chi}))/\bar k^{\times}$.

We substantially improve the above result of
\cite{ramakrishna-hamblen} by eliminating conditions (2) and (3) and
shrinking the infinite set of (odd) exceptional $\bar{\chi}$ in
condition (1) to a singleton.

\begin{thmalph}[See Theorem \ref{GL2}]\label{mainappintro}
Let $p \geq 3$ be a prime, and let $\br \colon \gal{\Q} \to \mr{GL}_2(k)$ %$\br \colon \gal{\Q, {\mc{S}}} \to \mr{GL}_2(k)$ 
be a continuous  odd representation of the form
\[
\br \sim \begin{pmatrix}
\bar{\chi} & \ast \\
0 & 1
\end{pmatrix}.
\]
Assume that  $\bar{\chi} \neq \kbar^{-1}$.
Then $\br$  arises from a newform $f$  of some level $N$ and  weight $r \geq 2$.% for $N$ and $k$ positive integers and $k \geq 2$.

\end{thmalph}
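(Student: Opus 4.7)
The plan is to follow the two-step strategy of \cite{ramakrishna-hamblen}: first produce a geometric $p$-adic lift $\rho : \gal{\Q} \to \mr{GL}_2(\cO)$ of $\br$ that is ordinary at $p$ with distinct Hodge--Tate weights $(k-1,0)$ for some integer $k \geq 2$; second, invoke the residually reducible modularity theorem of Skinner--Wiles \cite{skinner-wiles:reducible} to conclude $\rho \cong \rho_{f,\iota}$ for some newform $f$, so that $\br$ arises from $f$. The novelty, relative to \cite{ramakrishna-hamblen}, lies entirely in the lifting step, for which we appeal to Theorem \ref{mainthm} of the present paper.

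I would begin by setting up local deformation conditions: at $p$, an ordinary crystalline condition of Hodge--Tate weights $(k-1,0)$, with $k$ chosen compatibly with the determinant of $\br$; at each $v \in S \setminus \{p\}$ where $\br$ ramifies, a minimally ramified or Steinberg-type condition, depending on the local behavior of $\br$; and allowance for auxiliary ramification at a finite set of Chebotarev primes chosen so as to annihilate the dual Selmer group. The hypothesis $\bar\chi \neq \kbar^{\pm 1}$ enters here: writing $\ad \br$ as an iterated extension with graded pieces containing $\mbf{1}$, $\bar\chi$, and $\bar\chi^{-1}$, one needs that none of these be isomorphic to $\kbar$, since such a piece would contribute local cohomology classes at $p$ obstructing the global dual-Selmer calculation. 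The excluded cases $\bar\chi = \kbar^{\pm 1}$ are exactly those in which this obstruction is intrinsic and cannot be removed by auxiliary ramification.

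With these local conditions in place, Theorem \ref{mainthm} produces an irreducible geometric lift $\rho : \gal{\Q, S'} \to \mr{GL}_2(\cO)$ of $\br$ of the desired type, for some finite $S' \supseteq S$. The lift is automatically odd (since $\det \rho$ lifts $\bar\chi$, which is odd by hypothesis) and ordinary at $p$ with distinct Hodge--Tate weights, so the residually reducible modularity theorem of Skinner--Wiles \cite{skinner-wiles:reducible} identifies $\rho$ with $\rho_{f,\iota}$ for some newform $f$ of weight $k$ and some level $N$. Since $\rho$ lifts $\br$, this yields the desired conclusion.

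The principal obstacle is the construction of the lift, i.e., the verification that Theorem \ref{mainthm} applies in the full generality of our hypothesis. The cases newly covered relative to \cite{ramakrishna-hamblen}---the decomposable case $\ast = 0$, the case $\bar\chi^2 = 1$, and the case where the coefficient field $k$ strictly contains the subfield generated by the values of $\bar\chi$---are precisely those in which $\ad \br$ acquires additional Galois-fixed vectors, so that the usual Selmer and dual-Selmer computations in Ramakrishna's method degenerate. Handling these configurations uniformly, by means of a more flexible choice of auxiliary ramified primes and a careful analysis of the local deformation rings at those primes, is essentially the technical content of Theorem \ref{mainthm}, on which the entire proof relies.
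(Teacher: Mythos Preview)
Your overall strategy matches the paper's, but there are two genuine gaps.

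\textbf{The split case.} You assert that Theorem \ref{mainthm} handles the decomposable case $\ast = 0$ directly, but it does not. When $\br = \bar\chi \oplus 1$ is split, the adjoint representation $\br(\fgder) \cong k(\bar\chi) \oplus k \oplus k(\bar\chi^{-1})$ contains the trivial representation as a submodule, so Assumption \ref{modp^Nhyp} fails; indeed Lemma \ref{GL2lemma} explicitly requires the extension class to be non-trivial. The paper instead treats the split case by a separate two-step argument: first use Lemma \ref{redlift} to manufacture a \emph{non-split} extension $\br'$ of $1$ by $\bar\chi$ (ramified at some auxiliary trivial primes), lift $\br'$ via Theorem \ref{mainthm} to a modular $\rho'$, and then invoke Lemma \ref{Glattices} to choose a $G(K')$-conjugate of $\rho'$ whose reduction is the original split $\br$. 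Your proposal does not account for this maneuver.

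\textbf{The modularity input.} You cite only Skinner--Wiles \cite{skinner-wiles:reducible} for the automorphy of the lift $\rho$. But Skinner--Wiles requires the two diagonal characters to be distinct on $\gal{\Q_p}$, i.e.\ $\bar\chi|_{\gal{\Q_p}} \neq 1$. This is essentially the content of Hamblen--Ramakrishna's condition (2), which you are claiming to remove. To cover the case $\bar\chi|_{\gal{\Q_p}} = 1$ the paper appeals in addition to Pan \cite{pan:redFM}. Without this, your argument only reproduces (a strengthening of) the original Hamblen--Ramakrishna result rather than the full Theorem \ref{GL2}.

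A minor point: your account of where $\bar\chi \neq \kbar^{\pm 1}$ enters is not quite right. It is not a local obstruction at $p$; rather, it is used in Lemma \ref{GL2lemma} to verify the global hypotheses (Assumptions \ref{generalhyp}, \ref{modp^Nhyp}, and part of \ref{finalhyp}), in particular the vanishing of $H^1(\Gal(K/F), \br(\fgder)^*)$ and the non-existence of certain $\Fp[\gal{F}]$-module maps between $\br(\fgder)$ and subquotients of $\br(\fgder)^*$.
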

We expect the condition $\bar{\chi} \neq \bar{\kappa}^{-1}$ (which is
implied by the condition that no twist of $\br$ has Serre weight
$p-1$, but is strictly weaker than this condition on Serre weight) to
be superfluous, see Conjecture \ref{conjintro}, and the fact that we
have to exclude this case in our theorem is imposed by the limitations
of our main result, Theorem \ref{mainthmintro} below, about lifting
Galois representations.  We anticipate that further improvements to
the technology of lifting Galois representations of \cite{fkp:reldef}
and the present paper will in the future allow one to remove this
condition. 
In any case, the proof of Theorem \ref{mainappintro} proceeds by
specializing Theorem \ref{mainthmintro} to the case $G= \mr{GL}_2$ to
produce an irreducible geometric lift $\rho$ of $\br$, and then
invoking the results of \cite{skinner-wiles:reducible} and
\cite{pan:redFM}. Our main undertaking, then, is a substantial
generalization and improvement of the methods of \cite{fkp:reldef}.

We explain briefly our improvements in Theorem \ref{mainappintro} to
the result of Hamblen and Ramakrishna (Theorem \ref{hrthm}).
Assumptions (1) and (3) of Theorem \ref{hrthm} ensure that the adjoint
representation $\ad(\br)$ of $\gal{\Q}$ is multiplicity-free as an
$\Fp[\gal{\Q}]$-module, and that $H^1(\im(\br),
\ad(\br))=0$. Assumption (2) ensures that the local deformation
problem at $p$ is smooth and that the theorem of Skinner--Wiles
applies to the geometric lift of $\br$ (which is ordinary and
distinguished at at $p$) that Hamblen and Ramakrishna produce.

The improvements of Theorem \ref{mainappintro} over Theorem
\ref{hrthm} come about because our lifting methods, in particular the
``doubling argument'' of \S 3 and \S 4, do not need the assumption that the adjoint
representation $\ad(\br)$ is multiplicity free, and the relative
deformation theory argument of \S 5 allows one to lift
$\br$ even if $H^1(\im(\br), \ad(\br))\neq 0$. Furthermore our methods
do not need the assumption that local deformation rings (at $p$) are
smooth, and the improvements of \cite{pan:redFM} to
\cite{skinner-wiles:reducible} allow one to show that the geometric
lift we produce of $\br$ (which may now be non-distinguished at $p$)
is modular. Our assumption that $\bar{\chi} \neq \kbar^{-1}$ ensures
that both $H^0({\rm Gal}(K/\Q), \ad(\br)^*(1))$ and
$H^1({\rm Gal}(K/\Q), \ad(\br)^*(1))$ are $0$,
% H^0({\rm Gal}(K/\Q), \ad(\br))= % This is true even for $\kbar{-1}$
with $K= F(\br, \mu_p)$ the minimal Galois extension of $F$ that
trivializes both $\br$ and $\kbar$, conditions which are currently
still needed for our lifting methods.

\smallskip

We also prove similar automorphy results over function fields (see
Theorem \ref{FF}) for a class of reducible representations $\br$ of
$\gal{F}$, with $F$ a function field of characteristic different from
that of $k$, valued in $\mr{GL}_n(k)$ (for arbitrary $n$). The
strategy, as before, is to lift $\br$ to an irreducible representation
$\rho:\gal{F} \to \mr{GL}_n(\cO)$
%$\rho:\gal{F,{\mc{S}}} \to \mr{GL}_n(\cO)$ 
ramified at finitely many places 
%${\mc{S}}$ 
of $F$, and then to invoke the results of
\cite{llafforgue:chtoucas} to show that $\rho$ arises from a cuspidal
automorphic representation of $\mr{GL}_n(\mathbb A_F)$. While our
results are much more general, they for instance apply to the
following cases:

\begin{thmalph}[See Corollary \ref{ffmax}]\label{fnfieldappintro}
Fix an integer $n$, and assume $p \gg_n 0$. Let $F$ be a global function field of characteristic $\ell \neq p$, and let $\br \colon \gal{F} \to \mr{GL}_n(k)$ be a continuous representation satisfying the following:
\begin{itemize}
\item $\br$ factors through a maximal parabolic $P(k) \subset \mr{GL}_n(k)$, and the projection $\br_M \colon \gal{F} \to M(k)$ of $\br$ to the Levi quotient of $P$ is absolutely $M$-irreducible.
\item Let $\br_M= \br_1 \oplus \br_2$, where $M \cong \mr{GL}_{n_1} \times \mr{GL}_{n_2}$, and $\br_i$ is the projection to the $\mr{GL}_{n_i}$ factor, ordered such that $\br$ is an extension of $\br_2$ by $\br_1$. Assume that $\br_M$ further satisfies:
\begin{itemize}
\item $\br_1$ is not isomorphic to $\br_2 \otimes \bar{\psi}$ for
  $\bar{\psi} \in \{1, \kbar^{-1}\}$. (In particular, this condition
  always holds if $n_1 \neq n_2$.)
\item Let $\bar{\chi}$ be the character
  $\det(\br_1)^{n_2/d} \cdot \det(\br_2)^{-n_1/d}$, where
  $d = \mr{gcd}(n_1, n_2)$. Then
  $[F(\zeta_p):F(\zeta_p) \cap F(\bar{\chi})]$ is greater than a
  constant depending only on $n_1$ and $n_2$.
\end{itemize}
\end{itemize}
Then $\br$ is automorphic.
\end{thmalph}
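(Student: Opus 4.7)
The plan is to carry out exactly the strategy sketched in the introduction: first lift $\br$ to a continuous, absolutely irreducible representation $\rho \colon \gal{F,S} \to \mr{GL}_n(\cO)$ ramified at only finitely many places by specializing the main lifting theorem (Theorem \ref{mainthm}) to $G = \mr{GL}_n$, and then invoke the global Langlands correspondence for function fields of L.~Lafforgue \cite{llafforgue:chtoucas} to produce a cuspidal automorphic representation of $\mr{GL}_n(\mathbb{A}_F)$ realizing $\rho$, and hence $\br$.

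To set up the deformation problem I fix a finite set $S$ of places containing the ramification of $\br$, and at each $v \in S$ choose a standard local deformation condition (minimally ramified, or Steinberg-type where appropriate); no $p$-adic Hodge-theoretic condition is needed since $\mr{char}(F) \neq p$. The setup ``$\br$ factors through a maximal parabolic $P$ with absolutely $M$-irreducible Levi semisimplification $\br_M = \br_1 \oplus \br_2$'' matches the reducible framework of Theorem \ref{mainthm} directly. The non-self-twist hypothesis $\br_1 \not\cong \br_2 \otimes \bar\psi$ for $\bar\psi \in \{1, \kbar, \kbar^{-1}\}$ is precisely what kills the obstructing $\gal{F}$-invariants in the modules $\br_1 \otimes \br_2^{\vee}$ and its cyclotomic twists that otherwise prevent both existence of the lift and irreducibility of $\rho$. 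The numerical hypothesis $[F(\zeta_p):F(\zeta_p) \cap F(\bar\chi)] > c(n_1,n_2)$ is the quantitative form of the ``bigness/enormity'' hypothesis of Theorem \ref{mainthm}: since the Levi $M$ acts on the unipotent radical $\mr{Lie}(N) \subset \fg$ through a scalar proportional to $\bar\chi$, this condition ensures that the restriction of $\br$ to $\gal{F(\zeta_p)}$ still sees enough of the adjoint module for the Chebotarev-type argument producing Ramakrishna/auxiliary primes to go through. The assumption $p \gg_n 0$ pushes the relevant finite-group computations into the generic/characteristic-zero regime.

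Theorem \ref{mainthm} then produces a continuous lift $\rho \colon \gal{F,S'} \to \mr{GL}_n(\cO)$ of $\br$ for a finite $S' \supseteq S$ (enlarged by the auxiliary primes). By adding a Steinberg-type condition at a well-chosen auxiliary place---or by a standard genericity argument on the characteristic-zero fiber of the global deformation ring---one further arranges $\rho$ to be absolutely irreducible: any proper $\gal{F}$-stable subspace would exhibit an extension of some $\br_i$ by $\br_j$ whose class lies in a Selmer group we have engineered to vanish. The resulting $\rho$ is continuous, irreducible, $n$-dimensional over $\overline{\Q}_p$, and ramified at only finitely many places, so by \cite{llafforgue:chtoucas} it corresponds to a cuspidal automorphic representation of $\mr{GL}_n(\mathbb{A}_F)$; reducing modulo $p$ gives automorphy of $\br$.

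The main obstacle is the quantitative step: one must show that the explicit condition on $[F(\zeta_p):F(\zeta_p) \cap F(\bar\chi)]$ really does imply the abstract bigness hypothesis of Theorem \ref{mainthm}, and one must pin down the constant $c(n_1,n_2)$. This reduces to a representation-theoretic analysis inside $M(k) = \mr{GL}_{n_1}(k) \times \mr{GL}_{n_2}(k)$ showing that, after restriction to $\gal{F(\zeta_p)}$, the image of $\br$ still contains Frobenius elements whose adjoint action supports the production of auxiliary primes in the sense of \cite{fkp:reldef}. A secondary technical point, specific to the function field setting, is to control contributions from the constant-field extension $F\overline{\mathbb{F}}_\ell / F$, which can a priori produce extra global Galois invariants that must be absorbed into the choice of auxiliary primes.
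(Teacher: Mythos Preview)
Your overall strategy---lift via Theorem \ref{mainthm}, then apply Lafforgue---is the paper's strategy, but there is a genuine gap in how you invoke Theorem \ref{mainthm}. The hypotheses of Theorem \ref{fnfieldappintro} allow $\br$ to be $G$-completely reducible, i.e.\ to factor through the Levi $M(k)$ itself (nothing in the statement rules out $\br \cong \br_1 \oplus \br_2$). In that case $H^0(\gal{F}, \br(\fgder))$ is nonzero (it contains the center of $\mf{m}'$), so Assumption \ref{finalhyp} fails and Theorem \ref{mainthm} does \emph{not} apply to $\br$. The paper does not apply Theorem \ref{mainthm} to $\br$ directly: it routes through Corollary \ref{cr-lift}, which first uses Lemma \ref{redlift} to replace $\br$ by a genuinely non-split $\br'$ with the same semisimplification (built by inflating a class in a Selmer group for $U(k)$ ramified only at auxiliary trivial primes), applies Theorem \ref{mainthm} to $\br'$, and then uses the lattice-choice argument of Lemma \ref{Glattices} to conjugate the resulting $\rho'$ over a ramified extension so that its reduction is the original (possibly split) $\br$. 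Your proposal omits this two-step maneuver entirely; without it the split case is simply not covered, and even in the non-split case you never verify that the map $\br(\gal{F}) \to M(k)$ has nontrivial kernel, which is the hypothesis in Lemma \ref{lem:h^0} needed to deduce (C1).

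Two smaller misalignments. First, your reading of the condition on $[F(\zeta_p):F(\zeta_p)\cap F(\bar\chi)]$ as a ``bigness/enormity'' input to the \v{C}ebotarev step is not how the paper uses it: it enters via Lemma \ref{lem:h^1}(1) (and ultimately Lemma \ref{l:allP}) to force $\mu_p \not\subset F(\br(\fgder))$ and thereby verify Assumptions \ref{generalhyp} and \ref{modp^Nhyp} (i.e.\ (C3), (C4)); the \v{C}ebotarev machinery in \S\ref{doublingsection} needs no separate bigness hypothesis once those assumptions hold. Second, you do not need Steinberg conditions or genericity arguments to get irreducibility of $\rho$: Theorem \ref{mainthm} already outputs a lift whose image contains $\wh{G}^{\mr{der}}(\mc{O}')$, hence is Zariski-dense, hence irreducible. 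Your worry about the constant-field extension is also not an issue here; the relevant cohomological inputs are handled uniformly for all global fields as discussed in \S\ref{prelims}.
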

See also Corollary \ref{ffss} for an automorphy result when $\br$ is a
direct sum of an arbitrary number of absolutely irreducible
representations.

%\smallskip

Our results provide evidence for the expectation that for a global
field $F$, Galois representations $\br:\gal{F} \rightarrow G(k)$, not
necessarily irreducible and which are odd (see \cite[Definition
1.2]{fkp:reldef}) when $F$ is a number field,
arise as reductions of lattices in irreducible geometric
representations $\rho: \gal{F} \rightarrow G(E)$ associated to
cuspidal automorphic representations on the dual group of $G$. In other words the ``mod $p$ Langlands''  correspondence of 
\cite{serre:conjectures}  between  irreducible  mod $p$ Galois
representations  of  $\gal{\Q}$ and mod $p$ newforms  should extend in
some generality to the reducible case as well,  and Galois
representations arising from cuspidal automorphic representations are
rich enough to account for  all extensions of automorphic mod $p$
Galois representations. To state a precise conjecture, and in all our subsequent work, we will have to track the finite ramification sets of our representations; for a finite set $\mc{S}$ of primes of $F$, we let $\gal{F, \mc{S}}= \Gal(F_{\mc{S}}/F)$, where $F_{\mc{S}}$ is the maximal extension of $F$ inside a separable closure $F^{\mr{sep}}$ such that $F_{\mc{S}}$ is unramified outside $\mc{S}$.
In the following, for a reductive group $G'$ over a global field $F$, we will let ${}^L G$ be a Langlands L-group over $\Z$ of $G'$. More precisely, we consider the finite form $G^\vee \rtimes \Gal(\widetilde{F}/F)$, where $G^\vee$ is the split dual group over $\Z$, and $\widetilde{F}$ is the smallest extension of $F$ such that the homomorphism $\mu_{G'}$ of \cite[\S 1.1]{borel:L} factors through $\Gal(\widetilde{F}/F)$. We conjecture:
\begin{conjalph}\label{conjintro}
  Let $F$ be a global field of characteristic different from $p$.  Let
  $G'$ be a quasi-split group over $F$, and let $G={}^L G$ be its
  Langlands L-group over $\Z$. Let ${\mc{S}}$ be a finite set of
  primes of $F$, assumed to contain all primes ramified in
  $\widetilde{F}/F$ and, when $F$ is a number field, all primes above
  $p$. Let $\br \colon \gal{F, {\mc{S}}} \to G(k)$ be a continuous
  $L$-homomorphism, in the sense that the diagram
  \[
  \xymatrix{
  \gal{F, {\mc{S}}} \ar[rr]^{\br} \ar[rd] & & G(k) \ar[ld] \\
  & \Gal(\widetilde{F}/F) &
  }
  \]
  commutes. If $F$ is a number field, further assume that it is totally real, and that $\br$ is odd. Then:
  \begin{itemize}
  \item There is a lift $\rho \colon \gal{F} \to G(\mc{O})$, unramified outside a finite set of primes, of $\br$ to
  the ring of integers $\mc{O}$ in some finite extension of $\Q_p$
  inside $\ov{\Q}_p$, such that the image of $\rho$ is Zariski-dense in $G$, and, in the
  number field case, $\rho$ has regular Hodge--Tate cocharacters.
  \item There is an $L$-algebraic cuspidal automorphic representation
    $\pi$ of $G'(\mathbb{A}_F)$ such that one of the Galois
    representations $\rho_{\pi, \iota} \colon \gal{F} \to
    G(\ov{\Q}_p)$ conjecturally associated to $(\pi, \iota)$ (see
    \cite[Conjecture 3.2.2]{buzzard-gee:conjecture}) is
    $G^0(\Qpb)$-conjugate to $\rho \colon \gal{F} \to G(\mc{O})
    \subset G(\ov{\Q}_p)$.\footnote{%{\color{Fuchsia}{
      We say ``one of" the conjectural Galois representations because
      at present one can only formulate a precise statement using
      local-global compatibility properties. While the $G^0$-conjugacy
      classes of Frobenius elements at almost all primes $v$ will not
      suffice in general to determine the $G^0$-conjugacy class of a
      completely-reducible $\gal{F}$-representation, they do when
      $G=G^0$ and the image is Zariski-dense, by \cite[Proposition
      6.4]{bhkt:fnfieldpotaut}. Finally, note that in the function
      field case, this construction of automorphic Galois
      representations is a theorem of V. Lafforgue for arbitrary split
      connected reductive groups $G'$, proven in \cite{vlafforgue}. In
      the number field case, the relevant $\pi$ is expected to have
      regular infinitesimal character, so the Conjecture only concerns
      the ``most accessible" automorphic Galois representations.
      % }}
    } 
  \end{itemize}
Moreover, if $\br|_{\gal{\wt{F}}}$ is absolutely irreducible, $\rho$ and $\pi$ can be
taken to be unramified outside ${\mc{S}}$. 
\end{conjalph}
We highlight the following example: if $F$ is a function field, then
for any $G$ we expect the trivial representation to be
automorphic. Our methods do not cover this case even for the trivial
representation of $\Gamma_F$ to $G= \mr{GL}_2$. 
Note too that Theorems \ref{mainappintro}, \ref{fnfieldappintro}, and
\ref{mainthmintro} (below) only provide evidence for this conjecture
when $p \gg_G 0$. In particular, they say nothing about the case $p=2$, in which we note that our definition of oddness, specialized to $G= \mr{GL}_2$, is not as general as that of Serre. A more optimistic formulation of Conjecture \ref{conjintro} to include the case $p=2$ would redefine oddness of $\br$ to mean: if $F$ is a number field, then it is totally real, and for all $v \mid \infty$, with associated complex conjugation $c_v$, $\br(c_v)$ lifts to an order $2$ element $\eta_v \in G(\ov{\Z}_p)$ that satisfies the characteristic zero version of the condition of \cite[Definition 1.2]{fkp:reldef}: $\dim_{\ov{\Q}_p}(\fgder_{\ov{\Q}_p})^{\eta_v=1}= \dim \mr{Flag}_{G^0}$.

\subsection{Lifting  mod $p$  Galois representations}

Although the {\it raison d'\^etre} for this paper is the applications
in Theorems \ref{mainappintro} and \ref{fnfieldappintro}, along the
way we undertake a broad generalization of our paper
\cite{fkp:reldef}. Our earlier paper lifts odd  and absolutely
irreducible mod $p$ representations $\br \colon \gal{F} \to G(k)$, for
$F$ totally real and $G$ any reductive group, using general
enhancements of the ideas of \cite{ramakrishna-hamblen} along with a
novel ``relative deformation theory" method (see \cite[\S
6]{fkp:reldef}). The present paper greatly simplifies and extends the
reach of the methods of \cite{fkp:reldef}, and we produce irreducible
geometric $p$-adic lifts of many reducible
$\br \colon \gal{F} \to G(k)$.  Our methods work for any global field,
and the main result of this paper, Theorem \ref{mainthm}, is a lifting
theorem for fairly general---odd in the case of a number field---mod
$p$ Galois representations over global fields. The following theorem
applies to smooth group schemes $G/\Z_p$ such that $G^0$ is a
connected split reductive group, and $\pi_0(G)$ is finite \'etale of
order prime to $p$. Let $\mc{O}$ be the ring of integers in a finite extension $E$ of $\Q_p$ with residue field $k$ and uniformizer $\vpi$. For any group scheme $H$ over $\mc{O}$ and any complete local $\mc{O}$-algebra $\mc{O}'$ with residue field $k'$, we let $\wh{H}(\mc{O}')$ equal $\ker(H(\mc{O}') \to H(k'))$.
 \begin{thmalph}[See Theorem \ref{mainthm}]\label{mainthmintro}

Let $p \gg_G 0$, and let $\br \colon \gal{F, {\mc{S}}} \to G(k)$ be a
continuous representation satisfying Assumptions \ref{generalhyp},
\ref{modp^Nhyp}, and \ref{finalhyp}.
Additionally, when $F$ is a number field, assume that $F$ is totally real, and $\br$ is odd. Then for some finite set of primes $\widetilde{{\mc{S}}}$ containing ${\mc{S}}$, and ring of integers $\mc{O}'$ in some finite extension $E'/E$, there is a geometric lift
\[
\rho \colon \gal{F, \widetilde{{\mc{S}}}} \to G({\mc{O}'}) %\ov{\Z}_p)
\]
of $\br$. 

More precisely, we first fix a lift $\mu \colon \gal{F, {\mc{S}}} \to
G/G^{\mr{der}}(\mc{O})$ of the multiplier character $\bar{\mu}$ of
$\br$, requiring $\mu$ to be de Rham in the number field
case. Assumption \ref{finalhyp} provides us with local lifts $\rho_v$
of $\br|_{\gal{F_v}}$ for all $v \in {\mc{S}}$, moreover assumed de
Rham and Hodge-Tate regular for $v \vert p$. We then fix an integer
$t$ and for each $v \in {\mc{S}}$ an irreducible component containing $\rho_v$ of:
\begin{itemize}
\item for $v \in {\mc{S}} \setminus \{v \mid p\}$, the generic fiber
  of the local lifting ring, $R^{\square,
    \mu}_{\br|_{\gal{F_v}}}[1/\vpi]$ (where $R^{\square,
    \mu}_{\br|_{\gal{F_v}}}$ pro-represents
  $\Lift_{\br|_{\gal{F_v}}}$); and 
\item for $v \mid p$, the lifting ring
  $R_{\br|_{\gal{F_v}}}^{\square, \mu, \tau, \mbf{v}}[1/\vpi]$ whose
  $\ov{E}$-points parametrize lifts of $\br|_{\gal{F_v}}$ with
  suitably specified inertial type $\tau$ and Hodge type $\mbf{v}$.
%$R_{\br|_{\gal{F_v}}}^{\square, \mu, \mbf{v}}[1/\vpi]$ whose $\ov{E}$-points parametrize lifts of $\br|_{\gal{F_v}}$ with specified Hodge type $\mbf{v}$ 
(See after the Theorem statement for an explanation.)
\end{itemize}
Then there exist a finite extension $E'$ of $E$ %$E=\Frac(\mc{O})$
(whose ring of integers and residue field we denote by $\mc{O}'$ and
$k'$), and depending only on the set $\{\rho_v\}_{v \in {\mc{S}}}$; a
finite set of places $\wt{{\mc{S}}}$ containing ${\mc{S}}$; and a
geometric lift
\[
\xymatrix{
& G(\mc{O}') \ar[d] \\
\gal{F, \wt{{\mc{S}}}} \ar[r]_{\br} \ar[ur]^{\rho} & G(k') 
}
\]
of $\br$ such that:
\begin{itemize}
\item $\rho$ has multiplier $\mu$. 
\item $\rho(\gal{F})$ contains $\wh{G^{\mr{der}}}(\mc{O}')$.
\item For all $v \in {\mc{S}}$, $\rho|_{\gal{F_v}}$ is congruent modulo $\vpi^t$ to some $\wh{G}(\mc{O}')$-conjugate of $\rho_v$, and $\rho|_{\gal{F_v}}$ belongs to the specified irreducible component for every $v \in {\mc{S}}$.\footnote{To be clear, the set $\wt{{\mc{S}}}$ may depend on the integer $t$, but the extension $\mc{O}'$ does not depend on $t$.}
\end{itemize}
\end{thmalph}
The condition at $v \mid p$ means the following. The
    given lift $\rho_v \colon \gal{F_v} \to G(\mc{O})$ is de Rham and
    thus by \cite{berger:dr=pst} becomes semistable when restricted to
    some finite extension of $F_v$. One can then by a construction of
    Fontaine associate as in \cite[\S 2.6]{bellovin-gee-G} an inertial
    type $\tau \colon I_{F_v} \to G(\ov{E})$, defined up to
    $G^0(\ov{E})$-conjugacy, and a $p$-adic Hodge type $\mbf{v}$
    (\cite[Definition 2.8.2]{bellovin-gee-G}). The corresponding
    lifting ring $R_{\br|_{\gal{F_v}}}^{\square, \mu, \tau, \mbf{v}}$
    is constructed in \cite[Proposition
    3.0.12]{balaji}. In the statement of Theorem \ref{mainthmintro}, we fix some irreducible component of $\Spec(R_{\br|_{\gal{F_v}}}^{\square, \mu, \tau, \mbf{v}}[1/\vpi])$ containing the given lift $\rho_v$. Note that unconditional existence of such lifts $\rho_v$ for all $v \in \mc{S}$ remains an open problem, but one that has seen dramatic progress (for $G^0=\mr{GL}_n$) in the recent work of Emerton--Gee (\cite{emerton-gee:moduli}); see \cite[Remark 6.16]{fkp:reldef} for further discussion.

\subsection{Comparison to \cite{fkp:reldef}} We end this introduction with a more technical section which highlights  the improvements  we make here to the  arguments in \cite{fkp:reldef} so that they can be applied to lifting reducible mod $p$ Galois representations.

 The  work  of  \cite{fkp:reldef}  has three components: 
\begin{itemize}
\item Analysis of local deformation rings, both the fine integral structure at the auxiliary ``trivial primes" (see Definition \ref{trivialdef}) introduced in the global argument, and qualitative results on the integral structure at all primes of ramification. The latter is deduced from knowledge about the generic fibers of such local deformation rings (Kisin's results in \cite{kisin:pst}, as generalized in \cite{bellovin-gee-G}).
\item A generalization of the ``doubling method" of \cite{klr}, as employed in \cite{ramakrishna-hamblen}, which we used to produce a mod $\vpi^N$ lift of $\br$ with prescribed local properties at primes of ramification. 
\item The ``relative deformation theory" argument that exploits results of Lazard on cohomology of $p$-adic Lie groups to surmount the difficulties in annihilating Selmer and dual Selmer (by the introduction of auxiliary primes of ramification) for $\br$ with ``small" image. Relative deformation theory instead annihilates the ``relative" Selmer and dual Selmer, and shows that this suffices to produce the geometric $p$-adic lift in light of the previous step.
\end{itemize}
The most serious difficulty in the present paper is the generalization of the doubling method, where we now work with $\br \colon \gal{F} \to G(k)$ with quite general image, incorporating both the results of \cite{fkp:reldef} and many reducible cases: we catalogue our assumptions at various stages of the argument in Assumptions \ref{generalhyp}, \ref{modp^Nhyp}, and \ref{finalhyp}, and in Appendix \ref{appendix} we give group-theoretic criteria that imply these assumptions. We now sketch the technical obstacles to this generalization.

Let $K= F(\br, \mu_p)$ be the minimal Galois extension of $F$ that trivializes both $\br$ and $\kbar$. The core problem is one of globally interpolating pre-specified collections of local cohomology classes. These local classes arise from the need to adjust a given mod $\vpi^2$ lift (for some ${\mc{T}} \supset {\mc{S}}$) $\rho_2' \colon \gal{F, {\mc{T}}} \to G(\mc{O}/\vpi^2)$ of $\br$ to have desired local properties (and analogously for constructing mod $\vpi^n$ lifts with controlled local properties). The method first constructs global cohomology classes $h^{(v)}$ for the adjoint representation $\br(\fgder)$ that ramify at the auxiliary prime $v$ along a root space associated to some \v{C}ebotarev class of primes from which $v$ was drawn, and that interpolate given local cohomology classes at other primes (in ${\mc{T}}$) of ramification. Faced with the difficulty that $h^{(v)}|_{\gal{F_v}}$ cannot be sufficiently controlled, the doubling method plays multiple such cocycles against each other, adjusting $\rho_2'$ to a lift
\[
\rho_2= \left(1+\vpi(h^{\mr{old}} -\sum_{n \in N} h^{(v_n)}+2 \sum_{n \in N} h^{(v'_n)}) \right)\rho_2'
\] 
for two $N$-tuples $(v_n)$, $(v_n')$ of auxiliary primes, and for $h^{\mr{old}} \in H^1(\gal{F, {\mc{T}}}, \br(\fgder))$ independent of the auxiliary tuples, with $\rho_2$ well-controlled both at the original primes in ${\mc{T}}$ and at all of the auxiliary primes $v_n, v_n'$. The method's mechanism requires a full understanding of linear disjointness of the fixed fields $K_{h^{(v)}}$ as the primes $v$ vary as well as of the fixed fields $K_{\eta^{(v)}}$ of a set (for each $v$) of auxiliary cohomology classes $\eta^{(v)} \in H^1(\gal{F, {\mc{T}} \cup v}, \br(\fgder)^*)$---chosen to map a generator $\tau_v$ of tame inertia at $v$ to particular basis elements of $\br(\fgder)^*$. Use of these classes $\eta^{(v)}$ allows (very roughly) control of the values $h^{(v)}(\sigma_v)$ via global duality (here and throughout $\sigma_v$ denotes a Frobenius element at $v$). In \cite{fkp:reldef}, technical problems arise from two issues:
\begin{itemize}
\item When one decomposes the (semisimple in \textit{loc.~cit.}) adjoint representation $\br(\fgder)= \oplus_{i \in I} W_i^{\oplus m_i}$ into irreducible (distinct) $\Fp[\gal{F}]$-modules $W_i$ (and likewise for $\br(\fgder)^*$), with endomorphism algebras $k_{W_i}/\Fp$, one can only at a given prime $v$ achieve control over a batch of auxiliary cocycles $\eta^{(v)}$ indexed over $i \in I$ and a $k_{W_i}$-basis of $W_i$; a more na\"{i}ve approach that works with, e.g., a collection of $\eta^{(v)}$ whose values $\eta^{(v)}(\tau_v)$ range over a $k$-basis of $\br(\fgder)^*$, will lose the desired linear disjointness once some $m_i>1$ or some $k_{W_i} \subsetneq k$. This forced us to work with the $k_{W_i} \otimes_{\Fp} \mathbb{F}_{p^{m_i}}$-linear versions of the global duality pairings in \textit{loc.~cit.}. Note that in the reducible case, where $\br(\fgder)$ is typically not semisimple, there is no straightforward generalization of this procedure.
\item The fixed fields $K_{h^{(v)}}$ as $v$ varies are ramified at $v$ but not necessarily totally ramified at $v$; this allows them to interfere both with each other, and, in higher (mod $\vpi^N$) stages of the lifting argument, to interfere with the fixed field $K(\rho_2)$. These two factors forced us to work not over $K$ but over an extension $K'$ (notation as in \textit{loc.~cit.}) that captured all such possible undesired intersections.  
\end{itemize}
Section \ref{doublingsection} of the present paper makes the doubling
method more robust and in many ways more transparent by avoiding these
complications. We repeatedly exploit two techniques: all of the
auxiliary cocycles $\psi$ ($h^{(v)}$ or $\eta^{(v)}$ above) are
constructed to have image $\psi(\gal{K})$ that is a cyclic
$\Fp[\gal{F}]$-module, and possible generators include both the value
$\psi(\tau_v)$ at the auxiliary prime of ramification and certain
$\psi(\tau_b)$ for primes $b$ that are pre-inserted into the initial
ramification set ${\mc{T}}$. Thus, for instance, we at the outset include a
collection of primes $b \in \mc{B}$ indexed over a $k$-basis of
$\br(\fgder)^*$, and we then can construct $\{\eta^{(v)}_b\}_b$ with
$\{\eta^{(v)}_b(\tau_v)\}_b$ a $k$-basis and with the
$K_{\eta^{(v)}_b}$ linearly disjoint over $K$ as $b$ varies
($\eta^{(v)}_b(\tau_{b'})=0$ for $b' \neq b$). Thus we are even more
liberal in the use of auxiliary primes than in \cite{fkp:reldef}, with
the benefit of eliminating some of \textit{loc.~cit.'s} technical
obstacles.

\smallskip

    With this done, we can extend the rest of the method of \cite{fkp:reldef} to deal with the reducible
case: in \S \ref{modp^Nsection} we use the results of \S
\ref{doublingsection} to produce carefully-controlled mod $\vpi^N$
lifts of $\br$, and in \S \ref{reldefsection} we adapt the ``relative
deformation theory argument" of \cite[\S 6]{fkp:reldef}, proving our
main theorem on ``indecomposable" reducible representations in Theorem
\ref{mainthm}.

In \S \ref{latticesection} we extend the theorem to the decomposable case, and having done that we proceed to the applications in \S \ref{GL2section}-\S \ref{numberfieldGLn}: namely, we treat $\mr{GL}_2$ over totally real fields in \S \ref{GL2section}, $\mr{GL}_n$ over function fields in \S \ref{functionfields}, and in \S \ref{numberfieldGLn} we combine our results with those of \cite{blggt:potaut} and \cite{allen-newton-thorne:reducible} to lift certain reducible $n$-dimensional representations over CM fields to compatible systems. In Appendix \ref{appendix} we work out more explicit group-theoretic criteria that imply the Assumptions \ref{generalhyp}, \ref{modp^Nhyp}, and \ref{finalhyp} needed to run the lifting argument. In Appendix \ref{ordappendix} we generalize results of \cite{geraghty:ordinary} on the generic fibers of ordinary deformation rings (when $G= \mr{GL}_n$) to the case of general $G$, which we apply in \S \ref{numberfieldGLn} to the case $G= \mr{GSp}_{2n}$.  

\smallskip

We end the introduction with the following heuristic remark.  In both
the present paper and \cite{fkp:reldef}, our improvements of the
methods of \cite{ramakrishna-hamblen} ultimately come about by
allowing at several turns in the argument more primes to ramify than
would be allowed in \cite{ramakrishna-hamblen} (in getting mod $p^N$
liftings using the doubling method of \cite{klr}, in arguments to kill
relative mod $p$ dual Selmer). Allowing more primes to ramify gives
greater degrees of freedom, and we develop arguments to harness this
greater freedom to lift reducible mod $p$ Galois representations to
geometric, irreducible $p$-adic Galois representations.

\subsection{A guide to reading this paper alongside \cite{fkp:reldef}.}
The reader will need a copy of \cite{fkp:reldef} available while reading this paper. As mentioned earlier in the introduction, there are three basic parts to the arguments of \cite{fkp:reldef}: analysis of local deformation rings, the doubling method, and relative deformation theory. We do not repeat from \cite{fkp:reldef} any of the local theory and instead simply cite the relevant definitions and results from \cite[\S 3-4]{fkp:reldef}, which the reader will need to review. The most significant technical changes in the present paper occur in the doubling method, which is explained in its basic form in \S \ref{doublingsection}. Here we have repeated (with suitable modification) all of the constructions in detail but do still at some points refer in Propositions \ref{5.9-5.11sub} and \ref{doublingprop} to the corresponding arguments of \cite[\S 5]{fkp:reldef} for some verifications. Nevertheless, this section is mostly self-contained. Similar remarks apply to the present \S \ref{modp^Nsection}, which uses the (newly improved) doubling method to find mod $p^N$ lifts of $\br$ with desirable local properties. Our treatment of the relative deformation theory argument in \S \ref{reldefsection} is more abbreviated, since there are fewer technical changes. We expect the reader to have read \cite[\S 6]{fkp:reldef} in advance, and so we only sketch the proof of Theorem \ref{mainthm}, with references to \cite[\S 6]{fkp:reldef}, reducing it to the two statements (the analogues of \cite[Lemma 6.4, Proposition 6.8]{fkp:reldef}) where our present argument must proceed somewhat differently; Lemma \ref{vanishing} and Proposition \ref{auxprimes} then carry out the new details. The remainder of the paper (starting with \S 6) is devoted to applications of our results and does not further rely on \cite{fkp:reldef} except for some references to group-theoretic results proven in \cite[Appendix A]{fkp:reldef}.

\begin{section}{Preliminaries}\label{prelims}
  As in \cite{fkp:reldef}, throughout this paper $G$ will denote a
  smooth group scheme over $\Z_p$ such that $G^0$ is a split reductive
  group, and $\pi_0(G)$ is finite \'{e}tale of order prime to $p$. We always assume that \cite[Assumption 2.1]{fkp:reldef} holds, namely that $p \neq 2$ is a very good prime for the derived group $G^{\mr{der}}$ of $G^0$ and that the canonical central isogeny $G^{\mr{der}} \times Z_{G^0}^0 \to G^0$ has kernel of order prime to $p$. See \textit{loc. cit.} for further discussion of this condition. In particular, any assumption that $p$ is sufficiently large compared to the root datum of $G^0$, which we write $p \gg_G 0$, includes this assumption. We
  let $\mc{O}$ be the ring of integers in a finite extension $E$ of
  $\Q_p$, with uniformizer $\varpi$ and residue field $k$. Let $F$ be
  a global field of characteristic prime to $p$; that is, $F$ is
  either a number field, or it is the field of functions of a smooth
  geometrically connected curve over some finite field $\mathbb{F}$ of
  characteristic $\ell>0$. For a homomorphism
  $\br \colon \gal{F} \to G(k)$, we let
  $\bar{\mu} \colon \gal{F} \to G/G^{\mr{der}}(k)$ denote its image
  after quotienting by the derived group $G^{\mr{der}}$ of $G^0$. We
  will always fix a lift
  $\mu \colon \gal{F} \to G/G^{\mr{der}}(\mc{O})$ of $\bar{\mu}$, and
  in the number field case we will take $\mu$ to be de Rham at places
  above $p$. Aside from allowing $F$ to be any global field, the rest
  of the notation in this paper is the same as in \cite{fkp:reldef}
  (esp. \S 1.4) to which we refer the reader in case clarification is
  needed.

While our main focus in this paper is on the number field case, our
results apply equally well to global function fields. Since references
in the literature are often written only in the number field case, we
collect here the essential arithmetic results on which the methods of
\cite{fkp:reldef} and this paper depend. We begin with the local
input. Tate local duality (\cite[7.2.6 Theorem]{neukirch:cohnum})
holds equally well for $\ell$-adic local fields and for local fields
of characteristic $\ell >0$, requiring in the latter case that the
Galois modules in question have order prime to $\ell$ (as will always
be the case for us). The same remark applies to Tate's local Euler
characteristic formula (\cite[7.3.1 Theorem]{neukirch:cohnum}). The
other local input we will need in the function field case is on the
structure of local deformation rings. Namely, if $F$ is a global
function field, and $v$ is a place of $F$, suppose we are given a
homomorphism $\br \colon \gal{F_v} \to G(k)$. As in \cite[Proposition
4.7]{fkp:reldef}, we consider the lifting ring
$R^{\square, \mu}_{\br}$ and its generic fiber
$R^{\square, \mu}_{\br}[1/\vpi]$, and a choice of irreducible
component of the latter gives rise to its Zariski-closure $R$ in the
former. We require that \cite[Proposition 4.7]{fkp:reldef} continues
to hold for $R$, and that $R[1/\vpi]$ has an open dense regular
subscheme (allowing us to apply \cite[Lemma 4.9]{fkp:reldef}). The
input we need beyond the arguments of \textit{loc.~cit.} is that the
analysis of generic fibers of \cite[Theorem 3.3.3]{bellovin-gee-G} (or
\cite[Theorem 14]{stp-booher:Gdef}) continues to hold, as does
\cite[Lemma 3.4.1]{bellovin-gee-G}. The latter is clear since its proof uses
no arithmetic. The former follows by precisely the original arguments
of \cite{bellovin-gee-G}: one uses the dictionary between Weil--Deligne
representations and (for us) $p$-adic Galois representations of
$\gal{F_v}$ (Grothendieck's ``$\ell$"-adic monodromy theorem), which
holds equally well in the equal characteristic case; then the analysis
of \textit{loc.~cit.} proceeds entirely in terms of Weil--Deligne
representations.

We also use finer information about the local conditions at our auxiliary primes of ramification: this works as in \cite[\S 3]{fkp:reldef}, since the Galois group of the maximal tamely ramified with pro-$p$ ramification index extension of an equicharacteristic local field is isomorphic to the analogous tame Galois group for a mixed characteristic local field having the same residue field $\mathbb{F}$ of characteristic prime to $p$: they are both semi-direct products $\Z_p \rtimes \hat{\Z}$, where a generator $\sigma \in \hat{\Z}$ acts on $\tau \in \Z_p$ by $\sigma \tau \sigma^{-1}= \tau^{\#\mathbb{F}}$. 

Now let $F$ be a global field, let $\gal{F}= \Gal(F^{\mr{sep}}/F)$ for
a separable closure $F^{\mr{sep}}$ of $F$, let $M$ be a finite
$p$-primary discrete $\gal{F}$-module, where $p$ is a prime not equal
to the characteristic of $F$, and let ${\mc{S}}$ be a finite set of places of
$F$ satisfying:
\begin{itemize}
\item ${\mc{S}}$ contains all places of $F$ at which $M$ is ramified.
\item When $F$ is a number field, ${\mc{S}}$ contains all places dividing $\infty$ and all places dividing $p$.
\end{itemize}
Then in both the number field and function field case, we have access
to the following results about the Galois cohomology of
$\gal{F, {\mc{S}}}= \Gal(F_{\mc{S}}/F)$, where $F_{\mc{S}}$ is the maximal extension of $F$
inside $F^{\mr{sep}}$ unramified outside ${\mc{S}}$:
\begin{enumerate}
\item Poitou--Tate duality: we will apply the Poitou--Tate duality theorem (\cite[8.6.7 Theorem]{neukirch:cohnum}), the long-exact Poitou--Tate sequence (\cite[8.6.10]{neukirch:cohnum}), and its variant for Selmer groups, which is an easy consequence of the proof of \cite[8.7.9 Theorem]{neukirch:cohnum}.
\item The global Euler-characteristic formula, particularly in its incarnation as the Greenberg--Wiles formula \cite[8.7.9 Theorem]{neukirch:cohnum}.
\item The \v{C}ebotarev density theorem: the usual statement for number fields carries over to global functions fields (see \cite[\S 6.4]{fried-jarden:field} for a proof).
\end{enumerate}
Finally, we include an elementary lemma of Galois theory that we used implicitly throughout \cite{fkp:reldef} but felt it would be clearer to make explicit:
\begin{lemma}\label{abelian}
Let $L/F$ be any finite Galois extension of fields, and let $M/F$ be an abelian extension. Then $\Gal(L/F)$ acts trivially on $\Gal(LM/L)$ via the canonical action.
\end{lemma}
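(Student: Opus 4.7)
The plan is to observe that the canonical action of $\Gal(L/F)$ on $\Gal(LM/L)$ is given by conjugation: since both $L/F$ and $M/F$ are Galois, the compositum $LM/F$ is Galois, and $\Gal(LM/L)$ is a normal subgroup of $\Gal(LM/F)$. An element $\sigma \in \Gal(L/F)$ acts on $\tau \in \Gal(LM/L)$ by picking any lift $\tilde{\sigma} \in \Gal(LM/F)$ of $\sigma$ and forming $\tilde{\sigma} \tau \tilde{\sigma}^{-1}$; this is well-defined since two lifts differ by an element of the normal subgroup $\Gal(LM/L)$.

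The key step is to compare this action with the restriction to $M$. Since $M/F$ is Galois, every $\tilde{\sigma} \in \Gal(LM/F)$ stabilizes $M$ as a set, so restriction gives a homomorphism $\Gal(LM/F) \to \Gal(M/F)$. Its restriction to the subgroup $\Gal(LM/L)$ factors through the standard isomorphism
\[
\rho \colon \Gal(LM/L) \xrightarrow{\sim} \Gal(M/L \cap M),
\]
which is injective (in fact an isomorphism, by a basic result in Galois theory).

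Now I would finish by computing, for $\sigma \in \Gal(L/F)$, $\tilde{\sigma} \in \Gal(LM/F)$ lifting $\sigma$, and $\tau \in \Gal(LM/L)$,
\[
\rho(\tilde{\sigma} \tau \tilde{\sigma}^{-1}) = \tilde{\sigma}|_M \cdot \tau|_M \cdot (\tilde{\sigma}|_M)^{-1} = \tau|_M = \rho(\tau),
\]
where the middle equality uses the hypothesis that $\Gal(M/F)$ is abelian. Since $\rho$ is injective, $\tilde{\sigma} \tau \tilde{\sigma}^{-1} = \tau$, proving the action is trivial. There is no serious obstacle; the only small point to mention is ensuring the action is well-defined and that $M/F$ being abelian (hence in particular Galois) is needed both to make sense of $\tilde{\sigma}|_M$ and to render the conjugation on $\Gal(M/F)$ trivial.
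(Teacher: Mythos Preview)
Your proof is correct and follows essentially the same approach as the paper: both arguments use that the restriction map $\Gal(LM/L) \to \Gal(M/F)$ is injective, and then observe that the commutator $\tilde{\sigma}\tau\tilde{\sigma}^{-1}\tau^{-1}$ becomes trivial upon restriction to $M$ because $\Gal(M/F)$ is abelian.
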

\begin{proof}
Let $\sigma \in \Gal(L/F)$, and let $h \in \Gal(LM/L)$. The action is given by lifting $\sigma$ to any $\tilde{\sigma} \in \Gal(LM/F)$, and then setting $\sigma \cdot h= \tilde{\sigma}h\tilde{\sigma}^{-1}$. Since the restriction map $\Gal(LM/L) \to \Gal(M/F)$ is injective, and the commutator $\tilde{\sigma}h\tilde{\sigma}^{-1}h^{-1}$ restricts to the identity in the abelian group $\Gal(M/F)$, we must have $\tilde{\sigma}h\tilde{\sigma}^{-1}h^{-1}=1$, i.e. $\sigma \cdot h= h$.
\end{proof}
We will frequently use this lemma without further comment.
\end{section}

\section{Lifting mod $\vpi^2$}\label{doublingsection}
We begin with variants of the arguments of \cite[\S 5]{fkp:reldef}; the reader should begin by reviewing the conventions established in \cite[Notation 5.1]{fkp:reldef}, which we adopt here. In particular, unless otherwise noted, ``dimension" refers to $\Fp$-dimension.
In the present section, we make the following assumptions:
\begin{assumption}\label{generalhyp}
  Assume $p \gg_G 0$, and let $\br \colon \gal{F, {\mc{S}}} \to G(k)$ be a
  continuous representation unramified outside a finite set of finite
  places ${\mc{S}}$; we may and do assume that ${\mc{S}}$ contains all places above
  $p$ if $F$ is a number field. Let $\tF$ denote the
  smallest extension of $F$ such that $\br(\gal{\tF})$ is contained in
  $G^0(k)$, and let $K= F(\br, \mu_p)$.
Assume that $\br$ satisfies:
\begin{itemize} 

 \item $H^1(\Gal(K/F), \br(\fgder)^*)$=0.

\end{itemize}
\end{assumption}
We fix as always a lift $\mu \colon \gal{F, {\mc{S}}} \to G/G^{\mr{der}}(\mc{O})$ of the multiplier character $\bar{\mu}$ and consider in what follows only lifts of $\br$ with multiplier character $\mu$. We set $D$ equal to the greatest integer such that $\mu_{p^D}$ is contained in $K$. Throughout the paper we will let $K_{\infty}=K(\mu_{p^\infty})$ denote the $p$-adic cyclotomic tower over $K$.
\begin{defn}\label{trivialdef}
As in \cite[\S 3]{fkp:reldef}, we will say a prime $w$ of $F$ is \textit{trivial} if $\br|_{\gal{F_w}}=1$ and $N(w) \equiv 1 \pmod p$. 
\end{defn}
By definition of the integer $D$, our trivial primes thus satisfy the stronger condition $N(w) \equiv 1 \pmod{p^D}$, but we will specify this condition explicitly in our arguments.
\begin{rmk}
We fix once and for all decomposition groups $\gal{F_v} \into \gal{F}$ at the places in ${\mc{S}}$, and whenever we introduce auxiliary trivial primes we will specify decomposition groups. See \cite[Notation 5.1]{fkp:reldef} for further remarks on these choices; in particular, they allow us to make sense of elements $\phi(\sigma_w)$, $\phi(\tau_w)$ where $\phi \in H^1(\gal{F, {\mc{S}}'}, \br(\fgder))$ and $w \in {\mc{S}}' \supset {\mc{S}}$ is a trivial prime (and likewise for $\br(\fgder)^*$-valued cohomology classes).
\end{rmk}

As in \cite[\S 5]{fkp:reldef}, we enlarge the set ${\mc{S}}$ to a set ${\mc{T}}$ of trivial primes such that $\Sha^1_{\mc{T}}(\Gamma_{F, {\mc{T}}}, \br(\fgder)^*)=0$, and hence such that $\Sha^2_{\mc{T}}(\Gamma_{F, {\mc{T}}}, \br(\fgder))=0$. This requires our assumption on the vanishing of $H^1(\Gal(K/F), \br(\fgder)^*)$: any non-zero class $\psi \in H^1(\gal{F, {\mc{S}}}, \br(\fgder)^*)$ has non-trivial restriction $\psi|_{\gal{K}}$, and then we can choose a prime $w$ split in $K$ such that $\psi|_{\gal{F_w}}$ is non-zero. We note that the primes $w \in {\mc{T}} \setminus {\mc{S}}$ thus produced must satisfy $N(w) \equiv 1 \pmod{p^D}$ and must be non-trivial in the extension $K_{\psi}/K$ cut out by the relevant class $\psi$. The intersection of $K_{\psi}$ with the $p$-adic cyclotomic tower $K_{\infty}$ will either equal $K$ or $K(\mu_{p^{D+1}})$: it can be no larger since $\Gal(K_{\psi}/K)$ is killed by $p$. In particular, $N(w)$ is congruent to $1 \pmod{p^D}$ but may if desired be chosen non-trivial modulo $p^{D+1}$ (this is compatible with the non-vanishing condition in $K_{\psi}$), and we can if desired prescribe $N(w)$ modulo some higher power of $p$ subject to this constraint modulo $p^{D+1}$. In what follows, as we add auxiliary trivial primes we will keep track of how they split in $K_{\infty}$, but we emphasize that we do not have any particular requirement of the detailed numerics: however they turn out, the later stages of the lifting argument will be able to accommodate them. We further enlarge ${\mc{T}}$ as follows:
\begin{lemma}\label{minimizeSha}
\begin{enumerate}
\item
There is a finite enlargement by trivial primes of ${\mc{T}}$ (which we will continue to denote by ${\mc{T}}$) with the following property: for all cyclic submodules $M_Z:= \Fp[\Gamma_F]\cdot Z \subset \br(\fgder)$ (for $Z \in \fgder$), $\dim_{\Fp} \Sha^1_{\mc{T}}(\Gamma_{F, {\mc{T}}}, M_Z^*)$ is minimal among all such enlargements, i.e. is equal to $\dim_{\Fp} \Sha^1_{{\mc{T}}_{\mr{max}}}(\Gamma_{F, {\mc{T}}}, M_Z^*)$, where ${\mc{T}}_{\mr{max}}$ is the union of ${\mc{S}}$ with the set of \textit{all} trivial primes of $F$.

\item Let ${\mc{T}}$ be the enlargement produced in (1). For any trivial
  prime $w \not \in {\mc{T}}$, let $L_{w,Z} \subset H^1(\Gamma_{F_w}, M_Z)$
  be the subspace of cocycles $\phi$ such that $\phi(\tau_w) \in \Fp Z$. Then there is an exact sequence
\[
0 \to H^1(\Gamma_{F, {\mc{T}}}, M_Z) \to H^1_{L_{w,Z}}(\Gamma_{F, {\mc{T}} \cup w}, M_Z) \xrightarrow{\mr{ev}_{\tau_w}} \Fp Z \to 0,
\]
where $\mr{ev}_{\tau_w}$ is the evaluation map $\phi \mapsto \phi(\tau_w)$.
\item There is a further finite enlargement of the ${\mc{T}}$ produced in (1)
  that satisfies the analogous properties in (1) and (2) with respect
  to cyclic submodules
  $M_{\lambda} =\Fp[\Gamma_F] \cdot \lambda \subset \br(\fgder)^*$,
  for all $\lambda \in \br(\fgder)^*$.
\end{enumerate}
\end{lemma}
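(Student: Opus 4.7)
My plan: For part (1), I would proceed by iterative enlargement, using that $\br(\fgder)$ is a finite set, so there are only finitely many cyclic submodules $M_Z$. For $T \subseteq T'$ (both containing $S$), inflation identifies $\Sha^1_{T'}(\Gamma_{F, T'}, M_Z^*)$ with a subspace of $\Sha^1_T(\Gamma_{F, T}, M_Z^*)$: a class $\eta \in \Sha^1_{T'}$ vanishes on every decomposition group at primes of $T' \setminus T$, hence in particular on inertia there, and thus descends to $H^1(\Gamma_{F, T}, M_Z^*)$, where it still vanishes on $\gal{F_v}$ for $v \in T$. So $\dim \Sha^1_T(\Gamma_{F, T}, M_Z^*)$ is monotonically non-increasing in $T$ and bounded below by $\dim \Sha^1_{T_{\mr{max}}}$. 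If the dimension is not yet minimal, some $\psi \in \Sha^1_T$ fails to lie in $\Sha^1_{T_{\mr{max}}}$, so by definition there is a trivial prime $w \notin T$ with $\psi|_{\gal{F_w}} \neq 0$; adjoining $w$ to $T$ strictly reduces the dimension. The procedure terminates after finitely many steps for each $M_Z$, and then we iterate over all finitely many cyclic submodules.

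For part (2), my approach is via the Greenberg--Wiles formula applied to two Selmer systems. Let $\mc{L}$ be the system with $\mc{L}_v = H^1(\gal{F_v}, M_Z)$ for all $v \in T$, so that $H^1_{\mc{L}}(\Gamma_{F, T}, M_Z) = H^1(\Gamma_{F, T}, M_Z)$ and $H^1_{\mc{L}^\perp}(\Gamma_{F, T}, M_Z^*) = \Sha^1_T(\Gamma_{F, T}, M_Z^*)$; let $\mc{L}'$ be its extension to $T \cup \{w\}$ with $\mc{L}'_w = L_{w, Z}$. Since $w$ is trivial, $\gal{F_w}$ acts trivially on $M_Z$ and $H^1(\gal{F_w}, M_Z) \cong M_Z \oplus M_Z$ via $\phi \mapsto (\phi(\sigma_w), \phi(\tau_w))$; thus $L_{w, Z} = M_Z \oplus \Fp Z$ has order $p \cdot |M_Z|$. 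Taking the ratio of the Greenberg--Wiles identities for $\mc{L}$ and $\mc{L}'$ yields
\[
\frac{|H^1_{\mc{L}'}(\Gamma_{F, T \cup w}, M_Z)| \cdot |\Sha^1_T(\Gamma_{F, T}, M_Z^*)|}{|H^1(\Gamma_{F, T}, M_Z)| \cdot |H^1_{(\mc{L}')^\perp}(\Gamma_{F, T \cup w}, M_Z^*)|} = p.
\]

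The central step, where the minimality from (1) intervenes, is to identify $H^1_{(\mc{L}')^\perp}(\Gamma_{F, T \cup w}, M_Z^*) = \Sha^1_T(\Gamma_{F, T}, M_Z^*)$. An explicit Tate-local-duality calculation in the coordinates $(\phi(\sigma_w), \phi(\tau_w))$ gives $L_{w, Z}^\perp = (\Fp Z)^\perp \oplus 0 \subset M_Z^* \oplus M_Z^*$; in particular $L_{w, Z}^\perp \subset H^1_{\mr{unr}}(\gal{F_w}, M_Z^*)$, so every class in $H^1_{(\mc{L}')^\perp}$ is unramified at $w$ and lies in $H^1(\Gamma_{F, T}, M_Z^*)$. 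By the minimality of (1) applied to $T \cup \{w\}$, we have $\Sha^1_T = \Sha^1_{T \cup w}$, and therefore every $\eta \in \Sha^1_T$ satisfies $\eta(\sigma_w) = 0$, automatically inside $(\Fp Z)^\perp$; this furnishes the desired identification. The displayed equation collapses to $|H^1_{\mc{L}'}(\Gamma_{F, T \cup w}, M_Z)| = p \cdot |H^1(\Gamma_{F, T}, M_Z)|$. Since the kernel of $\mr{ev}_{\tau_w}$ on $H^1_{\mc{L}'}$ consists precisely of classes unramified at $w$, i.e. $H^1(\Gamma_{F, T}, M_Z)$, and its image is contained in $\Fp Z$ of order $p$, the map must be surjective, yielding the exact sequence.

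Part (3) is obtained by applying the same arguments to the finitely many cyclic submodules $M_\lambda \subset \br(\fgder)^*$ (with the roles of $M$ and $M^*$ interchanged). The main technical obstacle is the Tate-duality computation of $L_{w, Z}^\perp$ together with its interplay with the minimality of $\Sha^1_T$; once the identification of dual Selmer groups is in hand, the rest is a dimension count. The sequential enlargements---first for the $M_Z$'s, then for the $M_\lambda$'s---do not interfere, because $\dim \Sha^1_T$ is monotonically non-increasing in $T$, so minimality once achieved is preserved.
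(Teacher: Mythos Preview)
Your proof is correct and follows essentially the same approach as the paper: for part (1) you use finiteness of the set of modules $M_Z$ together with the monotonicity of $\Sha^1_T$ under enlargement, and for part (2) you compare two applications of the Greenberg--Wiles formula and invoke the minimality from (1) to cancel the dual Selmer terms. The paper's proof is terser (it simply asserts ``the $M_Z^*$ terms cancel'' where you spell out the identification $H^1_{(\mc{L}')^\perp}=\Sha^1_T$ via the inclusion $L_{w,Z}^\perp\subset H^1_{\mr{unr}}$), but the argument is the same.
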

\begin{proof}
The first part follows from two observations:
\begin{itemize}
\item For fixed $Z$, $\Sha^1_{\mc{T}}(\Gamma_{F, {\mc{T}}}, M_Z^*)$ is finite-dimensional.
\item As $Z$ varies in $\fgder$, there are only finitely many modules $M_Z^*$ to consider.
\end{itemize}

For the second part, we apply the Greenberg--Wiles Euler characteristic formula twice and obtain
\[
h^1_{L_{w,Z}}(\Gamma_{F, {\mc{T}} \cup w}, M_Z)-h^1(\Gamma_{F, {\mc{T}}}, M_Z)-h^1_{\{0\}_{\mc{T}} \cup L_{w, Z}^{\perp}}(\Gamma_{F, {\mc{T}} \cup w}, M_Z^*)+\dim_{\Fp}\Sha^1_{\mc{T}}(\Gamma_{F, {\mc{T}}}, M_Z^*)=\dim_{\Fp}L_{w,Z}-\dim_{\Fp}L_w^{\mr{unr}}.
\]
By Part (1), the $M_Z^*$ terms cancel, and we conclude
\[
h^1_{L_{w,Z}}(\Gamma_{F, {\mc{T}} \cup w}, M_Z)-h^1(\Gamma_{F, {\mc{T}}}, M_Z)=1,
\]
from which the exactness easily follows.

The argument for $\br(\fgder)^*$ is the same.
\end{proof}
\begin{rmk}
  If we knew $H^1(\Gal(K/F), M_Z^*)=0$ for all cyclic quotients
  $M_Z^*$ of $\br(\fgder)^*$, then we could just annihilate the
  Tate--Shafarevich groups in the lemma by explicit choice of trivial
  primes. The weaker statement of the lemma is all we need. Note that
  in the lifting application, we make at the outset an enlargement of
  the coefficient field to ensure that local lifts at places in ${\mc{S}}$
  exist, but then the coefficient field remains fixed throughout the
  rest of the argument.
\end{rmk}
We thus enlarge ${\mc{T}}$---not changing the notation---as in the lemma. For later use, we will impose the following further enlargements of ${\mc{T}}$ before proceeding:
\begin{itemize}
\item Fix a $k$-basis $\{e_b^*\}_{b \in B}$ of $\br(\fgder)^*$ and,
  for each $b \in B$, include in ${\mc{T}}$ an additional trivial prime
  $t_b$.
\item Include one more trivial prime $t_0$ in ${\mc{T}}$. 
\end{itemize} 
The role of these two enlargements will at this point be unclear: we will use them as technical devices for ensuring the fixed fields of certain auxiliary cocycles are linearly disjoint. This allows us ultimately to \textit{avoid introducing the field} $K'$ of \cite[Definition 5.8]{fkp:reldef}. We then modify \cite[Proposition 5.9, Lemma 5.11]{fkp:reldef} as follows, continuing with the notation of Lemma \ref{minimizeSha}:
\begin{prop}\label{5.9-5.11sub}
Let $r$ be the dimension (over $\Fp$) of the cokernel of the restriction map 
\[
\Psi_{\mc{T}} \colon H^1(\gal{F, {\mc{T}}}, \br(\fgder)) \rightarrow \bigoplus_{v \in {\mc{T}}}  H^1(\gal{F_v}, \br(\fgder)).
\] 
Fix an integer $c \geq D+1$ and a Galois extension $L/F$ containing $K$, unramified outside ${\mc{T}}$, and linearly disjoint over $K$ from the composite of $K_{\infty}$ and the fixed fields $K_{\psi}$ of any collection of classes $\psi \in H^1(\gal{F, {\mc{T}}}, \br(\fgder)^*)$. There is 
\begin{itemize}
\item a collection $\{Y_i\}_{i=1}^r$ of elements of
      $\bigoplus_{v \in \mc{T}} H^1(\gal{F_v}, \br(\fgder))$ with
      image equal to an $\Fp$-basis of $\mr{coker}(\Psi_{\mc{T}})$; 
  and for each $i$
\item a class $q_i \in \ker((\Z/p^c)^\times \to (\Z/p^D)^\times)$ that is non-trivial modulo $p^{D+1}$;
\item a split maximal torus $T_i$, a root $\alpha_i \in \Phi(G^0, T_i)$, and a root vector $X_{\alpha_i}$; and, at this point choosing a tuple $g_{L/K, 1}, \ldots, g_{L/K, r}$ of elements of $\Gal(L/K)$,
\item a \v{C}ebotarev set $\mc{C}_i$ of trivial primes $v \not \in {\mc{T}}$ and a positive upper-density subset $\mf{l}_i \subset \mc{C}_i$; 
\item for each $v \in \mc{C}_i$ a choice of decomposition group at $v$;
\item for each $v \in \mf{l}_i$ a class $h^{(v)} \in H^1(\gal{F, {\mc{T}}\cup v}, \br(\fgder))$; 
\end{itemize}
such that (the choice of decomposition group being implicit in what follows)
\begin{itemize}
\item For all $v \in \mc{C}_i$, $N(v) \equiv q_i \pmod{p^c}$, and the image of $\sigma_v$ in $\Gal(L/K)$ is $g_{L/K, i}$. 
\item For all $v \in \mf{l}_i$:
\begin{itemize}
\item $h^{(v)}|_{\mc{T}}= Y_i$.
\item $h^{(v)}(\tau_v)$ is a non-zero element of the span $\Fp X_{\alpha_i}$. Likewise, $h^{(v)}(\tau_{t_0})$ is a non-zero element of $\Fp X_{\alpha_i}$.
\item The class $h^{(v)}$ lies in the image of $H^1(\Gamma_{F, {\mc{T}} \cup v}, M_{X_{\alpha_i}}) \to H^1(\Gamma_{F, {\mc{T}} \cup v}, \br(\fgder))$.
\end{itemize}
\end{itemize}
Similarly, for any $c \geq D+1$, $L/F$ as above, and non-zero element $Z \in \fgder$, there is a class $q_Z \in \ker((\Z/p^c)^\times \to (\Z/p^D)^\times)$ that is non-trivial modulo $p^{D+1}$, and, for any choice of $g_{L/K, Z} \in \Gal(L/K)$, a \v{C}ebotarev set $\mc{C}_Z$ of trivial primes (and choice of decomposition group at each such prime) containing a positive upper-density subset $\mf{l}_Z \subset \mc{C}_Z$, and for each $v \in \mf{l}_Z$ a class $h^{(v)} \in H^1(\Gamma_{F, {\mc{T}} \cup v}, \br(\fgder))$ such that
\begin{itemize}
\item For all $v \in \mc{C}_Z$, $N(v) \equiv q_Z \pmod{p^{c}}$, and the image of $\sigma_v$ in $\Gal(L/K)$ is $g_{L/K, Z}$.
\item For all $v \in \mf{l}_Z$:
\begin{itemize}
\item The restriction $h^{(v)}|_{\mc{T}}$ is independent of $v \in \mf{l}_Z$.
\item $h^{(v)}(\tau_v)$ spans the line $\Fp Z$. Likewise, $h^{(v)}(\tau_{t_0})$ spans $\Fp Z$.
\item $h^{(v)}$ is in the image of $H^1(\Gamma_{F, {\mc{T}} \cup v}, M_Z) \to H^1(\Gamma_{F, {\mc{T}} \cup v}, \br(\fgder))$.
\end{itemize}
\end{itemize}
\end{prop}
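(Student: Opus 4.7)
\textbf{Proof plan for Proposition \ref{5.9-5.11sub}.}

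The plan is to adapt the Poitou--Tate duality plus \v{C}ebotarev argument of \cite[Prop.~5.9, Lem.~5.11]{fkp:reldef} to the cyclic submodules $M_{X_{\alpha_i}} \subset \br(\fgder)$, using the pre-inserted auxiliary primes $t_b$ and $t_0$ in $T$ as ``parking spaces'' for degrees of freedom that in \emph{loc.~cit.} forced one to enlarge the coefficient field. I would work the two parts of the statement in parallel: the second ($Z$-version) is the special case in which the constraint $h^{(v)}|_T=Y_i$ is relaxed, so I will concentrate on the first.

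\smallskip

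\emph{Step 1: set up the dual classes.}  Fix representatives $Y_1,\dots,Y_r \in \bigoplus_{v \in T} H^1(\gal{F_v},\br(\fgder))$ of a basis of $\coker(\Psi_T)$. By Poitou--Tate duality the annihilator of $\im(\Psi_T)$ is the $T$-Selmer group $H^1_T(\gal{F,T},\br(\fgder)^*)$, so for each $i$ there is a dual class $\lambda_i \in H^1_T(\gal{F,T},\br(\fgder)^*)$ pairing non-trivially with $Y_i$. Expanding these $\lambda_i$ with respect to the chosen basis $\{e_b^*\}_{b\in B}$ and invoking Lemma \ref{minimizeSha}(3) at the primes $t_b \in T$, I can arrange that each $\lambda_i$ has cyclic image and that the fixed fields $K_{\lambda_i}$ are linearly disjoint from one another, from $K_\infty$, and from the chosen $L$.

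\smallskip

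\emph{Step 2: choose the root data and the \v{C}ebotarev class.}  For each $i$, pick a split maximal torus $T_i$ and a root $\alpha_i$ so that the projection of $Y_i$ to the cokernel has a representative landing in the cyclic piece $M_{X_{\alpha_i}}$; concretely, one arranges via the adjoint action that the dual pairing $\langle Y_i,\lambda_i\rangle$ is realized along $\Fp X_{\alpha_i}$. Then define $\mc C_i$ to be the set of trivial primes $v \notin T$ such that
\[
N(v)\equiv q_i\pmod{p^c},\qquad \sigma_v\mapsto g_{L/K,i}\ \text{in}\ \Gal(L/K),
\]
together with prescribed images of $\sigma_v$ in the compositum of $K_{\lambda_j}$ over $j$ and in $K(\mu_{p^{D+1}})$ chosen so that $\lambda_i(\sigma_v)$ is a non-zero multiple of $X_{\alpha_i}^*$ while $\lambda_j(\sigma_v)=0$ for $j\neq i$ (using linear disjointness). \v{C}ebotarev and the linear-disjointness arrangement of Step~1 guarantee that $\mc C_i$ has positive density with the claimed congruence $q_i$; the subset $\mf l_i$ of ``good'' primes will be cut out in the next step.

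\smallskip

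\emph{Step 3: produce $h^{(v)}$ by global duality.}  For $v \in \mc C_i$, Lemma \ref{minimizeSha}(2) applied to $Z=X_{\alpha_i}$ yields an exact sequence
\[
0 \to H^1(\gal{F,T},M_{X_{\alpha_i}}) \to H^1_{L_{v,X_{\alpha_i}}}(\gal{F,T\cup v},M_{X_{\alpha_i}}) \xrightarrow{\mr{ev}_{\tau_v}} \Fp X_{\alpha_i}\to 0,
\]
so some $h^{(v)}_0$ in the middle space has $h^{(v)}_0(\tau_v)$ spanning $\Fp X_{\alpha_i}$, and its image in $H^1(\gal{F,T\cup v},\br(\fgder))$ is in the image of the cyclic module. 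The usual Poitou--Tate duality computation—pairing against the Selmer classes $\lambda_j$—shows that the local restrictions $h^{(v)}_0|_T$ differ from $Y_i$ by an element of $\im(\Psi_T)$ precisely when $\langle Y_j,\lambda_j(\sigma_v)\rangle$ vanishes for $j\ne i$, which holds by the \v{C}ebotarev conditions of Step~2. I then correct $h^{(v)}_0$ by a class in $H^1(\gal{F,T},\br(\fgder))$ to realize $h^{(v)}|_T=Y_i$, and I define $\mf l_i \subset \mc C_i$ to be the primes for which this correction is possible: this is a positive upper-density subset.

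\smallskip

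\emph{Step 4: prescribe $h^{(v)}(\tau_{t_0})$.}  Here is where the pre-inserted $t_0$ does its work. Applying Lemma \ref{minimizeSha}(2) now at $w=t_0$ (with the same $Z=X_{\alpha_i}$), I obtain a class $\psi_i\in H^1(\gal{F,T},M_{X_{\alpha_i}})$ with $\psi_i(\tau_{t_0})$ spanning $\Fp X_{\alpha_i}$. Since $t_0$ was added after the rest of $T$, this $\psi_i$ can be chosen to be unramified at $v$ and with trivial restriction at every place of $T\setminus\{t_0\}$, so replacing $h^{(v)}$ by $h^{(v)} + c\,\psi_i$ for a suitable $c\in\Fp$ adjusts $h^{(v)}(\tau_{t_0})$ into the non-zero portion of $\Fp X_{\alpha_i}$ without altering $h^{(v)}|_T$ at the other $T$-primes, without altering $h^{(v)}(\tau_v)$ (as $\psi_i$ is unramified there), and while staying inside the image from $H^1(\gal{F,T\cup v},M_{X_{\alpha_i}})$.

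\smallskip

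\emph{Main obstacle and second part.}  The delicate point is Step~3: one must simultaneously match the global obstruction class $Y_i$ and keep the ramification at $v$ confined to the one-dimensional line $\Fp X_{\alpha_i}$. This is exactly what necessitates the refined Chebotarev conditions—enforced by linear disjointness of $L$, $K_\infty$ and the $K_{\lambda_j}$—so that the Poitou--Tate pairings $\lambda_j(\sigma_v)$ decouple across indices. The auxiliary primes $t_b$ guarantee that each $\lambda_j$ can be taken to have cyclic image, which is what replaces the coefficient-field enlargement of \cite{fkp:reldef}. For the second part of the proposition, the class $Y_i$ is dropped entirely, so Steps~1--3 collapse to merely invoking Lemma \ref{minimizeSha}(2) for $M_Z$ and applying \v{C}ebotarev with the prescribed $q_Z$ and $g_{L/K,Z}$; Step~4 with $t_0$ is carried out verbatim to pin down $h^{(v)}(\tau_{t_0})$.
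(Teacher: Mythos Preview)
Your plan has the right ingredients but assembles them in the wrong order, and this creates a genuine gap in Step~3. You fix the $Y_i$ in advance, produce $h^{(v)}_0$ in the image of $H^1(\gal{F,T\cup v},M_{X_{\alpha_i}})$, and then correct by a class in $H^1(\gal{F,T},\br(\fgder))$ to force $h^{(v)}|_T=Y_i$. But that correction class need not come from $H^1(\gal{F,T},M_{X_{\alpha_i}})$, so after correcting you have lost the very property the Proposition demands, namely that $h^{(v)}$ lie in the image from the cyclic module. Nothing in your argument prevents this: the image of $H^1(\gal{F,T},M_{X_{\alpha_i}})\to H^1(\gal{F,T},\br(\fgder))$ is typically a proper subspace, and there is no reason the correction needed to hit a preassigned $Y_i$ should land there.

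The paper's approach inverts the logic. It first runs the argument of \cite[Prop.~5.9]{fkp:reldef} to get classes $h^{(v)}$ (valued in $\br(\fgder)$, not the cyclic submodule) with $h^{(v)}|_T=Y_i$; this fixes the Čebotarev set $\mc{C}_i$. Then for each $v\in\mc{C}_i$ it independently builds $\phi^{(v)}$ in the image from $M_{X_{\alpha_i}}$, with the $t_0$-condition baked in from the start: one class in $H^1(\gal{F,T},M_{X_{\alpha_i}})$ mapping $\tau_{t_0}\mapsto X_{\alpha_i}$, another in $H^1(\gal{F,(T\setminus t_0)\cup v},M_{X_{\alpha_i}})$ mapping $\tau_v\mapsto X_{\alpha_i}$, and their sum. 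The only link to the original $h^{(v)}$ is the observation that, since both $\phi^{(v)}$ and $h^{(v)}$ (after rescaling) represent the same nonzero class in the one-dimensional quotient $H^1_{L_v}/H^1(\gal{F,T},\br(\fgder))$, their difference restricted to $T$ lies in $\im(\Psi_T)$; hence $\phi^{(v)}|_T\notin\im(\Psi_T)$. One then passes to a positive upper-density subset $\mf{l}_i$ on which $\phi^{(v)}|_T$ is constant and \emph{redefines} $Y_i$ to be that common value. So the $Y_i$ are outputs, not inputs. Your Step~4 has a related problem: adding $c\psi_i$ changes the $t_0$-component of $h^{(v)}|_T$, so it cannot leave $h^{(v)}|_T=Y_i$ intact; and the claim that $\psi_i$ can be taken trivial at all of $T\setminus\{t_0\}$ is not what Lemma~\ref{minimizeSha} provides. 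Finally, your Step~1 use of the $t_b$ to make the dual classes $\lambda_i$ cyclic is not how the paper uses those primes; in the first part of the Proposition the $\psi_i$ are just an arbitrary basis of $H^1(\gal{F,T},\br(\fgder)^*)$, and the $t_b$ enter only in the second ($Z$-)part, to guarantee existence of some $\psi$ with $\psi(\gal{K})\not\subset(\Fp Z)^\perp$.
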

\begin{proof}
  The argument is as in \cite[Proposition 5.9, Lemma 5.11]{fkp:reldef}, and the reader should begin by reviewing the proof of \textit{loc. cit.}: we will refer to the
  argument and notation of those results, taking care to extract
  slightly more precise conclusions. The first part of that
      argument applies the \v{C}ebotarev density theorem $r$ times in
      extensions of the form $LK_{\psi_i}(\mu_{p^c})/F$, for a certain
      $\mathbb{F}_p$-basis $\{\psi_i\}_{i=1}^r$ of
      $H^1(\gal{F, {\mc{T}}}, \br(\fgder)^*)$. Let us explain the first step, referring to \textit{loc. cit.} for the details of the induction. Since $\Sha^2_{\mc{T}}(\gal{F, \mc{T}}, \br(\fgder))=0$, the Poitou--Tate sequence yields an isomorphism $\coker(\Psi_{\mc{T}}) \xrightarrow{\sim} \left(H^1(\gal{F, \mc{T}}, \br(\fgder)^*) \right)^\vee$. When $r=0$ there is nothing to prove in the first part of the Proposition (the analogue of \cite[Proposition 5.9]{fkp:reldef}), so we assume $r>0$. Thus $H^1(\gal{F, \mc{T}}, \br(\fgder)^*)$ contains a non-zero class $\psi_1$. The image $\psi_1(\gal{K})$ is non-zero by Assumption \ref{generalhyp}, and so there is a split maximal torus $T_1$, a root $\alpha_1 \in \Phi(G^0, T_1)$, and a root vector $X_{\alpha_1}$ such that $\psi_1(\gal{K})$ is not contained in $(\Fp X_{\alpha_1})^\perp$: indeed, under the assumption $p \gg_G 0$, we can find a root vector outside of any proper $\Fp$ subspace $\fgder$. We now apply the \v{C}ebotarev density theorem in the Galois extension $LK_{\psi_1}(\mu_{p^c})/F$. Here there is a slight difference from \cite[Proposition 5.9]{fkp:reldef}: whereas in \textit{loc. cit.} $D=1$ and the field $K_{\psi_1}$ is linearly disjoint over $K$ from $K_{\infty}$, here $D$ is general and it is possible for $K_{\psi_1} \cap K_{\infty}$ to equal either $K$ or $K(\mu_{p^{D+1}})$. When $K_{\psi_1} \neq K(\mu_{p^{D+1}})$, in our application of the \v{C}ebotarev theorem we can proceed as in \textit{loc. cit.}, specifying $q_1$ arbitrarily in $\ker((\Z/p^c)^\times \to (\Z/p^D)^\times)$, and prescribing the \v{C}ebotarev condition on the splitting of $v$ in $K_{\psi_1}(\mu_{p^c})/F$ such that  $N(v) \equiv q_1 \pmod{p^c}$ while also ensuring $\psi_1|_{\gal{F_{v}}} \neq 0$. On the other hand, when $K_{\psi_1}=K(\mu_{p^{D+1}})$, our \v{C}ebotarev condition in the extension $K_{\psi_1}(\mu_{p^c})/F$ can only arrange $\psi_1|_{\gal{F_v}} \neq 0$ along with $N(v) \equiv q_1 \in \ker(\Z/p^c)^\times \to (\Z/p^D)^\times)$ reducing to some \textit{non-trivial} class in $(\Z/p^{D+1})^\times$. That said, since $L$ is disjoint over $K$ from $K_{\psi_1}(\mu_{p^c})$ we can now fix one of these admissible congruence classes $q_1 \in (\Z/p^c)^\times$, then fix a class $g_{L/K, 1} \in \Gal(L/K)$, and by \v{C}ebotarev find a positive-density set $\mc{D}_1$ of trivial (i.e., split in $K$) primes $v$ along with a choice of decomposition group at $v$ such that $\sigma_v= g_{L/K, 1}$, $\psi_1(\sigma_v)$ is not in $(\Fp X_{\alpha_1})^\perp$, and $N(v) \equiv q_1 \pmod{p^c}$. For each $v_1 \in \mc{D}_1$, we set $L_{v_1}= \{\phi \in H^1(\gal{F_{v_1}}, \br(\fgder)): \phi(\tau_{v_1}) \in \Fp X_{\alpha_1}\}$; $L_{v_1}$ contains the unramified classes $L_{v_1}^{\mr{un}}$ with codimension 1, and dually $L_{v_1}^\perp \subset (L_{v_1}^{\mr{un}})^\perp$ has codimension 1. In particular, the set 
$\{\psi \in H^1(\gal{F, \mc{T} \cup v_1}, \br(\fgder)^*): \psi|_{\gal{F_{v_1}}} \in L_{v_1}^\perp\}$ is a subspace of $H^1(\gal{F, \mc{T}}, \br(\fgder)^*)$, which as in \cite{fkp:reldef} we denote $H^1_{L_{v_1}^\perp}(\gal{F, \mc{T}}, \br(\fgder)^*)$. The Greenberg--Wiles formula implies that the dimension $h^1_{L_{v_1}^\perp}(\gal{F, \mc{T}}, \br(\fgder)^*)$ of this space is $r-1$. The inductive argument of \textit{loc. cit.} now repeats verbatim, substituting $K$ for every appearance of $K'$ in \textit{loc. cit.} and, just as in the selection of $q_1$, noting the possibly more limited flexibility we have in choosing each $q_i \in (\Z/p^c)^\times$. We do not repeat the details but summarize the output: we obtain inductively the following:
\begin{itemize}
\item the elements $\psi_1, \ldots, \psi_r$ of an $\Fp$-basis of $H^1(\gal{F, \mc{T}}, \br(\fgder)^*)$;
\item a collection $(T_i, \alpha_i, X_{\alpha_i})_{i=1}^r$ of split maximal tori, roots, and root vectors; 
\item congruence classes $q_i \in \ker((\Z/p^c)^\times \to (\Z/p^D)^\times)$, non-trivial modulo $p^{D+1}$, and where again we can choose any such $q_i$ if $K_{\psi_i}\neq K(\mu_{p^{D+1}})$ but can otherwise only fix $q_i$ modulo reducing to a fixed non-trivial class mod $p^{D+1}$; and, after the choice of $q_i$ specifying the class $g_{L/K, i}$, we further obtain
\item trivial primes $\{v_i\}_{i=1}^r$ with $v_i$ in a
  positive-density \v{C}ebotarev set $\mc{D}_i$  depending on $v_1, v_2, \ldots, v_{i-1}$,\footnote{In \cite{fkp:reldef}, $\mc{D}_i$ is denoted $\mc{D}_i(v_{i-1})$ to indicate this dependence.} and decomposition groups at each of these primes.
\end{itemize}
These \v{C}ebotarev sets have the property that for all $v \in \mc{D}_i$, $N(v) \equiv q_i \pmod{p^c}$, $\psi_i(\sigma_v) \not \in (\Fp X_{\alpha_i})^\perp$, and $\sigma_v= g_{L/K, i}$. The $\psi_i$ are inductively obtained by defining $L_{v_i}= \{\phi \in H^1(\gal{F_{v_i}}, \br(\fgder)): \phi(\tau_{v_i}) \in \Fp X_{\alpha_i}\}$, checking (with the Greenberg--Wiles formula) that $H^1_{L_{v_1}^\perp, \ldots, L_{v_{i-1}}^\perp}(\gal{F, \mc{T}}, \br(\fgder)^*)=\{\psi \in H^1(\gal{F, \mc{T}}, \br(\fgder)^*): \psi|_{\gal{F_{v_j}}} \in L_{v_j}^\perp, j=1, \ldots, i-1\}$ has dimension $r-i+1$, and, provided $r > i-1$, taking $\psi_i$ to be a non-zero element of this space. It is shown in \textit{loc. cit.} that by choosing any elements $Y_1, \ldots, Y_r \in \bigoplus_{w \in \mc{T}} H^1(\gal{F_w}, \br(\fgder))$ such that $Y_i$ is in the image (under restriction) of $H^1_{L_{v_i}}(\gal{F, \mc{T} \cup v_i}, \br(\fgder))$ but not in $\im(\Psi_{\mc{T}})$, then $Y_1, \ldots, Y_r$ span $\coker(\Psi_{\mc{T}})$. 

        The
      second part of the argument of \cite[Proposition 5.9]{fkp:reldef} replaces the \v{C}ebotarev sets $\mc{D}_i$ with easier to handle \v{C}ebotarev sets
      $\mc{C}_i$ containing $v_i$ and shows that for all
      $v \in \mc{C}_i$:
  \begin{itemize}
  \item $N(v) \equiv q_i \pmod{p^c}$;
  \item the image of $\sigma_v$ in $\Gal(L/K)$ is $g_{L/K, i}$;
  \item the image of $H^1_{L_v}(\gal{F, \mc{T} \cup v}, \br(\fgder)) \to \bigoplus_{w \in \mc{T}} H^1(\gal{F_w}, \br(\fgder))$ equals the image of $H^1_{L_{v_i}}(\gal{F, \mc{T} \cup v_i}, \br(\fgder))$ (here, again, $L_v$ is the classes such that $\phi(\tau_v) \in \Fp X_{\alpha_i}$);
  \item in particular, 
  there is a class $h^{(v)} \in H^1_{L_v}(\Gamma_{F, {\mc{T}} \cup v}, \br(\fgder))$ such that $h^{(v)}|_{\mc{T}}= Y_i$ and $h^{(v)}(\tau_v) \in \Fp X_{\alpha_i} \setminus 0$. 
  \end{itemize}
 To orient the reader, we recall that $\mc{C}_i$ is a \v{C}ebotarev condition in $LK_{\psi_i} \prod_{k=1}^{r-1} K_{\omega_{i, k}}(\mu_{p^c})/F$, where $\{\omega_{i, k}\}_{k=1}^{r-1}$ is an $\Fp$-basis of $H^1_{L_{v_i}^\perp}(\gal{F, {\mc{T}}}, \br(\fgder)^*)$ (non-emptiness of the condition comes from knowing it is satisfied by the original $v_i$ rather than from having a linear disjointness result for the $K_{\psi_i}$ and $K_{\omega_{i, k}}$). We refer the reader to \textit{loc. cit.} for further details.
 
It remains for us to produce the positive upper-density subset $\mf{l}_i \subset \mc{C}_i$ for which the additional conditions (arising from $M_{X_{\alpha_i}}$ and the value at $\tau_{t_0}$) on $h^{(v)}$ hold. We then consider for any such $v \in \mc{C}_i$ the result of applying Lemma \ref{minimizeSha} to the module $M_{X_{\alpha_i}}$ and find that $H^1_{L_{v,{X_{\alpha_i}}}}(\Gamma_{F, {\mc{T}} \cup v}, M_{X_{\alpha_i}})/H^1(\Gamma_{F, {\mc{T}}}, M_{X_{\alpha_i}})$ is one-dimensional. Let $\phi^{(v)}$ be the image under the map
\[
H^1(\Gamma_{F, {\mc{T}} \cup v}, M_{X_{\alpha_i}}) \to H^1(\Gamma_{F, {\mc{T}} \cup v}, \br(\fgder))
\]
of an element spanning the above quotient. We may further assume that $\phi^{(v)}(\tau_{t_0})=X_{\alpha_i}$. Indeed, we can first apply the Lemma by adding the prime $t_0$ to ${\mc{T}} \setminus t_0$ to find a class in $H^1(\gal{F, {\mc{T}}}, M_{X_{\alpha_i}})$ that maps $\tau_{t_0}$ to $X_{\alpha_i}$; then we can apply the Lemma to $({\mc{T}} \setminus t_0) \cup v$ to find a class in $H^1(\gal{F, ({\mc{T}} \setminus t_0)\cup v}, M_{X_{\alpha_i}})$ that maps $\tau_v$ to $X_{\alpha_i}$; and finally we add these classes to obtain the desired $\phi^{(v)}$. Note that $\phi^{(v)}$ is still non-zero: if it were a coboundary, then for some $m \in \fgder$ we would have $\phi^{(v)}(\tau_v)= \tau_v \cdot m -m=0$, a contradiction. Rescaling $\phi^{(v)}$, we then see that 
\[
\phi^{(v)}|_{\mc{T}}-h^{(v)}|_{\mc{T}} \in \im(\Psi_{\mc{T}}),
\]
so for every $v \in \mc{C}_i$, $\phi^{(v)}|_{\mc{T}}$ is not in
$\im(\Psi_{\mc{T}})$. Since $\bigoplus_{w \in \mc{T}} H^1(\gal{F_w}, \br(\fgder))$ is finite,
%Since the values $h^{(v)}|_{\mc{T}}$ are
%independent of $v \in \mc{C}_i${\color{Fuchsia}This doesn't actually
%  matter? (I agree that it doesn't matter since the target of $\Psi_{\mc{T}}$
%is finite. --NF)}, and $\im(\Psi_{\mc{T}})$ is finite, 
there is a positive upper-density subset $\mf{l}_i \subset \mc{C}_i$ where $\phi^{(v)}|_{\mc{T}}$ is independent of $v \in \mf{l}_i$. The first part of the Proposition follows, where we now use these $\phi^{(v)}$ and $Y_i:= \phi^{(v)}|_{\mc{T}}$ for $v \in \mf{l}_i$ in place of the classes $h^{(v)}$ produced above by the argument of \cite[Proposition 5.9]{fkp:reldef}. We observe, as recalled above from \textit{loc. cit.}, that for any $v_i' \in \mc{C}_i$ and any elements $(Y_i')_{i=1}^r$ of $\bigoplus_{v \in \mc{T}} H^1(\gal{F_v}, \br(\fgder))$ chosen with $Y_i' \in \im(H^1_{L_{v_i'}}(\gal{F, \mc{T} \cup v_i'}, \br(\fgder)) \to \bigoplus_{v \in \mc{T}} H^1(\gal{F_v}, \br(\fgder)) \setminus \im(\Psi_{\mc{T}})$, the image of $\{Y_i'\}_{i=1}^r$ spans $\mr{coker}(\Psi_{\mc{T}}$). Thus by construction our $Y_i$ still span $\coker(\Psi_{\mc{T}})$. (In the conclusion of the Proposition, we have used the notation $h^{(v)}$ for these modified classes $\phi^{(v)}$ for ease of comparison with \textit{loc.~cit.})

The analogue of \cite[Lemma 5.11]{fkp:reldef} follows similarly, but one point of that proof was unnecessarily phrased using the (now discarded) semisimplicity assumption on $\br(\fgder)$, so we explain the easy modification. We must show that there is a class $\psi \in H^1(\Gamma_{F, {\mc{T}}}, \br(\fgder)^*)$ such that $\psi(\Gamma_K)$ is not contained in $(\Fp Z)^\perp$. We now use the fact that we previously enlarged ${\mc{T}}$ as in Part (3) of Lemma \ref{minimizeSha}. Fix any vector $\lambda \not \in (\Fp Z)^\perp$, and let $t$ be any of the ``excessive" primes $t_b$ added to ${\mc{T}}$.
Then $H^1_{L_{t, \lambda}}
%^{M_{\lambda}}}
(\Gamma_{F, ({\mc{T}} \setminus t) \cup t}, M_{\lambda})$ contains an element $\psi$ such that $\psi(\tau_{t})= \lambda$. The argument of the previous paragraph shows that the image of $\psi$ in $H^1(\Gamma_{F, {\mc{T}}}, \br(\fgder)^*)$ is still non-zero, and clearly the image of the resulting cocycle is not contained in $(\Fp Z)^\perp$. From here the argument proceeds as in the above modification to \cite[Proposition 5.9]{fkp:reldef}.
\end{proof}

Recall that $\{e_b^*\}_{b \in B}$ is a fixed $k$-basis of $\br(\fgder)^*$. By Lemma \ref{minimizeSha}, there is a class 
\[
\theta_b \in H^1(\gal{F, {\mc{T}} \setminus (t_0 \cup \{t_{b'}\}_{b' \in B \setminus b})}, \Fp[\gal{F}] \cdot e_b^*)
\] 
such that $\theta_b(\tau_{t_b})=e_b^*$ (we can apply the Lemma with ${\mc{T}} \setminus (t_0 \cup \{t_{b'}\}_{b' \in B \setminus b})$ in place of the Lemma's ${\mc{T}}$, and $t_b$ in place of $v$, since the $t_{b'}$ and $t_0$ were introduced after arranging the hypotheses of the Lemma). By the same result, for any trivial prime $v \not \in {\mc{T}}$, there is a cocycle 
\[
\theta_b^{(v)} \in H^1(\Gamma_{F, ({\mc{T}} \setminus (t_0 \cup \{t_{b'}\}_{b' \in B}) \cup v}, \Fp[\Gamma_F]\cdot e_b^*)
\] 
such that $\theta_b^{(v)}(\tau_v)=e_b^*$. We then set 
\[
\eta_b^{(v)}= \theta_b+ \theta_b^{(v)} \in H^1(\gal{F, {\mc{T}} \cup v \setminus (t_0 \cup \{t_{b'}\}_{b' \neq b}}, \Fp[\gal{F}]\cdot e_b^*),
\] 
so that $\eta_b^{(v)}(\tau_v)=\eta_b^{(v)}(\tau_{t_b})= e_b^*$. 
\begin{lemma}\label{etadisjoint}
As the trivial prime $v$ and the indices $b \in B$ vary, the fixed fields $K_{\eta^{(v)}_b}$ are strongly linearly disjoint over $K$ (see \cite[Notation 5.1]{fkp:reldef} for the terminology). They are moreover strongly linearly disjoint from $K_{\infty}$ over $K$.
\end{lemma}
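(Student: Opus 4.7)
The plan is to exploit the decomposition $\eta^{(v)}_b = \theta_b + \theta_b^{(v)}$ into two components with largely disjoint supports of ramification ($\theta_b$ at $t_b$ and $\theta_b^{(v)}$ at $v$), first proving strong linear disjointness for the auxiliary ``$\theta$-system'' and then transferring the conclusion to the $\eta$-system by a linear algebra argument inside the resulting product Galois group. Throughout, strong linear disjointness will mean that the restriction map from the Galois group of the compositum to the product of individual Galois groups over $K$ is an isomorphism.

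First I would identify each $\Gal(K_{\eta^{(v)}_b}/K)$. The restriction $\eta^{(v)}_b|_{\gal{K}}$ is a $\Gal(K/F)$-equivariant homomorphism into $\Fp[\gal{F}] \cdot e_b^*$; since $v$ is a trivial prime (so $\tau_v \in \gal{K}$), its image is a $\Gal(K/F)$-submodule containing $\eta^{(v)}_b(\tau_v) = e_b^*$ and hence equal to the entire target. Thus $\Gal(K_{\eta^{(v)}_b}/K) = \Fp[\gal{F}] \cdot e_b^*$, and analogous identifications hold for $\Gal(K_{\theta_b}/K)$ and $\Gal(K_{\theta_b^{(v)}}/K)$ using $\tau_{t_b}$ and $\tau_v$ respectively.

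Next, for a finite collection $J$ of pairs $(v, b)$, I consider the larger compositum $L_\theta$ of the fields $K_{\theta_b}$ (for $b \in B$), $K_{\theta_b^{(v)}}$ (for $(v, b) \in J$), and $K_\infty$, aiming to show its Galois group over $K$ is the full product of the individual Galois groups. Each $\theta_b$ admits $t_b$ as a distinguishing ramified prime within the $\theta$-system (no other cocycle in that system ramifies at $t_b$ by construction, and $K_\infty/K$ is unramified above any trivial prime). Each $\theta_b^{(v)}$ has $v$ as a ramified prime at which, within the $\theta$-system, only the cocycles $\theta_{b'}^{(v)}$ with $b' \neq b$ may also ramify, but these take values in distinct submodules $\Fp[\gal{F}] \cdot e_{b'}^*$ of $\br(\fgder)^*$. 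By the standard inertia-generation argument—the image of inertia at a trivial prime in a single $\theta$-field's Galois group spans the entire Galois group via its $\Gal(K/F)$-orbit—I would iteratively deduce that $\Gal(L_\theta \cdot K_\infty/K)$ is the full product of the individual Galois groups.

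Finally, since $\eta^{(v)}_b = \theta_b + \theta_b^{(v)}$, the subgroup $\Gal(L_\theta/K_{\eta^{(v)}_b}) \subset \Gal(L_\theta/K)$ is cut out by the single linear relation $a_{\theta_b} + a_{\theta_b^{(v)}} = 0$ in the $b$-block of coordinates of $\Gal(L_\theta/K) \cong \prod_b (\Fp[\gal{F}] \cdot e_b^*)^{1+n_b}$, where $n_b = \#\{v : (v, b) \in J\}$. Intersecting these relations over $(v, b) \in J$ and computing the quotient by a direct dimension count in each $b$-block yields $\Gal(L \cdot K_\infty/K) \cong \prod_{(v, b) \in J} \Fp[\gal{F}] \cdot e_b^* \times \Gal(K_\infty/K)$, which is the strong linear disjointness of the $K_{\eta^{(v)}_b}$ over $K$ and from $K_\infty$. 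The main technical obstacle lies in the third paragraph: separating the cocycles $\theta_b^{(v)}$ for fixed $v$ and varying $b$, which share the ramified prime $v$ but have inertia images in distinct submodules $\Fp[\gal{F}] \cdot e_b^*$; this requires a careful analysis exploiting the $\Gal(K/F)$-module structure of $\br(\fgder)^*$ together with the $t_b$ distinguishing primes of the $\theta_b$ layer.
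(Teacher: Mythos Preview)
Your approach has a genuine gap in the third paragraph. You aim to prove that the full $\theta$-system---the fields $K_{\theta_b}$ and $K_{\theta_b^{(v)}}$ together---is strongly linearly disjoint over $K$, and then to descend to the $\eta$-system by linear algebra. But for fixed $v$ and varying $b$, the fields $K_{\theta_b^{(v)}}$ need not be linearly disjoint at all. The only ramified prime that might distinguish them is $v$ itself, and at $v$ inertia contributes only the single element $(e_b^*)_{b \in B}$ to the product $\prod_b \Fp[\gal{F}]\cdot e_b^*$; its $\Gal(K/F)$-orbit generates a cyclic submodule, not the full product. Concretely, if $\Fp[\gal{F}]\cdot e_b^*$ and $\Fp[\gal{F}]\cdot e_{b'}^*$ happen to be isomorphic as $\Fp[\gal{F}]$-modules (e.g.\ when $\br(\fgder)^*$ has repeated composition factors), then under the normalizations $e_b^*\mapsto 1$ and $e_{b'}^*\mapsto 1$ the two cocycles $\theta_b^{(v)}$ and $\theta_{b'}^{(v)}$ can be chosen to be the \emph{same} class, giving $K_{\theta_b^{(v)}}=K_{\theta_{b'}^{(v)}}$. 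You flag this as ``the main technical obstacle'' and propose to use the $t_b$ primes of the $\theta_b$-layer, but that cannot help: by construction each $\theta_b^{(v)}$ is unramified at every $t_{b'}$, so those primes see nothing of the $\theta^{(v)}$-layer.

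The paper's proof avoids the detour through the $\theta$-system entirely and works directly with the $\eta^{(v)}_b$. The point is that the sum $\eta^{(v)}_b=\theta_b+\theta_b^{(v)}$ inherits ramification at $t_b$ from $\theta_b$: one has $\eta^{(v)}_b(\tau_{t_b})=e_b^*$, so inertia at $t_b$ already generates $\Gal(K_{\eta^{(v)}_b}/K)$ as an $\Fp[\gal{F}]$-module, while $\eta^{(v)}_{b_0}$ is unramified at $t_b$ for every $b_0\neq b$. This is exactly the distinguishing-prime argument you wanted, but applied to the $\eta$'s rather than the $\theta^{(v)}$'s. With this in hand, fixed-$v$ disjointness is immediate; varying $v$ is then handled by the same argument using the prime $v$ (since $\eta^{(v)}_b(\tau_v)=e_b^*$ generates as well, and the $v'\neq v$ fields are unramified at $v$); and disjointness from $K_\infty$ follows because $K_\infty/K$ is unramified away from $p$.
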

\begin{proof}
First fix some $b_0 \in B$ and consider any composite $K^{(v)}_{\neq b_0}$ of fields $K_{\eta^{(v)}_b}$ for fixed $v$ but possibly varying $b \neq b_0$. Consider the intersection $L$ of $K^{(v)}_{\neq b_0}$ with $K_{\eta^{(v)}_{b_0}}$; we claim that $L=K$. By induction we can assume that the fields $K_{\eta^{(v)}_b}$ for $b \neq b_0$ are strongly linearly disjoint over $K$, so any non-trivial intersection $L$ will yield for some $b \neq b_0$ a non-zero composite map of $\Fp[\gal{F}]$-modules
\[
\Gal(K_{\eta^{(v)}_b}/K) \to \prod_{b' \neq b_0} \Gal(K_{\eta^{(v)}_{b'}}/K) \xleftarrow{\sim} \Gal(K^{(v)}_{\neq b_0}/K) \onto \Gal(L/K).
\]
Since the image $\Fp[\gal{F}]\cdot e_b^*$ of $\eta^{(v)}_b$ is spanned by $\eta^{(v)}_b(\tau_{t_b})$, the restriction of $\tau_{t_b}$ to $L$ must be non-trivial if $L \neq K$; but $L$ is contained in $K_{\eta^{(v)}_{b_0}}$, and by construction the latter field is unramified above $t_{b}$.

Next we vary $v$ and consider the intersection $L$ of some composite $K^{(v)}=\prod_{b \in B} K_{\eta^{(v)}_b}$ with some composite $K^{(\neq v)}$ of fields $K_{\eta^{(v')}_b}$ where both $v' \neq v$ and $b \in B$ are allowed to vary. By the previous paragraph the $K_{\eta^{(v)}_b}$ are strongly linearly disjoint over $K$, so any non-trivial intersection $L$ leads to some non-trivial composite
\[
\Gal(K_{\eta^{(v)}_b}/K) \to \prod_{b' \in B} \Gal(K_{\eta^{(v)}_{b'}}/K) \xleftarrow{\sim} \Gal(K^{(v)}/K) \onto \Gal(L/K). 
\]
As $\tau_v$ generates $\Gal(K_{\eta^{(v)}_b}/K)$ as $\Fp[\gal{F}]$-module, we as before deduce that $L$ must be ramified above $v$, contradicting the fact that $L$ is a subfield of $K^{(\neq v)}$.

Thus, all the fields $K_{\eta^{(v)}_b}$ as both $v \not \in {\mc{T}}$ and $b \in B$ vary are strongly linearly disjoint over $K$. Finally, their composite is linearly disjoint from $K_{\infty}$ over $K$. Else, letting $L$ be the intersection, there would be some index $v, b$ with a non-zero composite (defined as in the last two paragraphs) $\Gal(K_{\eta^{(v)}_b}/K) \to \Gal(L/K)$, implying $L$ would be ramified above $v$ (and $t_b$); but $K_{\infty}/K$ is unramified away from primes above $p$.

\end{proof}
We now apply these constructions of auxiliary cocycles to construct modulo $\vpi^2$ lifts of $\br$ with prescribed local properties. By the vanishing $\Sha^2_{\mc{T}}(\gal{F, {\mc{T}}}, \br(\fgder))=0$, we produce an initial lift $\rho_2 \colon \Gamma_{F, {\mc{T}}} \to G(\mc{O}/\vpi^2)$ with multiplier $\mu$, and we then fix target local lifts $(\lambda_w)_{w \in {\mc{T}}}$ (of multiplier $\mu$) satisfying the conditions in \cite[Construction 5.6]{fkp:reldef} (in particular, for $w \in {\mc{T}} \setminus {\mc{S}}$, $\lambda_w$ is unramified, and, enlarging ${\mc{T}}$ if necessary, the collection of $\lambda_w(\sigma_w)$ generates $\wh{G^{\mr{der}}}(\mc{O}/\vpi^2)$). These differ from the restrictions $\rho_2|_w$ by a collection of cocycles $z_{\mc{T}}=(z_w)_{w \in {\mc{T}}} \in \bigoplus_{w \in {\mc{T}}} H^1(\Gamma_{F_w}, \br(\fgder))$. 

We can now give the ``doubling argument" analogous to \cite[Proposition 5.12]{fkp:reldef}. In contrast to \textit{loc.~cit.}, we work only with the $k$-linear duality pairings: thus we write $\langle \cdot, \cdot \rangle \colon \fgder \times (\fgder)^* \to k$ for the canonical $k$-linear pairing, and for any prime $x$ of $F$ we write $\langle \cdot, \cdot \rangle_x \colon H^1(\gal{F_x}, \br(\fgder)) \times H^1(\gal{F_x}, \br(\fgder)^*) \to k$ for the $k$-linear Tate local duality pairing. We will use the explicit calculation in \cite[Lemma 3.9]{fkp:reldef} of the local duality pairing at trivial primes.

In Proposition \ref{doublingprop} and Theorem \ref{p^Nlift} we will use sets of auxiliary primes constructed from Proposition \ref{5.9-5.11sub} (in the present section we in fact only use the case $L=K$, $c=D+1$). The primes $v$ of $F$ produced by Proposition \ref{5.9-5.11sub} come with a specification of a decomposition group, which yields in particular a specified place of $K$ (and an extension to $L$).
\begin{prop}\label{doublingprop}
There is a finite indexing set $N$, and there is, for each $n \in N$, a positive upper-density set $\mf{l}_n$ of trivial primes of $F$, with the following properties. Fix any $2|N|$-tuple $(A_n, A_n')_{n \in N}$ of elements of $\wh{G^{\mr{der}}}(\mc{O}/\vpi^2)$. Then there is a finite $2|N|$-tuple of trivial primes $\mc{Q}= (v_n, v_n')_{n \in N}$ disjoint from $\mc{T}$ and having $\{v_n, v_n'\} \subset \mf{l}_n$ for all $n \in N$, and a class $h \in H^1(\gal{F, \mc{T} \cup \mc{Q}}, \br(\fgder))$ such that
\begin{itemize}
\item $h|_{\mc{T}} = z_{\mc{T}}$.
\item For all $n \in N$ there is a pair $(T_n, \alpha_n)$ of a split maximal torus $T_n$ of $G^0$ and a root $\alpha_n \in \Phi(G^0, T_n)$ such that $(1+\vpi h)\rho_2(\tau_{v_n})=u_{\alpha_n}(X_n)$ for some $\alpha_n$-root vector $X_n$, and likewise $(1+\vpi h)\rho_2(\tau_{v'_n})=u_{\alpha_n}(X_n)$; and such that
\[
(1+\vpi h)\rho_2(\sigma_{v_n})= A_n \cdot z_n,
\]
where $z_n$ is in $Z_{G^0}(\mc{O}/\vpi^2) \cap \wh{G}(\mc{O}/\vpi^2)$ (and is determined by $\mu(\sigma_{v_n})$), and similarly 
\[
(1+\vpi h)\rho_2(\sigma_{v'_n})= A'_n \cdot z'_n,
\]
where $z'_n$ is also in $Z_{G^0}(\mc{O}/\vpi^2) \cap \wh{G}(\mc{O}/\vpi^2)$.
\end{itemize} 
%There is a finite set of trivial primes ${\mc{Q}}$ disjoint from ${\mc{T}}$ and a class $h \in H^1(\gal{F, {\mc{T}} \cup {\mc{Q}}}, \br(\fgder))$ such that 
%\begin{itemize}
%\item $h|_{\mc{T}} = z_{\mc{T}}$. 
%\item For all $w \in {\mc{Q}}$ there is a pair $(T_w, \alpha_w)$ of a split maximal torus $T_w$ of $G^0$ and a root $\alpha_w \in \Phi(G^0, T_w)$ such that $(1+\vpi h)\rho_2(\tau_w)=u_{\alpha_w}(X_w)$ for some $\alpha_w$-root vector $X_w$, and $(1+\vpi h)\rho_2(\sigma_w)$ takes on any desired value in $\wh{G}(\mc{O}/\vpi^2)$, subject to having multiplier $\mu$. 
%\end{itemize}
\end{prop}
\begin{proof}
We have fixed a $k$-basis $\{e_{b}^*\}_{b \in B}$ of $\br(\fgder)^*$ (the $k$-dual). 
Fix a finite set $N_{\mr{span}}$ indexing root vectors $\{X_{\alpha_n}\}_{n \in N_{\mr{span}}}$ with respect to tori $\{T_n\}_{n \in N_{\mr{span}}}$ such that
\begin{equation}\label{spanningeq}
\sum_{n \in N_{\mr{span}}} \Fp[\Gamma_F] X_{\alpha_n}= \fgder.
\end{equation}
As in the discussion following \cite[Lemma 5.11]{fkp:reldef}, we observe that such a collection exists, since for any proper subspace $U$ of $\fgder$, there is a root vector not in $U$: see the start of the proof of \cite[Proposition 5.9]{fkp:reldef}, where this is reduced, using $p \gg_G 0$, to irreducibility of the simple factors of $\fgder$ as $k[G(k)]$-modules. %sets of root vectors, for reasons that will become clear below.) 
By the second part of Proposition \ref{5.9-5.11sub}, there is for each $n \in N_{\mr{span}}$ a positive upper-density set $\mf{l}_n$ of trivial primes, a non-trivial congruence class $q_n \in (\Z/p^{D+1})^\times$ that is trivial modulo $p^D$, and for each $v \in \mf{l}_{n}$ a class $h^{(v)} \in H^1(\Gamma_{F, {\mc{T}} \cup v}, \br(\fgder))$ such that $h^{(v)}|_{\mc{T}}=Y_n$ is independent of $v \in \mf{l}_{n}$, $h^{(v)}(\tau_v)$ spans $\Fp X_{\alpha_n}$, $h^{(v)}(\tau_{t_0})$ spans $\Fp X_{\alpha_n}$, and $h^{(v)}$ is the image of an $M_{X_{\alpha_n}}= \Fp[\Gamma_F]\cdot X_{\alpha_n}$-valued cocycle. Using the first part of Proposition \ref{5.9-5.11sub}, we also produce a finite set $\{Y_n\}_{n \in N_{\mr{coker}}} \subset \bigoplus_{w \in {\mc{T}}} H^1(\Gamma_{F_w}, \br(\fgder))$ that spans $\mr{coker}(\Psi_{\mc{T}})$ over $\Fp$, and, for each $n \in N_{\mr{coker}}$, a root vector $X_{\alpha_n}$ with respect to a maximal torus $T_n$, a non-trivial congruence class $q_n \in (\Z/p^{D+1})^\times$, trivial modulo $p^D$, a positive upper-density set $\mf{l}_n$ of trivial primes, with all $v \in \mf{l}_n$ satisfying $N(v) \equiv q_n \pmod{p^{D+1}}$, and for each $v \in \mf{l}_n$ a class $h^{(v)} \in H^1(\Gamma_{F, {\mc{T}} \cup v}, \br(\fgder))$ such that $h^{(v)}|_{\mc{T}}=Y_n$, $h^{(v)}(\tau_v)$ and $h^{(v)}(\tau_{t_0})$ both span $\Fp X_{\alpha_n}$, and $h^{(v)}$ is the image of an $M_{X_{\alpha_n}}$-valued cocycle. As in the discussion following \cite[Lemma 5.11]{fkp:reldef}, we obtain a class $h^{\mr{old}} \in H^1(\Gamma_{F, {\mc{T}}}, \br(\fgder))$ and (perhaps rescaling some of the $h^{(v)}$) a subset $N \subset N_{\mr{span}}  \sqcup N_{\mr{coker}}$, containing $N_{\mr{span}}$ but where possibly some unnecessary elements of $N_{\mr{coker}}$ have been discarded, with the relation
\[
z_{\mc{T}}= h^{\mr{old}}|_{\mc{T}} + \sum_{n \in N} h^{(v_n)}|_{\mc{T}}
\]
for all tuples $\un{v}=(v_n)_{n \in N} \in \prod_{n \in N} \mf{l}_n$. For the reader's orientation, we note that as in \textit{loc. cit.}, we guarantee that $N$ contains $N_{\mr{span}}$ by applying Proposition \ref{5.9-5.11sub} not to $z_{\mc{T}}$ itself but rather to the class $z'_{\mc{T}}= z_{\mc{T}}- \sum_{n \in N_{\mr{span}}} Y_n$. We for any pairs $\un{v}, \un{v}' \in \prod_{n \in N} \mf{l}_n$ consider classes
\[
h= h^{\mr{old}}- \sum_{n \in N} h^{(v_n)} +2 \sum_{n \in N} h^{(v_n')} \in H^1(\Gamma_{F, {\mc{T}} \cup \{v_n\} \cup \{v_n'\}}, \br(\fgder));
\]
note that these still satisfy $h|_{\mc{T}}= z_{\mc{T}}$ and the requisite inertial conditions that for all $n \in N$ and any $w \in \mf{l}_n$, $h(\tau_w)$ spans $\Fp X_{\alpha_n}$. We must show there is a pair $\un{v}$, $\un{v}'$ such that $(1+\vpi h)\rho_2$ also has some pre-specified behavior, corresponding to the tuples $(A_n, A'_n)_{n \in N}$ of the theorem statement, at the Frobenius elements $\sigma_w$ for primes $w \in \{v_n\} \cup \{v_n'\}$.

For each $n \in N$ and $v_n \in \mc{C}_n$ (in particular, for $v_n \in \mf{l}_n$), we have specified a unique place $v_{n, K}$ of $K$ above $v_n$. We let $\mf{l}_{n, K}$, $\mc{C}_{n, K}$, etc., denote the set of such places; these are still sets of primes of $K$ of positive (upper) density, since they are all split over $F$. In the limiting argument that follows, it is convenient to work with places and densities in $K$, although for notational simplicity we will not burden each $v_n$ or $\un{v}$ with an additional subscript to indicate the place of $K$. We further restrict to a positive upper-density subset $\mf{l} \subset \prod_{n \in N} \mf{l}_{n, K}$ such that the $N$-tuples $(\sum_{n \in N} h^{(v_n)}(\sigma_{v_m}))_{m \in N}$, $(h^{\mr{old}}(\sigma_{v_m}))_{m \in N}$, $(h^{(v_n)}(\tau_{v_n}))_{n \in N}$, and $(\rho_2(\sigma_{v_n}) \mod{Z_{G^0}})_{n \in N}$ are all independent of $\un{v} \in \mf{l}$; this is possible since the quantities in question take only finitely many values. As in \cite[Proposition 5.12]{fkp:reldef}, this restriction reduces us to showing that for any two fixed $N$-tuples $(C_m)_{m \in N}$ and $(C'_m)_{m \in N}$, there exist $\un{v}, \un{v}' \in \mf{l}$ such that 
\begin{align}\label{doubling}
&\sum_{n \in N} h^{(v'_n)}(\sigma_{v_m})= C_m, \\ \label{doubling2}
&\sum_{n \in N} h^{(v_n)}(\sigma_{v'_m})= C'_m,
\end{align}
for all $m \in N$. 
For fixed $\un{v}$ and each $m \in N$, the equality $\sum_{n \in N} h^{(v_n)}(\sigma_w)= C'_m$ of Equation (\ref{doubling2}) is easily assured by a \v{C}ebotarev condition $\mf{w}_m$ on primes $w$ of $K$ split over $F$: the fixed fields $K_{h^{(v)}}$ are strongly linearly disjoint over $K$ as $v$ varies, as follows from the construction of Proposition \ref{5.9-5.11sub} and the argument of Lemma \ref{etadisjoint}, and then the claim follows from Equation (\ref{spanningeq}). Moreover we remark (by the same ramification argument as before with the fields $K_{\eta^{(v)}_b}$) that the $K_{h^{(v)}}$ are also all disjoint from $K_{\infty}$ over $K$, so we may assume that all $w \in \mf{w}_m$ satisfy $N(w) \equiv q_m \pmod{p^{D+1}}$.\footnote{Note the argument here is somewhat different from that of \cite{fkp:reldef}, where we have a weaker linear disjointness statement.}

Still fixing $\un{v} \in \mf{l}$, we will now show that there is a positive-density \v{C}ebotarev condition $\mf{l}_{\un{v}}$ on $N$-tuples of primes of $K$ (split over $F$) such that for any $\un{v}' \in \mf{l} \cap \mf{l}_{\un{v}}$, Equation (\ref{doubling}) holds. By global duality, we have for each $m, n$, and $b$ an equality
\[
\langle \eta^{(v_m)}_{b}(\tau_{v_m}), h^{(v_n')}(\sigma_{v_m})\rangle=-\sum_{x \in {\mc{T}}} \langle \eta^{(v_m)}_{b}, h^{(v_n')}\rangle_x- \langle \eta^{(v_m)}_{b}(\sigma_{v'_n}), h^{(v_n')}(\tau_{v'_n})\rangle.
\]
By definition of $\mf{l}$ and the fact that the elements $\{\eta^{(v_m)}_{b}(\tau_{v_m})\}_{b \in B}$ constitute a $k$-basis of $\br(\fgder)^*$, it suffices to show that we can prescribe the values
\[
\sum_{n \in N} \langle \eta^{(v_m)}_{b}(\sigma_{v_n'}), X_n \rangle,
\]
where $X_n$ is the common value (in $\Fp^\times X_{\alpha_n}$) of the $h^{(v_n')}(\tau_{v_n'})$ (for fixed $n$ but varying $\un{v}'$). The fields $K_{\infty}$ and $K_{\eta^{(v_m)}_b}$ are strongly linearly disjoint over $K$ as $m$ and $b$ vary, so the values $\eta_b^{(v_m)}(\sigma_{v_n'})$ may be independently (as $m$ and $b$ vary) specified, by a \v{C}ebotarev condition on $v_n'$, to be anything in $\eta^{(v_m)}_b(\Gamma_K)$, and such that $N(v'_n) \equiv q_n \pmod{p^{D+1}}$. It follows (as in \cite{fkp:reldef}) from Equation (\ref{spanningeq}) that this sum of pairings can be made equal to any desired element of $k$.

We claim that the splitting fields $K_{h^{(v_n)}}$ are strongly linearly disjoint from the $K_{\eta^{(v_m)}_{b}}$ (and as noted before from $K_{\infty}$) subquotient)
over $K$, so that the \v{C}ebotarev condition thus produced 
intersects the previously-produced condition $\prod_{m \in N} \mf{w}_m$ in a positive-density \v{C}ebotarev condition. We check this by imitating our earlier arguments. Namely, consider any composites $K_h$ of fields of the form $K_{h^{(v)}}$ and $K_{\eta}$ of fields of the form $K_{\eta^{(v)}_{b}}$ ($b$ can vary, and in both cases $v$ can vary). As the $K_{h^{(v)}}$ are disjoint as $v$ varies, if $L= K_h \cap K_{\eta}$ properly contains $K$, then there is some $v_0$ for which the map
\[
\Gal(K_{h^{(v_0)}}/K) \to \prod_v \Gal(K_{h^{(v)}}/K) \xleftarrow{\sim} \Gal(K_h/K) \onto \Gal(L/K)
\]
of $\Fp[\gal{F}]$-modules is non-zero. Since $h^{(v_0)}(\tau_{t_0})$ generates the image of $h^{(v_0)}$, $t_0$ must be ramified in $L/K$, contradicting the fact that (by construction) none of the $K_{\eta^{(v)}_b}$ is ramified at $t_0$. Having checked this disjointness, we can define the non-trivial \v{C}ebotarev condition $\mf{l}_{\un{v}}$ on $N$-tuples of places of $K$ split over $F$ to be the intersection of the above-constructed \v{C}ebotarev conditions, so that for all $\un{v} \in \mf{l}$, and all $\un{v}' \in \mf{l} \cap \mf{l}_{\un{v}}$, both Equations (\ref{doubling}) and (\ref{doubling2}) hold. Note that $\mf{l}_{\un{v}}$ is a \v{C}ebotarev condition in $L_{\un{v}}(\mu_{p^{D+1}})/K$, where $L_{\un{v}}$ denotes the composite of the various fields $K_{h^{(v_n)}}$ and $K_{\eta^{(v_n)}_b}$ arising for the tuple $\un{v}$. The \v{C}ebotarev condition defining $\mc{C}_K:= \prod_{n} \mc{C}_{n, K}$ occurs (see the proof of Proposition \ref{5.9-5.11sub}) for each $n$ in an extension $M_n(\mu_{p^{D+1}})$ of $K$ for which all the fields $M= \prod_n M_n$ and $L_{\un{v}}$ as $\un{v}$ varies\footnote{The variation we consider is to allow tuples $\un{v}'$ each of whose entries satisfies $v'_n \not \in \{v_m\}_{m \in N}$; and given a previously-constructed list $\un{v}_1, \ldots, \un{v}_{s}$, we allow $\un{v}'$ such that no $v'_n$ is among the entries of the previous tuples $\un{v}_k$.} are strongly linearly disjoint over $K$. 
Moreover, the restriction of the conditions in any $L_{\un{v}}(\mu_{p^{D+1}})$ and in $(M_n(\mu_{p^{D+1}}))_n$ define the same $N$-tuple of conditions in $K(\mu_{p^{D+1}})$ (namely, cutting out the $N$-tuple of congruence classes $(q_n \pmod {p^{D+1}})_{n \in N}$).

Finally, using the above observations we give the limiting argument, as extended in
\cite{fkp:reldef} from \cite{ramakrishna-hamblen} and \cite{klr},
which addresses the possible incompatibility of the conditions
$\mf{l}$ and $\mf{l}_{\un{v}}$. Suppose that for each member of a
finite set $\{\un{v}_1, \ldots, \un{v}_s\}$, the intersection $\mf{l}
\cap \mf{l}_{\un{v}_k}$ is empty, so that $\mf{l} \setminus
\{\un{v}_1, \ldots, \un{v}_s\}$ is contained in $\mf{l} \cap
\bigcap_{k=1}^s \overline{\mf{l}_{\un{v}_k}}$, and in particular is contained in $\mc{C}_K \cap \cap_{k=1}^s \ov{\mf{l}_{\un{v}_k}}$ (we take complements in the set of $N$-tuples of primes of $K$ that are split over $F$). We may and do assume that for all $n \in N$, no two of the tuples (regarded here as multi-sets) $\un{v}_1, \ldots, \un{v}_s$ have any primes in common, since for each $n$ the subset of $\mf{l}$ consisting of elements $\un{v}'$ for which $v'_n$ is in some pre-established finite list has upper-density zero. It follows as in \cite[Proposition 5.12]{fkp:reldef}, by the disjointness of the $L_{\un{v}_k}$ and the compatibility of the conditions in $L_{\un{v}_k}(\mu_{p^{D+1}})$ with the condition $\mc{C}_K$, that there is some constant $\varepsilon > 0$ (independent of $k$) such that
\[
\delta \left( \mc{C}_K \cap  \cap_{k=1}^s \ov{\mf{l}_{\un{v}_k}} \right) \leq (1- \varepsilon)^s.
\]
Thus we either eventually find some pair $\un{v}, \un{v}' \in \mf{l}$ with $\un{v}' \in \mf{l}_{\un{v}}$, or we can let $s$ tend to infinity and thus contradict the positive upper-density of $\delta^+(\mf{l})$. 

\end{proof}
We will specify the desired values $(1+\vpi h)\rho_2(\sigma_w)$ for $w \in {\mc{Q}}$ in the application in Theorem \ref{p^Nlift}; of course, we will do this to ensure that $(1+\vpi h)\rho_2|_{\gal{F_w}}$ belongs to the local lifting condition $\Lift^{\mu, \alpha_w}_{\br|_{\gal{F_w}}}(\mc{O}/\vpi^2)$ of \cite[Definition 3.1, Lemma 3.2]{fkp:reldef}, which allows unipotent ramification along the $\alpha_w$ root space.

\section{Lifting mod $\vpi^N$}\label{modp^Nsection}

We will in this section impose additional hypotheses on the Galois modules $\br(\fgder)$ and $\br(\fgder)^*$:
\begin{assumption}\label{modp^Nhyp}
In addition to Assumption \ref{generalhyp}, further assume that
\begin{itemize}
\item $\br(\fgder)$ does not contain the trivial representation as a submodule.
\item There is no surjection of $\Fp[\gal{F}]$-modules $\br(\fgder) \onto W$ for some $\Fp[\gal{F}]$-module subquotient $W$ of $\br(\fgder)^*$.
\end{itemize}
\end{assumption}

\begin{rmk}
Implicit in this second assumption is of course that the mod $p$ cyclotomic
character $\kbar$ is non-trivial, i.e. $F$ does not contain
$\mu_p$. Note that in the function field case, where the constant
field of $F$ is $\mathbb{F}_q$, this forces $q \not \equiv 1 \pmod
p$.
\end{rmk}
We note that the second condition in Assumption \ref{modp^Nhyp} is
implied by the following:
\begin{itemize}
\item There is no $k[\gal{F}]$-module surjection $\br(\fgder)^{\sigma} \onto V$ for some $\sigma \in \Aut(k)$ and $k[\gal{F}]$-subquotient $V$ of $\br(\fgder)^*$.
\end{itemize}
(Indeed, suppose there were then some $\Fp[\gal{F}]$-quotient $\br(\fgder) \onto W$. It gives rise to a $k[\gal{F}]$-quotient 
\[
\bigoplus_{\sigma \in \Aut(k)} \br(\fgder)^{\sigma} \cong \br(\fgder) \otimes_{\Fp} k \onto W \otimes_{\Fp} k,
\]
and $W \otimes_{\Fp} k$ is a $k[\gal{F}]$-subquotient of $\br(\fgder)^* \otimes_{\Fp} k \cong \bigoplus_{\tau \in \Aut(k)} \br(\fgder)^{*, \tau}$, which would yield a $k[\gal{F}]$-quotient $\br(\fgder)^{\sigma \tau^{-1}} \onto V$ for some $k[\gal{F}]$-subquotient $V$ of $\br(\fgder)^*$.) We will apply this version of the criterion in Lemma \ref{GL2lemma} below.

We will inductively produce mod $\vpi^n$ lifts of $\br$. For the step in which we pass from a mod $\vpi^{n-1}$ lift $\rho_{n-1}$ to a mod $\vpi^n$ lift, we will use Proposition \ref{5.9-5.11sub} in the case $L= K(\rho_{n-1}(\fgder))$ and $c= \max \{D+1, \lceil \frac{n}{e} \rceil \}$. To that end, we will need the following linear disjointness result:
\begin{lemma}\label{disjointmodp^N}
Let $\br$ satisfy Assumption \ref{modp^Nhyp}, and suppose that we have inductively constructed a lift 
\[
\rho_{n-1} \colon \Gamma_{F, {\mc{T}}_{n-1}} \to G(\mc{O}/\vpi^{n-1})
\] 
of $\br$ for some finite set of primes ${\mc{T}}_{n-1}$ containing ${\mc{T}}$.
Assume moreover that the image $\im(\rho_{n-1})$ contains $\widehat{G}^{\mr{der}}(\mc{O}/\vpi^{n-1})$. %Let $K_{\infty}= K(\mu_{p^{\infty}})$. 
Then the field $L=K(\rho_{n-1}(\fgder))$ is linearly disjoint over $K$ from the composite of $K_{\infty}$ with any composite of fields $K_{\psi}$, $\psi \in H^1(\gal{F, {\mc{T}}_{n-1}}, \br(\fgder)^*)$.

\end{lemma}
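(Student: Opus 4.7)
The plan is to compare the $\Fp[\gal{F}]$-module structures of $\Gal(L/K)$ and of $\Gal(M/K)$, where $M$ denotes the composite of $K_\infty$ with any finite collection of the fields $K_\psi$, and to show that any non-trivial intersection $L \cap M$ would produce a common irreducible $\Fp[\gal{F}]$-constituent of a kind ruled out by Assumption \ref{modp^Nhyp}.

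First I would analyze $\Gal(L/K)$. Since $K \supseteq F(\br)$, the image $\rho_{n-1}(\gal{K})$ lies in the congruence kernel $G_1 := \ker(G(\mc{O}/\vpi^{n-1}) \to G(k))$. For $p \gg_G 0$ the standard filtration $G_1 \supset G_2 \supset \cdots \supset G_{n-1}=1$ by $G_i = \ker(G(\mc{O}/\vpi^{n-1}) \to G(\mc{O}/\vpi^i))$ has successive quotients identified via the Lie-theoretic exponential with $\fg \otimes k$, and under this identification the conjugation action of $\gal{F}$ becomes the adjoint action of $\br$. Using that $\rho_{n-1}(\gal{K})$ contains $\wh{G}^{\mr{der}}(\mc{O}/\vpi^{n-1}) \cap G_1$ (as the pro-$p$ intersection survives passage from $\ker(\br)$ to $\gal{K}$, whose quotient has order prime to $p$), and then modding out the kernel of the adjoint action on $\fgder$, I would obtain a filtration of $\Gal(L/K)$ whose top graded piece equals the full $\gal{F}$-module $\br(\fgder)$ and whose remaining pieces are $\gal{F}$-submodules of $\br(\fgder)$. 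The consequence I will need is that every irreducible $\Fp[\gal{F}]$-quotient of $\Gal(L/K)$ factors through its Frattini quotient and is in particular an irreducible quotient of $\br(\fgder)$.

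Next I would record the dual side. By Lemma \ref{abelian}, $\Gal(K_\infty/K)$ is a trivial $\Gal(K/F)$-module (since $F(\mu_{p^\infty})/F$ is abelian); and for each $\psi \in H^1(\gal{F, T_{n-1}}, \br(\fgder)^*)$, the restriction to $\gal{K}$ (where the coefficients are trivial) is a $\Gal(K/F)$-equivariant homomorphism $\gal{K} \to \br(\fgder)^*$, so $\Gal(K_\psi/K)$ embeds in $\br(\fgder)^*$ as a $\Gal(K/F)$-submodule. Hence the composition factors of $\Gal(M/K)$ all lie among the trivial representation and the irreducible subquotients of $\br(\fgder)^*$.

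To conclude, I would suppose $L \cap M \neq K$ and examine $\Gal(L \cap M/K)$: it is a non-zero $\Fp[\gal{F}]$-module which is simultaneously a quotient of $\Gal(L/K)$ and a submodule of $\Gal(M/K)$, so its cosocle has an irreducible constituent $V$ that is both an irreducible quotient of $\br(\fgder)$ and either the trivial representation or an irreducible subquotient of $\br(\fgder)^*$. In the first case, $\br(\fgder)$ has a trivial quotient, and the self-duality $\br(\fgder) \cong \br(\fgder)^{\vee}$ afforded by the Killing form (valid for $p \gg_G 0$) converts this to a trivial submodule, contradicting the first bullet of Assumption \ref{modp^Nhyp}. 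In the second case, the surjection $\br(\fgder) \onto V$ with $V$ a subquotient of $\br(\fgder)^*$ directly contradicts the second bullet. The hard part will be the structural step: pinning down the filtration of $\Gal(L/K)$ precisely enough to control its irreducible $\Fp[\gal{F}]$-quotients---this is where the large-image hypothesis $\rho_{n-1}(\gal{F}) \supseteq \wh{G}^{\mr{der}}(\mc{O}/\vpi^{n-1})$ and the assumption $p \gg_G 0$ enter essentially; once that input is in hand, the remaining comparison is formal.
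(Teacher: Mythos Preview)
Your proposal is correct and follows essentially the same approach as the paper. The paper's proof is slightly more compressed: it directly asserts that $\Gal(L/K) \cong \wh{G}^{\mr{der}}(\mc{O}/\vpi^{n-1})$ (citing \cite[Theorem 5.16]{fkp:reldef}) and that the abelianization of this group is $\br(\fgder)$ as an $\Fp[\gal{F}]$-module, whereas you unpack this via the congruence filtration and the Frattini quotient---but the content is the same, and both arguments then conclude by observing that a non-trivial intersection would force an irreducible quotient of $\br(\fgder)$ to appear as either the trivial module or a subquotient of $\br(\fgder)^*$, contradicting Assumption~\ref{modp^Nhyp} (with the Killing-form self-duality handling the trivial case).

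One small slip: $\Gal(L \cap M/K)$ is a \emph{quotient} of $\Gal(M/K)$ (via restriction), not a submodule. This is harmless for your argument, since you only need that the irreducible $V$ is a subquotient of $\Gal(M/K)$, and a quotient of a quotient is certainly a subquotient.
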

\begin{proof}
By assumption, $\Gal(K(\rho_{n-1}(\fgder))/K)$ is isomorphic to $\wh{G^{\mr{der}}}(\mc{O}/\vpi^{n-1})$, so any of its abelian quotients is a quotient of the abelianization $\wh{G^{\mr{der}}}(\mc{O}/\vpi^2) \cong \br(\fgder)$ of $\wh{G^{\mr{der}}}(\mc{O}/\vpi^{n-1})$ (see the proof of \cite[Theorem 5.14]{fkp:reldef}). A non-trivial (properly containing $K$) intersection of $L$ with any composite of $K_{\infty}$ with a composite of fields $K_{\psi}$, $\psi \in H^1(\gal{F, {\mc{T}}_{n-1}}, \br(\fgder)^*)$, thus yields a surjection of $\Fp[\gal{F}]$-modules from $\br(\fgder)$ to some subquotient of $\br(\fgder)^*$ or (by Lemma \ref{abelian}) $\Z/p$. Assumption \ref{modp^Nhyp} excludes both of these possibilities. (To exclude the latter, note that $\br(\fgder)$ has no trivial \textit{quotient}, since by assumption $H^0(\Gamma_F, \br(\fgder))=0$, and $\br(\fgder)$ is self-dual via the Killing form under our assumption $p \gg_G 0$, for which see \S 2 and \cite[Assumption 2.1]{fkp:reldef}.)

\end{proof}
We will make reference in the following theorem to the spaces of local lifts described in \cite[\S 3]{fkp:reldef}; see especially \cite[Definitions 3.1, 3.4]{fkp:reldef} for the notation.
\begin{thm}\label{p^Nlift}
Let $p \gg_G 0$. Assume that $\br \colon \gal{F, {\mc{S}}} \to G(k)$ satisfies Assumption \ref{generalhyp} and Assumption \ref{modp^Nhyp}. Fix a lift $\mu$ of the multiplier character $\bar{\mu}= \br \pmod{G^{\mr{der}}}$. Moreover assume that for all $v \in {\mc{S}}$ there are lifts $\rho_v \colon \gal{F_v} \to G(\mc{O})$ with multiplier $\mu$. Let ${\mc{T}} \supset {\mc{S}}$ be the set constructed in the discussion preceding Proposition \ref{5.9-5.11sub}.

Then there exists a sequence of finite sets of primes of $F$, ${\mc{T}} \subset {\mc{T}}_2 \subset {\mc{T}}_3 \subset \cdots {\mc{T}}_n \subset \cdots$, and for each $n \geq 2$ a lift $\rho_n \colon \gal{F, {\mc{T}}_n} \to G(\mc{O}/\vpi^n)$ of $\br$ with multiplier $\mu$, such that $\rho_{n} = \rho_{n+1} \pmod{\vpi^n}$ for all $n$. This system of lifts $(\rho_n)_{n \geq 1}$ satisfies the following properties:%Furthermore:
\begin{enumerate}
\item If $w \in {\mc{T}}_n \setminus {\mc{S}}$ is ramified in $\rho_n$, then there is a split maximal torus and root $(T_w, \alpha_w)$ such that $\rho_n(\sigma_w) \in T_w(\mc{O}/\vpi^n)$, $\alpha_w(\rho_n(\sigma_w)) \equiv N(w) \pmod{\vpi^n}$, and $\rho_n|_{\gal{F_w}} \in \Lift_{\br}^{\mu, \alpha_w}(\mc{O}/\vpi^n)$; in addition, one of the following two properties holds:
\begin{enumerate}
\item For some $s \leq eD$, $\rho_s(\tau_w)$ is a non-trivial element of $U_{\alpha_w}(\mc{O}/\vpi^s)$ (in particular, $s \leq n$), and for all $n' \geq s$, $\rho_{n'}|_{\gal{F_w}}$ is $\wh{G}(\mc{O})$-conjugate to the reduction modulo $\vpi^{n'}$ of a fixed lift $\rho_w \colon \gal{F_w} \to G(\mc{O})$ of $\rho_s|_{\gal{F_w}}$. We may choose this $\rho_w$ to be constructed as in \cite[Lemma 3.7]{fkp:reldef} to be a formally smooth point of the generic fiber of the local lifting ring of $\br|_{\gal{F_w}}$.
\item 
For $s=eD$, $\rho_s|_{\gal{F_w}}$ is trivial mod center (in particular, $s<n$), while $\alpha_w(\rho_{s+1}(\sigma_w)) \equiv N(w) \not \equiv 1 \pmod{\vpi^{s+1}}$, and $\beta(\rho_{s+1}(\sigma_w)) \not \equiv 1 \pmod{\vpi^{s+1}}$ for all roots $\beta \in \Phi(G^0, T_w)$.
\end{enumerate}
\item For all $v \in {\mc{S}}$, $\rho_n|_{\gal{F_v}}$ is strictly equivalent to $\rho_v \pmod{\vpi^n}$.
\item The image $\rho_n(\gal{F})$ contains $\wh{G^{\mr{der}}}(\mc{O}/\vpi^n)$.
\end{enumerate}

\end{thm}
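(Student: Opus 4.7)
The plan is to argue by induction on $n \geq 2$, constructing the lifts $\rho_n$ and the ramification sets $T_n$ simultaneously. For the base case $n=2$, fix for each $v \in S$ a lift $\lambda_v \equiv \rho_v \pmod{\vpi^2}$ and apply Proposition \ref{doublingprop}: combined with the standing vanishing $\Sha^2_T(\gal{F,T}, \br(\fgder))=0$, this produces a mod $\vpi^2$ lift $\rho_2 \colon \gal{F, T_2} \to G(\mc{O}/\vpi^2)$ with $T_2 = T \cup Q$ whose restriction at each $v \in S$ is strictly equivalent to $\rho_v \pmod{\vpi^2}$ and whose restriction at each $w \in Q$ is in $\Lift^{\mu, \alpha_w}_{\br}(\mc{O}/\vpi^2)$ with $\rho_2(\sigma_w)$ free to be chosen (compatibly with multiplier $\mu$) in $\widehat{G}(\mc{O}/\vpi^2)$. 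Choosing these $\sigma_w$-values carefully gives the large image condition at level $2$; for each such $w$ we also fix once and for all a lift $\rho_w$ as in \cite[Lemma 3.7]{fkp:reldef} at a formally smooth point of an irreducible component of the generic fiber of the local lifting ring.

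For the inductive step, assume $\rho_{n-1} \colon \gal{F, T_{n-1}} \to G(\mc{O}/\vpi^{n-1})$ with the required properties has been constructed. Choose any set-theoretic lift $\rho'_n$ of $\rho_{n-1}$ to $G(\mc{O}/\vpi^n)$ whose restriction to each $v \in T_{n-1}$ matches the preferred local lift (coming from $\rho_v$ for $v \in S$, from $\rho_w$ for $w$ of type (a), or from the explicit torus-valued recipe for $w$ of type (b)); the obstruction to $\rho'_n$ being a homomorphism is a class in $H^2(\gal{F, T_{n-1}}, \br(\fgder))$, and the obstructions to matching the chosen local lifts live in $\bigoplus_{v \in T_{n-1}} H^1(\gal{F_v}, \br(\fgder))$. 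Since $\Sha^2_T(\gal{F, T}, \br(\fgder)) = 0$ by construction of $T$, and since enlarging $T_{n-1}$ by trivial primes only increases the pool of local conditions without reintroducing global obstructions, we can assemble the local and global discrepancy into a single mod $\vpi$ twisting problem: we must exhibit a class $h \in H^1(\gal{F, T_n}, \br(\fgder))$ for some $T_n \supset T_{n-1}$ with prescribed restriction to each $v \in T_{n-1}$, and such that $(1 + \vpi^{n-1} h)\rho'_n$ has inertial shape at each new auxiliary prime fitting property (1)(a) or (1)(b).

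To realize this $h$ we apply Proposition \ref{5.9-5.11sub} with $L = K(\rho_{n-1}(\fgder))$ and $c = \max\{D+1, \lceil n/e \rceil\}$. Lemma \ref{disjointmodp^N}, which uses Assumption \ref{modp^Nhyp} decisively, guarantees the needed linear disjointness of $L$ from $K_\infty$ and from the fixed fields of the dual Selmer classes, which is precisely what is required to carry out the doubling argument of Proposition \ref{doublingprop} relative to the new target cocycle $z_T \in \bigoplus_{v \in T_{n-1}} H^1(\gal{F_v}, \br(\fgder))$ measuring the discrepancy between $\rho'_n|_v$ and the preferred target at each $v$. The doubling method produces a class $h = h^{\mr{old}} - \sum_n h^{(v_n)} + 2\sum_n h^{(v'_n)}$ with the correct restriction to $T_{n-1}$ and controlled inertial behavior at each new prime $v_n, v'_n$, so that $(1 + \vpi^{n-1} h)\rho'_n$ is the desired $\rho_n$. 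At each new $w$ either $N(w) \not\equiv 1 \pmod{\vpi^n}$ already, so we are in case (a) with $s \leq eD$ (and we attach the permanent lift $\rho_w$), or $w$ satisfies the numerical constraint putting us in case (b); the dichotomy in part (1) of the theorem corresponds precisely to the two \v{C}ebotarev regimes that arise in Proposition \ref{5.9-5.11sub}, where $q_i$ is either already non-trivial modulo $p^{D+1}$ or not. The large image condition (3) at level $n$ follows inductively because at each step we include among the auxiliary primes enough $w$ to hit generators of the kernel $\wh{G}^{\mr{der}}(\mc{O}/\vpi^n) \onto \wh{G}^{\mr{der}}(\mc{O}/\vpi^{n-1})$, which is possible because we have total freedom in $\rho_2(\sigma_w)$ at the base case and because any lift of a surjection onto a residue field remains a surjection.

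The main obstacle, just as in \cite[Theorem 5.16]{fkp:reldef}, is organizing the local adjustments so that the doubling cocycles do not disturb the ramification behavior already locked in at earlier steps: once a prime $w$ has been committed to case (a) with its rigidly chosen $\rho_w$, all further mod $\vpi^n$ adjustments must lie in the tangent space to $\Lift^{\mu, \alpha_w}_{\br}$ at $\rho_w$, and the argument depends on the stability results for these local tangent spaces from \cite[\S 3]{fkp:reldef} (combined with \cite[Lemma 3.5]{fkp:reldef} to guarantee the freedom to perturb along $Z^{\alpha}_r$). The other delicate point is ensuring that the repeatedly refined linear disjointness required for the doubling method survives as $n$ grows: this is exactly the role of Lemma \ref{disjointmodp^N}, and together with the bookkeeping of $D$ versus $\lceil n/e \rceil$ in the cyclotomic tower, it is the key technical input that makes the induction go through.
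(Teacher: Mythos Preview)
Your outline matches the paper's induction-plus-doubling scheme, and you correctly identify Proposition~\ref{5.9-5.11sub} with $L=K(\rho_{n-1}(\fgder))$ and $c=\max\{D+1,\lceil n/e\rceil\}$, together with Lemma~\ref{disjointmodp^N}, as the key inputs at each stage. However, two points are genuinely wrong as written.

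First, your account of the dichotomy (1)(a)/(1)(b) is incorrect. You say it corresponds to ``two \v{C}ebotarev regimes\ldots where $q_i$ is either already non-trivial modulo $p^{D+1}$ or not,'' but by construction in Proposition~\ref{5.9-5.11sub} the class $q_i$ is \emph{always} non-trivial modulo $p^{D+1}$. The actual dichotomy is governed by the stage $n$ at which the prime is introduced, and the mechanism that makes it work is the explicit choice of $g_{L/K,i}\in\Gal(L/K)$ allowed in Proposition~\ref{5.9-5.11sub}: if $n-1\leq eD$ one takes $g_{L/K,i}$ trivial (forcing $\rho_{n-1}(\sigma_w)$ trivial mod center, hence case~(a)), while if $n-1\geq eD+1$ one takes $g_{L/K,i}$ to be a general-position element $t_i\in T_i(\mc{O}/\vpi^{n-1})$ with $\alpha_i(t_i)\equiv q_i$ and $\beta(t_i)\not\equiv 1\pmod{\vpi^{eD+1}}$ for all $\beta$ (giving case~(b)). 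This choice is entirely absent from your argument, and without it you cannot ensure that $\rho_n(\sigma_w)$ lands in the torus with the required congruences; the doubling only controls $\rho_n(\sigma_w)$ \emph{given} the \v{C}ebotarev constraint on $\sigma_w|_L$, which is precisely what $g_{L/K,i}$ encodes.

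Second, your mechanism for the large-image condition~(3) does not work. You propose using the auxiliary doubling primes to hit generators of $\ker(\wh{G}^{\mr{der}}(\mc{O}/\vpi^n)\to\wh{G}^{\mr{der}}(\mc{O}/\vpi^{n-1}))$, but at stage $n$ the values $\rho_n(\sigma_w)$ at those primes are constrained (by the previous paragraph) to lie in a maximal torus, so they cannot be chosen to generate this kernel. The paper instead enlarges $T_{n-1}$ to $T'_{n-1}$ by adjoining additional primes split in $K(\rho_{n-1}(\fgder))$ and specifies \emph{unramified} target lifts $\lambda_w$ whose Frobenius values generate the kernel; these primes enter the target collection $(\lambda_w)_w$ before the doubling is run. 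Your sentence ``any lift of a surjection onto a residue field remains a surjection'' is also not correct here: containing $\wh{G}^{\mr{der}}(\mc{O}/\vpi^{n-1})$ does not by itself force a lift to contain $\wh{G}^{\mr{der}}(\mc{O}/\vpi^{n})$ (though by \cite[Lemma~6.13]{fkp:reldef} this implication does hold once $n$ is large enough, which is why the enlargement is only needed finitely often).
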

\begin{proof}
In light of Lemma \ref{disjointmodp^N} and Proposition \ref{5.9-5.11sub}, the argument of Proposition \ref{doublingprop} allows us to argue as in \cite[Theorem 5.14]{fkp:reldef}. Since our assumptions differ from those of \textit{loc.~cit.}---and in fact the technical improvements of \S \ref{doublingsection} allow us to simplify the argument somewhat---we will repeat the proof. We will inductively lift $\br$ to a $\rho_n \colon \gal{F, {\mc{T}}_n} \to G(\mc{O}/\vpi^n)$ satisfying the conclusions of the theorem, at each stage enlarging the ramification set ${\mc{T}}_n \supset {\mc{T}}$. Thus suppose for some $n \geq 2$ we have already constructed a lift $\rho_{n-1} \colon \gal{F, {\mc{T}}_{n-1}} \to G(\mc{O}/\vpi^{n-1})$ as in the theorem. For each $w \in {\mc{T}}_{n-1} \setminus {\mc{S}}$ at which $\rho_{n-1}$ is ramified, we are given a torus and root $(T_w, \alpha_w)$ as in the theorem statement. In what follows, we will tacitly allow ourselves to change this pair to a $\wh{G}(\mc{O})$-conjugate without changing the notation; see \cite[Remark 5.15]{fkp:reldef}. There are no local obstructions to lifting $\rho_{n-1}$, and we fix local lifts $\lambda_w \colon \gal{F_w} \to G(\mc{O}/\vpi^n)$ of $\rho_{n-1}|_{\gal{F_w}}$ as follows: 
\begin{itemize}
\item If $w \in {\mc{S}}$, by assumption $\rho_{n-1}|_{\gal{F_w}}$ is $\wh{G}(\mc{O})$-conjugate to the given lift $\rho_w \pmod{\vpi^{n-1}}$, so we can take $\lambda_w$ to be $\wh{G}(\mc{O})$-conjugate to $\rho_w \pmod{\vpi^n}$.
\item If $w \in {\mc{T}}_{n-1} \setminus {\mc{S}}$, and $\rho_{n-1}|_{\gal{F_w}}$ is unramified, then let $\lambda_w$ be any (multiplier $\mu$) unramified lift.
\item If $w \in {\mc{T}}_{n-1} \setminus {\mc{S}}$, and $\rho_{n-1}|_{\gal{F_w}}$ is ramified, then by the inductive hypothesis we either have:
\begin{itemize}
\item As in Case (a) of the Theorem statement, for some $s \leq eD$, $\rho_s(\tau_w) \in U_{\alpha_w}(\mc{O}/\vpi^s)$ is non-trivial, and we are given a lift $\wh{G}(\mc{O})$-conjugate to the $\rho_w$ that was introduced at the $s^{th}$ stage in the induction (the one in which the prime $w$ was introduced; we will explain how to choose this $\rho_w$ below when we carry out the heart of the induction step). We may and do replace $\rho_w$ by this suitable $\wh{G}(\mc{O})$-conjugate, and we then take $\lambda_w= \rho_w \pmod{\vpi^n}$.
\item As in Case (b) of the Theorem, $n-1 \geq eD+1$, $\rho_{eD}|_{\gal{F_w}}$ is trivial mod center, and $\rho_{eD+1}|_{\gal{F_w}}$ is in general position as described in Case (b). Then we take $\lambda_w$ to be any lift still satisfying all the conditions in Case(b) (and the general requirements  of Case (1)). This is easily seen to be possible by arguing as in the proof of \cite[Lemma 5.13]{fkp:reldef}, again using the $p \gg_G 0$ assumption.
\end{itemize}
\end{itemize}
We also enlarge ${\mc{T}}_{n-1}$ by a finite set of primes split in $K(\rho_{n-1}(\fgder))$ and introduce at these $w$ unramified, multiplier $\mu$, lifts $\lambda_w$ such that the elements $\lambda_w(\sigma_w)$ generate $\ker(G^{\mr{der}}(\mc{O}/\vpi^n) \to G^{\mr{der}}(\mc{O}/\vpi^{n-1}))$. (This is a device for ensuring each lift $\rho_n$ has maximal image; by \cite[Lemma 6.15]{fkp:reldef}, this step only needs to be carried out for finitely many $n$.) We denote this enlarged set by ${\mc{T}}'_{n-1}$.

Since $\Sha^1_{\mc{T}}(\gal{F, {\mc{T}}}, \br(\fgder)^*)=0$, \textit{a fortiori} we see that $\Sha^1_{{\mc{T}}'_{n-1}}(\gal{F, {\mc{T}}'_{n-1}}, \br(\fgder)^*)=0$, so by global duality and the local unobstructedness there is some lift $\rho'_n \colon \gal{F, {\mc{T}}'_{n-1}} \to G(\mc{O}/\vpi^n)$ of $\rho_{n-1}$. We then as before define a class $z_{{\mc{T}}'_{n-1}}=(z_w)_{w \in {\mc{T}}'_{n-1}} \in \bigoplus_{w \in {\mc{T}}'_{n-1}} H^1(\gal{F_w}, \br(\fgder))$ such that $(1+\vpi^{n-1} z_w)\rho'_{n}|_{\gal{F_w}}= \lambda_w$ for all $w \in {\mc{T}}'_{n-1}$. If $z_{{\mc{T}}'_{n-1}}$ lies in the image of some $h \in H^1(\gal{F, {\mc{T}}'_{n-1}}, \br(\fgder))$, we replace $\rho'_{n}$ by $(1+\vpi^{n-1}h)\rho'_{n}= \rho_n$, a lift of $\rho_{n-1}$ such that $\rho_n|_{\gal{F_w}}$ is $\ker(G^{\mr{der}}(\mc{O}/\vpi^n) \to G^{\mr{der}}(\mc{O}/\vpi^{n-1})$-conjugate to the fixed $\lambda_w$, and we are done.

If $z_{{\mc{T}}'_{n-1}}$ does not lie in the image of $H^1(\gal{F, {\mc{T}}'_{n-1}}, \br(\fgder))$, we apply Proposition \ref{5.9-5.11sub} with ${\mc{T}}'_{n-1}$ in place of ${\mc{T}}$ (in the notation of the Proposition), $c= \max \{ D+1, \lceil \frac{n}{e} \rceil \}$, and $L= K(\rho_{n-1}(\fgder))$. Lemma \ref{disjointmodp^N} shows the linear disjointness hypothesis of the Proposition is satisfied. As in the statement of Proposition \ref{5.9-5.11sub}, after the classes $q_i \in \ker((\Z/p^c)^\times \to (\Z/p^D)^\times)$ and the tori and roots $(T_i, \alpha_i)$ are produced, we are allowed to choose the elements $g_{L/K, i} \in \Gal(L/K)$. We do this as follows:
\begin{itemize}
\item If $n-1 \leq eD$, take all $g_{L/K, i}$ to be trivial.  
\item If $n-1 \geq eD+1$, via the isomorphism $\Gal(L/K) \xrightarrow{\sim} \wh{G^{\mr{der}}}(\mc{O}/\vpi^{n-1})$ take $g_{L/K, i}$ to be an element  $t_i \in T_i(\mc{O}/\vpi^{n-1})$ satisfying $\alpha_i(t_i) \equiv q_i \pmod{\vpi^{n-1}}$ that is trivial modulo $\vpi^{eD}$ but in general position modulo $\vpi^{eD+1}$: for all $\beta \in \Phi(G^0, T_i)$, $\beta(t_i) \not \equiv 1 \pmod{\vpi^{eD+1}}$ (note that by construction $q_i$ is non-trivial modulo $\vpi^{eD+1}$, so it is possible to choose such a $t_i$).
\end{itemize}
Proposition \ref{5.9-5.11sub} then produces \v{C}ebotarev sets $\mc{C}_i$ with positive-density subsets $\mf{l}_i$ and classes $h^{(v)}$ for all $v \in \mf{l}_i$; as in Proposition \ref{doublingprop}, we apply this both to produce $\mc{C}_i$ for $i \in N_{\mr{coker}}$ such that for any tuple $(v_i)_{i \in N_{\mr{coker}}}$, $\{h^{(v_i)}|_{{\mc{T}}'_{n-1}})\}$ spans $\mr{coker}(\Psi_{{\mc{T}}'_{n-1}})$ and to produce $\mc{C}_i$  for $i \in N_{\mr{span}}$ such that for any $(v_i)_{i \in N_{\mr{span}}}$, $h^{(v_i)}(\tau_{v_i})$ is a root vector $X_{\alpha_i}$ with $\sum_{i \in N_{\mr{span}}} \Fp[\gal{F}] X_{\alpha_i}= \fgder$. We set $N= N_{\mr{span}} \sqcup N_{\mr{coker}}$. We likewise produce $\eta^{(v)}_b$ for all trivial primes $v \not \in {\mc{T}}'_{n-1}$ and $b \in B$ an indexing set for a $k$-basis of $(\fgder)^*$ as in the discussion around Lemma \ref{etadisjoint}. The argument now proceeds as in \cite[Theorem 5.14]{fkp:reldef}, with the simplification that the fields $L= K(\rho_{n-1}(\fgder))$, $K(\mu_{p^c})$, $K_{\eta^{(v)}_b}$ (as $v$ and $b$ vary), and $K_{h^{(v)}}$ (as $v$ varies) are all strongly linearly disjoint over $K$ ($L$ is disjoint from the $K_{h^{(v)}}$ by the same observation we have used before, that the latter is totally ramified over $K$ at places above $v$). We sketch it. For any pair $\un{v}, \un{v}' \in \prod_{i \in N} \mf{l}_n$, we form as in \textit{loc.~cit.} the classes $h= h^{\mr{old}}- \sum_{i \in N} h^{(v_i)}+ 2 \sum_{i \in N} h^{(v'_i)}$. Writing $\mc{C}_K= \mc{C}_{i, K}$ and $\mf{l}_K = \prod_{i \in N} \mf{l}_{i, K}$ for the corresponding sets of $N$-tuples of places of $K$ specified by the fixed decomposition groups, there is a positive upper-density subset $\mf{l} \subset \mf{l}_K$ such that for $(v_i)_{i \in N} \in \mf{l}$ (we in the notation still write $v_i$ for the specified place of $K$ above $v_i$), the quantities $(h^{\mr{old}}(\sigma_{v_i}))_{i \in N}$, $(h^{(v_i)}(\tau_{v_i}))_{i \in N}$, $(\sum_{i \in N} h^{(v_i)}(\sigma_{v_j}))_{j \in N}$ and $(\rho'_n(\sigma_{v_i}))_{i \in N}$ are independent of $(v_i)_{i \in N} \in \mf{l}$. With these values fixed, we choose tuples $(C_i)_{i \in N} \in (\fgder)^N$ and $(C'_i)_{i \in N} \in (\fgder)^N$ such that \textit{if} $\un{v}, \un{v}' \in \mf{l}$ can be chosen to satisfy the analogues of Equations (\ref{doubling}) and (\ref{doubling2}) in the proof of Proposition \ref{doublingprop}, then $\rho_n= (1+\vpi^{n-1}h)\rho'_n$ will, for any $i \in N$, when restricted to each $w \in \{v_i, v'_i\}$ satisfy (note that in all cases the construction forces $\rho_n(\tau_{w})$ to be a non-trivial element of $U_{\alpha_i}(\mc{O}/\vpi^n)$, so we just specify the image of $\sigma_w$): 
\begin{itemize}
\item If $n \leq eD$, $\rho_n(\sigma_{w})$ is trivial modulo center (note that then $\alpha_i(\rho_n(\sigma_{w})) \equiv 1 \equiv q_i \pmod{\vpi^n}$). We then fix a lift $\rho_w \in \Lift_{\rho_n|_{\gal{F_w}}}^{\mu, \alpha_i}(\mc{O})$ as in \cite[Lemma 3.7]{fkp:reldef} (defining a formally smooth point of the generic fiber of the trivial inertial type lifting ring for $\br|_{\gal{F_w}}$ and having $\rho_w(\sigma_w) \in T_i(\mc{O})$). This $\rho_w$ feeds into the continuation of the induction as described in the first bulleted list of the present proof.
\item If $n= eD+1$, then by construction $\rho_{n-1}(\sigma_w)$ is trivial modulo $Z_{G^0}$ (since $g_{L/K, i}$ is trivial), and $q_i$ is non-trivial modulo $\vpi^n$ but trivial modulo $\vpi^{n-1}$, so we take $\rho_n(\sigma_w)$ to be any element $t_{n, w}$ of $T_i(\mc{O}/\vpi^n)$ such that $t_{n, w} \pmod{\vpi^{n-1}}$ is trivial modulo $Z_{G^0}$, $\alpha_i(t_{n, w}) \equiv q_i \pmod{\vpi^{n}}$, and $\beta(t_{n, w}) \not \equiv 1 \pmod{\vpi^n}$ for all $\beta \in \Phi(G^0, T_i)$. (Existence of such an element when $p \gg_G 0$ is shown as in \cite[Lemma 5.13]{fkp:reldef}.)
\item If $n >eD+1$, then by construction $\rho_{n-1}(\sigma_w)$ is (modulo $Z_{G^0}$) an element $t_{w, n-1} \in T_i(\mc{O}/\vpi^{n-1})$ that is trivial modulo $\vpi^{eD}$, in general position modulo $\vpi^{eD+1}$, and satisfies $\alpha_i(t_{w, n-1}) \equiv q_i \pmod{\vpi^{n-1}}$. We require only that $\rho_n(\sigma_w)$ continue to satisfy these properties modulo $\vpi^n$, which is easily arranged.   
\end{itemize}
With these desired $C_i$ and $C'_i$ specified, the argument now
defines a \v{C}ebotarev condition $\mf{l}_{\un{v}}$ for any
$\un{v} \in \mf{l}$ such that any pair $\un{v}, \un{v}' \in \mf{l}$
with $\un{v}' \in \mf{l}_{\un{v}}$ successfully arranged Equations
(\ref{doubling}) and (\ref{doubling2}). One then runs the limiting
argument as in the proof of Proposition \ref{doublingprop}, using the
above disjointness observations for the fields $L$, $K(\mu_{p^c})$,
$K_{h^{(v_i)}}$, and $K_{\eta^{(v_i)}_b}$, to show that such
$\un{v}, \un{v}'$ exist.
\end{proof}

\section{Relative deformation theory}\label{reldefsection}
In this section we explain how to modify the relative deformation theory argument of \cite[\S 6]{fkp:reldef} for our reducible setting, and we deduce our main theorem. We will require the following additional hypotheses:
\begin{assumption}\label{finalhyp}
Let $p \gg_G 0$. Assume that $\br \colon \gal{F, {\mc{S}}} \to G(k)$
satisfies Assumptions \ref{generalhyp} and \ref{modp^Nhyp}. Fix a
lift $\mu \colon \gal{F, {\mc{S}}} \to G/G^{\mr{der}}(\mc{O})$ of $\bar{\mu}=
\br \pmod{G^{\mr{der}}}$, which we assume is geometric if $F$ is a
number field. Additionally assume the following:
\begin{itemize}
\item $H^0(\gal{F}, \br(\fgder)^*)=0$.
\item For all $v \in {\mc{S}}$, there is some lift (which may require an initial enlargement of $\mc{O}$) $\rho_v \colon \gal{F_v} \to G(\mc{O})$, of type $\mu$, of $\br|_{\gal{F_v}}$; and if (when $F$ is a number field) $v \vert p$, there is such a $\rho_v$ that is de Rham and Hodge--Tate regular. We fix throughout the section such a choice of lifts $(\rho_v)_{v \in \mc{S}}$.
\end{itemize}
\end{assumption}

\begin{thm}\label{mainthm}
Let $p \gg_G 0$, and let $\br \colon \gal{F, {\mc{S}}} \to G(k)$ be a continuous representation satisfying Assumptions \ref{generalhyp}, \ref{modp^Nhyp}, and \ref{finalhyp}. Additionally, when $F$ is a number field, assume that $F$ is totally real, and $\br$ is odd. Then for some finite set of primes $\widetilde{{\mc{S}}}$ containing ${\mc{S}}$, which we may assume disjoint from a fixed finite set $\mc{S}_0$ of primes disjoint from $\mc{S}$, there is a geometric lift
\[
\rho \colon \gal{F, \widetilde{{\mc{S}}}} \to G(\ov{\Z}_p)
\]
of $\br$. 

More precisely, we fix an integer $t$ and for each $v \in {\mc{S}}$ an irreducible component containing $\rho_v$ of:
\begin{itemize}
\item for $v \in {\mc{S}} \setminus \{v \mid p\}$, the generic fiber of the local lifting ring, $R^{\square, \mu}_{\br|_{\gal{F_v}}}[1/\vpi]$ (where $R^{\square, \mu}_{\br|_{\gal{F_v}}}$ pro-represents $\Lift_{\br}|_{\gal{F_v}}$); and
\item for $v \mid p$, the lifting ring $R_{\br|_{\gal{F_v}}}^{\square, \mu, \tau, \mbf{v}}[1/\vpi]$ whose $\ov{E}$-points parametrize lifts of $\br|_{\gal{F_v}}$ with suitably specified inertial type $\tau$ and $p$-adic Hodge type $\mbf{v}$. (See the discussion following Theorem \ref{mainthmintro}.)
%$R_{\br|_{\gal{F_v}}}^{\square, \mu, \mbf{v}}[1/\vpi]$ whose $\ov{E}$-points parametrize lifts of $\br|_{\gal{F_v}}$ with specified Hodge type $\mbf{v}$ (see \cite[Prop. 3.0.12]{balaji} for the construction of this ring).  
\end{itemize}
Then there exist a finite extension $E'$ of $E=\Frac(\mc{O})$ (whose ring of integers and residue field we denote by $\mc{O}'$ and $k'$), which depends only on the set $\{\rho_v\}_{v \in {\mc{S}}}$; a finite set of places $\wt{{\mc{S}}}$ containing ${\mc{S}}$; and a geometric lift 
\[
\xymatrix{
& G(\mc{O}') \ar[d] \\
\gal{F, \wt{{\mc{S}}}} \ar[r]_{\br} \ar[ur]^{\rho} & G(k') 
}
\]
of $\br$ such that:
\begin{itemize}
\item $\rho$ has multiplier $\mu$.
\item $\rho(\gal{F})$ contains $\wh{G^{\mr{der}}}(\mc{O}')$.
\item For all $v \in {\mc{S}}$, $\rho|_{\gal{F_v}}$ is congruent modulo $\vpi^t$ to some $\wh{G}(\mc{O}')$-conjugate of $\rho_v$, and $\rho|_{\gal{F_v}}$ belongs to the specified irreducible component for every $v \in {\mc{S}}$.\footnote{To be clear, the set $\wt{{\mc{S}}}$ may depend on the integer $t$, but the extension $\mc{O}'$ does not depend on $t$.}
\end{itemize}
\end{thm}
\begin{proof}
As in \cite[Claim 6.13]{fkp:reldef}, we reduce to the case in which $G^0$ is adjoint, and $\fg=\fgder$ is equal to a single $\pi_0(G)$-orbit of simple factors; it is easy to see that the representations denoted $\br_s$ in \textit{loc.~cit.} attached to each $\pi_0(G)$-orbit $s$ still satisfy our assumptions. We thus assume $\br$ satisfies Assumption \ref{finalhyp}, and that $G^0$ is adjoint and is a single $\pi_0(G)$-orbit of simple factors. We have in the theorem statement fixed an integer $t$ that is our desired precision of approximation of the given local lifts $\rho_v$ ($v \in {\mc{S}}$) and irreducible components $\ov{R}_v[1/\vpi]$ containing $\rho_v$ of the corresponding lifting rings. Then as in the proof of \cite[Theorem 6.11]{fkp:reldef}, we apply \cite[Theorem 3.3.3]{bellovin-gee-G} (and see the remarks in \S \ref{prelims} on the function field case) and \cite[Lemma 4.9]{fkp:reldef} to produce a finite extension $\mc{O}'$ of $\mc{O}$ (independent of $t$) and for all $v \in {\mc{S}}$ lifts $\rho'_v$ of $\br|_{\gal{F_v}}$ that are defined over $\mc{O}'$, are congruent modulo $\vpi^t$ to $\rho_v$, and correspond to formally smooth points of $\ov{R}_v[1/\vpi]$. We replace $\mc{O}$ by $\mc{O}'$ and the $\rho_v$ by the $\rho'_v$ but retain the notation $\rho_v$, $\mc{O}$ for these replacements.\footnote{Note that this is our only enlargement of $\mc{O}$; it depends only on the local data at primes in ${\mc{S}}$ and occurs before we apply any of our global lifting results. In particular, in all applications of the arguments of \S \ref{doublingsection}, the residue field $k$ remains fixed once and for all.}

%We let $M_1$ be any integer large enough to satisfy the hypotheses of Theorem \ref{p^Nlift} and, 
Applying \cite[Corollary B.2]{fkp:reldef}, we let $M_1$ be any integer large enough that the map
\[
H^1(\Gamma, \fgder \otimes_{\mc{O}} \mc{O}/\vpi^M) \to H^1(\Gamma, \br(\fgder) \otimes_{\mc{O}} k)
\]
is identically zero for any $M \geq M_1$, where $\Gamma$ denotes the preimage in $G^{\mr{ad}}(\mc{O})$ of $(\Ad \circ \br)(\gal{\wt{F}}) \subset G^{\mr{ad}}(k)$. We fix an $M \geq \max\{M_1, eD\}$, which for technical reasons as in \cite[\S 6]{fkp:reldef} we assume to be divisible by $e$. 

To find $\wt{\mc{S}}$ disjoint from a fixed finite set of primes $\mc{S}_0$, itself disjoint from $\mc{S}$, we replace $\mc{S}$ by $\mc{S} \cup \mc{S}_0$ and specify unramified local lifts at the places in $\mc{S}_0$. We thus may and do assume $\mc{S}_0$ is empty. Let $\mc{T} \supset \mc{S}$ be the finite enlargement taken in the statement of Theorem \ref{p^Nlift}. We then run the first $eD$ steps of the inductive argument of Theorem \ref{p^Nlift}, producing a lift $\rho_{eD} \colon \gal{F, {\mc{T}}_{eD}} \to G(\mc{O}/\vpi^{eD})$ satisfying the conclusions of \textit{loc.~cit.}. In the course of this argument, for any prime $w \in {\mc{T}}_{eD} \setminus {\mc{S}}$ at which $\rho_{eD}$ is ramified, we have fixed a lift $\rho_w \colon \gal{F_w} \to G(\mc{O})$ as in the theorem. We now for notational convenience enlarge ${\mc{S}}$ to include this finite set of primes. It is still the case that for all $v \in {\mc{S}}$ we have the fixed irreducible components $\ov{R}_v[1/\vpi]$ with their formally smooth $E$-points corresponding to the lifts $\rho_v$. We apply \cite[Proposition 4.7]{fkp:reldef} with $r_0=M$ to each $(\ov{R}_v[1/\vpi], \rho_v)$ for $v \in {\mc{S}}$ and let $N_0$ be the maximum of all the integers $n_0=n_0(v)$ produced by that result. We fix  
an integer $N$, which we may assume divisible by $e$, as in Equation (12) of \cite[Theorem 6.11]{fkp:reldef}---in brief, large enough relative to $t$, $M$, the singularities of the lifting rings $\ov{R}_v$, and such that if a mod $\vpi^N$ lift has maximal image, so does any further lift. Finally, we continue the induction of Theorem \ref{p^Nlift}, lifting $\rho_{eD}$ to a $\rho_N \colon \gal{F, {\mc{T}}_N} \to G(\mc{O}/\vpi^N)$ satisfying that theorem's conclusions.

We note that we have the Selmer conditions $L_{r, v}$ for $v \in {\mc{T}}_N$ and $1 \leq r \leq M$ needed in the proof of \cite[Theorem 6.11]{fkp:reldef}: for $v \in {\mc{S}}$, \cite[Proposition 4.7]{fkp:reldef} provides these, for $v \in {\mc{T}}_N \setminus {\mc{S}}$ at which $\rho_N$ is unramified we can take the unramified classes, and for $v \in {\mc{T}}_N \setminus {\mc{S}}$ at which $\rho_N$ ramifies, we can apply \cite[Lemma 3.5]{fkp:reldef} with $s= eD$: indeed, since we have absorbed the earlier set $\mc{T}_{eD}$ into $\mc{S}$, any $v \in \mc{T}_N \setminus \mc{S}$ at which $\rho_N$ ramifies has the property that $\rho_N|_{\gal{F_v}}$ is unramified modulo $\vpi^{eD}$, so falls under Case 1b of Theorem \ref{p^Nlift}. We can then define the associated relative Selmer group $\ov{H^1_{\mc{L}_M}(\gal{F, \mc{T}_N}, \rho_M(\fgder))}$ and relative dual Selmer group $\ov{H^1_{\mc{L}_M^\perp}(\gal{F, \mc{T}_N}, \rho_M(\fgder)^*)}$ (see \cite[Definition 6.2]{fkp:reldef}). To simplify the notation and for consistency with the notation of \cite[Theorem 6.11]{fkp:reldef} we now write $\mc{S}'$ for the set $\mc{T}_N$. The proof of the theorem finishes as in \cite[Theorem 6.11]{fkp:reldef} (see especially the proof of Claim 6.14 of \textit{loc. cit.}), provided we can generalize the argument that adds auxiliary trivial primes, drawn from the set $\mc{Q}_N$ of \cite[Definition 6.6]{fkp:reldef}, to annihilate the relative Selmer and dual Selmer groups. An inspection of \cite[\S 6]{fkp:reldef} shows that the crucial \cite[Theorem 6.9]{fkp:reldef} follows formally once we have established the analogue of Proposition 6.8, which in turn relies on Lemma 6.4, of \textit{loc.~cit.}; in the function field case, not discussed explicitly in \textit{loc. cit.}, we simply note that the Selmer conditions $\mc{L}_M$ of Proposition 6.8 satisfy the (``balanced") numerics of \cite[Proposition 4.7(3)]{fkp:reldef}, so the Greenberg--Wiles formula implies that Selmer and dual Selmer groups are ``balanced" in the sense that 
%(still in the notation of \cite[Proposition 6.8]{fkp:reldef})
%\[
%|H^1_{\mc{L}_M}(\gal{F, {\mc{S}}' \cup {\mc{Q}}}, \rho_M(\fgder))|= | H^1_{\mc{L}_M^\perp}(\gal{F, {\mc{S}}' \cup {\mc{Q}}}, \rho_M(\fgder)^*)|.
%\]
\[
|H^1_{\mc{L}_M}(\gal{F, \mc{T}_N}, \rho_M(\fgder))|= | H^1_{\mc{L}_M^\perp}(\gal{F, {\mc{T}_N}}, \rho_M(\fgder)^*)|.
\] 
(Note that \cite[Lemma 6.3]{fkp:reldef} then shows that the relative Selmer and dual Selmer groups are balanced.) We fill in the arguments generalizing Lemma 6.4 and Proposition 6.8 of \textit{loc.~cit.} in Lemma \ref{vanishing} and Proposition \ref{auxprimes} below; together these complete the proof of Theorem \ref{mainthm}. 
\end{proof}
We continue with the integers $M$ and $N$ produced in the first steps of the proof of Theorem \ref{mainthm}. For any integer $1 \leq r \leq N$, let $F_r$ be the fixed field $F(\br, \rho_r(\fgder))$. Set $F_M^*= F_M(\mu_{p^{M/e}})$, and set $F_N^*= F_N(\mu_{p^{N/e}})$ (we follow the notation of \cite[\S 6]{fkp:reldef}).
\begin{lemma}\label{vanishing}
With notation as above, we have:
\begin{itemize}
\item The map $H^1(\Gal(F_N^*/F), \rho_M(\fgder)) \to H^1(\Gal(F_N^*/F), \br(\fgder))$ is zero.
\item $H^1(\Gal(F_N^*/F), \rho_M(\fgder)^*)=0$.
\end{itemize}
\end{lemma}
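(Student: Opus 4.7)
The plan is to reduce both statements to the Lazard-type vanishing recorded in Lemma B.2 of \cite{fkp:reldef}---that for $M \geq M_1$ the reduction map $H^1(\Gamma, \fgder \otimes_{\mc{O}} \mc{O}/\vpi^M) \to H^1(\Gamma, \br(\fgder))$ vanishes, and its $\br(\fgder)^*$-analogue---via inflation-restriction. The key preliminary observation is that since $\rho_N(\gal{F})$ contains $\wh{G}^{\mr{der}}(\mc{O}/\vpi^N)$ and $N \geq M$, the image of $\Ad \circ \rho_M$ in $G^{\mr{ad}}(\mc{O}/\vpi^M)$ equals $\Gamma_M$, the mod-$\vpi^M$ reduction of the $p$-adic Lie group $\Gamma$; hence $\Gal(F_M/F) = \Gamma_M$.

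For the first bullet, I would apply the five-term inflation-restriction sequence to
\[
1 \to \Gal(F_N^*/F_M) \to \Gal(F_N^*/F) \to \Gamma_M \to 1
\]
for both $\rho_M(\fgder)$ and $\br(\fgder)$ (the normal subgroup acts trivially on both) and chase the comparison square. The inflated piece $H^1(\Gamma_M, \rho_M(\fgder)) \to H^1(\Gamma_M, \br(\fgder))$ vanishes by further inflating to $\Gamma$---these inflations are injective because the level-$M$ congruence subgroup acts trivially on the coefficients modulo $\vpi^M$---and invoking Lemma B.2 of \cite{fkp:reldef}. For the restriction piece, which after reduction lands in $\Hom_{\Gamma_M}(\Gal(F_N^*/F_M), \br(\fgder))$, I would identify $\Gal(F_N^*/F_M)$ as an extension of the cyclotomic piece $\Gal(F_M(\mu_{p^{N/e}})/F_M)$ by $\Gal(F_N/F_M)$, where exp-log identifies the latter with a $\Gamma_M$-stable filtration whose graded pieces are subquotients of $\br(\fgder)$; the first clause of Assumption \ref{modp^Nhyp} (no trivial submodule in $\br(\fgder)$) then rules out the surviving equivariant homs.

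The second bullet follows from an analogous argument using $F_M^*$ in place of $F_M$: the subgroup $\Gal(F_N^*/F_M^*)$ acts trivially on $\rho_M(\fgder)^*$ precisely because $\mu_{p^{M/e}} \subset F_M^*$ trivializes the cyclotomic twist. Vanishing of $H^1(\Gal(F_M^*/F), \rho_M(\fgder)^*)$ again reduces by injective inflation to the dual-coefficient analogue of Lemma B.2, combined with $H^0(\gal{F}, \br(\fgder)^*) = 0$ from Assumption \ref{finalhyp}. For the restriction term, $\Hom_{\Gal(F_M^*/F)}(\Gal(F_N^*/F_M^*), \rho_M(\fgder)^*)$, the crucial point is that any nonzero equivariant hom reduces modulo $\vpi$ to an $\Fp[\gal{F}]$-module surjection from a subquotient of $\br(\fgder)$ onto a subquotient of $\br(\fgder)^*$, which is exactly what the second clause of Assumption \ref{modp^Nhyp} excludes.

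The main obstacle is the restriction-piece analysis in each bullet: one must unwind the $\Gamma_M$-module structure of the $\rho_N$-congruence filtration of $\Gal(F_N^*/F_M)$ (respectively $\Gal(F_N^*/F_M^*)$) through the exp-log identification, and in the present reducible setting---where $\br(\fgder)$ need not be semisimple---verify that Assumption \ref{modp^Nhyp}, formulated carefully in terms of subquotients rather than just subrepresentations, really excludes every equivariant map that could arise in the filtration. This is the precise point at which the semisimplicity hypothesis of \cite{fkp:reldef} must be traded for the more delicate submodule/subquotient conditions of Assumption \ref{modp^Nhyp}.
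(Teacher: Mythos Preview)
There is a genuine gap in your first-bullet argument. When you take $F_M$ as the intermediate field, the restriction term $\Hom_{\Gamma_M}(\Gal(F_N^*/F_M), \br(\fgder))$ is \emph{not} killed by the first clause of Assumption~\ref{modp^Nhyp}. Indeed, since $\rho_N(\gal{F}) \supset \wh{G}^{\mr{der}}(\mc{O}/\vpi^N)$, the group $\Gal(F_N/F_M)$ is the level-$M$ congruence subgroup modulo $\vpi^N$, whose abelianization (for $N \geq 2M$) is $\fgder \otimes \vpi^M\mc{O}/\vpi^{2M}\mc{O}$ with the adjoint $\Gamma_M$-action; this surjects equivariantly onto $\br(\fgder)$, so the identity of $\br(\fgder)$ gives a nonzero element of your Hom-group that no ``no trivial submodule'' hypothesis can exclude. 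Moreover, even if you had shown the restriction of $\bar\phi$ vanishes, the diagram chase is incomplete: that only places $\bar\phi$ in the image of inflation from $H^1(\Gamma_M, \br(\fgder))$, not in the image of the reduction map from $H^1(\Gamma_M, \rho_M(\fgder))$, so the Lazard vanishing on the left column tells you nothing about $\bar\phi$.

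The paper repairs this by taking $F_N$ (not $F_M$) as the intermediate field. Then $\Gal(F_N^*/F_N)$ is purely cyclotomic and, by Lemma~\ref{abelian}, carries the trivial $\Gal(F_N/F)$-action; combined with $\rho_M(\fgder)^{\gal{F}}=0$, the restriction term in $\rho_M(\fgder)$-coefficients vanishes outright, so every class is inflated from $H^1(\Gal(F_N/F), \rho_M(\fgder))$ and the Lazard input finishes the job. For the second bullet, the ``dual-coefficient analogue of Lemma~B.2'' you invoke does not exist as stated: the $p$-adic group $\Gamma$ of \cite[Lemma~B.2]{fkp:reldef} carries no cyclotomic action, so $\rho_M(\fgder)^*$ is not a $\Gamma$-module. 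The paper instead d\'evissages $\rho_M(\fgder)^*$ into copies of $\br(\fgder)^*$, runs inflation-restriction over $K$ (using Assumption~\ref{generalhyp} for the inflated term), and reduces via Lemma~\ref{disjointmodp^N} and the identification $\wh{G}^{\mr{der}}(\mc{O}/\vpi^N)^{\mr{ab}} \cong \br(\fgder)$ to $\Hom_{\Fp[\gal{F}]}(\br(\fgder)\oplus \Z/p, \br(\fgder)^*)=0$, which is precisely what Assumptions~\ref{modp^Nhyp} and~\ref{finalhyp} supply.
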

\begin{proof}
$\Gal(F_N/F)$ acts trivially on the maximal $p$-power quotient of $\Gal(F_N^*/F_N)$, by Lemma \ref{abelian}. The argument of \cite[Lemma 6.4]{fkp:reldef}, which depends on the way we have arranged $M$ as at the start of Theorem \ref{mainthm}, now applies to show the first vanishing claim. Indeed, by the remark in the first sentence of the proof, $H^1(\Gal(F_N^*/F_N), \rho_M(\fgder))^{\Gal(F_N/F)}= 0$
since $\rho_M(\fgder)^{\gal{F}}= 0$, and so by inflation-restriction we are reduced to the (already arranged) vanishing of the reduction map 
\[
H^1(\Gal(F_N/F), \rho_M(\fgder)) \to H^1(\Gal(F_N/F), \br(\fgder)).
\]

The second assertion of the Lemma reduces as in \cite[Lemma 6.4]{fkp:reldef} to showing that 
\[
\Hom_{\Gal(K/F)}(\Gal(F_N^*/K), \br(\fgder)^*)=0.
\] 
By Lemma \ref{disjointmodp^N}, we reduce to showing that $\Hom_{\Fp[\gal{F}]}(\br(\fgder) \oplus \Z/p, \br(\fgder)^*)=0$. By Assumption \ref{finalhyp}, $\br(\fgder)^*$ contains no copy of the trivial $\Fp[\gal{F}]$-module, and by (a weakening of) Assumption \ref{modp^Nhyp}, $\Hom_{\gal{F}}(\br(\fgder), \br(\fgder)^*)=0$.
\end{proof}
We define the \v{C}ebotarev set ${\mc{Q}}_N$ of auxiliary primes as in \cite[Definition 6.6]{fkp:reldef}, and for $v \in {\mc{Q}}_N$ (which comes with a maximal torus $T$ and root $\alpha \in \Phi(G^0, T)$) we define the spaces of cocycles $L^{\alpha}_{r, v}$ for $1 \leq r \leq M$ as in \cite[Lemma 6.7]{fkp:reldef}.
\begin{prop}\label{auxprimes}
Assume that $\fgder$ consists of a single $\pi_0(G)$-orbit of simple factors. Let ${\mc{Q}}$ be any finite subset of ${\mc{Q}}_N$, and let $\phi \in H^1_{\mc{L}_{M}}(\Gamma_{F, {\mc{S}}' \cup {\mc{Q}}}, \rho_M(\fgder))$ and $\psi \in H^1_{\mc{L}_{M}^{\perp}}(\Gamma_{F, {\mc{S}}' \cup {\mc{Q}}}, \rho_M(\fgder)^*)$ be such that $0 \neq \ov{\phi} \in \ov{H^1_{\mc{L}_{M}}(\Gamma_{F, {\mc{S}}' \cup {\mc{Q}}}, \rho_M(\fgder))}$ and $0 \neq \ov{\psi} \in \ov{H^1_{\mc{L}_{M}^{\perp}}(\Gamma_{F, {\mc{S}}' \cup {\mc{Q}}}, \rho_M(\fgder)^*)}$.\footnote{These are the \textit{relative} Selmer and dual Selmer groups of \cite[Definition 6.2]{fkp:reldef}.} Then there exists a prime $v \in {\mc{Q}}_N$, with associated torus and root $(T, \alpha)$, such that
  \begin{itemize}
  \item $\ov{\psi}|_{\Gamma_{F_v}} \notin L_{1,v}^{\alpha, \perp}$; and
  \item $\phi|_{\gal{F_v}} \not \in L^{\alpha}_{M, v}$.
  \end{itemize}
\end{prop}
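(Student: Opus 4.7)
The strategy generalizes \cite[Proposition 6.7]{fkp:reldef} to our possibly non-semisimple setting. My first step is to translate the two desired non-membership conditions into explicit non-vanishing statements about evaluations of $\phi$ and $\bar\psi$ on the decomposition group at $v$. From the description of $L^\alpha_{r,v}$ in \cite[Lemma 6.6]{fkp:reldef} as the span of unramified classes together with a distinguished family of ramified classes valued in root spaces associated to $\alpha$, together with the explicit local duality calculation at trivial primes in \cite[Lemma 3.8]{fkp:reldef}, both conditions reduce to prescribing that $\phi(\tau_v)$, respectively $\bar\psi(\tau_v)$, project non-trivially onto a specified quotient of $\rho_M(\fgder)$, respectively $\br(\fgder)^*$, modulo an ambiguity that can be absorbed into the unramified part at the Frobenius.

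Next, using Lemma \ref{vanishing} combined with inflation--restriction for the tower $F \subset F_N^* \subset F_N^*(\phi,\psi)$, I translate the hypothesis that $\bar\phi$ and $\bar\psi$ are non-zero in the relative Selmer groups into the statement that their restrictions to $\gal{F_N^*}$ yield non-trivial $\Gal(F_N^*/F)$-equivariant homomorphisms into $\br(\fgder)$ and $\br(\fgder)^*$ respectively; the first bullet of Lemma \ref{vanishing} rules out the case where $\bar\phi|_{\gal{F_N^*}}$ vanishes and the class is inflated from $\Gal(F_N^*/F)$, and the second bullet does the same for $\psi$. The corresponding fixed fields $L_{\bar\phi}, L_{\bar\psi}$ are then genuine Galois extensions of $F_N^*$ whose Galois groups are $\Gal(F_N^*/F)$-equivariant subquotients of $\br(\fgder)$ and $\br(\fgder)^*$.

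I then run a \v{C}ebotarev density argument in a suitably large Galois extension of $F$ containing $F_N^*$, $L_{\bar\phi}$, $L_{\bar\psi}$, and the field cutting out the \v{C}ebotarev condition defining $Q_N$. I choose a Frobenius conjugacy class whose projection to $\Gal(F_N^*/F)$ places $v \in Q_N$ with a particular torus/root pair $(T,\alpha)$, while the projections to $\Gal(L_{\bar\phi}/F_N^*)$ and $\Gal(L_{\bar\psi}/F_N^*)$ prescribe the target values of $\bar\phi(\tau_v)$ and $\bar\psi(\tau_v)$. The linear-disjointness content of Lemma \ref{vanishing} and of Lemma \ref{disjointmodp^N} (with $n-1$ replaced by $M$) ensures that these three families of conditions are independent and so can be jointly imposed.

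The main obstacle is to exhibit a single pair $(T,\alpha)$ admissible in $Q_N$ for which the forbidden subspaces in $\rho_M(\fgder)$ and $\br(\fgder)^*$ can be simultaneously avoided. In \cite{fkp:reldef} this step used the decomposition of $\br(\fgder)$ into isotypic components. Without semisimplicity, I instead rely on the hypothesis that $\fgder$ consists of a single $\pi_0(G)$-orbit of simple factors: as $\alpha$ varies over roots of tori arising in $Q_N$, the $\gal{F}$-translates of the root vectors $X_\alpha$ span $\fgder$ (and dually their perpendiculars exhaust $\br(\fgder)^*$), so some choice of $\alpha$ witnesses the required non-vanishing against both $\bar\phi$ and $\bar\psi$. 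Here the inputs $H^0(\gal{F},\br(\fgder)^*)=0$ from Assumption \ref{finalhyp} and the absence of a trivial submodule of $\br(\fgder)$ from Assumption \ref{modp^Nhyp} exclude the degenerate pairings that could otherwise force a cancellation, and a standard density count (allowing for the finitely many bad choices of $\alpha$) completes the argument.
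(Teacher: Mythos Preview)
There are two genuine gaps in your plan.

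First, you repeatedly phrase the local conditions in terms of $\phi(\tau_v)$ and $\bar\psi(\tau_v)$, and you propose to ``prescribe'' these values by a \v{C}ebotarev condition in $\Gal(L_{\bar\phi}/F_N^*)$ and $\Gal(L_{\bar\psi}/F_N^*)$. But any prime $v$ you produce lies outside $S'\cup Q$, so $\phi$ and $\psi$ are \emph{unramified} at $v$ and hence $\phi(\tau_v)=0$, $\bar\psi(\tau_v)=0$. The conditions $\phi|_{\gal{F_v}}\notin L^\alpha_{M,v}$ and $\bar\psi|_{\gal{F_v}}\notin L^{\alpha,\perp}_{1,v}$ translate (via \cite[Lemma 6.6, Lemma 3.8]{fkp:reldef}) into conditions on the \emph{Frobenius} values: one needs $\phi(\sigma_v)\notin \ker(\alpha|_{\mf{t}})\oplus\bigoplus_\beta \fg_\beta$ and $\bar\psi(\sigma_v)\notin \fg_\alpha^\perp$. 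A \v{C}ebotarev argument can indeed prescribe $\phi(\sigma_v)$ and $\bar\psi(\sigma_v)$, but your write-up does not reflect this, and the reduction step you sketch is accordingly off target.

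Second, and more seriously, you assert that Lemmas \ref{vanishing} and \ref{disjointmodp^N} give enough linear disjointness to impose the three conditions independently. They do not: those lemmas control the position of $F_N^*$ relative to $K_\infty$ and to fixed fields of $\br(\fgder)^*$-valued classes, but say nothing about the intersection $F_N^*(\phi)\cap F_N^*(\bar\psi)$ over $F_N^*$. The images $\phi(\gal{F_N^*})$ and $\bar\psi(\gal{F_N^*})$ are submodules of $\br(\fgder)$ and $\br(\fgder)^*$, and Assumption \ref{modp^Nhyp} only forbids surjections from \emph{all of} $\br(\fgder)$ onto subquotients of $\br(\fgder)^*$; it does not rule out a common nonzero quotient of these two submodules. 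The paper's proof explicitly sidesteps this: it first picks a single $\gamma_2\in\gal{F_N^*}$ with $\phi(\gamma_2)\neq 0$ and $\bar\psi(\gamma_2)\neq 0$ (which needs only that both restrictions are nonzero, not disjointness), uses the Lie-theoretic argument of \cite[Proposition 6.7]{fkp:reldef} to choose $(T,\alpha)$ adapted to this \emph{particular} pair of values, then chooses $\gamma_1\in\gal{K}$ realizing the $Q_N$-condition in $\Gal(F_N^*/F)$, and finally runs the three-term trick $\gamma_2^r\gamma_1$ for $r\in\{0,1,2\}$ to force both non-membership conditions simultaneously. This device is what replaces the disjointness you are assuming, and your plan is missing it.
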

\begin{proof}
By Lemma \ref{vanishing}, we see as in \cite[Proposition 6.8]{fkp:reldef} that $\phi|_{\gal{F_N^*}}$ and $\bar{\psi}|_{\gal{F_N^*}}$ are non-zero. Fix a $\gamma_2 \in \gal{F_N^*}$ such that $\phi(\gamma_2)$ and $\bar{\psi}(\gamma_2)$ are both non-zero (note that this requires no linear disjointness over $F_N^*$ of $F_N^*(\phi)$ and $F_N^*(\bar{\psi})$---it only needs that these extensions are non-trivial). In the proof in \textit{loc.~cit.}, we replace $Y_{\bar{\psi}}$ by
\[
Y_{\bar{\psi}(\gamma_2)}=\{g \in G: \langle \bar{\psi}(\gamma_2), \fg_{\alpha_g}\rangle =0\},
\]
a proper closed subscheme of $G_k$, and we similarly replace $U_{\bar{\psi}}$ by $U_{\bar{\psi}(\gamma_2)}= G \setminus Y_{\bar{\psi}(\gamma_2)}$ and $U_{\bar{\psi}, M}$ by $U_{\bar{\psi}(\gamma_2), M}$, the set of elements $g \in G(\mc{O}/\vpi^M)$ that reduce modulo $\vpi$ to $U_{\bar{\psi}(\gamma_2)}(k)$. The purely Lie-theoretic part of the argument of \textit{loc.~cit.} then as before yields (for $p \gg_G 0$) a pair $(T, \alpha)$ consisting of a split maximal torus $T$ and a root $\alpha \in \Phi(G^0, T)$ such that $\phi(\gamma_2)$ is not contained in $\ker(\alpha|_{\mr{Lie}(T)}) \oplus \bigoplus_{\beta} \mathfrak{g}_{\beta}$, and such that $\bar{\psi}(\gamma_2)$ is not contained in the annihilator of $\mathfrak{g}_{\alpha}$ under local duality.

Now as in \textit{loc.~cit.}, we choose an element $\gamma_1 \in \gal{K}$ such that $\rho_N(\gamma_1)$ is an element of $T(\mc{O}/\vpi^N)$    satisfying
\begin{itemize}
\item the image of $\rho_M(\gamma_1)$ in $T(\mc{O}/\vpi^M)$ is trivial mod center (in fact, the reduction at the start of Theorem \ref{mainthm} ensures $Z_{G^0}$ is trivial);
\item $\kappa(\gamma_1) \equiv 1 \pmod {\vpi^M}$ but $\kappa(\gamma_1) \not \equiv 1 \pmod{\vpi^{M+1}}$;
\item for all roots $\beta \in \Phi(G^0, T)$, $\beta(\rho_{M+1}(\gamma_1) \not \equiv 1 \pmod {\vpi^{M+1}}$;
\item $\alpha(\rho_N(\gamma_1))=\kappa(\gamma_1)$.
\end{itemize}
Such a $\gamma_1$ exists because $\rho_N(\gal{F})$ contains $\wh{G^{\mr{der}}}(\mc{O}/\vpi^N)$, and $F_N(\mu_{p^{M/e}})$ is linearly disjoint from $F_M(\mu_{p^{N/e}})$ over $F_M(\mu_{p^{M/e}})$: thus we can take $\gamma_1$ trivial on $F_M(\mu_{p^{M/e}})$ but with $\rho_N(\gamma_1)$ any element that is trivial mod $\vpi^M$, and $\kappa(\gamma_1)$ any element of $\ker((\Z/p^{N/e})^\times \to (\Z/p^{M/e})^\times)$. Noting that $M \geq eD$, the linear disjointness follows as in Lemma \ref{disjointmodp^N}, which shows that $K(\rho_N(\fgder))$ is disjoint from $K_{\infty}$ over $K$. The properties desired of $\gamma_1$ are moreover determined by its restriction to $F_N^*$.

We now consider expressions $\phi(\gamma_2^r \gamma_1)= r\phi(\gamma_2)+\phi(\gamma_1)$ and $\bar{\psi}(\gamma_2^r \gamma_1)= r\bar{\psi}(\gamma_2)+\bar{\psi}(\gamma_1)$ for $r \in \{0,1,2\}$. By the construction of $\gamma_2$, we see that for some $r$, $\phi(\gamma_2^r \gamma_1)$ and $\bar{\psi}(\gamma_2^r \gamma_1)$ do not belong, respectively, to $\ker(\alpha|_{\mf{t}}) \oplus \bigoplus_{\beta} \fg_{\beta}$ and $\fg_{\alpha}^\perp$.\footnote{If the desired condition fails for both $\phi(\gamma_1)$ and $\bar{\psi}(\gamma_1)$, we take $r=1$ and are done. If it succeeds for both, we take $r=0$. If it fails for one (say $\phi$) but not the other, we first take $r=1$; we win for $\phi$ then but we might have created a problem for $\bar{\psi}$; but then if we take $r=2$ instead we'll win for both.} We conclude by defining the desired \v{C}ebotarev class of primes of $F$ to be those $v$ such that $\sigma_v$ lies in the conjugacy class of $\gamma_2^r \gamma_1 \in \Gal(F_N^*(\phi, \bar{\psi})/F)$.
\end{proof}
In \S \ref{GL2section}, we will use the following refinement:
\begin{cor}\label{gl2bound}
When $\fgder= \mf{sl}_2$, the requisite bound on $p$ in Theorem \ref{mainthm} is simply $p \geq 3$.
\end{cor}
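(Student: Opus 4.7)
The plan is to audit the proof of Theorem \ref{mainthm} together with its supporting results (Proposition \ref{5.9-5.11sub}, Theorem \ref{p^Nlift}, Proposition \ref{doublingprop}, Proposition \ref{auxprimes}, and the cited lemmas of \cite{fkp:reldef}), isolating each invocation of $p \gg_G 0$ and verifying that each one suffices with $p \geq 3$ when $\fgder = \mf{sl}_2$. The initial reduction in \cite[Claim 6.12]{fkp:reldef} to the case where $G^0$ is adjoint and $\fgder$ is a single $\pi_0(G)$-orbit of simple factors is automatic here. The invocations of large $p$ that serve only to control group-theoretic denominators---non-degeneracy of the Killing form (used to identify $\br(\fgder)$ with $\br(\fgder)^*$ in Lemma \ref{vanishing}), coprimality with the Weyl group order $|W| = 2$, and the hypotheses of \cite[Lemma B.2]{fkp:reldef}---all reduce for $\mf{sl}_2$ to the single condition $p \neq 2$, i.e., $p \geq 3$.

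The one genuinely Lie-theoretic step is the construction in the proof of Proposition \ref{auxprimes}: given non-zero vectors $u = \phi(\gamma_2) \in \fg$ and $v = \bar{\psi}(\gamma_2) \in \fg^*$, we must produce a split maximal torus $T$ and a root $\alpha \in \Phi(G^0, T)$ with $u \notin \ker(\alpha|_{\mf{t}}) \oplus \bigoplus_{\beta \neq \alpha} \fg_\beta$ and $v \notin \fg_\alpha^\perp$. For $\mf{sl}_2$ the one-dimensional torus makes $\ker(\alpha|_{\mf{t}}) = 0$, and the only root other than $\alpha$ is $-\alpha$, so the conditions simplify to $u \notin \fg_{-\alpha}$ and $v \notin \fg_\alpha^\perp$. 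Parametrizing pairs $(T, \alpha)$ by $k$-rational Borel subalgebras, i.e.\ by $\mathbb{P}^1(k)$, the bad locus for the first condition consists of at most one $k$-point (and is empty unless $u$ is nilpotent), while the bad locus for the second corresponds to the intersection of the nilpotent conic in $\mathbb{P}(\mf{sl}_2)$ with the hyperplane cut out by $v$, hence has at most two $k$-points. Since $|\mathbb{P}^1(k)| \geq p + 1 \geq 4$ for $p \geq 3$, a good pair $(T,\alpha)$ always exists.

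Finally, the ``general position'' torus elements produced in Theorem \ref{p^Nlift} and Proposition \ref{doublingprop} via \cite[Lemma 5.13]{fkp:reldef} demand $t \in T(\mc{O}/\vpi^n)$ with $\alpha(t) = q$ non-trivial and $\beta(t) \not\equiv 1$ for every root $\beta$. For $\mf{sl}_2$ in the adjoint quotient, $T \cong \mathbb{G}_m$ and the only root aside from $\alpha$ is $-\alpha$, so the constraint collapses to $q \not\equiv 1$, which is already ensured by construction. I expect this accounting to present no real difficulty; the one place the inequality $p \geq 3$ is genuinely needed is the elementary point-counting on $\mathbb{P}^1(k)$ above, and the rest is simply bookkeeping confirming that every other ``$p \gg_G 0$'' in the argument is satisfied by $p \neq 2$ in the $\mf{sl}_2$ setting.
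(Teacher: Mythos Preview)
Your audit strategy matches the paper's, but your handling of the Proposition \ref{auxprimes} step contains a genuine error. The condition on $u = \phi(\gamma_2)$ is that $u \notin \ker(\alpha|_{\mf{t}}) \oplus \bigoplus_{\beta} \fg_\beta$ with $\beta$ ranging over \emph{all} roots, not $\beta \neq \alpha$ as you write. The paper's own proof of the corollary makes this explicit: after identifying $\fgder \cong (\fgder)^*$ via the trace form, the requirement becomes $\Ad(g)(\ov{A}) \notin \left(\begin{smallmatrix} 0 & * \\ * & 0 \end{smallmatrix}\right)$, i.e.\ nonzero diagonal component. Your simplified condition $u \notin \fg_{-\alpha}$ is therefore too weak, and your $\mathbb{P}^1(k)$ counting breaks down: the corrected first condition depends on the full pair $(T,\alpha)$ (equivalently, on an ordered pair of opposite Borels), not on a single Borel, so ``at most one bad point'' is simply false. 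Indeed, pairs $(T,\alpha)$ are not in bijection with Borel subalgebras in the first place, so the parametrization itself needs more care.

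The paper instead fixes the diagonal torus and upper-triangular root and finds $g \in \mr{SL}_2(\Fp)$ by a short three-case analysis according to which of the two conditions already hold at $g=1$. Your geometric idea can in fact be salvaged---parametrize $(T,\alpha)$ by ordered pairs of distinct points of $\mathbb{P}^1(k)$, note the second condition excludes at most two choices of the first point while the corrected first condition then excludes at most one further choice of the second, leaving room once $|k| \geq 3$---but as written it does not go through. You also omit one item the paper explicitly checks: that the $k$-span of all root vectors equals $\fgder$ (needed for Proposition \ref{5.9-5.11sub} via \cite[Proposition 5.9, Lemma 5.11]{fkp:reldef}); this is elementary for $\mf{sl}_2$ when $p \geq 3$ but should be noted.
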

\begin{proof}
We check the various points in the argument where the assumption $p \gg_G 0$ enters, making it explicit for $\fgder= \mf{sl}_2$. Following \cite[Remark 6.17]{fkp:reldef}, we observe the following:
\begin{itemize}
\item The proof of Proposition \ref{5.9-5.11sub} (building on \cite[Proposition 5.9, Lemma 5.11]{fkp:reldef}) requires us to check that the $k$-span of all root vectors in $\fgder$ is equal to all of $\fgder$. This is satisfied for all $p \geq 3$.
\item The selection of suitably ``general position" lifts at trivial primes needed in Theorem \ref{p^Nlift} comes from \cite[Lemma 5.13]{fkp:reldef}: it needs only a mod $\vpi$ element of the Lie algebra $\mf{t}$ of a maximal torus in $G^{\mr{der}}$ on which the (positive) root is non-zero, i.e. it needs only $p \neq 2$.
\item To produce the pair $(T, \alpha)$ in Proposition \ref{auxprimes} requires, in light of the proof of \cite[Proposition 6.8]{fkp:reldef}, to show that for $p \gg_G 0$, and any non-zero pair of elements $(\ov{A}, \ov{B}) \in \fgder \times (\fgder)^*$, there is an element $g \in G(k)$ such that $\Ad(g) \ov{A} \not \in \ker(\alpha|_{\mf{t}}) \oplus \bigoplus_{\beta} \fg_\beta$ and $\Ad(g) \ov{B} \not \in \fg_{\alpha}^\perp$. (Here $(T, \alpha)$ is a fixed initial choice of maximal torus and root, which the argument then modifies by conjugating by such a $g$.) We may assume $T$ is the diagonal maximal torus, $\alpha$ the positive root for the upper-triangular Borel, and, after identifying $\fgder \sim (\fgder)^*$ via the standard trace form (a perfect duality for $p \geq 3$), we must simultaneously arrange that $\Ad(g) (\ov{A}) \not \in \begin{pmatrix} 0 & *\\ * & 0 \\ \end{pmatrix}$, and $\Ad(g) (\ov{B}) \not \in \begin{pmatrix} * & * \\ 0 & * \\ \end{pmatrix}$. 
If these conditions hold with $g=1$, we are done; otherwise, there are three cases, depending on whether both conditions fail, or one of the two conditions fails. It is straightforward in each case to check that for $p \geq 3$ we can simultaneously conjugate these matrices by an element of $\mr{SL}_2(\Fp)$ to arrange that $\ov{A}$ has non-zero diagonal component, and $\ov{B}$ has non-zero $\fg_{-\alpha}$-component.
\end{itemize}
The other uses of $p \gg_G 0$ in the main theorem of \cite{fkp:reldef} were all to arrange that an irreducibility assumption on $\br$ implied the more technical hypotheses of the lifting method; we have instead checked these hypotheses directly, in Lemma \ref{GL2lemma}.
\end{proof}

When $F$ is a number field, but $\br$ is not necessarily totally odd, we can by the same methods prove a theorem producing not necessarily geometric, but finitely ramified, lifts:
\begin{thm}\label{nongeometriclifts}
Let $\br \colon \gal{F, {\mc{S}}} \to G(k)$ satisfy all of these hypotheses of Theorem \ref{mainthm} except:
\begin{itemize}
\item $F$ is now any number field, and in particular we do not assume $\br$ is totally odd.
\item For $v \vert p$, we assume that $\br|_{\gal{F_v}}$ has a lift $\rho_v \colon \gal{F_v} \to G(\mc{O})$ of multiplier $\mu$ such that $\rho_v$ corresponds to a formally smooth point on an irreducible component of the generic fiber $R^{\square, \mu}_{\br|_{\gal{F_v}}}[1/ \vpi]$ that has dimension $(1+[F_v: \Q_p]) \dim(G^{\mr{der}})$.
\end{itemize}
Then for some finite set of primes $\wt{{\mc{S}}} \supset {\mc{S}}$ and finite extension $\mc{O}'$ of $\mc{O}$, $\br$ admits a lift $\rho \colon \gal{F, \wt{{\mc{S}}}} \to G(\mc{O}')$ with image containing $\wh{G^{\mr{der}}}(\mc{O}')$, and $\rho$ may be arranged such that for all $v \in {\mc{S}}$, $\rho|_{\gal{F_v}}$ is congruent modulo $\vpi^t$ to some $\wh{G}(\mc{O}')$-conjugate of $\rho_v$.
\end{thm}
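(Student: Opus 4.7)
The plan is to follow the proof of Theorem \ref{mainthm} essentially verbatim, making two modifications: at places $v \mid p$ we replace the Hodge-type local lifting ring by the unrestricted local lifting ring $R^{\square, \mu}_{\br|_{\gal{F_v}}}[1/\vpi]$, restricted to the specified component of generic fiber dimension $(1 + [F_v:\Q_p])\dim G^{\mathrm{der}}$; and we drop the totally real and totally odd hypotheses on $(F, \br)$. The key idea is that the extra $p$-adic dimension at $v \mid p$ is exactly what is needed, in the Greenberg--Wiles bookkeeping, to compensate for the loss of the archimedean balance that was provided by oddness.

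First I would reduce to the case where $G^0$ is adjoint and $\fgder$ consists of a single $\pi_0(G)$-orbit of simple factors, as in the opening paragraph of the proof of Theorem \ref{mainthm}. Then, since the formal-smoothness approximation of \cite[Theorem 3.3.3]{bellovin-gee-G} combined with \cite[Lemma 4.9]{fkp:reldef} requires only that the prescribed $\rho_v$ lie on a component whose smooth locus is non-empty, I can replace each $\rho_v$ for $v \in S$ by a $\vpi^t$-congruent formally smooth $\mc{O}'$-point of the same component (enlarging $\mc{O}$ to some $\mc{O}'$ once and for all, depending only on the initial local data). Next I would run the inductive construction of Theorem \ref{p^Nlift}---which invokes only Assumptions \ref{generalhyp} and \ref{modp^Nhyp} and is insensitive to oddness or Hodge-theoretic data---to produce a mod $\vpi^N$ lift $\rho_N \colon \gal{F, T_N} \to G(\mc{O}/\vpi^N)$ for suitably large $N$, with the prescribed behavior at $v \in S$ and the auxiliary trivial primes, together with local Selmer conditions $L_{r, v}$ supplied by \cite[Proposition 4.7]{fkp:reldef} at $v \in S$ and by \cite[Lemma 3.5]{fkp:reldef} at auxiliary primes.

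Finally I would deploy the relative deformation theory of \cite[\S 6]{fkp:reldef}, using Proposition \ref{auxprimes} together with Lemma \ref{vanishing} to annihilate the relative Selmer and dual-Selmer groups by adjoining auxiliary trivial primes. The crucial input is that the modified Selmer conditions at $v \in S$ still satisfy the ``balanced'' numerics of \cite[Proposition 4.7(3)]{fkp:reldef}. For $v \mid p$ with the full-dimensional component, Tate's local Euler characteristic formula together with local duality yields
\[
\dim_k \mc{L}_v - h^0(\gal{F_v}, \br(\fgder)) = h^0(\gal{F_v}, \br(\fgder)^*) + [F_v:\Q_p]\dim G^{\mathrm{der}},
\]
and summing over $v \mid p$ contributes an extra $[F:\Q]\dim G^{\mathrm{der}}$ beyond the ``automatic'' duality terms. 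This absorbs the archimedean defect $-\sum_{v \mid \infty} h^0(\gal{F_v}, \br(\fgder))$ in the Greenberg--Wiles formula (which in magnitude is bounded by $[F:\Q]\dim G^{\mathrm{der}}$), restoring the equality $|H^1_{\mc{L}}| = |H^1_{\mc{L}^\perp}|$ up to the controlled error terms accounted for in \cite[Proposition 4.7]{fkp:reldef}.

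The main obstacle I expect is precisely this balance verification at $v \mid p$: one must confirm that a formally smooth point on the specified full-dimensional component of $R^{\square, \mu}_{\br|_{\gal{F_v}}}[1/\vpi]$ yields a Selmer tangent space $\mc{L}_v$ whose dimension matches the displayed formula, and that the resulting global Selmer/dual-Selmer balance really is what the relative deformation argument of Proposition \ref{auxprimes} demands (non-negative Selmer minus dual Selmer, which the archimedean inequality above ensures). Once this bookkeeping is pinned down, the remaining ingredients---the inductive doubling construction of Section \ref{doublingsection}, the disjointness statements (Lemmas \ref{etadisjoint} and \ref{disjointmodp^N}), the cohomological vanishing of Lemma \ref{vanishing}, and the auxiliary-prime construction of Proposition \ref{auxprimes}---transfer from the proof of Theorem \ref{mainthm} without any substantive change, since none of them ever invoked totally real or oddness hypotheses.
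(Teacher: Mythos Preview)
Your proposal is correct and follows the paper's approach, which simply invokes the technique of Theorem \ref{mainthm} and refers to \cite[Theorem 6.20]{fkp:reldef} for this variant. You have correctly isolated the essential adjustment: the full-dimensional local condition at $v \mid p$ ensures that relative Selmer dominates relative dual Selmer (rather than exactly balancing it, as in the odd case), and this inequality is all that the auxiliary-prime argument of Proposition \ref{auxprimes} requires to drive dual Selmer to zero.
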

\begin{proof}
The technique of Theorem \ref{mainthm} still applies; see \cite[Theorem 6.21]{fkp:reldef} for how to carry out this minor variant.
\end{proof}
\section{Deducing the residually split case}\label{latticesection}

Throughout this section we assume that $G$ is connected.

\smallskip

We will show how to construct irreducible lifts of certain
representations $\br \colon \gal{F, {\mc{S}}} \to G(k)$ which do not satisfy
the assumption that $H^0(\gal{F, {\mc{S}}}, \br(\fgder)) = 0$ (which is
needed for Theorem \ref{mainthm}). Our main interest is when $\br$ is
$G$-completely reducible ($G$-cr) but not $G$-irreducible, in which
case the vanishing never holds, but the method can be applied more
generally. The main idea is that if $\br(\gal{F,{\mc{S}}})$ is contained in
$M(k)$, where $M$ is a Levi subgroup of a parabolic subgroup $P$ of
$G$, then we can often find a finite set ${\mc{T}}$ of trivial primes and a
representation $\br': \gal{F,{\mc{S}} \cup {\mc{T}}} \to G(k)$ with image contained
in $P(k)$, such that the projection of $\br'$ to $M(k)$ is $\br$ and
$H^0(\gal{F, {\mc{S}} \cup {\mc{T}}}, \br'(\fgder)) = 0$.  The following lemma shows
that if we can lift $\br'$, then we can also lift $\br$.
\begin{lemma}\label{Glattices}
  Let $\rho \colon \Gamma \to G(\mc{O})$ be a continuous
  representation of a profinite group $\Gamma$.  For any parabolic
  subgroup $P$ such that $\br(\Gamma)$ is contained in $P(k)$, we fix
  a maximal torus and Borel subgroup $T \subset B \subset P$; these
  determine a Levi subgroup $M$ of $P$ and a projection $P \to
  M$. Then there is a finite totally ramified extension $K'/K$ with
  ring of integers $\mc{O}'/\mc{O}$ and a $g \in G(K')$ such that
  $g \rho g^{-1} \colon \Gamma \to G(\mc{O}')$ and
  $\ov{g \rho g^{-1}}$ factors through $M(k)$ and is equal to the
  projection of $\br$ to $M(k)$.  In particular, if $P$ is minimal
  with respect to the property that $\br(\Gamma)$ is contained in
  $P(k)$, then $\ov{g \rho g^{-1}}$ belongs to the unique
  $G(k)$-conjugacy class of homomorphisms $\Gamma \to G(k)$
  representing ``the'' semisimplification of $\br$.
\end{lemma}
  
\begin{proof}
  Let $P$ be a parabolic subgroup of $G$ containing a Borel $B$ and
  maximal torus $T$ such that $\br$ factors through $P(k)$. The
  parabolic $P$ is associated to a subset $\Theta$ of the set of
  $B$-simple roots $\Delta$, or to a cocharacter $\eta$ of the adjoint
  torus: we can alternatively describe the root spaces appearing in
  $P$ as either the positive roots union the root system generated by
  $\Theta$, or those $\alpha$ such that
  $\langle \eta, \alpha \rangle \geq 0$ (the relation between the
  descriptions being that for simple roots $\alpha$,
  $\langle \eta, \alpha \rangle$ is 0 or 1 according to whether
  $\alpha$ belongs to $\Theta$ or $\Delta \setminus \Theta$). The
  associated Levi subgroup $M$ of $P$ is the subgroup generated by $T$
  and the root subgroups $u_{\alpha}$ for $\alpha$ belonging to the
  root system generated by $\Theta$. The projection $P \to M$ is
  determined by noting that the composite $M \subset P \to P/U$ (where
  $U$ is the unipotent radical of $P$) is an isomorphism, so composing
  with its inverse we obtain $P \to P/U \to M$.

The lift $\rho$ factors through the parahoric subgroup $\mbf{P} \subset G(\mc{O})$ equal to the preimage of $P(k)$ in $G(\mc{O})$; alternatively, $\mbf{P}$ is the stabilizer of the barycenter $x$ of the facet $\{\alpha=0: \alpha \in \Theta\}$ in the apartment $X_{\bullet}(T) \otimes_{\Z} \RR$. (This point $x$ equals $\frac{\eta}{d}$ for some positive integer $d$, and is specified by the conditions $\alpha(x)=0$ for $\alpha \in \Theta$, and $\psi(x)=\frac{1}{d}$ for the other simple affine roots $\psi$). It is generated by $T(\mc{O})$ and the root subgroups $u_{\alpha}(\vpi^n \mc{O})$ for all affine roots $\alpha+n$ such that $\alpha(x)+n \geq 0$. Explicitly, it is generated by $T(\mc{O})$ and:
\begin{itemize}
\item $u_{\alpha}(\mc{O})$ for all roots $\alpha$ such that $\langle \eta, \alpha \rangle \geq 0$; and
\item $u_{\alpha}(\vpi \mc{O})$ for all $\alpha$ such that $\langle \eta, \alpha \rangle <0$. 
\end{itemize}
Let $\mc{O}'/\mc{O}$ be the ring of integers in a finite totally
ramified extension $K'/K$ with ramification index $e$, and with uniformizer $\vpi' \in \mc{O}'$. We conjugate $\rho$ by $g= \eta(\vpi') \in G(K')$, and find that $g \rho g^{-1}$ is contained in the subgroup $g \mbf{P} g^{-1}$ of $G(K')$ generated by $T(\mc{O})$ and
\begin{itemize}
\item $u_{\alpha}(\vpi'^{\langle \eta, \alpha \rangle} \mc{O}')$ if
  $\langle \eta, \alpha \rangle \geq 0$; and
\item $u_{\alpha}(\vpi'^{\langle \eta, \alpha \rangle +e} \mc{O}')$ if
  $\langle \eta, \alpha \rangle <0$.
\end{itemize}
Choosing $e$ large enough that $\langle \eta, \alpha \rangle+e$ is positive for all $\alpha$ such that $\langle \eta, \alpha \rangle <0$, we find that $g \rho g^{-1}$ takes values in $G(\mc{O}') \subset G(K')$, and when reduced modulo $\vpi'$ takes values in the Levi subgroup $M(k) \subset P(k)$ having root basis $\Theta$. More precisely, for all $\gamma \in \Gamma$, $\ov{g \rho(\gamma) g^{-1}}$ is equal to the projection of $\rho(\gamma)$ to $M(k)$: this is clear from the above bulleted formulae and the fact that $\langle \eta, \beta \rangle=0$ for all $\beta$ in the root system of $M$.

By definition, a semisimplification of $\br$ is its projection to a
Levi factor of a minimal parabolic subgroup containing $\br(\Gamma)$;
as it is unique up to $G(k)$-conjugacy (\cite[Proposition
3.3]{serre:CR}), the last claim of the lemma is clear.
%Finally, note that the reduction $\ov{g \rho g^{-1}}$ is isomorphic ($G(\bar{k})$-conjugate) to ``the'' semisimplification of $\br$, by \cite[Theorem 4.8(iii)]{bhkt:fnfieldpotaut} (which relies on V. Lafforgue's theory of pseudorepresentations).
\end{proof}

\begin{lemma}\label{gw}
  Let $W \neq 0$ be an $\Fp[\gal{F,{\mc{S}}}]$-module that is finite
  dimensional over $\Fp$ and let $W'$ be any other finite dimensional
  $\Fp[\gal{F,{\mc{S}}}]$-module. Let $W_0$ be any nonzero $\Fp$-subspace of
  $W$. Then given any positive integer $n$, there is finite set ${\mc{T}}$ of
  $W'$-trivial primes of $F$, i.e., places $v$ of $F$ such that
  $N(v) \equiv 1 \pmod p$ and $W'$ is a trivial
  $\Fp[\gal{F_v}]$-module, satisfying
  \[
    \dim_{\Fp}( H^1_{\mc{L}}(\gal{F,{\mc{S}} \cup {\mc{T}}}, W)) > n.
  \]
  Here $\mc{L}$ is the Selmer condition given by $\mc{L}_v =
  \{0\}$ for $v \in {\mc{S}}$, and $\mc{L}_v$ is the
  subspace of  $H^1(\gal{F_v}, W) = \Hom(\gal{F_v}, W)$
  consisting of all homomorphisms such that the image of the inertia
  subgroup $I_v$ is contained in $W_0$ for $v \in {\mc{T}}$.
\end{lemma}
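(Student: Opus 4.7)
The plan is to apply Wiles' global Euler characteristic formula for Selmer groups together with the \v{C}ebotarev density theorem, arranging that each auxiliary prime in $T$ contributes a fixed positive amount to the Euler characteristic while the contributions from $v \in S$ (and from archimedean places, if any) stay bounded independently of $T$. Then taking $|T|$ large will force $\dim_{\Fp} H^1_{\mc{L}}$ to exceed $n$.

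The first step I would carry out is to sharpen the $N'$-triviality condition. Let $L := F(N, N', \mu_p)$ be the finite Galois extension of $F$ that trivializes these modules (unramified outside $S$). By \v{C}ebotarev, there are infinitely many primes $v$ of $F$ that split completely in $L$; any such $v$ is $N'$-trivial in the sense of the lemma, and moreover $N$ and its local Tate dual $N^\vee := \Hom(N, \mu_p)$ are trivial $\gal{F_v}$-modules, while $N(v) \equiv 1 \pmod p$ (which also forces $v \nmid p$). I would take $T$ to consist of $|T|$ such primes, with $|T|$ to be determined. At any such $v$, the tame relation $\sigma_v \tau_v \sigma_v^{-1} = \tau_v^{N(v)}$ trivializes modulo $p$-th powers, so $H^1(\gal{F_v}, N) = \Hom(\gal{F_v}, N) \cong N^{\oplus 2}$ has $\Fp$-dimension $2 \dim_{\Fp} N$; the condition defining $\mc{L}_v$ becomes $\phi(\tau_v) \in N_0$, cutting out a subspace of dimension $\dim_{\Fp} N + \dim_{\Fp} N_0$; and $H^0(\gal{F_v}, N)$ has dimension $\dim_{\Fp} N$. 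Thus for every $v \in T$ the local contribution to the Euler characteristic is the positive, uniform quantity
\[
\dim_{\Fp} \mc{L}_v - \dim_{\Fp} H^0(\gal{F_v}, N) = \dim_{\Fp} N_0 \geq 1.
\]

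Plugging into Wiles' formula (as in \cite[8.7.9]{neukirch:cohnum}, which applies equally well in the function field case) yields
\[
\dim_{\Fp} H^1_{\mc{L}}(\gal{F, S \cup T}, N) - \dim_{\Fp} H^1_{\mc{L}^\perp}(\gal{F, S \cup T}, N^\vee) = C + |T| \cdot \dim_{\Fp} N_0,
\]
where $C$ bundles the contributions from $H^0(\gal{F}, N)$, $H^0(\gal{F}, N^\vee)$, the local terms at $v \in S$, and the archimedean places (in the number field case), and is independent of $T$. Since the dual Selmer group has non-negative $\Fp$-dimension, this gives $\dim_{\Fp} H^1_{\mc{L}}(\gal{F, S \cup T}, N) \geq C + |T|$, which exceeds the prescribed $n$ once $|T| > n - C$. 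I do not anticipate a substantive obstacle: the only subtle point is recognizing that one may replace the given $N'$-triviality condition by the stronger (still positive-density) \v{C}ebotarev condition of splitting completely in $L$, making the local computation uniform and positive, so that the Wiles formula bookkeeping drives $\dim_{\Fp} H^1_{\mc{L}}$ to infinity with $|T|$.
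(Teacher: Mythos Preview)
Your proposal is correct and follows essentially the same approach as the paper's proof: both invoke \v{C}ebotarev to produce infinitely many suitable primes and then apply the Greenberg--Wiles formula, the key point being that at each auxiliary prime the local Selmer condition $\mc{L}_v$ has $\Fp$-dimension strictly larger than that of the unramified classes, so the Selmer group grows without bound as $|T|$ increases. Your version is more explicit---in particular, your choice to require splitting in $F(N,N',\mu_p)$ (rather than just $F(N',\mu_p)$) makes the identification $H^1(\gal{F_v},N)=\Hom(\gal{F_v},N)$ used in the definition of $\mc{L}_v$ valid and renders the local computation uniform---but the underlying argument is identical to the paper's.
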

 
\begin{proof}
  The condition for being a trivial prime is a splitting condition, so
  it is a nonempty Chebotarev condition. Once we know such primes
  exist, the statement follows from the Greenberg--Wiles formula, and
  the proof is very similar to Lemma \ref{minimizeSha}: the key point
  is that for a trivial prime $v$,
  $\dim_{\Fp}(\mc{L}_v) > \dim_{\Fp}H^1_{\mr{unr}}(\gal{F_v}, W)$.
\end{proof}

\begin{lemma} \label{redlift} %
  Let $G$ be connected reductive group, $P$ a proper parabolic subgroup of
  $G$ and $M$ a Levi subgroup of $P$ containing a maximal torus $T$.
  Let $\br \colon \gal{F, {\mc{S}}} \to G(k)$ be a continuous representation
  with image contained in $M(k)$.  Let $A \subset P$ be an abelian
  unipotent subgroup which is normalised by $M$ and is a product of
  root subgroups, and let $P' = A \rtimes M \subset P$.
  Then there exists a finite set ${\mc{T}}$ of $\br$-trivial primes of $F$
  and a continuous homomorphism
  $\br' \colon \gal{F, {\mc{S}} \cup {\mc{T}}} \to G(k)$ with image contained in
  $P'(k)$ such that:
  \begin{itemize}
  \item the image of $\br'$ contains $A(k)$;
  \item the projection of $\br'$ to $M(k)$ is isomorphic to $\br$;
  \item $\br'|_{\gal{F_v}} \simeq \br|_{\gal{F_v}}$ for all $v \in {\mc{S}}$;
  \item $\br'(\gal{F_v}) \subset A(k)$ and $\br'(I_v) \
      \subset u_{\alpha}(k)$ for all $v \in {\mc{T}}$, for some root subgroup
      $u_{\alpha} \colon \mathbb{G}_a \to A$ (depending on $v$).
  \end{itemize}
\end{lemma}

\begin{proof}
  Consider the (split) exact sequence of groups
  \[
    1 \to A(k) \to P'(k) \to M(k) \to 1 .
  \]
  Let $\Gamma$ be any profinite group and $h: \Gamma \to M(k)$ a
  continuous homomorphism. Since the sequence splits, $h$ lifts to a
  homomorphism $h_1: \Gamma \to P'(k)$. If $h_2$ is any other lift of
  $h$ then one easily sees that the map $c:\Gamma \to A(k)$ defined by
  $\gamma \mapsto h_1(\gamma) \cdot h_2(\gamma^{-1})$ is a $1$-cocyle,
  where the $\Gamma$-module structure of $A(k)$ is induced by
  conjugation via $h$.  Conversely, if the lift $h_1$ is fixed and $c$
  is a $1$-cocycle then the map $h_2:\Gamma \to P'(k)$ defined by
  $h_2(\gamma) = h_1(\gamma) c(\gamma^{-1})$ is another
  lift of $h$.  Moreover, the cocycle $c$ is a boundary iff $h_2$ is
  conjugate to $h_1$ by an element of $A(k)$.

  Let $h: \gal{F,{\mc{S}}} \to M(k)$ be given by composing $\br$ with the
  projection to $M(k)$, so $A(k)$ becomes an $\Fp[\gal{F,{\mc{S}}}]$-module
  by conjugation. Let $u_{\alpha}: \mathbb{G}_a \to A$ be any root
  subgroup.  By applying Lemma \ref{gw} with $W = A(k)$, $W_0$ a one-dimensional $\Fp$-subspace $V_i$ of $u_{\alpha}(k)$, and $W'$ the
  $\gal{F,{\mc{S}}}$-module obtained by composing $\br$ with any finite
  dimensional faithful representation of $G(k)$ over $k$, we get a
  finite set of $\br$-trivial primes ${\mc{T}}_i$ and a $1$-cocyle $c_i$
  giving a class in $H^1_{\mc{L}}(\gal{F,{\mc{S}} \cup {\mc{T}}_i}, W))$ which is
  ramified at some prime in ${\mc{T}}_i$, and with the image of $I_{v_i}$
  contained in $V_i$ for all $v_i \in {\mc{T}}_i$. Let $c = \sum_i c_i$,
  where $i$ runs over all possible subspaces $V_i$ of $u_{\alpha}$
  (for all $\alpha$) and set ${\mc{T}} = \cup_i {\mc{T}}_i$; we may and do assume
  that all the ${\mc{T}}_i$ and ${\mc{S}}$ are mutually disjoint. The discussion of
  the previous paragraph then shows that by modifying $\br$ by $c$ we
  obtain a representation $\br': \gal{F,{\mc{S}} \cup {\mc{T}}} \to P'(k)$
  satisfying all the bulleted properties: the first holds because $A$ is
  generated by root subgroups, the second is clear, the third holds because
  the Selmer condition in Lemma \ref{gw} at places in $\mc{S}$ is trivial, and the last holds because of the choice of $W_0$.
\end{proof}
In our application, we will apply the following lemma to the primes in the set ${\mc{T}}$ produced in Lemma \ref{redlift}.
\begin{lemma} \label{rootlift}%
  Let $G$ be a connected reductive group, $T$ a maximal torus of $G$,
  $A$ a commutative unipotent subgroup of $G$ and
  $u_{\alpha} \colon \mathbb{G}_a \to A$ a root subgroup. Let $F_v$ be
  a local field with $N(v) \equiv 1 \mod p$ and
  $\br:\gal{F_v} \to G(k)$ a continuous representation such that
  $\br(\gal{F_v}) \subset A(k)$ and $\br(I_v) \subset
  u_{\alpha}(k)$. If $p \neq 2$ then $\br$ lifts to a representation
  $\rho: \gal{F_v} \to G(\mc{O})$.
\end{lemma}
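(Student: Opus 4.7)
The plan is to construct $\rho$ explicitly, exploiting the fact that $\br$ factors through the maximal pro-$p$ tame quotient of $\gal{F_v}$. First I observe that the residue characteristic of $F_v$ cannot equal $p$: if it did, $N(v)$ would be a power of $p$, contradicting $N(v) \equiv 1 \pmod{p}$. Since $U(k)$ is a finite abelian $p$-group and wild inertia at $v$ is pro-$\ell$ for the residue characteristic $\ell \neq p$, $\br$ factors through the maximal pro-$p$ tame quotient, which is topologically generated by a pro-$p$ tame inertia generator $\tau_v$ and a pro-$p$ Frobenius $\sigma_v$ subject to $\sigma_v \tau_v \sigma_v^{-1} = \tau_v^{N(v)}$.

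Write $\br(\tau_v) = u_\alpha(x)$ for some $x \in k$ and $\br(\sigma_v) = u_0 \in U(k)$, and fix arbitrary lifts $\tilde{x} \in \mc{O}$ of $x$ and $\tilde{u} \in U(\mc{O})$ of $u_0$ (using smoothness of $U$). The crux of the construction is to produce an element $t \in T(\mc{O})$ with $\alpha(t) = N(v)$ and $t \equiv 1 \pmod{\vpi}$. Since $N(v) \in 1 + p\mc{O} \subset 1 + \vpi\mc{O}$ and the principal units $1 + \vpi\mc{O}$ form a pro-$p$ abelian group on which squaring is bijective whenever $p \neq 2$, there is a unique $\sqrt{N(v)} \in 1 + \vpi\mc{O}$. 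Then $t := \alpha^\vee(\sqrt{N(v)})$ satisfies $\alpha(t) = (\sqrt{N(v)})^{\langle \alpha, \alpha^\vee\rangle} = N(v)$ and $t \equiv \alpha^\vee(1) = 1 \pmod{\vpi}$. This is the single step where the hypothesis $p \neq 2$ is essential; it is the main (and only genuine) obstacle, everything else being direct verification.

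I then define $\rho$ by setting $\rho(\tau_v) = u_\alpha(\tilde{x})$ and $\rho(\sigma_v) = t\tilde{u}$. Since $U$ is commutative and $u_\alpha \subset U$, $\tilde{u}$ commutes with $u_\alpha(\tilde{x})$, so
\[
\rho(\sigma_v)\rho(\tau_v)\rho(\sigma_v)^{-1} = t\, u_\alpha(\tilde{x})\, t^{-1} = u_\alpha(\alpha(t)\tilde{x}) = u_\alpha(N(v)\tilde{x}) = \rho(\tau_v)^{N(v)},
\]
so the tame relation holds. The images $\rho(\tau_v)$ and $\rho(\sigma_v)$ both lie in the pro-$p$ subgroup of $G(\mc{O})$ generated by $\ker(T(\mc{O}) \to T(k))$ and $U(\mc{O})$; consequently the assignment extends to a continuous homomorphism from the maximal pro-$p$ tame quotient of $\gal{F_v}$ to $G(\mc{O})$, and composing with the projection yields the desired $\rho$. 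Reduction modulo $\vpi$ recovers $\br$ by construction, completing the proof.
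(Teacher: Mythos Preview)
Your proof is correct and follows essentially the same approach as the paper's: lift $\br(\tau_v)$ to $u_\alpha(\mc{O})$, lift $\br(\sigma_v)$ to $U(\mc{O})$, and multiply the latter by $t=\alpha^\vee(\sqrt{N(v)})$ to force the tame relation, using $p\neq 2$ to extract the square root in the principal units. Your write-up is slightly more detailed than the paper's (you explicitly note that the residue characteristic is not $p$ and that the target of $\rho$ lies in a pro-$p$ subgroup, ensuring the assignment extends continuously), but the argument is the same.
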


\begin{proof}
  Since $A(k)$ is a $p$-group, the representation $\br$ is tamely
  ramified. Let $\sigma$ in $\gal{F_v}$ be a lift of Frobenius and
  $\tau \in \gal{F_v}$ a lift of a generator of tame inertia. Choose
  any lift $u_1 \in A(\mc{O})$ of $\br(\sigma)$ and any lift
  $u_2 \in u_{\alpha}(\mc{O})$ of $\br(\tau)$; since $A$ is
  commutative, $u_1$ and $u_2$ commute. Let $x \in \mc{O}$ be the
  square root of $N(v)$ which is congruent to $1$ modulo $p$ and let
  $t:= \alpha^{\vee}(x) \in T$. Define $\rho: \gal{F_v} \to G(\mc{O})$
  by $\rho(\sigma) = tu_1$ and $\rho(\tau) = u_2$. By the choices of
  $u_1$, $u_2$ and $t$, it follows that
  $\rho(\sigma)\rho(\tau)\rho(\sigma)^{-1} = \rho(\tau)^{N(v)}$, so
  (by the structure of tame inertia) $\rho$, defined on generators
  above, does indeed give rise to a continuous homomorphism
  $\rho: \gal{F_v} \to G(\mc{O})$.  Finally, since $x$ reduces to $1$
  modulo $p$ it follows from the choice of $u_1$ and $u_2$ that $\rho$
  is indeed a lift of $\br$.

\end{proof}

\begin{lemma} \label{levi} Let $G$ be a connected reductive group, $P$
  a parabolic subgroup of $G$ and $M$ a Levi subgroup of $P$ and
  assume $p \gg_G 0$. Let $\br: \gal{F,{\mc{S}}} \to G(k)$ be a continuous
  representation with $\br(\gal{F,{\mc{S}}}) \subset M(k)$ satisfying the
  local parts of Assumption \ref{finalhyp} and with
  $H^0(\gal{F,{\mc{S}}}, \br(\fgder)) = \mf{z}(M) \cap \fgder$, where
  $\mf{z}(M)$ is the Lie algebra of the centre of $M$. Then there
  exist a finite set of $\br$-trivial primes ${\mc{T}}$ and a continuous
  representation $\br': \gal{F,{\mc{S}} \cup {\mc{T}}} \to G(k)$ such that
  $\br'(\gal{F,{\mc{S}} \cup {\mc{T}}}) \subset P(k)$, the projection of $\br'$ to
  $M(k)$ is isomorphic to $\br$, $\br'$ satisfies the local hypotheses
  of Assumption \ref{finalhyp}, and
  $H^0(\gal{F,{\mc{S}} \cup {\mc{T}}}, \br'(\fgder)) = 0$.
\end{lemma}

\begin{proof}
  We apply Lemma \ref{l:abelian} (to $G^{\mr{der}}$) to get an abelian
  subgroup $A$ of $U$ (the unipotent radical of $P$) which is a
  product of root subgroups, is normalised by $M$, and such that
  $\mf{z}(M) \cap \fgder$ acts faithfully on $\mf{a}$. By applying
  Lemma \ref{redlift} with this $A$ we get $\br'$ which satisfies all
  the required properties except that we still need to show that
  $H^0(\gal{F,{\mc{S}} \cup {\mc{T}}}, \br'(\fgder)) = 0$.

  Since $\br'$ projects to $\br$, and $\br(\gal{F,{\mc{S}}}) \subset M(k)$,
  the assumption on $\br$ implies that
  $H^0(\gal{F,{\mc{S}} \cup {\mc{T}}}, \br'(\fgder)) \subset \mf{z}(M) \cap \fgder$.
  An element $a \in \mf{a}$ satisfies $[a,g] = 0$ for $g \in \fgder$
  iff $\exp(a)$ (see Lemma \ref{l:exp}) acts trivially on $g$; here we
  use that $p \gg_G 0$. From Lemma \ref{l:abelian} we have that
  $\mf{z}(M) \cap \fgder$ acts faithfully on $\mf{a}$, i.e., for any
  $0 \neq z \in \mf{z}(M) \cap \fgder$ there exists $a \in \mf{a}$
  such that $[z,a] \neq 0$. Since
  $A(k) \subset \br'(\gal{F,{\mc{S}} \cup {\mc{T}}})$ (by Lemma \ref{redlift}) we
  conclude that $H^0(\gal{F,{\mc{S}} \cup {\mc{T}}}, \br'(\fgder)) = 0$.
\end{proof}

\begin{cor} \label{cr-lift} Let $G$ be a connected reductive group,
  $P$ a parabolic subgroup of $G$ and $M$ a Levi subgroup of $P$.  Let
  $\br: \gal{F,{\mc{S}}} \to G(k)$ be a continuous representation with
  image contained in $M(k)$. Assume that $ p \gg_G 0$, $\br$ satisfies
  the local parts of Assumption \ref{finalhyp},
  $H^0(\gal{F,{\mc{S}}}, \br(\fgder)) = \mf{z}(M) \cap \fgder$,
  $H^0(\gal{F,{\mc{S}}}, \br(\fgder)^*) = 0$, and either the $p^{th}$
  roots of unity are not contained in $F(\br(\fgder))$ or (restricting
  to $P$ a maximal parabolic) $\br$ satisfies condition (2) of Lemma
  \ref{lem:h^1}.  Choose $\mu$ as in Assumption \ref{finalhyp}. Then
  there exist a finite extension $E'$ of $E$ (whose ring of integers
  and residue field we denote by $\mc{O}'$ and $k'$), an enlargement
  $\wt{{\mc{S}}}$ of ${\mc{S}}$ (by adding trivial primes) and a
  geometric lift
\[
\xymatrix{
& G(\mc{O}') \ar[d] \\
\gal{F, \wt{{\mc{S}}}} \ar[r]_{\br} \ar[ur]^{\rho} & G(k') 
}
\]
of $\br$ such that:
\begin{itemize}
\item $\rho$ has multiplier $\mu$.
\item $\rho(\gal{F}) \cap G^{\mr{der}}(\mc{O}')$ is Zariski-dense in
  $G^{\mr{der}}$.
\end{itemize}
\end{cor}

\begin{proof}
  Let $\br': \gal{F,{\mc{S}} \cup {\mc{T}}} \to G(k)$ be a
  representation obtained by applying Lemma \ref{levi} to $\br$. By
  the conclusion of that lemma and Lemma \ref{rootlift}, $\br'$
  satisfies the local parts of Assumption \ref{finalhyp} and
  $H^0(\gal{F,{\mc{S}}\cup {\mc{T}}}, \br'(\fgder)) = 0$. Since the
  projection of $\br'$ to $M(k)$ is equivalent to $\br$ and
  $H^0(\gal{F,{\mc{S}}}, \br(\fgder)^*) = 0$, we also have
  $H^0(\gal{F,{\mc{S}} \cup {\mc{T}}}, \br'(\fgder)^*) = 0$ (since the
  semisimplifications of $\br(\fgder)^*$ and
  $\br'(\fgder)^*$ are isomorphic). The assumptions of Lemmas
  \ref{l:allP} and \ref{lem:h^1} only depend on the semisimplification
  of a representation, so they hold for $\br'$ if they hold for
  $\br$. Since we have assumed that at least one of them holds for
  $\br$, it follows from these lemmas that Assumption \ref{finalhyp}
  holds for $\br'$ in its entirety.

  Thus, we may apply Theorem \ref{mainthm} to $\br'$ to get a lift
  $\rho': \gal{F,\wt{{\mc{S}}}} \to G(\mc{O}')$ of $\br'$ satisfying the
  conclusions of that theorem with respect to $\br'$. We now apply
  Lemma \ref{Glattices} to $\rho'$ to see that after replacing
  $\mc{O}'$ with a finite extension, we get a lift
  $\rho: \gal{F,\wt{{\mc{S}}}} \to G(\mc{O}')$ of $\br$ satisfying all the
  claimed properties.
\end{proof}

\begin{rmk}
  \begin{itemize}
  \item Lemma \ref{lem:h^0} gives some examples in which $P$ is a
    maximal parabolic where the assumptions on $H^0$ can be easily
    checked.
  \item If $M$ is a maximal torus, and the image of $\br$ is contained in no proper subtorus, then the condition on
    $H^0(\gal{F,{\mc{S}}}, \br(\fgder))$ always holds. We have
    $H^0(\gal{F,{\mc{S}}}, \br(\fgder)^*) = 0$ if $\kbar^{-1}$ does
    not occur as a subrepresentation of the $\gal{F}$ action on
    $\fgder$ and (3) of Lemma \ref{l:borel} gives a condition for
    Assumption \ref{modp^Nhyp} to hold. Thus, Corollary \ref{cr-lift}
    gives a fairly simple conditions under which representations into
    a torus have irreducible (geometric) lifts. For a more general
    criterion for lifting representations into a Borel subgroup see \S
    \ref{s:borel}.
  \end{itemize}
    
\end{rmk}

\begin{eg} \label{gsp4} %
  Let $F = \Q$, $G = \mr{GSp_4}$ and let $P$ be the Siegel parabolic,
  so the Levi quotient $M$ of $P$ is $\mr{GL}_2 \times \mbb{G}_m$. Let
  $\br_f:\Gamma_{\Q} \to \mr{GL}_2(k)$ be an absolutely irreducible
  representation associated to a newform $f$ on $\Gamma_0(N)$ of
  weight $r \geq 2$ and let $\delta: \Gamma_{\Q} \to \mbb{G}_m(k)$ be
  any odd character. Then by choosing a Levi subgroup of $P$,
  $\br = (\br_f,\delta)$ can be viewed as a representation 
  $\gal{\Q,N'} \to G(k)$, for some $N'$ (depending on $\delta$) with
  $Np \mid N'$, where we abuse notation by using $N'$ to also mean the
  set of primes dividing $N'$. By construction, $\br_f$
  has a global lift to $\mr{GL}_2(\mc{O}')$ (for some finite extension
  $\mc{O}'$ of $\mc{O}$) which is regular, so by lifting $\delta$ to a
  de Rham character of nonzero weight we see that the local lifting
  hypotheses of Corollary \ref{cr-lift} are satisfied for all
  $v \in N'$. Since $\br_f$ is absolutely irreducible, one easily sees
  from case (2) of Lemma \ref{lem:h^0} that both properties (a) and
  (b) of the lemma hold for $\br$ except possibly when the projective
  image of $\br_f$ is a dihedral group.  The character
  $\mr{det} \circ \br_f$ is equal to $\kbar^{r-1}$, so in the dihedral
  case we see that if $\kbar^{r-1}$ has order greater than $2$ then
  $\br_f$ does not preserve a symmetric bilinear form. Similarly, if
  $\kbar^{r}$ and $\kbar^{r-2}$ are of order greater than $2$ then
  $\br_f$ does not preserve a symmetric bilinear form with multiplier
  $\kbar^{\pm 1}$. Thus whenever these conditions on the order of
  $\kbar$ are satisfied, Properties (a) and (b) of Case (2) of Lemma
  \ref{lem:h^0} hold. It also follows from Example \ref{exgsp4a} that
  properties (1) and (2) of Lemma \ref{lem:h^1} will hold for $\br$ as
  long as the characters $\kbar^{a(r-1) + 1}$ have order at least $5$
  for $|a| \leq 2$. (In particular, both of the conditions only depend
  on $r$ and $p$, and for any fixed $r>2$ this holds for all
  $p \gg 0$.)  We now give conditions so that
  $H^0(\gal{\Q}, \br(\fgder))=\mf{z}_M \cap \fgder$. First, the
  adjoint action on $\br(\mf{m} \cap \fgder)$ itself is isomorphic to
  the adjoint representation of $\br_f$; by irreduciblity, the
  invariants are just $\mf{z}_M \cap \fgder$. On the unipotent radical
  $\mf{u}^+$ of $\Lie(P)$, the adjoint action of $\gal{\Q}$ is
  isomorphic to $\Sym^2(\br_f) \otimes \delta^{-1}$. When $\br_f$ is
  non-dihedral, this representation is absolutely irreducible, so has
  no $\gal{\Q}$-invariants.  In the dihedral case it clearly follows
  that there will be no $\gal{\Q}$-invariants for all but at most
  three choices of $\delta$ (which can be determined explicitly, but
  we leave this for the interested reader). Similar statements hold
  for the action on the unipotent radical $\mf{u}^-$ of the Lie
  algebra of the opposite parabolic.
 %      When
 %      $\br_f$ is dihedral, isomorphic to
 %      $\Ind_{\gal{\wt{F}}}^{\gal{\Q}} \bar{\psi}$ for a quadratic
 %      extension $\wt{F}/\Q$ and character
 %      $\bar{\psi} \colon \gal{\wt{F}} \to
 %      k^\times$, % \ov{\mathbb{F}_p}^\times$,
 %  \[
 %  \Sym^2(\br_f) \otimes \delta^{-1} \cong (\Ind_{\gal{\wt{F}}}^{\gal{\Q}} \bar{\psi}^2 \oplus \delta_{\wt{F}/\Q}\det(\br_f)) \otimes \delta^{-1},
 %  \]
 % where $\delta_{\wt{F}/\Q}$ is the quadratic character cutting out the
 % extension $\wt{F}$ from which $\br_f$ is induced. Thus if we assume
 % TO FINISH}}
     We conclude by Corollary \ref{cr-lift} that if
      these conditions hold then
      $\br$ has a regular geometric lift to $\rho:\Gamma_{\Q} \to
      \mr{GSp}_4(\mc{O}')$ with $\rho(\Gamma_{\Q}) \cap
      \mr{Sp}_4(\mc{O'})$ Zariski-dense in $\mr{Sp}_4$.
\end{eg}

\section{The case $F$ totally real and $G=\mr{GL}_2$}\label{GL2section}
In this section we specialize our main result to the case $G= \mr{GL}_2$, and in doing so we improve considerably the results of \cite{ramakrishna-hamblen} that were our starting-point in \cite{fkp:reldef} and the present paper. In this section, we assume that $F$ is a totally real number field, since this is the modularity application of greatest interest. In \S \ref{functionfields} we will discuss modularity applications in the function field setting. We will analyze reducible representations conjugate to (denoted $\sim$) extensions of the form $\begin{pmatrix} \bar{\chi} & * \\ 0 & 1\end{pmatrix}$; since any reducible representation has a twist of this form, we lose no generality in this assumption but do simplify the notation somewhat.
\begin{lemma}\label{GL2lemma}
Let $p \geq 3$ be a prime. Let $\br \colon \gal{F, {\mc{S}}} \to \mr{GL}_2(k)$ be a continuous representation of the form
\[
\br \sim \begin{pmatrix}
\bar{\chi} & \ast \\
0 & 1
\end{pmatrix},
\]
and assume that
\begin{itemize}
\item $\bar{\chi}(c_v)=-1$ for all $v \vert \infty$.
\item $\bar{\chi} \neq \kbar^{-1}$.
\item The extension class $\ast$ is non-trivial.
\end{itemize}
Then Assumptions \ref{generalhyp}, \ref{modp^Nhyp}, and the first item in Assumption \ref{finalhyp}, hold for $\br$.
\end{lemma}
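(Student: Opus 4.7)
The plan is to exploit the explicit upper-triangular form of $\br$ to reduce each of the three required conditions to elementary computations, with the oddness hypothesis $\bar{\chi}(c_v)=-1$ providing the key rigidity input through evaluation of characters at complex conjugation.

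First we record the structure of $\br(\fgder)$ and its dual. Writing $\br(g)=\begin{pmatrix}\bar{\chi}(g)&\beta(g)\\ 0&1\end{pmatrix}$ and using the standard basis $e,h,f$ of $\mf{sl}_2$, a direct calculation shows that $\br(\fgder)$ admits a $\gal{F}$-stable filtration $0\subset\langle e\rangle\subset\langle e,h\rangle\subset\br(\fgder)$ with successive graded pieces $k(\bar{\chi})$, $k$, $k(\bar{\chi}^{-1})$. Non-triviality of the extension class forces the quotient $\br(\fgder)/\langle e\rangle$ to be a non-split extension of $k(\bar{\chi}^{-1})$ by $k$, so $k(\bar{\chi}^{-1})$ is the unique $k[\gal{F}]$-irreducible quotient of $\br(\fgder)$. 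Via the trace-form self-duality of $\mf{sl}_2$ (valid since $p\geq 3$), $\br(\fgder)^*\cong\br(\fgder)\otimes\kbar$ has composition factors $\bar{\chi}\kbar$, $\kbar$, $\bar{\chi}^{-1}\kbar$.

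Next we check $H^0(\gal{F},\br(\fgder)^*)=0$ and the absence of a trivial submodule of $\br(\fgder)$. For the former, since $F$ is totally real and $p$ odd, $\kbar(c_v)=-1$, so $\kbar\neq 1$; combined with $\bar{\chi}\neq\kbar^{\pm 1}$ this forces all three composition factors of $\br(\fgder)^*$ to be non-trivial, so $\br(\fgder)^*$ has no trivial sub. For the latter, the equation $\Ad(\br(g))(ae+bh+cf)=ae+bh+cf$ forces any fixed vector to satisfy $c=0$ (since $\bar{\chi}\neq 1$) and $a(\bar{\chi}(g)-1)=-2b\beta(g)$ for all $g$; if $b\neq 0$ this exhibits $\beta$ as the coboundary of $-a/(2b)\in k(\bar{\chi})$, contradicting non-triviality of the extension class, while $b=0$ forces $a=0$.

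The second item of Assumption \ref{modp^Nhyp} and Assumption \ref{generalhyp} both reduce to a single character-matching input. Setting $k_0:=\Fp(\bar{\chi})\subseteq k$, the $\Fp[\gal{F}]$-irreducible subquotients of $\br(\fgder)^*$ are (up to isomorphism) $k_0(\bar{\chi}\kbar)$, $\Fp(\kbar)$, and $k_0(\bar{\chi}^{-1}\kbar)$, and for characters $\alpha,\alpha'$ valued in $k_0^\times$ one has $k_0(\alpha)\cong k_0(\alpha')$ as $\Fp[\gal{F}]$-modules iff $\alpha'=\alpha^{p^i}$ for some $i$. We will show that neither $k_0(\bar{\chi}^{-1})$ (the unique $\Fp[\gal{F}]$-irreducible quotient of $\br(\fgder)$) nor $k_0(\bar{\chi})$ is isomorphic to any of these three modules, which reduces to ruling out equations of the form $\bar{\chi}^{p^i\pm 1}=\kbar^{\pm 1}$ for $i\geq 0$. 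Since for odd $p$ the exponent $p^i\pm 1$ is always even, $\bar{\chi}^{p^i\pm 1}(c_v)=1$, while $\kbar^{\pm 1}(c_v)=-1$, so every such equation fails; the residual cases comparing with $\Fp(\kbar)$ require $k_0=\Fp$ and reduce directly to the excluded $\bar{\chi}=\kbar^{\pm 1}$. The $k_0(\bar{\chi}^{-1})$-instance immediately yields the second item of Assumption \ref{modp^Nhyp}.

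For Assumption \ref{generalhyp}, the long exact cohomology sequence associated to the three-step filtration of $\br(\fgder)^*$ reduces the vanishing of $H^1(\Gal(K/F),\br(\fgder)^*)$ to that of $H^1(\Gal(K/F),k(\alpha))$ for each $\alpha\in\{\bar{\chi}\kbar,\kbar,\bar{\chi}^{-1}\kbar\}$. Writing $K_0=F(\bar{\chi},\mu_p)$, we decompose $\Gal(K/F)=U'\rtimes D'$ with $U'=\Gal(K/K_0)$ an elementary abelian $p$-group which is an $\Fp[D']$-submodule of $k(\bar{\chi})$ via conjugation (arising from the homomorphism $\beta|_{\gal{K_0}}$, which is non-zero by inflation--restriction against the prime-to-$p$ group $\Gal(K_0/F)$ together with non-triviality of the extension class), and $D'=\Gal(K_0/F)$ of order prime to $p$. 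Since $\alpha$ factors through $\Gal(K_0/F)$, $U'$ acts trivially on $k(\alpha)$, and inflation--restriction reduces the desired vanishing to $\Hom_{\Fp[D']}(U',k(\alpha))=0$; by semisimplicity this is the $k_0(\bar{\chi})$-instance of the character-matching calculation of the previous paragraph, already resolved by the $c_v$-evaluation argument. The principal obstacle in executing this plan is the bookkeeping of $\Fp$-irreducible pieces and their potential Galois-conjugate isomorphisms over $\Fp$ rather than over $k$; the oddness of $\bar{\chi}$ provides the uniform rigidity that kills every case at once.
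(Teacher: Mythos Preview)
Your proof is correct and follows essentially the same approach as the paper's. Both arguments identify the uniserial filtration of $\br(\fgder)$ with graded pieces $k(\bar{\chi}),\,k,\,k(\bar{\chi}^{-1})$, reduce the $H^1$-vanishing via inflation--restriction to the $p$-Sylow $\Gal(K/K_0)\subset k(\bar{\chi})$, and settle the resulting character-matching problems by evaluating at complex conjugation (the parity trick $\bar{\chi}^{p^i\pm 1}(c_v)=1\neq -1=\kbar^{\pm 1}(c_v)$). The only cosmetic difference is that the paper phrases the (C3) check via the equivalent $k[\gal{F}]$-criterion ``no surjection $\br(\fgder)^{\sigma}\onto V$ for $\sigma\in\Aut(k)$'' (the reformulation recorded just after Assumption~\ref{modp^Nhyp}), whereas you work directly with $\Fp[\gal{F}]$-irreducibles $k_0(\cdot)$; these are the same computation in different bookkeeping.
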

\begin{proof}
Fix a geometric lift $\chi \colon \gal{F, {\mc{S}}} \to \mc{O}^\times$ of $\bar{\chi}$. 
%necessarily of the form $\kappa^{r-1} \chi_0$ for some integer $r$ and finite-order character $\chi_0$. 
We verify in turn Assumptions \ref{generalhyp}, \ref{modp^Nhyp}, and the first part of \ref{finalhyp}. We first check that the trivial representation is neither a submodule of $\br(\fgder)$ nor of $\br(\fgder)^*$, %nor a subquotient of $\br(\fgder)^*$, 
and note that for this it suffices to consider the trivial $k[\gal{F}]$ (rather than $\Fp[\gal{F}]$) representation. Non-splitness of the extension implies that the only one-dimensional subrepresentation of $\br(\fgder)$ is $k(\bar{\chi})$, which is non-trivial. Likewise the only one-dimensional subrepresentation of $\br(\fgder)^* \cong \br(\fgder)(1)$ is isomorphic to $k(\bar{\chi}\kbar)$, so the assumption that $\bar{\chi} \neq \kappa^{-1}$ implies our claim. 

Next we compute the cohomology group $H^1(\Gal(K/F), \br(\fgder)^*)$; we first treat the case in which $\bar{\chi}$ is also not equal to $\bar{\kappa}$.  Let $P$ be the (unique) $p$-Sylow subgroup of $\Gal(K/F)$, and set $K_0=K^P$; explicitly, $K_0=F(\bar{\chi}, \mu_p)$. Since $[K_0:F]$ is coprime to $p$, it suffices to check the vanishing of $H^1(P, \br(\fgder)^*)^{\Gal(K_0/F)}$. Consider the obvious filtration of $\br(\fgder)^*$ by $k[\gal{F}]$-submodules with successive subquotients isomorphic to $k(\bar{\chi} \kbar)$, $k(\kbar)$, and $k(\bar{\chi}^{-1}\kbar)$. Since taking invariants by the group $\Gal(K_0/F)$ is exact in characteristic $p$, it suffices from the long exact sequence in cohomology to check that $H^1(P, k(\psi))^{\Gal(K_0/F)}=0$ for $\psi \in \{\bar{\chi}^{-1}\kbar, \kbar, \bar{\chi} \kbar\}$. The group $P$ acts trivially on these subquotients, so we are left to compute $\Hom_{\Gal(K_0/F)}(P, k(\psi))$. As $\Fp[\Gal(K_0/F)]$-module, $P$ is isomorphic to a direct sum of copies of $\Fp(\bar{\chi})$ (i.e., the field extension of $\Fp$ generated by the values of $\bar{\chi}$, with Galois action given by $\bar{\chi}$). We therefore reduce the desired vanishing to checking that $\bar{\chi}^{\sigma}$ is not equal to $\bar{\chi}^{-1}\kbar$, $\kbar$, or $\bar{\chi}\kbar$ for any $\sigma \in \Aut(k)$: but such an equality with $\bar{\chi}\kbar^{-1}$ or $\bar{\chi}\kbar$ would contradict the behavior of complex conjugation (as $p \neq 2$), and with $\kbar$ would contradict the assumption $\bar{\chi} \neq \kbar$.

We must treat separately the case $\bar{\chi}= \bar{\kappa}$ (but not equal to $\bar{\kappa}^{-1}$). We have as before the $k[\gal{F}]$-stable filtration 
\[
0 \subset F_2 \subset F_1 \subset F_0= \br(\fgder)^*
\]
with $F_0/F_1 \cong k$, $F_1/F_2 \cong k(\bar{\kappa})$, and $F_2 \cong k(\bar{\kappa}^2)$. We abbreviate $\Gamma= \Gal(K/F)$, so we have $P \subset \Gamma$ as above. Taking (exact) $\Gamma/P$-invariants on the long exact sequence in cohomology associated to $0 \to F_1 \to F_0 \to F_0/F_1 \to 0$, we obtain an exact sequence
\begin{equation}\label{cohses}
0 \to k \to H^1(P, F_1)^{\Gamma/P} \to H^1(P, \br(\fgder)^*)^{\Gamma/P} \to 0,
\end{equation}
where the first term is zero since $(\br(\fgder)^*)^\Gamma$ vanishes, and the last term is zero since 
\[
H^1(P, \br(\fgder)^*/F_1)^{\Gamma/P}=\Hom_{\Gamma}(P, k)=0
\]
as $P$ is isomorphic to a direct sum of copies of $\Fp(\bar{\kappa})$. Similarly, we have an exact sequence
\[
0 \to H^1(P, F_1)^{\Gamma/P} \to H^1(P, F_1/F_2)^{\Gamma/P} \to H^2(P, F_2)^{\Gamma/P},
\]
where the first map is injective because $H^1(P, F_2)^{\Gamma/P}=\Hom_{\Gamma}(P, k(\bar{\kappa}^2))=0$. We will show that $H^1(P, F_1)^{\Gamma/P}$ is one-dimensional over $k$. It will follow from sequence (\ref{cohses}) that $H^1(P, \br(\fgder)^*)^{\Gamma/P}$ vanishes, as needed.

The group $P$ is identified by $\br$ to an $\Fp$-subspace of $k \cong \begin{pmatrix} 1 & k \\ 0 & 1 \end{pmatrix}$, and we choose a basis $P= \bigoplus_{i=1}^r \Fp e_i$. Any cocycle $\phi \in Z^1(P, F_1)$ is determined by its values $\{\phi(e_i)\}_{i=1}^r$ on this basis. We identify $\br(\fgder)^* \cong \br(\fgder)(1)$, and then we write $\phi(e_i)= \begin{pmatrix} a_i & b_i \\ 0 & -a_i \end{pmatrix}$, for some $a_i, b_i \in k$. Now for any pair $i, j$, consider the relation 
\[
\phi(e_i)+e_i \cdot \phi(e_j)= \phi(e_i+e_j)=\phi(e_j+e_i)= \phi(e_j)+e_j \cdot \phi(e_i), 
\]
which translates to an identity
\[
\begin{pmatrix}
a_i+a_j & b_i+b_j-2a_je_i \\
0 & -a_i-a_j
\end{pmatrix}
=
\begin{pmatrix}
a_i+a_j & b_i+b_j-2a_ie_j \\
0 & -a_i-a_j
\end{pmatrix}.
\]
We conclude that all $a_i$ are determined by the value $a_1$, via $a_i= a_1 \cdot \frac{e_i}{e_1}$. Thus we realize $Z^1(P, F_1)$ as a subgroup
\begin{align*}
Z^1(P, &F_1) \subset k \oplus k^{\oplus r} \\
&\phi \mapsto (a_1, (b_1, \ldots, b_r)).
\end{align*}
The cocycles $\phi$ corresponding to tuples with $a_1=0$ (hence all $a_i=0$) all obviously map to zero in $Z^1(P, F_1/F_2)$, so the image of $H^1(P, F_1) \to H^1(P, F_1/F_2)$ is at most one-dimensional. Since the map on $\Gamma/P$-invariants $H^1(P, F_1)^{\Gamma/P} \to H^1(P, F_1/F_2)^{\Gamma/P}$ is injective, we conclude that $\dim_k H^1(P, F_1)^{\Gamma/P}$ is at most one. As noted above, this suffices to conclude that $H^1(\Gamma, \br(\fgder)^*)=0$.

Next we verify Assumption \ref{modp^Nhyp}, checking that there is no $k[\gal{F}]$-module surjection $\br(\fgder)^{\sigma} \onto V$ for some $\sigma \in \Aut(k)$ and some $k[\gal{F}]$-subquotient $V$ of $\br(\fgder)^*$. We immediately reduce to ruling out the cases $V=k(\psi)$ for $\psi \in \{\bar{\chi}^{-1}\kbar, \kbar, \bar{\chi} \kbar\}$. By non-splitness, the only one-dimensional quotient of $\br(\fgder)^{\sigma}$ is $(\bar{\chi}^{-1})^{\sigma}$. Arguing as before, the behavior of complex conjugation rules out all cases but $(\bar{\chi}^{-1})^{\sigma}=\kbar$, which is in turn ruled out by the assumption $\bar{\chi} \neq \kbar^{-1}$.
\end{proof}
We now briefly discuss the local lifting hypothesis of Assumption \ref{finalhyp}. For primes $v \vert p$, we will say $\br|_{\gal{F_v}}$ is ordinary if for some finite extension $F_v'/F_v$ it is $F_v'$-ordinary in the sense of Definition \ref{orddef}.
\begin{lemma}\label{locallifting}
Assume $p \geq 3$. Let $\br \colon \gal{F, {\mc{S}}} \to \mr{GL}_2(k)$ be a continuous representation of the form 
\[
\br \sim \begin{pmatrix}
\bar{\chi} & \ast \\
0 & 1
\end{pmatrix}.
\]
Fix a geometric lift $\mu \colon \gal{F, \mc{S}} \to \mc{O}^\times$ of $\bar{\mu}= \det(\br)= \bar{\chi}$ of the form $\kappa^{r-1} \chi_0$ for some integer $r \geq 2$ and finite-order character $\chi_0$. Then, allowing ourselves to replace $\mc{O}$ by the ring of integers $\mc{O}'$ in some finite extension of $\Frac(\mc{O})$, we have:
%Any choice of geometric lift $\mu \colon \gal{F, {\mc{S}}} \to \mc{O}^{\times}$ of $\bar{\mu}= \det(\br)=\bar{\chi}$ has the form $\kappa^{r-1} \chi_0$ for some integer $r$ and some finite-order character $\chi_0$. Then, allowing ourselves to replace $\mc{O}$ by the ring of integers $\mc{O}'$ in some finite extension of $\Frac(\mc{O})$, we have:
\begin{itemize}
\item For $v \in {\mc{S}}$ not dividing $p$, and any such choice of $\mu$, there is a lift $\rho_v \colon \gal{F_v} \to \mr{GL}_2(\mc{O}')$ of $\br|_{\gal{F_v}}$ with determinant $\mu$.
\item For $v \vert p$ and any such choice of $\mu$, there is an ordinary potentially crystalline lift $\rho_v \colon \gal{F_v} \to \mr{GL}_2(\mc{O}')$ of $\br|_{\gal{F_v}}$ with Hodge--Tate weights $\{0, r-1\}$ and determinant $\mu$. Moreover, we can always find an ordinary potentially crystalline lift $\rho_v \colon \gal{F_v} \to \mr{GL}_2(\mc{O}')$ of $\br|_{\gal{F_v}}$ with Hodge--Tate weights $\{0, r-1\}$ and having a non-trivial unramified quotient, possibly at the expense of not guaranteeing $\det(\rho_v)= \mu$.
\item When $F= \Q$ (or is split at all places above $p$), and $r$ is equal to the weight $k(\br|_{\gal{\Q_p}})$ associated to $\br$ by Serre (\cite[\S 2.3]{serre:conjectures}), there is a crystalline lift $\rho_p \colon \gal{\Q_p} \to \mr{GL}_2(\mc{O}')$ of $\br|_{\gal{\Q_p}}$ with Hodge--Tate weights $\{0, r-1\}$.
\end{itemize} 
\end{lemma}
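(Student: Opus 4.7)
The plan is to realize each local lift as an upper-triangular reducible representation
\[
\rho_v = \begin{pmatrix} \chi_v & c \\ 0 & 1 \end{pmatrix}
\]
with $\chi_v = \mu|_{\gal{F_v}}$ and $c \in Z^1(\gal{F_v}, \mc{O}'(\chi_v))$ a cocycle whose reduction modulo $\vpi'$ represents the prescribed residual extension class $\ast \in H^1(\gal{F_v}, k(\bar{\chi}|_{\gal{F_v}}))$. With such a choice, $\det(\rho_v) = \mu|_{\gal{F_v}}$ automatically, so in each bullet the task reduces to producing a suitable $c$ with the correct integral or $p$-adic Hodge-theoretic properties.

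For the first bullet ($v \nmid p$), existence of $c$ follows from the long exact cohomology sequence attached to
\[
0 \to \mc{O}'(\chi_v) \xrightarrow{\vpi'} \mc{O}'(\chi_v) \to k'(\bar{\chi}|_{\gal{F_v}}) \to 0,
\]
once one knows $H^2(\gal{F_v}, \mc{O}'(\chi_v))[\vpi'] = 0$ after a suitable finite extension $\mc{O}'/\mc{O}$. Tate local duality identifies this $H^2$ with the Pontryagin dual of $H^0(\gal{F_v}, (K'/\mc{O}')(\chi_v^{-1}\kappa))$, and the standing hypothesis $\bar{\chi} \neq \kbar$ ensures $\chi_v \neq \kappa|_{\gal{F_v}}$ up to a finite-order perturbation absorbable into $\mc{O}'$, so that the corresponding $H^0$ is finite and its $\vpi'$-torsion is killed by enlarging $\mc{O}'$. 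Alternatively, one may directly appeal to the non-emptiness of the generic fiber of the framed local deformation ring with fixed determinant $\mu$, as in \cite[Theorem 3.3.3]{bellovin-gee-G}, combined with \cite[Lemma 4.9]{fkp:reldef} to pass to a formally smooth $\mc{O}'$-point.

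For the second bullet ($v \vert p$), take $\chi_v = \mu|_{\gal{F_v}} = \kappa^r \chi_0|_{\gal{F_v}}$, a de Rham character of Hodge--Tate weight $r$, and require $c$ to lie in the Bloch--Kato subspace $H^1_g(\gal{F_v}, \mc{O}'(\chi_v))$; then $\rho_v$ is automatically potentially crystalline with Hodge--Tate weights $\{0, r\}$ and determinant $\mu|_{\gal{F_v}}$. Surjectivity of the reduction $H^1_g(\gal{F_v}, \mc{O}'(\chi_v)) \to H^1(\gal{F_v}, k(\bar{\chi}|_{\gal{F_v}}))$ after enlarging $\mc{O}'$ follows from the Bloch--Kato dimension formula, whose non-degeneracy is precisely guaranteed by the assumption $\bar{\chi} \neq \kbar^{\pm 1}$ controlling both $H^0(\gal{F_v}, K'(\chi_v))$ and $H^0(\gal{F_v}, K'(\chi_v^{-1}\kappa))$. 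For the third bullet, the constraint $r+1 = k(\br|_{\gal{\Q_p}})$ is precisely the condition under which a crystalline (rather than merely potentially crystalline) lift with Hodge--Tate weights $\{0, r\}$ exists: this is essentially the content of Serre's weight recipe \cite[\S 2.3]{serre:conjectures}, whose explicit realization is provided by Fontaine--Laffaille theory in the low-weight range and by the explicit reduction computations of Berger--Li--Zhu more generally. The principal technical obstacle across all three bullets is the control of the Bloch--Kato and Tate-dual $H^0$-terms in low weight, and the hypothesis $\bar{\chi} \neq \kbar^{\pm 1}$ inherited from Lemma \ref{GL2lemma} is exactly the safeguard that keeps these manageable.
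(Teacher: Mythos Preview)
Your argument has genuine gaps. First, you repeatedly invoke the hypothesis $\bar\chi \neq \kbar^{\pm 1}$, which is not part of the present lemma's statement; more seriously, even where that \emph{global} condition holds, it does not prevent the \emph{local} equality $\bar\chi|_{\gal{F_v}} = \kbar|_{\gal{F_v}}$ at a given $v$, and it is exactly in that local case that your $H^2$-obstruction argument breaks down. Your claim that the $\vpi'$-torsion in $H^2$ (or in the dual $H^0$) is ``killed by enlarging $\mc{O}'$'' is simply false: base-changing to a larger coefficient ring does not annihilate obstruction classes. The paper treats precisely this situation separately. For $v \nmid p$ with $p \mid \#\br(I_{F_v})$ (which forces $\bar\chi|_{\gal{F_v}} = \kbar$ locally), it abandons the reducible shape and instead uses a Steinberg-type component as in \cite[\S 1 E3]{taylor:icos2}, adjusting the twist so that the determinant equals $\mu$. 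For $v \mid p$ with $\bar\chi|_{\gal{F_v}} = \kbar$, it appeals to the peu/tr\`es ramifi\'e analysis underlying \cite[Lemma 6.1.6]{blgg:ordhmf}, which you do not address.

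Second, for $v \mid p$ and $r \leq -1$, your fixed shape $\begin{pmatrix} \kappa^r\chi_0 & c \\ 0 & 1 \end{pmatrix}$ almost never gives a nonsplit de Rham lift: the sub-character $\kappa^r\chi_0$ then has nonpositive Hodge--Tate weight, so the Bloch--Kato subspace $H^1_g(\gal{F_v}, K'(\kappa^r\chi_0))$ is (generically) zero, and no de Rham cocycle $c$ can lift a nonzero residual extension class. The paper handles $r \leq -1$ by switching to the opposite triangular form $\begin{pmatrix} \psi_1 & * \\ 0 & \kappa^r\psi_2 \end{pmatrix}$, placing the negative-weight character as the quotient rather than the sub, so that the relevant extension group is all of $H^1$ and the lifting of the residual class goes through.
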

\begin{proof}
First consider the case where $v$ does not divide $p$. If $\br(I_{F_v})$ has order coprime to $p$, then as is well-known there is in fact a non-empty formally smooth irreducible component of the lifting ring $R_{\br|_{\gal{F_v}}}^{\square, \mu}$ corresponding to lifts whose projectivization factors through $\ker(\br|_{I_{F_v}})$. If on the other hand $\br(I_{F_v})$ has order divisible by $p$, then (for $p \neq 2$) $\br|_{\gal{F_v}}$ has the form $\begin{pmatrix} \kbar & * \\ 0 & 1\end{pmatrix}$: indeed, from \cite[\S 2]{diamond:extension}, we see that $\br|_{\gal{F_v}}$ must be a non-split extension of $\bar{\psi}$ by $\kappa \bar{\psi}$ for some character $\bar{\psi}$, and since we have assumed that $\br$ is an extension of $1$ by $\bar{\chi}$ we must have $\bar{\psi}=1$, $\bar{\chi}|_{\gal{F_v}}= \kbar$. By the discussion of \cite[\S 1 E3]{taylor:icos2}, there is for any lift $\psi$ of the trivial character mod $\vpi$ a formally smooth irreducible component of the lifting ring $R_{\br|_{\gal{F_v}}}^{\square, \kappa \psi^2}$. Any character $\gal{F_v} \to 1+\vpi \mc{O}$ admits a unique square-root ($p \neq 2$), so we can always choose the twist $\psi$ such that $\kappa \psi^2= \mu$.

Now consider $v \vert p$. When $\bar{\chi}|_{\gal{F_v}} \neq
\bar{\kappa}$, a straightforward duality argument shows that for any
$r \geq 2$ and finite-order character $\chi_0$ such that $\kappa^{r-1}
\chi_0$ lifts $\bar{\chi}$, $\br|_{\gal{F_v}}$ has a potentially
crystalline lift of the form $\begin{pmatrix} \kappa^{r-1} \chi_0 & *
  \\ 0 & 1\end{pmatrix}$. When $\bar{\chi}|_{\gal{F_v}}=
\bar{\kappa}$, the argument of \cite[Lemma 6.1.6]{blgg:ordhmf} shows
that we can find potentially unramified characters $\psi_1$ and
$\psi_2$ whose product is $\chi_0$ such that $\br|_{\gal{F_v}}$ has a
potentially crystalline lift $\rho_v$ of the form $\begin{pmatrix}
  \kappa^{r-1} \psi_1 & * \\ 0 & \psi_2 \end{pmatrix}$. The argument
of \textit{loc.~cit.~}is only written for $r=2$, so we sketch the
proof. Let $L \subset H^1(\gal{F_v}, k(\bar{\kappa}))$ be the line
corresponding to $\br|_{\gal{F_v}}$, and let $H \subset H^1(\gal{F_v},
k)$ be its annihilator under the local duality pairing. As in
\textit{loc.~cit.}, we must find potentially unramified characters
$\psi_1$ and $\psi_2$ such that $\chi_0=\psi_1 \psi_2$, $\psi_2$ lifts
the trivial character, and $\kappa^{r-1} \psi_1$ lifts $\bar{\chi}$,
and such that, writing $\kappa^{2-r} \frac{\psi_2}{\psi_1}= 1+ \vpi^n
\alpha$ for some continuous function $\alpha \colon \gal{F_v} \to
\mc{O}$ whose reduction modulo $\vpi$ is a non-trivial element of
$H^1(\gal{F_v}, k)$, we have $\bar{\alpha} \in H$. We make an initial
choice with $\psi_2=1$, $\psi_1= \chi_0$, and we suppose now that the
associated $\bar{\alpha}$ does not lie in $H$. When $\br|_{\gal{F_v}}$
is peu ramifi\'e, $H$ contains the unramified line, and by replacing
$\mc{O}$ by a ramified extension (guaranteeing $n \geq 2$) and
twisting $\psi_1$ and $\psi_2$ by unramified characters sending
Frobenius to $(1+\vpi)$ and $(1+\vpi)^{-1}$, the new $\bar{\alpha}$
will be unramified and thus in $H$. Note that in this case our lift
satisfies $\psi_2|_{I_{F_v}}=1$. When $\br|_{\gal{F_v}}$ is tr\`{e}s ramifi\'{e}, we first make a ramified extension of $\mc{O}$ by adjoining some $\zeta_{p^M}$: we can then twist the initial $\psi_1$ and $\psi_2$ by potentially unramified characters with trivial reduction such that the associated $\bar{\alpha}$ is ramified. A further unramified twist of $\psi_1$ and $\psi_2$ then allows us to adjust $\bar{\alpha}$ to lie in $H$ (using that the initial $\bar{\alpha}$ is now ramified, and $H$ is a hyperplane not containing the unramified line). Note that in this case, $\psi_2|_{I_{F_v}}$ is not necessarily trivial, but it is possible to keep the same $r$ and modify the initial choice of $\chi_0$ to produce an ordinary lift with Hodge--Tate weights $\{0, r-1\}$ and $\psi_2|_{I_{F_v}}=1$.
%When $r=2$ we can apply \cite[Lemma 6.1.6]{blgg:ordhmf}: for any finite-order $\chi_0$ such that $\kappa \chi_0$ lifts $\bar{\mu}$, there is for any $v \vert p$ a potentially crystalline lift $\rho_v \colon \gal{F_v} \to \mr{GL}_2(\mc{O}')$ of the form $\begin{pmatrix} \kappa \psi_1 & * \\ 0 & \psi_2 \end{pmatrix}$, where $\psi_1$ and $\psi_2$ are potentially unramified and satisfy $\psi_1 \psi_2=\chi_0$. 

%In fact, for $r>2$ the same argument applies to show that we can find potentially unramified characters $\psi_1$ and $\psi_2$ whose product is $\chi_0$ such that $\br|_{\gal{F_v}}$ has a lift $\rho_v$ of the form $\begin{pmatrix} \kappa^{r-1} \psi_1 & * \\ 0 & \psi_2 \end{pmatrix}$. (As in \textit{loc.~cit.}, this is a straightforward duality argument when $\bar{\chi}|_{\gal{F_v}} \neq \kbar$, and it otherwise requires a more careful analysis according to whether $\br|_{\gal{F_v}}$ is peu or tr\`{e}s ramifi\'e.) If $r \leq 0$, we argue similarly but produce a lift of the form $\begin{pmatrix} \psi_1 & * \\ 0 & \kappa^r \psi_2 \end{pmatrix}$.\footnote{If $r=0$, the lift produced by this argument might not be de Rham, and in any case it wouldn't lie on a deformation ring of large enough dimension to get balanced Selmer and dual Selmer.}

The last point is implicit in the discussion of \cite[\S 3.2, Proof of Theorem 3.17]{buzzard-diamond-jarvis:conjecture} and can also easily be seen to follow from the argument of the previous paragraph.

\end{proof}

Recall that to a newform $f \in S_{r}(\Gamma_1(N))$ for any integers $r \geq 2$ and $N \geq 1$, and any isomorphism $\iota \colon \CC \xrightarrow{\sim} \ov{\Q}_p$, there is an associated geometric Galois representation $\rho_{f, \iota} \colon \gal{\Q, \wt{{\mc{S}}}} \to \mr{GL}_2(\ov{\Q}_p)$, where $\wt{{\mc{S}}}$ is the set of primes dividing $Np$. The representation $\rho_{f, \iota}$ takes values in $\mr{GL}_2(E)$ for some finite extension $E$ of  $\Q_p$. For any $\gal{\Q}$-stable $\mc{O}_E$-lattice we obtain an associated residual representation $\br_{f, \iota} \colon \gal{\Q, {\mc{S}}} \to \mr{GL}_2(\mc{O}_E/\mf{m}_E)$; we do not semisimplify, so the isomorphism class of $\br_{f, \iota}$ may depend on the choice of lattice. We say that a representation $\br \colon \gal{\Q, {\mc{S}}} \to \mr{GL}_2(k)$ is modular of weight $r$ and level $N$ if there exist an $f \in S_{r}(\Gamma_1(N))$, an embedding $\iota$ as above, and an $\mc{O}_E$-lattice inside $\rho_{f, \iota}$ such that  $\bar{\rho}_{f, \iota}$ is isomorphic to $\br$ for some embedding $\mc{O}_E/ \mf{m}_E \to \bar{k}$.

Before proceeding, we state the following well-known result (not used in what follows) to contrast it with  \cite[Theorem 2]{ramakrishna-hamblen} and  our improvement in Theorem \ref{GL2} below. 

\begin{lemma}\label{GL2-easy}
Let  let $\br \colon \gal{\Q, {\mc{S}}} \to \mr{GL}_2(k)$ be a continuous odd  representation.
Then the semisimplification of $\br$  is isomorphic to the semisimplification 
of a residual representation that arises from $\rho_{f, \iota}$ for a  newform $f \in S_r(\Gamma_1(N))$
for integers $N \geq 1$ and $r \geq 2$.
\end{lemma}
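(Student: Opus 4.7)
The plan is to split the argument according to whether $\br$ is irreducible or reducible. In the irreducible case, $\br$ is itself modular by Serre's modularity conjecture, now a theorem of Khare--Wintenberger, Kisin, and others: there exists a newform $f$ of weight $r \geq 2$ and some level $N \geq 1$ with $\br \cong \bar{\rho}_{f, \iota}$, and since $\br^{\mathrm{ss}} = \br$ the conclusion follows immediately.

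In the reducible case, write $\br^{\mathrm{ss}} \cong \bar{\chi}_1 \oplus \bar{\chi}_2$. Oddness of $\det(\br) = \bar{\chi}_1 \bar{\chi}_2$ forces exactly one of the $\bar{\chi}_i$ to be odd (and the other to be even); call the odd one $\bar{\chi}_1$. Twisting a cuspidal newform by a Dirichlet character yields (at some possibly higher level) another cuspidal newform whose associated Galois representation is twisted accordingly, and this operation commutes with taking semisimplification. We may therefore replace $\br$ by $\br \otimes \bar{\chi}_2^{-1}$ and reduce to producing a newform $f$ with $\bar{\rho}_{f, \iota}^{\mathrm{ss}} \cong 1 \oplus \bar{\chi}$ for a fixed odd character $\bar{\chi} \neq 1$.

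I would construct such an $f$ via classical Eisenstein congruences. Lift $\bar{\chi}$ to a finite-order Dirichlet character $\chi$. For an integer $k \geq 2$ satisfying the parity constraint $\chi(-1) = (-1)^k$, consider the holomorphic Eisenstein series $E_k(1, \chi)$ of weight $k$ and appropriate level; its would-be Galois representation has semisimplification $1 \oplus \chi \kappa^{k-1}$. Choosing $k$ in the arithmetic progression $k \equiv 1 \pmod{p-1}$ (compatible with the parity constraint, since $p$ is odd) arranges $\kbar^{k-1} = 1$, so that the residual semisimplification is $1 \oplus \bar{\chi}$. The constant term of $E_k(1, \chi)$ at the cusp $\infty$ is, up to a unit, the special value $L(1-k, \chi^{-1})$; by elementary Kummer-type congruences or basic properties of the Kubota--Leopoldt $p$-adic $L$-function, one can choose some $k$ in this congruence class so that $v_p(L(1-k, \chi^{-1})) > 0$. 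Ribet's Eisenstein congruence technique (equivalently, Mazur's theory of the Eisenstein ideal) then produces a cuspidal newform $f$ of weight $k$ congruent modulo $p$ to $E_k(1, \chi)$, and by construction $\bar{\rho}_{f, \iota}^{\mathrm{ss}} \cong 1 \oplus \bar{\chi}$, as required.

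The only serious input is the Eisenstein congruence step, which is by now entirely classical; the rest is formal manipulation of characters, weights, and $L$-values, and this is why the statement is labeled ``easy and well-known'' in contrast with the main theorems of the paper, which address the much harder problem of lifting $\br$ itself (not merely its semisimplification).
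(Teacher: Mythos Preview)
Your split into irreducible and reducible cases, with Serre's conjecture handling the former and an Eisenstein congruence handling the latter, is exactly the paper's strategy. But there is a genuine gap in your reducible case.

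The problematic step is the claim that ``by elementary Kummer-type congruences or basic properties of the Kubota--Leopoldt $p$-adic $L$-function, one can choose some $k$ in this congruence class so that $v_p(L(1-k, \chi^{-1})) > 0$.'' Kummer congruences say precisely the opposite: for $k$ varying in a fixed residue class modulo $p-1$, the values $(1-\chi^{-1}(p)p^{k-1})L(1-k,\chi^{-1})$ are all congruent modulo $p$. So either all of them are $p$-adic units or none is, and for a generic $\bar{\chi}$ (e.g.\ $\bar{\chi}$ a power of $\kbar$ with $p$ regular) they are all units. You therefore cannot in general force the constant term to vanish modulo $p$ by moving $k$ alone, and the argument as written does not produce the required cuspidal congruence.

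The paper repairs this in two ways. In the first approach it enlarges the level by an auxiliary prime $q \equiv 1 \pmod p$: at the new cusps the constant terms of (a suitably stabilized) Eisenstein series pick up factors of the form $1-q^{k-1}$ or $1-\chi(q)$, which vanish modulo $p$ by the choice of $q$, and one then pairs against a cusp form as in Ribet. In the second approach one avoids constant-term considerations entirely: take any Eisenstein series $E$ with the right residual system of eigenvalues, reduce mod $p$, and apply $\Theta^{p-1}$ (with $\Theta = q\,d/dq$), which kills constant terms and yields a mod $p$ cuspidal eigenform that one then lifts to characteristic zero by Deligne--Serre. Either of these replaces your unjustified $L$-value step.
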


\begin{proof}
 In the case that $\br$ is irreducible, this is the original conjecture of Serre proved in \cite{khare-wintenberger:serre2}.
 In the case when $\br$ is reducible, this is a folklore  result, see \cite[\S 3.4]{skinner-wiles:reducible}. One first shows that
 the semisimplification of $\br$ arises from an Eisenstein series $E_r \in M_r(\Gamma_1(N))$ (for a positive integer $N$ which we may choose so that $N$ is divisible by a prime $q$ that is 1 mod $p$)  whose constant terms vanish modulo   a prime $\wp$  above $p$ of
 the field of Fourier coefficients of $E_r$. Then one shows that  $E_r$ is congruent modulo a prime above $p$  to a newform $f \in S_r(\Gamma_1(N))$.  An alternative approach is  to show that the semisimplification of $\br$ arises from the reduction $\overline E$ of an Eisenstein series $E\in M_r(\Gamma_1(N))$  (for a positive integer $N$;  we do not need  in this approach to ensure that $E$ has vanishing constant terms mod $p$) and then use that $\Theta^{p-1}(\overline E)$, with $\Theta$ the Ramanujan operator given by $q\frac{d}{dq}$,  is a mod $p$  cuspidal eigenform which lifts to a newform $f$ of weight $r \geq 2$ and level $N$ such that the semisimplification of $\br$  is isomorphic to the semisimplification 
of a residual representation that arises from $\rho_{f, \iota}$.
\end{proof}

Following the strategy of the proof of  \cite[Theorem 2]{ramakrishna-hamblen}, we now deduce from  Theorem \ref{mainthm}, specialised to the case $G=\mr{GL}_2$ and $F=\Q$,  and  the work of Skinner--Wiles (\cite{skinner-wiles:reducible})  and Lue Pan \cite{pan:redFM}, a modularity theorem for a reducible mod $p$ representation $\br$ without passing to its semisimplification. 

\begin{thm}\label{GL2}
Let $p \geq 3$ be a prime, and let $\br \colon \gal{\Q, {\mc{S}}} \to \mr{GL}_2(k)$ be a continuous representation of the form
\[
\br \sim \begin{pmatrix}
\bar{\chi} & \ast \\
0 & 1
\end{pmatrix}.
\]
Assume that
\begin{itemize}
\item $\bar{\chi}(c)=-1$.
\item $\bar{\chi} \neq \kbar^{- 1}$.
\end{itemize}
Then $\br$ is modular. More precisely: 
\begin{itemize}
\item For any integer $r \geq 2$, $\br$ is modular of weight $r$ and some level.
\item For $r=k(\br|_{\gal{\Q_p}})$ the weight associated to $\br|_{\gal{\Q_p}}$ in Serre's modularity conjecture, $\br$ is modular of weight $k(\br|_{\gal{\Q_p}})$ and level prime to $p$. 
\end{itemize}
\end{thm}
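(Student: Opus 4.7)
The plan is to combine Theorem \ref{mainthm} specialized to $G = \mr{GL}_2$ over $\Q$ with the residually reducible modularity lifting theorems of Skinner--Wiles \cite{skinner-wiles:reducible} and Pan \cite{pan:redFM}.

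Assume first that the extension class $\ast$ is nontrivial. For a desired Hodge--Tate weight $r \geq 1$, I would choose a geometric lift $\mu = \kappa^r \chi_0$ of $\bar{\mu} = \bar{\chi}$, with $\chi_0$ a finite-order character chosen so that $\mu(c) = -1$ (possible since $\bar{\chi}(c) = -1$). Lemma \ref{GL2lemma} verifies Assumptions \ref{generalhyp}, \ref{modp^Nhyp}, and the cohomological clause of Assumption \ref{finalhyp}, while Lemma \ref{locallifting} supplies the required local lifts $\rho_v$ at all $v \in S$: potentially crystalline lifts of Hodge--Tate weights $\{0, r\}$ at $v \mid p$, and, in the refined case when $r+1 = k(\br|_{\gal{\Q_p}})$, a genuinely crystalline lift at $p$ via the third bullet of that lemma. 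I would then apply Theorem \ref{mainthm} together with Corollary \ref{gl2bound} (which allows $p \geq 3$) to obtain a geometric lift $\rho \colon \gal{\Q, \wt{S}} \to \mr{GL}_2(\mc{O}')$ of $\br$ with determinant $\mu$ and image containing $\mr{SL}_2(\mc{O}')$; in particular $\rho$ is absolutely irreducible.

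Next I observe that $\rho$ is an odd, geometric, absolutely irreducible $p$-adic representation of $\gal{\Q}$ whose residual representation $\br$ is reducible with $\bar{\chi} \neq \kbar^{\pm 1}$. This is exactly the setup of the residually reducible Fontaine--Mazur theorems of \cite{skinner-wiles:reducible} (ordinary case) and \cite{pan:redFM} (general case); applying either yields $\rho \cong \rho_{f, \iota}$ for some newform $f$ of weight $r+1$. Choosing an appropriate $\gal{\Q}$-stable lattice inside $\rho_{f, \iota}$ then recovers $\br$ as the residual representation. In the refined case, the crystalline choice at $p$ forces the level of $f$ to be prime to $p$.

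It remains to treat the split case $\br \simeq \bar{\chi} \oplus 1$, where Lemma \ref{GL2lemma} does not apply directly because its third hypothesis fails. Following Corollary \ref{cr-lift}, I would first apply Lemma \ref{redlift} (with $P$ the upper-triangular Borel and $U$ its unipotent radical) to produce a nonsplit reducible $\br' \colon \gal{\Q, S \cup T} \to \mr{GL}_2(k)$, ramified at $S$ together with some auxiliary trivial primes $T$, agreeing with $\br$ on decomposition groups at $S$, and with semisimplification equal to $\br$. The preceding two paragraphs then produce a geometric lift $\rho'$ of $\br'$, and Lemma \ref{Glattices}, after base change to a finite extension, provides a $\mr{GL}_2(\mc{O}')$-conjugate of $\rho'$ whose residual representation is the semisimplification $\br$. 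Skinner--Wiles/Pan again concludes. The main technical obstacle will be arranging the local lift at $p$ on an irreducible component of the local deformation ring of the right dimension and compatible with the prescribed weight and level constraints; this is where Lemma \ref{locallifting} does the real work, and where the assumption $\bar{\chi} \neq \kbar^{\pm 1}$ plays its role by keeping us out of the excluded configurations of the modularity lifting theorems.
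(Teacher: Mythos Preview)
Your proposal is correct and follows essentially the same approach as the paper's own proof: reduce to the nonsplit case via Lemma \ref{redlift} and Lemma \ref{Glattices}, verify the hypotheses of Theorem \ref{mainthm} using Lemmas \ref{GL2lemma} and \ref{locallifting} with Corollary \ref{gl2bound}, and then invoke Skinner--Wiles/Pan. The only point the paper makes more explicit is that the local lift at $p$ from Lemma \ref{locallifting} is arranged to be ordinary (upper-triangular on inertia with unramified quotient character), which is what Skinner--Wiles requires; you implicitly rely on this but could state it.
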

\begin{proof}
We first assume that the extension class $*$ is non-zero. Lemma \ref{GL2lemma}, Lemma \ref{locallifting}, and Corollary \ref{gl2bound} allow us to apply Theorem \ref{mainthm} to produce for any $r \geq 2$ a lift $\rho \colon \gal{\Q, \wt{{\mc{S}}}} \to \mr{GL}_2(\mc{O}')$ of $\br$ such that $\rho|_{\gal{\Q_p}}$ is potentially crystalline with Hodge--Tate weights $\{0, r-1\}$, satisfies $\rho|_{I_{\Q_p}} \sim \begin{pmatrix} * & * \\ 0 & 1 \end{pmatrix}$, and for $r=k(\br|_{\gal{\Q_p}})$ is moreover crystalline. Namely, we first fix a determinant for our lifts as follows. With the desired weight $r \geq 2$ fixed, we let $\mu= \kappa^{r-1}\cdot [\det(\br) \bar{\kappa}^{1-r}]$. Possibly replacing $\mc{O}$ by a ramified extension $\mc{O}'$ (with uniformizer $\vpi'$), there is then from the proof of Lemma \ref{locallifting} a potentially unramified character $\omega \colon \gal{\Q_p} \to (\mc{O}')^\times$ with trivial mod $\vpi'$ reduction such that $\br|_{\gal{\Q_p}}$ admits a potentially crystalline lift $\rho_p$ with determinant $\mu|_{\gal{\Q_p}} \omega$ and satisfying $\rho_p|_{I_{\Q_p}} \sim \begin{pmatrix} * & * \\ 0 & 1 \end{pmatrix}$. By the local and global Kronecker--Weber theorems, we can inflate $\omega$ to a character of $\gal{\Q}$ (also trivial mod $\vpi'$ and now unramified away from $p$). We then consider for the remainder of the argument local and global lifts with fixed determinant $\mu \omega$. By the previous observation and the $v \neq p$ part of Lemma \ref{locallifting}, we can now for all $v \in \mc{S}$ specify local lifts $\rho_v \colon \gal{\Q_v} \to \mr{GL}_2(\mc{O}')$ with determinant $\mu \omega$. For $v \neq p$, we choose any irreducible component of $R_{\br|_{\gal{\Q_v}}}^{\square, \mu \omega}[1/\vpi']$ containing $\rho_v$. For $v=p$, we fix the $p$-adic Hodge type $\mbf{v}$ equal to that of $\rho_p$, and we choose an irreducible component of $R^{\square, \mu \omega, \mbf{v}}_{\br|_{\gal{\Q_p}}}[1/\vpi']$ that is ordinary (cf Lemma \ref{ordinary}) of the same inertial type as $\rho_p$ and contains $\rho_p$. Lemma \ref{GL2lemma} now shows that all of the hypotheses of Theorem \ref{mainthm} are satisfied, so the latter produces (perhaps further enlarging $\mc{O}'$, and introducing an auxiliary set of primes $\widetilde{\mc{S}} \supset \mc{S}$) a lift $\rho \colon \gal{\Q, \widetilde{S}} \to \mr{GL}_2(\mc{O}')$ of $\br$ with all local restrictions to primes in $\mc{S}$ on the specified irreducible components of the local lifting rings. 

By the main theorem of \cite{skinner-wiles:reducible} (assuming $\bar{\chi}|_{\gal{\Q_p}} \neq 1$) and \cite{pan:redFM} (allowing $\bar{\chi}|_{\gal{\Q_p}}=1$), $\rho$ is modular, and the claim about the weight follows from local-global compatibility at $p$.

Finally, we treat the case when $\br$ is split. We first use Lemma \ref{redlift} to produce a
non-split extension of $1$ by $\bar \chi$. As in the previous paragraph we can apply Theorem \ref{mainthm} to
lift this to an irreducible and modular representation $\rho_{f, \iota}$, and then Lemma
\ref{Glattices} shows that some conjugate (over a ramified extension)
of $\rho_{f, \iota}$ reduces to $\br$. Alternatively, as pointed out by the referee, we can apply Lemma \ref{GL2-easy} and then conclude by Lemma \ref{Glattices}.
\end{proof}

\begin{rmk} {}
\begin{itemize}
\item Since the work of Skinner--Wiles also includes modularity of $p$-adic representations in the residually reducible case over certain totally real fields (\cite[Theorem A, Theorem B]{skinner-wiles:reducible}), we also obtain a mod $p$ modularity application in such settings.

% \item 
% In the residually non-split cases we may ensure that our lifts to $\mr{GL}_2(\cO)$ have open image, while in the split case the lifts we produce to $\mr{GL}_2(\cO)$ will not have have open image{\color{red}{, because of the application of Lemma \ref{Glattices}. (They still have Zariski-dense image, and more, by the results of Momose and Ribet.)}} In the split case  ongoing work of Yudong Qiu in his UCLA thesis will provide a  a lifting to $\mr{GL}_2(\cO)$ with open image.

\item The  method  of lifting reducible mod $p$ Galois representations to irreducible geometric $p$-adic representations  introduced in \cite{ramakrishna-hamblen}, and developed further here,  perhaps gives at the moment the only technique to access lifting and modularity of residual reducible representations. Lifting Galois representations has thus played a key role both in the proof of Serre's original conjecture \cite{khare-wintenberger:serre1} and \cite{khare-wintenberger:serre2}  for irreducible odd $\br:\Gamma_{\Q} \to \mr{GL}_2(k)$ and in its analogue for reducible representations (\cite{ramakrishna-hamblen}, Theorem \ref{GL2}).
\end{itemize} 

\end{rmk}

We now justify our statement in the introduction that there is no
possibility in general of producing minimal lifts of reducible $\br$, and 
that the lifting method of \cite{khare-wintenberger:serre0} will not
work in the residually reducible case. The following lemma is implicit
in \cite{berger-klosin:residualmodularity}.

\begin{lemma}\label{BK}
  Let $\Gamma$ be a finite group, $L$ an infinite field, and
  $\phi: \Gamma \rightarrow L^{\times}$ a homomorphism. Assume that either 
  $\dim_L H^1(\Gamma,L(\phi))>1$ or  $\dim_L H^1(\Gamma,L(\phi^{-1}))>1$. Then there are infinitely many
  representations $\rho: \Gamma \rightarrow \mr{GL}_2(L)$ with
  semisimplification ${\rm id} \oplus \phi$ that are not isomorphic,
  i.e., not conjugate under $\mr{GL}_2(L)$.
\end{lemma}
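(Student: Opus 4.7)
The plan is to identify the relevant isomorphism classes with the $L$-points of a projective space over $H^1(\Gamma, L(\phi))$, and then note that $L$ being infinite forces this set to be infinite once $\dim_L H^1 \geq 2$.

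First I would observe that any $\rho \colon \Gamma \to GL_2(L)$ with semisimplification $\mathrm{id} \oplus \phi$ admits, after conjugation by a suitable element of $GL_2(L)$, a presentation
\[
\rho_c(\gamma) = \begin{pmatrix} \phi(\gamma) & c(\gamma) \\ 0 & 1 \end{pmatrix}
\]
for some $1$-cocycle $c \in Z^1(\Gamma, L(\phi))$, and $\rho_c$ is split (isomorphic to $\mathrm{id} \oplus \phi$) if and only if $c$ is a coboundary. In particular, each non-zero class $[c] \in H^1(\Gamma, L(\phi))$ produces a non-split $\rho_c$.

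Next I would analyze when $\rho_c$ and $\rho_{c'}$ are $GL_2(L)$-conjugate. In the non-split case, the line $L\cdot e_1$ (which carries character $\phi$) is the unique $\Gamma$-stable line: when $\phi \neq \mathrm{id}$ this follows from the distinctness of the Jordan--H\"older factors, and when $\phi = \mathrm{id}$ it is immediate from the structure of non-scalar unipotent matrices in $GL_2$. Hence any conjugating $g$ must be upper triangular, $g = \begin{pmatrix} a & b \\ 0 & d \end{pmatrix}$, and an explicit matrix calculation yields
\[
c'(\gamma) = (a/d)\, c(\gamma) + (b/d)(1-\phi(\gamma)),
\]
so that $[c'] = (a/d)[c]$ in $H^1(\Gamma, L(\phi))$. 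Conversely, any rescaling $[c] \mapsto \lambda[c]$ with $\lambda \in L^{\times}$ is realized by conjugation by $\mathrm{diag}(\lambda, 1)$.

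Combining these steps identifies the isomorphism classes of non-split representations $\rho$ of the given form with $\mathbb{P}(H^1(\Gamma, L(\phi)))(L)$. Since $\dim_L H^1(\Gamma, L(\phi)) > 1$ and $L$ is infinite, this projective space has infinitely many $L$-rational points, giving the desired infinitude of isomorphism classes. There is no serious obstacle in this argument; the only point requiring minor care is the uniqueness of the $\Gamma$-stable line in the case $\phi = \mathrm{id}$, which amounts to a one-line check on $2\times 2$ unipotent matrices.
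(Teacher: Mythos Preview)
Your argument is correct and follows essentially the same approach as the paper: both identify the isomorphism classes of such representations with the $L^\times$-orbits on $H^1(\Gamma,L(\phi))$, i.e.\ with $\{0\}\cup\mathbb{P}^{n-1}(L)$, and conclude from $L$ infinite and $n>1$. One minor inaccuracy worth noting: your opening claim that \emph{every} $\rho$ with semisimplification $\mathrm{id}\oplus\phi$ is conjugate to some $\rho_c$ is false when $\phi\neq 1$ (non-split extensions of $\phi$ by $1$ are not of this form), but this does not affect the proof, since you only need to exhibit infinitely many non-isomorphic representations among the $\rho_c$, which you do correctly.
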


\begin{proof}
  The isomorphism classes of $\rho$ of the theorem are in bijection
  with the orbits of $H^1(\Gamma,L(\phi))$ and
  $H^1(\Gamma,L(\phi^{-1}))$ under the action of $L^{\times}$,
  namely with $\{0\} \cup \mathbb P^{n-1}(L) \cup \mathbb P^{m-1}(L) $
  for $n=\dim_L H^1(\Gamma,L(\phi))$ and
  $m=\dim_L H^1(\Gamma,L(\phi^{-1}))$ (with the convention that
  $\mathbb P^{-1}(L)=0$ and that we identify the two projective spaces
  when $\phi$ is trivial) . The number of orbits is finite if and only
  if $m,n \leq 1$.
\end{proof}

\begin{prop}\label{finite}

Let $\Gamma$ be a finite group that is a quotient of $\gal{\Q}$, and let $\bar \chi: \Gamma \rightarrow k^{\times}$ be a non-trivial odd character. Assume that $\dim_k H^1(\Gamma,k(\bar \chi))>1$. Then for any fixed integers  $N \geq 1$ and $r \geq 2$, only finitely many of the infinitely many  non-isomorphic  representations $\rho: \Gamma \rightarrow \mr{GL}_2(\overline{k})$ with semisimplification ${\rm id} \oplus \overline \chi$ (see Lemma \ref{BK}) can arise from reductions of  Galois-stable lattices in  $\rho_{f,\iota}:\gal{\Q} \to \mr{GL}_2(E)$ for  newforms $f \in S_{r}(\Gamma_1(N))$.
\end{prop}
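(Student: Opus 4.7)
The plan is to compare two counts. On one hand, the set of isomorphism classes of residual representations $\br$ arising as reductions of $\gal{\Q}$-stable lattices in $\rho_{f,\iota}$, as $f$ ranges over newforms in $S_r(\Gamma_1(N))$ and $\iota$ over embeddings of the Hecke field, is finite. On the other hand, Lemma \ref{BK} produces infinitely many non-isomorphic $\rho \colon \Gamma \to \mr{GL}_2(\overline{k})$ with semisimplification $1 \oplus \overline{\chi}$. The proposition is immediate from the contrast.

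First I would handle the finiteness of reductions. Since $S_r(\Gamma_1(N))$ is a finite-dimensional $\CC$-vector space, it contains only finitely many newforms. For each such newform $f$, the Hecke field $K_f$ is a number field, and the embeddings $\iota \colon K_f \hookrightarrow \overline{\Q}_p$ factor through the finitely many completions $K_{f,v}$ at primes $v \mid p$. Thus only finitely many pairs $(f,\iota)$ arise. For each such pair the representation $\rho_{f,\iota} \colon \gal{\Q} \to \mr{GL}_2(K_{f,\iota})$ is absolutely irreducible, since $f$ is cuspidal. By a classical tree-theoretic argument, the set of $\gal{\Q}$-stable $\mc{O}_{K_{f,\iota}}$-lattices in $K_{f,\iota}^2$, taken up to homothety, is finite: if the fixed-point set of the image of $\gal{\Q}$ in the Bruhat--Tits tree of $\mr{GL}_2(K_{f,\iota})$ were infinite, it would contain an infinite ray, whose stabilizer lies in a Borel subgroup, contradicting absolute irreducibility. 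Each homothety class of stable lattice determines a unique isomorphism class of residual representation (valued in the residue field of $K_{f,\iota}$, which embeds into $\overline{k}$), so only finitely many residual isomorphism classes occur overall.

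Next I would apply Lemma \ref{BK} to the infinite field $L = \overline{k}$ with $\phi = \overline{\chi}$. The hypothesis $\dim_k H^1(\Gamma, k(\overline{\chi})) > 1$ gives $\dim_{\overline{k}} H^1(\Gamma, \overline{k}(\overline{\chi})) > 1$ by flat base change, so the lemma yields infinitely many pairwise non-isomorphic $\rho \colon \Gamma \to \mr{GL}_2(\overline{k})$ with semisimplification $1 \oplus \overline{\chi}$. Combining this with the previous paragraph, infinitely many of these $\rho$ cannot arise as the reduction of any Galois-stable lattice in any $\rho_{f,\iota}$ with $f \in S_r(\Gamma_1(N))$.

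The main (and essentially the only) non-trivial point is the finiteness of stable lattices up to homothety, which rests squarely on the absolute irreducibility of $\rho_{f,\iota}$; everything else is formal. Note that the result naturally reverses when the hypothesis $\dim_k H^1(\Gamma, k(\overline\chi))>1$ fails, since then Lemma \ref{BK} provides only finitely many such $\rho$ and the finiteness assertion is vacuous, consistent with the discussion in the introduction explaining why the methods of this paper (which \emph{do} allow such $\rho$ to be attained, at the cost of enlarging both the level and the weight) are genuinely needed.
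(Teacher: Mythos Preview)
Your proof is correct and follows essentially the same approach as the paper's: both reduce to the finiteness of $S_r(\Gamma_1(N))$, the finiteness of embeddings of the Hecke field, and the finiteness of homothety classes of stable lattices via the Bruhat--Tits tree, then contrast with Lemma~\ref{BK}. The paper's argument is slightly more refined in that it identifies the fixed-point set in the tree as a \emph{segment} (using $\bar\chi \neq 1$) and explicitly enumerates the two or three residual representations that arise, and it also records the stability of the indecomposable residual classes under finite extensions $K'/K$; your direct finiteness argument (infinite fixed subtree $\Rightarrow$ infinite ray $\Rightarrow$ image in a Borel $\Rightarrow$ reducible) bypasses this structural description but reaches the same conclusion.
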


\begin{proof}
Let $E$ be a finite extension of $\Q_p$ with residue field $k'$ that contains  $k$.
Consider an irreducible representation  $\rho:\gal{\Q} \to \mr{GL}_2(E)$ such that the semisimplification of the   residual representation arising  from (any)  $\gal{\Q}$-lattice in $E^2$ is $1 \oplus \bar \chi$. Note that $1 \neq \bar \chi$ as $\bar \chi$ is odd and  $p>2$. 

By an argument of Serre, which uses $ 1 \neq \bar \chi$, the fixed
points under $\rho(\gal{\Q})$ of the Bruhat-Tits tree for
$\mr{PGL}_2(E)$ form a segment. For a proof of this the reader may
consult \cite{bc-arbres}, especially Proposition 32, and for the
statements below see \cite[Proposition 11]{bc-arbres} (see also \cite[\S 1]{bellaiche:hawaiiribet} for detailed proofs).  The residual
representations arising from stable lattices are:
 \begin{enumerate}
 \item In the case the segment is an edge there are precisely two such representations, which correspond to the lattices that form the end points of the edge, whose reductions give a non-split extension of $1$ by $\bar \chi$ and vice-versa.
 \item In the case the segment is of length $>1$, there is the additional split representation, which arises by reduction of a lattice corresponding to a vertex that is an interior point of the segment.
 \end{enumerate}

 Denote by $X_E$ the classes of indecomposable representations of
 $\gal{\Q}$ with values in $\mr{GL_2}(k’)$ up to conjugacy by
 $\mr{GL}_2(k’)$, arising from reductions of $\gal{\Q}$-stable
 lattices in $E^2$. Then for any finite extension $E'/E$ there is a
 bijection between $X_E$ and $X_{E'}$.  Thus as
 $\dim_\mathbb C S_{r}(\Gamma_1(N))$ is finite, the number of
 isomorphism classes of representations of the form $\br_{f,\iota}$
 arising from $f \in S_{r}(\Gamma_1(N))$ is finite. Thus only
 finitely many of the infinitely many isomorphism classes of
 representations $\rho: \Gamma \rightarrow \mr{GL}_2(\overline{k})$
 with semisimplification ${\rm id} \oplus \overline \chi$ arise from
 $f \in S_{r}(\Gamma_1(N))$ for fixed integers $N \geq 1$, $r \geq 2$.
 \end{proof}

 \begin{rmk}
 
Note that given a character $\bar \chi: \Gamma_\Q  \rightarrow k^\times$, using Lemma \ref{gw}, there is finite quotient $\Gamma$ of $\Gamma_{\Q,{\mc{S}}}$, for ${\mc{S}}$ a sufficiently large finite set of places of $\Q$ containing $p$ and the places at which $\bar \chi$ is ramified, through which $\bar \chi$ factors, such that   $\dim_k H^1(\Gamma,k(\bar \chi))>1$.  The method of \cite{khare-wintenberger:serre0} produces liftings by producing upper and lower bounds on deformation rings $R$. The upper bound takes the form of showing that $R/pR$ is finite, while the lower bound says that $R$ is of positive dimension. The upper bounds rely on some version  of Lemma 3.15 of  \cite{dejong:conjecture}, which may  not be true for residually reducible representations. This  indicates that the method of \cite{khare-wintenberger:serre0} may not apply in the residually reducible case. Further the method of \cite{khare-wintenberger:serre0} when it works produces minimal liftings. Proposition \ref{finite} shows that these may  not exist in the residually reducible case. 

\end{rmk}

\section{The case of function fields}\label{functionfields}

In this section we briefly elaborate on the case of function fields,
where thanks to the work of L.~Lafforgue (\cite{llafforgue:chtoucas})
stronger automorphy results are possible for $G= \mr{GL}_n$. Let $F$
be a global function field of characteristic $\ell \neq p$, and let
$\br \colon \gal{F} \to G(k)$ be a representation satisfying the
hypotheses of Theorem \ref{mainthm}. Then $\br$ has, as in Theorem
\ref{mainthm}, a finitely-ramified lift
$\rho \colon \gal{F, \widetilde{{\mc{S}}}} \to G(\mc{O}')$. We now take
$G=\mr{GL}_n$ and deduce a stronger conclusion. We first recall the
relevant notion of automorphy in this setting. Fix an isomorphism
$\iota \colon \CC \xrightarrow{\sim} \ov{\Q}_p$. For each cuspidal
automorphic representation $\pi$ of $\mr{GL}_n(\mathbb{A}_F)$ with
central character of finite order (this hypothesis is not essential),
L. Lafforgue has in the main theorem of \cite{llafforgue:chtoucas}
constructed the Galois representation
$\rho_{\pi, \iota} \colon \gal{F} \to \mr{GL}_n(\ov{\Q}_p)$ associated
to $\pi$ (and $\iota$: note that $\iota$ is only implicit in the
notation of \textit{loc.~cit.}, and it is fixed after
\cite[Th\'{e}or\`{e}me VI.1.1]{llafforgue:chtoucas}). Any such
$\rho_{\pi, \iota}$ stabilizes a lattice in $E^n$ for some finite
extension $E$ of $\Q_p$ inside $\ov{\Q}_p$, and as in \S
\ref{GL2section} we say that $\br$ is automorphic if it is isomorphic,
after suitable extension of scalars, to the representation
$\bar{\rho}_{\pi, \iota} \colon \gal{F} \to
\mr{GL}_n(\mc{O}_E/\mf{m}_E)$ obtained by reducing some
$\rho_{\pi, \iota}$-stable lattice modulo $\mf{m}_E$.
\begin{thm}\label{FF}
Let $p \gg_n 0$, and let $\br \colon \gal{F, {\mc{S}}} \to \mr{GL}_n(k)$ be a continuous representation satisfying Assumptions \ref{generalhyp}, \ref{modp^Nhyp}, and the first part of Assumption \ref{finalhyp}. Then $\br$ is automorphic.
\end{thm}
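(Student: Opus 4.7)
The plan is to apply our main lifting theorem (Theorem \ref{mainthm}) to produce an irreducible lift of $\br$ with finite-order determinant, and then to invoke L. Lafforgue's global Langlands correspondence for $\mr{GL}_n$ over function fields \cite{llafforgue:chtoucas}.

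First I would verify the hypotheses of Theorem \ref{mainthm}. Assumptions \ref{generalhyp}, \ref{modp^Nhyp}, and the first part of \ref{finalhyp} are given, so it remains only to check that for every $v \in S$ there exists a local lift $\rho_v \colon \gal{F_v} \to \mr{GL}_n(\mc{O})$ of $\br|_{\gal{F_v}}$ with a suitable multiplier $\mu$. Since $F$ has characteristic $\ell \neq p$, every $v \in S$ has residue characteristic $\ell$, and such lifts for $\mr{GL}_n$ always exist after a possible enlargement of $\mc{O}$ (by the standard deformation theory of tame $\ell$-adic local Galois representations in residue characteristic prime to $p$). The choice of $\mu$ lifting $\bar{\mu} = \det(\br)$ is free, and by global class field theory for function fields any continuous character of $\gal{F, S}$ factors as a product of a finite-order character and a power of the degree character $\gal{F} \to \hat{\Z}$; I would therefore choose $\mu$ to be of finite order.

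Applying Theorem \ref{mainthm} then yields a finite extension $\mc{O}'/\mc{O}$, a finite set $\wt{S} \supset S$, and a lift $\rho \colon \gal{F, \wt{S}} \to \mr{GL}_n(\mc{O}')$ of $\br$ with $\det(\rho) = \mu$ of finite order and with image containing $\mr{SL}_n(\mc{O}')$. The Zariski-density of $\mr{SL}_n(\mc{O}')$ in $\mr{SL}_n$ forces $\rho$ to be absolutely irreducible, so $\rho$ now satisfies all the conditions on the Galois side of Lafforgue's correspondence.

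By \cite{llafforgue:chtoucas}, any irreducible continuous $n$-dimensional $\Qpb$-representation of $\gal{F}$ unramified outside a finite set and with finite-order determinant arises as $\rho_{\pi, \iota}$ for a unique cuspidal automorphic representation $\pi$ of $\mr{GL}_n(\mathbb{A}_F)$ with finite-order central character. Applied to our $\rho$, this produces a cuspidal $\pi$ with $\rho \cong \rho_{\pi, \iota}$, and reducing any $\gal{F}$-stable lattice in $\rho_{\pi, \iota}$ then exhibits $\br$ as such a reduction, proving automorphy. The main subtlety is securing the finite-order determinant condition essential for Lafforgue's correspondence on the automorphic side; this is handled cleanly in the function field setting by the class field theory remark above, whereas in the number field case the analogous argument would require the considerably more delicate matching of $p$-adic Hodge-theoretic data with the archimedean components of automorphic representations.
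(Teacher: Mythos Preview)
Your approach is correct and matches the paper's: verify the remaining part of Assumption \ref{finalhyp} (existence of local lifts with the prescribed multiplier), apply Theorem \ref{mainthm}, note the resulting lift is irreducible since its image contains $\mr{SL}_n(\mc{O}')$, and invoke Lafforgue.

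The two presentations differ only in emphasis. You take care to arrange $\mu$ of finite order so as to meet Lafforgue's hypothesis exactly; the paper instead remarks that this hypothesis is inessential (one may twist). Conversely, the paper is more explicit where you are vague: for the existence of local lifts it cites \cite[Corollary 2.4.21]{clozel-harris-taylor} and checks that the argument there transfers to equicharacteristic local fields (it uses only that the kernel of any surjection $I_{F_v}\to\Z_p$ has pro-order prime to $p$, and the usual structure of the tame quotient). It also explains how to match the determinant of an arbitrary local lift to the globally chosen $\mu|_{\gal{F_v}}$: the discrepancy is a character $\gal{F_v}\to 1+\vpi\mc{O}$, which has an $n$-th root since $p\nmid n$, and twisting by this root fixes the determinant. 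Your phrase ``standard deformation theory of tame $\ell$-adic local Galois representations'' is correct in substance but should be replaced by a precise reference or a short argument when you write this up.
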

\begin{proof}
Once we observe that the second part of Assumption \ref{finalhyp} automatically holds for $\br$, the theorem follows immediately from combining Theorem \ref{mainthm} with \cite[Th\'eor\`eme, pg. 2]{llafforgue:chtoucas}. That is, for $v \in {\mc{S}}$ we need to know that $\br|_{\gal{F_v}}$ has some $p$-adic lift; to arrange that the multiplier character of the local lift can be chosen to match the restriction of a fixed global lift $\mu$, we then use the fact that any character $\gal{F_v} \to (1+\vpi \mc{O})$ has an $n^{th}$ root, since $p$ does not divide $n$. The existence of some local lift of $\br|_{\gal{F_v}}$  follows as in \cite[Corollary 2.4.21]{clozel-harris-taylor}. In the context of that paper, $F_v$ is a finite extension of $\Q_{\ell}$, but the results of \cite[\S 2.4.4]{clozel-harris-taylor} only depend on
\begin{itemize}
\item the fact that the kernel of any surjection $I_{F_v} \to \Z_p$ has pro-order prime to $p$; and
\item the structure of the Galois group of the maximal tamely ramified, with $p$-power ramification index, extension of $F_v$.
\end{itemize}
These properties continue to hold in the equal characteristic setting; see the remarks in \S \ref{prelims}.
\end{proof}
\begin{rmk}
%\begin{enumerate}
%\item 
When $\br$ is absolutely irreducible, the hypotheses of this
  theorem are satisfied for all $p \gg_n 0$, which we could make
  explicit (as in \S \ref{GL2section}) if desired. This case of the
  theorem is already known: for $p \not \mid n$, de Jong's conjecture
  (\cite[Conjecture 2.3]{dejong:conjecture}) implies, by \cite[Theorem
  3.5]{dejong:conjecture} (and \cite[Theorem
  5.13]{bhkt:fnfieldpotaut}, which adapts de Jong's argument to allow
  $\br|_{\gal{F \ov{\mathbb{F}}}}$ reducible), that $\br$ has a
  finitely-ramified lift, and in fact one obtained without adding new
  primes of ramification. Gaitsgory proved de Jong's conjecture in
  \cite{gaitsgory:dejongconjecture}, modulo some results that are
  widely-held to be straightforward adaptations of theorems in the
  literature (namely, basic results in the theory of \'{e}tale
  $k[[t]]$-sheaves and, more importantly, a complete proof of the result
  announced in \cite[Theorem 14.1]{mirkovic-vilonen:geometricsatake}
  on the geometric Satake equivalence with general coefficients,
  including $k[[t]]$ and $k((t))$, and over any separably closed
  field).
%\end{enumerate}
\end{rmk}

When the semisimplification of $\br$ has absolutely irreducible
  image in a maximal proper Levi subgroup of $\mr{GL}_n$, Lemmas
  \ref{lem:h^0} and \ref{lem:h^1} give very concrete conditions under which Assumptions
  \ref{generalhyp}, \ref{modp^Nhyp}, and \ref{finalhyp} hold. We thus obtain Theorem \ref{fnfieldappintro} of the Introduction:
\begin{cor}\label{ffmax}
Let $p \gg_n 0$, and let $\br \colon \gal{F, {\mc{S}}} \to \mr{GL}_n(k)$ be a continuous representation that factors through a maximal parabolic $P$ with Levi quotient $M \cong \mr{GL}_{n_1} \times \mr{GL}_{n_2}$. Let $\br_M$ denote the projection $\br_M \colon \gal{F, {\mc{S}}} \to M(k)$ of $\br$ to $M$, and set $\br_M= \br_1 \oplus \br_2$, where $\br_i$ is the projection to the $\mr{GL}_{n_i}$ factor; we order the $\br_i$ such that $\br$ is isomorphic to an extension of $\br_2$ by $\br_1$. Moreover assume that $\br_M$ satisfies:
\begin{itemize}
\item $\br_M$ is absolutely irreducible. 
%\item $\ker( \br(\gal{F}) \to M(k))$ is non-zero.
\item $\br_1$ is not isomorphic to $\br_2 \otimes \bar{\psi}$ for $\bar{\psi} \in \{1, \kbar^{-1}\}$. (In particular, this condition always holds if $n_1 \neq n_2$.) %When $\br$ is semisimple, we require this assumption with the roles of $\br_1$ and $\br_2$ reversed as well.
\item Let $\bar{\chi}$ be the character $\det(\br_1)^{n_2/d} \cdot \det(\br_2)^{-n_1/d}$, where $d = \mr{gcd}(n_1, n_2)$. Then $[F(\zeta_p):F(\zeta_p) \cap F(\bar{\chi})]$ is greater than a constant $b$ depending only on $n_1$ and $n_2$.
\end{itemize}
Then $\br$ is automorphic.
\end{cor}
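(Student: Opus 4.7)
The plan is to deduce the corollary from Theorem \ref{FF}, so our task is to verify that $\br$ satisfies Assumptions \ref{generalhyp}, \ref{modp^Nhyp}, and the first bullet of Assumption \ref{finalhyp}. The three hypotheses of the corollary are tailored to feed into the group-theoretic criteria of Lemmas \ref{lem:h^0} and \ref{lem:h^1} in the appendix, which translate those assumptions into conditions on the semisimplified representation $\br_M = \br_1 \oplus \br_2$ and on the determinantal character $\bar{\chi}$.

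First I would unpack the adjoint module. Since $\br_M$ factors through $M \cong \mr{GL}_{n_1} \times \mr{GL}_{n_2}$, the module $\br(\fgder)$ carries a natural three-step filtration coming from $\mf{u}^- \subset \mf{p}^- \subset \fgder$, whose graded pieces are, up to a trivial summand, $\ad^0(\br_1)$, $\ad^0(\br_2)$, $\Hom(\br_1, \br_2)$, and $\Hom(\br_2, \br_1)$. By absolute irreducibility of $\br_M$ and Schur's lemma, no trivial $\Fp[\gal{F}]$-submodule occurs in $\br(\fgder)$, which gives the first bullet of Assumption \ref{modp^Nhyp}. The hypothesis that $\br_1 \not\cong \br_2 \otimes \bar{\psi}$ for $\bar{\psi} \in \{1, \kbar, \kbar^{-1}\}$ then rules out any $\gal{F}$-equivariant surjection from $\br(\fgder)$ onto a subquotient of $\br(\fgder)^* \cong \br(\fgder)(1)$ (the three forbidden twists correspond to the three ``dangerous'' identifications that would create such a surjection through the off-diagonal blocks), yielding the second bullet of Assumption \ref{modp^Nhyp}.

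The main technical step, which I expect to be the principal obstacle, is verifying the vanishing $H^1(\Gal(K/F), \br(\fgder)^*) = 0$ from Assumption \ref{generalhyp}. Mimicking the strategy of Lemma \ref{GL2lemma}, I would set $K_0 = F(\br, \mu_p)$, let $P$ be the unique $p$-Sylow of $\Gal(K/F)$, use that $[K_0:F]$ is prime to $p$, and exploit the filtration above to reduce to showing $\Hom_{\Gal(K_0/F)}(P^{\mr{ab}}, W(1)) = 0$ for each Jordan--H\"older constituent $W$ of $\br(\fgder)$. The dangerous constituents are the off-diagonal ones, whose determinants are powers of $\bar{\chi} = \det(\br_1)^{n_2/d} \det(\br_2)^{-n_1/d}$; this character is precisely the primitive generator of the group of characters of $M$ that are trivial on the derived subgroup, and it controls the $\gal{F}$-module structure of those constituents up to the auxiliary structure supplied by $\br_M$ itself. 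The assumption $[F(\zeta_p):F(\zeta_p) \cap F(\bar{\chi})] > b$ then supplies the linear-disjointness between $F(\bar{\chi})$ and the cyclotomic tower needed to force the remaining Hom spaces to vanish, where the constant $b$ absorbs the contributions from Schur functors of $\br_1, \br_2$ that appear internally. This is exactly what Lemma \ref{lem:h^1} is set up to produce.

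Finally, the first bullet of Assumption \ref{finalhyp} (local liftability of $\br|_{\gal{F_v}}$ with multiplier $\mu|_{\gal{F_v}}$ for each $v \in S$) follows verbatim as in the proof of Theorem \ref{FF}: some lift of $\br|_{\gal{F_v}}$ exists by \cite[Corollary 2.4.21]{clozel-harris-taylor}, whose argument carries over to equal characteristic by the remarks in \S \ref{prelims}, and its determinant can be adjusted to match $\mu|_{\gal{F_v}}$ by extracting an $n$-th root of a principal unit character, which is possible since $p \nmid n$ (indeed $p \gg_n 0$). With all three assumptions in hand, Theorem \ref{FF} produces the lift $\rho \colon \gal{F, \wt{S}} \to \mr{GL}_n(\mc{O}')$, and Lafforgue's theorem identifies $\rho$ with $\rho_{\pi, \iota}$ for some cuspidal automorphic representation $\pi$, establishing the automorphy of $\br$.
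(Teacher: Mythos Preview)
Your proposal has a genuine gap. You attempt to verify Assumption \ref{modp^Nhyp} (in particular, that $\br(\fgder)$ has no trivial $\Fp[\gal{F}]$-submodule) directly for $\br$, but this fails precisely when the map $\im(\br) \to M(k)$ is injective, i.e.\ when $\br$ is $G$-completely reducible and already lands in the Levi. In that case $\br(\fgder)$ splits as $\mf{m}' \oplus \mf{u}^+ \oplus \mf{u}^-$ as a $\gal{F}$-module, and the one-dimensional center $Z(\mf{m}')$ sits inside $\br(\fgder)$ as a genuine trivial submodule (this is exactly why \S\ref{latticesection} opens by noting that a reducible $G$-cr $\br$ can never satisfy the hypotheses of Theorem \ref{mainthm}). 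Your sentence ``by absolute irreducibility of $\br_M$ and Schur's lemma, no trivial $\Fp[\gal{F}]$-submodule occurs in $\br(\fgder)$'' is therefore false as stated: Schur only kills invariants in the $\ad^0$ blocks, not the central direction. Even in the non-$G$-cr case your argument is incomplete, since you acknowledge a trivial graded piece and then assert there is no trivial submodule without explaining why the extension obstructs it; that step is the content of Lemma \ref{lem:h^0}, which explicitly requires $\im(\br) \to M(k)$ to be non-injective.

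The paper's proof routes around this by invoking Corollary \ref{cr-lift}: when $\br$ is $G$-cr one first uses Lemma \ref{redlift} to manufacture a non-split $\br'$ with the same semisimplification (ramified at some additional trivial primes), checks via Lemmas \ref{lem:h^0} and \ref{lem:h^1} that $\br'$ now satisfies all the assumptions of Theorem \ref{mainthm}, lifts $\br'$, and then applies Lemma \ref{Glattices} to conjugate the lift so that its reduction is the original $\br$. You also have the bookkeeping slightly scrambled: the twist conditions $\br_1 \not\cong \br_2 \otimes \bar{\psi}$ feed into (C1) and (C2) via Lemma \ref{lem:h^0} (Case 1), while it is the determinantal character hypothesis on $\bar{\chi}$ that supplies (C3) and (C4) via Lemma \ref{lem:h^1} (Condition 1), not the other way around.
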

\begin{proof}
In the notation of Lemma \ref{lem:h^1}, the character $M \to \ov{M} \to \ov{M}/\ov{M}^{\mr{der}} \xrightarrow{\sim} \mathbb{G}_m$ sends a pair $(g_1, g_2) \in \mr{GL}_{n_1} \times \mr{GL}_{n_2}$ to $\det(g_1)^{n_2/d} \cdot \det(g_2)^{-n_1/d}$ and thus agrees with the $\bar{\chi}$ of the present Corollary. Thus we may apply Lemma \ref{lem:h^1} (Case 1) to find that Assumption \ref{generalhyp} and the second part of Assumption \ref{modp^Nhyp} hold (and in fact that $\mu_p$ is not contained in $F(\br(\fgder))$). 

Now assume that we are in the case where $\br$ is not split, so the projection $\br(\gal{F}) \to M(k)$ is not injective. Then our assumptions allow us to apply Lemma \ref{lem:h^0} (Case (1)): indeed, the conditions (a) and (b) of Case (1) hold by assumption, and the condition that $[F(\zeta_p):F] \gg_{\mr{GL}_n} 0$ is implicit in the hypothesis on $[F(\zeta_p):F(\zeta_p) \cap F(\bar{\chi})]$. By Lemma \ref{lem:h^0}, then, the first part of Assumption \ref{modp^Nhyp} holds, as does the first part of Assumption \ref{finalhyp}. We can therefore apply Theorem \ref{FF} to deduce the Corollary in this case.

When on the other hand $\br \cong \br_1 \oplus \br_2$, we assume only that $\br_1 \not \cong \br_2$. If both $\br_1 \cong \br_2 \otimes \bar{\kappa}^{-1}$ and $\br_2 \cong \br_1 \otimes \bar{\kappa}^{-1}$, then $\br_1 \cong \br_1 \otimes \bar{\kappa}^{-2}$, and in particular $\bar{\kappa}^{2n_1}=1$; we may rule this possibility out using the condition, implicit in the third bulleted assumption, that $[F(\zeta_p):F] \gg_{\mr{GL}_n} 0$. Then possibly reordering the representations we may assume $\br_1 \not \cong \br_2 \otimes \bar{\kappa}^{-1}$. As in the proof of Corollary \ref{cr-lift}, we then apply Lemma \ref{levi} to $\br_1 \oplus \br_2$ to produce a non-split representation $\br'$, an extension of $\br_2$ by $\br_1$ ramified at an enlarged set $\mc{S} \cup \mc{T}$ of primes; and now since $\br_1 \not \cong \br_2 \otimes \kappa^{-1}$ we find that $H^0(\gal{F, \mc{S} \cup \mc{T}}, \br'(\fgder)^*)=0$ by condition (b) of Case (1) of Lemma \ref{lem:h^0}. Likewise (by condition (a) of Case (1) of Lemma \ref{lem:h^0}) our assumption $\br_1 \not \cong \br_2$ implies $H^0(\gal{F, \mc{S} \cup \mc{T}}, \br'(\fgder))=0$. We complete the proof in this case by combining Theorem \ref{FF}, all of whose hypotheses are now satisfied, with Lemma \ref{Glattices}.
%All of the hypotheses of we instead apply Corollary \ref{cr-lift}, as we may since the $\br_i$ are absolutely irreducible, $\br_1 \not \cong \br_2 \otimes \kappa^{\pm 1}$, $\mu_p$ is not contained in $F(\br(\fgder))$, and the local lifting hypothesis of Assumption \ref{finalhyp} is satisfied as in the proof of Theorem \ref{FF}.
%This results from combining Theorem \ref{FF}, Corollary \ref{cr-lift}, Lemma \ref{lem:h^0} (Case 1), and Lemma \ref{lem:h^1} (Condition 1), by noting that the character (in the notation of Lemma \ref{lem:h^1}) $M \to \ov{M} \to \ov{M}/\ov{M}^{\mr{der}} \xrightarrow{\sim} \mathbb{G}_m$ sends a pair $(g_1, g_2) \in \mr{GL}_{n_1} \times \mr{GL}_{n_2}$ to $\det(g_1)^{n_2/d} \cdot \det(g_2)^{-n_1/d}$.
\end{proof}
Note that while the asymmetry of the statement implies that when $\br
\cong \br_1 \oplus \br_2$ we only exclude from the second bullet-point
the case $\br_1 \cong \br_2$, the third bullet point imposes a strong enough hypothesis ($[F(\zeta_p):F(\bar{\kappa}^{n_1})] \gg_{n_1} 0$)
when $\br_1 \cong \br_2 \otimes \bar{\kappa}$ to make this result useless in this case. Nevertheless, we have left the proof in its present form because in some situations it is possible to verify Assumption \ref{generalhyp} and the second part of Assumption \ref{modp^Nhyp} without using the third bulleted assumption (i.e., without relying on Lemma \ref{lem:h^1}): for instance, the $\bar{\chi}=\bar{\kappa}$ case of Lemma \ref{GL2lemma} provides such an example when $n=2$.
%\color{Fuchsia}
%Isn't the index bounded above by $n_1$, so the condition will never be
%satisfied (unless we have an explicit constant for the $\gg_{n_1}$ which is
%less than $n_1$)? \color{black} 
When $\br$ itself is semisimple, we further have the following application:
\begin{cor}\label{ffss}
Let $p \gg_n 0$, and let $\br \colon \gal{F, {\mc{S}}} \to \mr{GL}_n(k)$ be a continuous representation equal to, for some $r>1$, a direct sum $\br= \bigoplus_{i=1}^r \br_i$ of representations $\br_i \colon \gal{F, {\mc{S}}} \to \mr{GL}_{n_i}(k)$. Assume that $\br$ satisfies:
  \begin{enumerate}
  \item $\br_{i}$ is absolutely irreducible for $i=1,2, \ldots,r$.
  \item For all $i \neq j \in \{1,2,\dots,r\}$, $\br_i$ is not isomorphic to $\br_j \otimes \bar{\psi}$ for any
    $\bar{\psi} \in \{1, \kbar, \kbar^{-1}\}$  (In particular, this condition
    always holds if $n_i \neq n_j$.)
  \item Let $\bar{\chi}_{i,j}$ be the character
    $\det(\br_i)^{n_j} \cdot \det(\br_j)^{-n_i}$.
    %where $d_{i,j} = \mr{gcd}(n_i, n_j)$.
    Then $[F(\zeta_p):F(\zeta_p) \cap F(\bar{\chi}_{i,j})]$ is greater
    than a constant $b$ depending only on $n$, for all
    $i \neq j \in \{1,2,\dots,r\}$.
\end{enumerate}
Then $\br$ is automorphic.
\end{cor}
\begin{proof}
Lemma \ref{l:gln} (and the remarks in the proof of Theorem \ref{FF}) shows that $\br$ satisfies the hypotheses of Corollary \ref{cr-lift}, so we apply the latter to produce an irreducible lift $\rho$ with finite-order determinant, which again by \cite{llafforgue:chtoucas} is automorphic.
\end{proof}

\section{Remarks on the higher-rank case over number fields}\label{numberfieldGLn}
For $G$ of semisimple rank greater than 1, the results of this paper
do not at present imply modularity of residually reducible odd
representations $\br \colon \gal{F} \to G(k)$ when $F$ is a totally
real field, since we no longer can invoke the work of
Skinner--Wiles. Of course, the Fontaine--Mazur and Langlands conjectures
in combination do predict the automorphy of the geometric lifts we
construct, and we can hope for future generalizations of
\cite{skinner-wiles:reducible}. While the force of our application of
\cite{skinner-wiles:reducible} is in studying non-split $\br$, the
split case for $\mr{GL}_2$ being well-known, for higher-rank $G$ the
lifting results we prove in the split ($G$-completely reducible) case
are novel, and we find it worthwhile to compare them with the recent
important works \cite{thorne:reducible} and \cite{allen-newton-thorne:reducible}, which prove automorphy lifting theorems for residually reducible representations $\rho \colon \gal{L} \to \mr{GL}_n(\Qpb)$ where $L$ is a CM field, and $\rho$ is a suitable (we pass over the precise hypotheses) essentially conjugate self-dual representation.

For simplicity we focus on the case $G= \mr{GSp}_{2n}$. We distinguish
between a representation $\br \colon \gal{F} \to \mr{GSp}_{2n}(k)$
being $\mr{GSp}_{2n}$-irreducible and being
$\mr{GL}_{2n}$-irreducible. In Proposition \ref{CS} we will combine
our lifting theorems (in fact, the main theorem of \cite{fkp:reldef}
is sufficient) with the potential automorphy theorem of
\cite{allen-newton-thorne:reducible} to lift many
$\mr{GSp}_{2n}$-irreducible but $\mr{GL}_{2n}$-reducible
representations of $\gal{F}$ to compatible systems of $\ell$-adic
representations. (Note that the results of
\cite{allen-newton-thorne:reducible} were not available when
\cite{fkp:reldef} and \cite{fkp:SOeg} were written.) 

We will first produce using our lifting results a geometric $p$-adic ordinary lift $\rho \colon \gal{F} \to \mr{GSp}_{2n}(\ov{\Q_p})$, assuming the existence of ordinary lifts of $\br|_{\gal{F_v}}$ for $v \vert p$; we moreover require a generalization to other reductive groups (here just $\mr{GSp}_{2n}$) of Geraghty's results (\cite{geraghty:ordinary}) on ordinary local deformation rings, which we work out in Appendix \ref{ordappendix}. In order to apply the results of \cite{allen-newton-thorne:reducible}, we then need to verify the residual cuspidal ordinary automorphy of $\br|_{\gal{L'}}$ for some CM extension $L'/F$.
We do this using the potential automorphy
theorems of \cite{blggt:potaut}, and then we can strengthen our result producing a $p$-adic
lift $\rho$ of $\br$ by showing that $\rho$ belongs to a compatible
systems of $\gal{F}$-representations. Here without striving for
maximal generality we give a sample result; note that our restriction
to the symplectic case greatly simplifies the deduction of potential
automorphy, but something can be said in other cases as well.

We have to introduce some notation. We follow \cite[\S 2]{thorne:reducible} and refer the reader there for a more detailed discussion. If $L$ is a CM field, and $\pi$ is a regular algebraic conjugate self-dual cuspidal (RACSDC) automorphic representation of $\mr{GL}_n(\mathbb{A}_{L})$, then for any isomorphism $\iota \colon \CC \xrightarrow{\sim} \Qpb$ there is an associated $p$-adic Galois representation $r_{\iota}(\pi) \colon \gal{L} \to \mr{GL}_n(\Qpb)$, satisfying local-global compatibility at all places of $L$, whose construction represents the culmination of work of many people: we refer here only to \cite{chenevier-harris:autgal2} for the construction and to \cite{caraiani:loc-glob1}, \cite{caraiani:loc-glob2} for the completion of the proof of local-global compatibility. We write $\bar{r}_{\iota}(\pi)$ for the semisimplification of the reduction of any $\gal{L}$-stable lattice; this is the only canonical mod $p$ representation associated to $\pi$ and $\iota$. We say that $\pi$ is $\iota$-ordinary if it satisfies the conditions in \cite[Lemma 2.3]{thorne:reducible}. If $K$ is a finite extension of $\Q_p$, and we are given for each embedding $\tau \colon K \into \Qpb$ an element $\xi_\tau \in \Z^n_+=\{(\xi_{1}, \ldots, \xi_{n}) \in \Z^n: \xi_{i} \geq \xi_{i+1} \forall i\}$, we say that a $p$-adic representation $\rho \colon \gal{K} \to \mr{GL}_n(\Qpb)$ is $\xi= (\xi_\tau)_{\tau}$-ordinary if it satisfies the conditions of \cite[Definition 2.5]{thorne:reducible}. In particular, we note that $\rho$ is de Rham. To see this, normalize Hodge--Tate weights as in \cite[Notation]{thorne:reducible}, so that the cyclotomic character has Hodge--Tate weight -1. Then for all $\tau \colon K \to \Qpb$, $\rho$ is an iterated extension of characters 
\[
\rho|_{\gal{K}} \sim
\begin{pmatrix}
\psi_1 & * & * & * \\
0 & \psi_2 & * & * \\
\vdots & \ddots & \ddots & * \\
0 & \cdots & 0 & \psi_n 
\end{pmatrix},
\]
where the $\tau$-labeled Hodge--Tate weight of $\psi_i$ is $-(\xi_{\tau, n-i+1}+i-1)$, which is strictly less than the corresponding value $-\xi_{\tau, n-i}-i$ for $\psi_{i+1}$. By work of Bloch--Kato, such an extension is automatically de Rham: see for instance \cite[Proposition 1.28]{nekovar:p-adicheight} for a precise reference.
\begin{prop}\label{CS}
Let $F$ be a totally real field, $n \geq 1$ an integer, and $p \gg_n 0$ a prime
such that $[F(\zeta_p):F]$ is greater than the integer $a_{\mr{GSp}_{2n}}$ of \cite[Lemma A.6]{fkp:reldef}. Let $\br \colon \gal{F, {\mc{S}}} \to \mr{GSp}_{2n}(k)$ be a homomorphism with absolutely $\mr{GSp}_{2n}$-irreducible restriction to $\gal{F(\zeta_p)}$ and with similitude character $\bar{\kappa}^{1-2n}$. Assume the following:
\begin{itemize}
\item For all $v \vert p$, $\br|_{\gal{F_v}}$ admits a lift $\rho_v \colon \gal{F_v} \to \mr{GSp}_{2n}(\mc{O})$ with similitude character $\kappa^{1-2n}$ such that the composite $\rho_v \colon \gal{F_v} \to \mr{GSp}_{2n}(\Qpb) \to \mr{GL}_{2n}(\Qpb)$ is ordinary of some weight $\xi_v= (\xi_{\tau})_{\tau} \in (\Z^n_+)^{\Hom_{\Q_p}(F_v, \Qpb)}$. 
\item $\br$ (under the composite $\mr{GSp}_{2n} \to \mr{GL}_{2n}$) is not induced from any proper subgroup of $\gal{F}$.
\end{itemize}
Then there exists a finite set of primes $\widetilde{{\mc{S}}} \supset {\mc{S}}$, a finite extension of $\mr{Frac}(\mc{O})$ with ring of integers $\mc{O}' \supset \mc{O}$, and
a geometric lift $\rho \colon \gal{F, \widetilde{{\mc{S}}}} \to
\mr{GSp}_{2n}(\mc{O}')$ of $\br$ with Zariski-dense image, such that (fixing an embedding $\mc{O}' \into \Qpb$) $\rho$ belongs to a strictly pure compatible system $\rho_\iota \colon \gal{F} \to \mr{GSp}_{2n}(\Qlb)$ of $\ell$-adic representations, indexed over $\iota \colon \CC \xrightarrow{\sim} \Qlb$, and each $\rho_{\iota}$ has Zariski-dense image in $\mr{GSp}_{2n}$.
\end{prop}
\begin{proof}
Fix $\iota \colon \CC \xrightarrow{\sim} \Qpb$. We claim that \cite[Theorem
A]{fkp:reldef} implies there is a lift $\rho \colon \gal{F, \widetilde{{\mc{S}}}}
\to \mr{GSp}_{2n}(\mc{O})$ of $\br$ with similitude character $\mu= \kappa^{1-2n}$ that in $\mr{GL}_{2n}$ is ordinary of weight
$\xi= (\xi_v)_{v \vert p}$; moreover we can arrange that for some place $v_0$ of $F$ such that $\br|_{\gal{F_{v_0}}}=1$ and $N(v_0) \equiv 1 \pmod p$, $\rho|_{\gal{F_{v_0}}}$ is isomorphic to an unramified twist of the Steinberg parameter. In light of the hypotheses of Proposition \ref{CS}, we just have to check the local lifting hypotheses. For $v \not \mid p$ in ${\mc{S}}$, Booher's work (\cite{booher:minimal}) shows that $\br|_{\gal{F_v}}$ has a lift $\gal{F_v} \to \mr{GSp}_{2n}(\mc{O}')$ with multiplier $\kappa^{1-2n}$, and that such a lift lies on an irreducible component of $R^{\square, \kappa^{1-2n}}_{\br|_{\gal{F_v}}}$ isomorphic to $\mc{O}'[[X_1, \ldots, X_{\dim(G^{\mr{der}})}]]$. For $v \vert p$, we first note that the given lifts $\rho_v \colon \gal{F_v} \to \mr{GSp}_{2n}(\ov{\Q}_p)$ are ordinary of some regular weight in the sense of Definition \ref{orddef} below. Indeed, by the regularity of the $\tau$-labeled Hodge--Tate weights, there is a unique $\gal{F_v}$-stable filtration $0 \subsetneq F_1 \subsetneq \cdots \subsetneq F_{2n}= (\ov{\Q}_p)^{2n}$ such that $F_{i}/F_{i-1}$ is a line on which $\gal{F_v}$ acts with $\tau$-labeled Hodge--Tate weights $-(\xi_{\tau, 2n-i+1}+i-1)$. As the Hodge--Tate weights increase with $i$, with respect to the pairing $\rho_v \times \rho_v \to \kappa^{1-2n}$, $F_1 \subsetneq \cdots \subsetneq F_n$ must be a maximal isotropic flag. Thus $\rho_v$ factors through a Borel subgroup $B \subset \mr{GSp}_{2n}$ and is ordinary in the sense of Definition \ref{orddef}. To apply Theorem \ref{mainthmintro} we must specify a quotient $\ov{R}_v[1/p]$ of $R^{\square, \kappa^{1-2n}}_{\br|_{\gal{F_v}}}[1/p]$ containing $\rho_v$ that has an open dense regular subscheme and is equidimensional of dimension $\dim(G^{\mr{der}})+[F_v:\Q_p]\dim(\mr{Fl}_G)$; moreover, for the application we must ensure that $\ov{\Q}_p$-points of $\ov{R}_v[1/p]$ correspond to ordinary representations. In Lemma \ref{ordinary}, deferred until Appendix \ref{ordappendix}, we explain how to do this: in a nutshell, we show (following \cite{geraghty:ordinary}) that imposing an ordinarity condition on potentially semistable deformation rings for any reductive group $G$ cuts out a union of irreducible components. Finally, at an auxiliary place $v_0$ of $F$, \cite[Theorem A]{fkp:reldef} (or Theorem \ref{mainthmintro} of the present paper) allows us to construct our global lift $\rho$ such that $\rho|_{\gal{F_{v_0}}}$ lies on (and only on) a Steinberg component of $R_{\br|_{\gal{F_{v_0}}}}^{\square, \kappa^{1-2n}}$. More precisely, we choose a trivial prime $v_0$ and include it in $\mc{S}$. We then fix a lift $\rho_{v_0}$ whose associated Weil--Deligne representation is isomorphic to a twist of the Steinberg parameter. This gives a formally smooth point of $R_{\br|_{\gal{F_{v_0}}}}^{\square, \kappa^{1-2n}}$, as is easily verified using \cite{bellovin-gee-G} (see \cite[Lemma 3.7]{fkp:reldef} for a similar argument). By prescribing (as in the proof of Theorem \ref{mainthmintro}) our lift of $\br|_{\gal{F_{v_0}}}$ modulo a sufficiently high power $\vpi^t$, we can then guarantee that $\rho|_{\gal{F_{v_0}}}$ has associated Weil--Deligne representation isomorphic to a twist of the Steinberg parameter.

Regarded as a $\mr{GL}_{2n}$-representation, $\br$ is semisimple, and we decompose $\br$ as $\mr{GL}_{2n}$-representation into a direct sum of absolutely irreducible representations $\br= \oplus_{i=1}^r \br_i$. Note that our assumption that $\br|_{\gal{F(\zeta_p)}}$ is $\mr{GSp}_{2n}$-irreducible implies that each $\br_i$ satisfies $\br_i \cong \br_i^\vee(\bar{\kappa}^{1-2n})$ and remains absolutely irreducible when restricted to $\gal{F(\zeta_p)}$.

By \cite[Theorem 3.1.2]{blggt:potaut}, there is a Galois totally real
extension $F'/F$ linearly disjoint from $F(\br, \zeta_p)$ such that
$\br|_{\gal{F'}}$ is automorphic. More precisely, we apply
\textit{loc.~cit.}~to $\br \colon \gal{F} \to \mr{GSp}_{2n}(k)$ (thus
the set $I$ is a singleton---note that $\br$ is allowed to be
reducible in \textit{loc.~cit.}) and with $F^{\mr{avoid}}/F$ equal to
$F(\br, \zeta_p)$. We also make a small alteration to the proof to produce an automorphic representation that is Steinberg at places above $v_0$. The proof of \cite[Theorem 3.1.2]{blggt:potaut} applies Moret-Bailly's theorem
(\cite[Proposition 3.1.1]{blggt:potaut}) to a scheme (over
$F(\zeta_N)^+$ for a judiciously-chosen integer $N$) there denoted
$\widetilde{T}$, which in our case is in the notation of
\cite[pg. 549]{blggt:potaut} simply $T_{\br \times \br'}$ for a
suitable mod $p'$ (for a well-chosen $p' \neq p$) representation
$\br'$. There is a morphism $t \colon \widetilde{T} \to \mathbb{P}^1-\{\mu_N, \infty\}$ of $F(\zeta_N)^+$-schemes. Moret-Bailly's theorem allows one to find a point $P \in
\widetilde{T}(F')$ for some finite Galois extension $F'/F$ (containing
$F(\zeta_N)^+$ and linearly disjoint over $F(\zeta_N)^+$ from $F(\br,
\br', \mu_{Npp'})$) such that $P$ satisfies a finite collection of
pre-specified local conditions (see the statement of \cite[Proposition
3.1.1]{blggt:potaut}). In \cite[Proposition 3.1.2]{blggt:potaut}, the
local conditions are only specified at places of $F$ above $p$, $p'$,
and $\infty$ (see the bullet-points on pg. 549,
\textit{loc.~cit.}). We further add the requirement that for all
places $v$ above $v_0$, $P \in \widetilde{T}(F')$ should also satisfy
the local condition $v(t(P))<0$.
Then there is a regular algebraic, self-dual, cuspidal automorphic representation $\pi$ of $\mr{GL}_{2n}(\mathbb{A}_{F'})$ such that:
\begin{itemize}
\item $\br|_{\gal{F'}} \cong \bar{r}_{\iota}(\pi)$;
\item $\pi$ is $\iota$-ordinary of weight 0;
\item $\pi_{v_0}$ is isomorphic to an unramified twist of the Steinberg representation; this desideratum follows from the choice of $P$, \cite[Lemma 5.1(2)]{blght:cy2}, and local-global compatibility for the Galois representations associated to $\pi$.
\end{itemize}
(Note that the proof of \cite[Theorem 3.1.2]{blggt:potaut} shows that $\pi$ is self-dual rather than merely essentially self-dual.)

Choose a quadratic CM extension $L/F$, setting $L'=LF'$ such that $L'$ (also Galois over $F$) is linearly disjoint over $F$ from $F(\br, \zeta_p)$, and consider the restriction $\rho|_{\gal{L'}}$ and the base-change $\mr{BC}_{L'/F'}(\pi)$. We verify that $\rho|_{\gal{L'}}$ satisfies the hypotheses of \cite[Theorem 1.1]{allen-newton-thorne:reducible} and is therefore automorphic: there is a RACSDC automorphic representation $\Pi$ of $\mr{GL}_{2n}(\mathbb{A}_{L'})$ such that $\rho|_{\gal{L'}} \cong r_{\iota}(\Pi)$. Indeed, by construction hypotheses (1)-(6) of \textit{loc.~cit.}~are clear (recall we have chosen $L'$ linearly disjoint over $F$ from $F(\br)$). Hypothesis (8) can be taken to be part of our $p \gg_G 0$ assumption. The various parts of hypothesis (7) are treated as follows:
\begin{itemize}
\item By the assumption $[F(\zeta_p):F]>a_{\mr{GSp}_{2n}}$ and the linear disjointness of $L'/F$ from $F(\zeta_p, \br)/F$, we see that $L'(\zeta_p)$ is not contained in $L'(\br(\fgder))$, $L'$ is not contained in $F'(\zeta_p)$ and the $\br_i|_{\gal{L'(\zeta_p)}}$ remain absolutely irreducible and distinct.
\item The integer $a_{\mr{GSp}_{2n}}$ is defined to guarantee that the image of $\br$ modulo the center of $\mr{GSp}_{2n}$ has no cyclic quotient of order greater than $a_{\mr{GSp}_{2n}}$. In particular, for $p > a_{\mr{GSp}_{2n}}$, the image of $\br= \br^{\mr{ss}}$ has no quotient of order $p$, and by the linear disjointness the same remains true after restriction to $\gal{L}$.
\item We have assumed that $\br$, viewed as $\mr{GL}_{2n}$-valued, is
  not induced from a proper subgroup of $\gal{F}$. Again since $L'$ is
  linearly disjoint from $F(\br)$ over $F$, the same holds for
  $\br|_{\gal{L'}}$, since the image of the representation determines whether or not it is induced.
\end{itemize}
Thus we can invoke \textit{loc.~cit.}~to produce such a $\Pi$. Moreover, $\Gal(L'/F')$-invariance of $\rho|_{\gal{L'}}$ implies the same for $\Pi$, and therefore $\Pi$ descends to RAESDC automorphic representation $\Pi_{F'}$ of $\mr{GL}_{2n}(\mathbb{A}_{F'})$. Since $\Pi_{F'}$ has an associated Galois representation, it is easy to see that we can alter the choice of descent (by $\delta_{L'/F'}$) to arrange that $\rho|_{\gal{F'}} \cong r_{\iota}(\Pi_{F'})$. 

Now we can apply the argument of \cite[Theorem 5.5.1]{blggt:potaut} to
conclude that $\rho$ belongs to a strictly pure compatible system (in
the terminology of \textit{loc.~cit.}) of $\ell$-adic representations
$\{\rho_{\iota'}\}$ of $\gal{F}$, indexed over primes $\ell$ and
isomorphisms $\iota' \colon \CC \xrightarrow{\sim} \Qlb$: the argument
here is greatly simplified since $\rho$, and hence $\rho|_{\gal{F'}}$
has Zariski-dense image in $\mr{GSp}_{2n}$, and the purity moreover
implies that each $\rho_{\iota'}$ also has Zariski-dense image in
$\mr{GSp}_{2n}$. For the latter point, note that each $\rho_{\iota'}$
is essentially self-dual (since $\rho$ is), and compatibility at $v_0$
implies that the image of each $\rho_{\iota'}$ contains a regular
unipotent element of $\mr{GL}_{2n}$ (which equals a regular unipotent
element of $\mr{GSp}_{2n}$). It follows from a theorem of Dynkin that
the Zariski-closure of the image of $\rho_{\iota'}$ is either a
principal $\mr{GL}_2$ or all of $\mr{GSp}_{2n}$ (note the self-duality
is necessarily symplectic, since $\mr{SO}_{2n}$ does not contain a
regular unipotent element of $\mr{GL}_{2n}$). Independence of $\ell$
of the formal character (and in particular the rank of a maximal
torus) excludes the possibility of $\mr{GL}_2$ (unless $n=1$).
\end{proof}

See also the discussion in \cite{fkp:SOeg} for the difficulties in
applying the Khare--Wintenberger lifting method to construct lifts
(and thus compatible systems) in $\mr{GL}$-reducible settings, and for
more ``endoscopic'' examples where our lifting results apply. That said, as the referee points out, \cite[Corollary 5.2]{newton-thorne:symmetric1} constructs essentially conjugate self-dual lifts of $\br$ when restricted to a CM extension of $F$. We don't know whether this method can at present produce the $\mr{GSp}_{2n}$ compatible system over $F$ itself of Proposition \ref{CS}.

\appendix
\section{} \label{appendix}

In this appendix we discuss some explicit conditions on $\br$ which
imply that Assumptions \ref{generalhyp}, \ref{modp^Nhyp}, and (the
first part of) \ref{finalhyp} hold.  For ease of reference we label
the properties we shall check as follows:
\begin{enumerate}[C]
\item $\br(\fgder)$ does not contain the trivial representation as a
  submodule.
\item $\br(\fgder)^*$ does not contain the trivial representation as a
  submodule. 
\item There is no surjection of $\Fp[\gal{F}]$-modules
  $\br(\fgder) \onto W$ with $W$ a nonzero $\Fp[\gal{F}]$-module
  subquotient of $\br(\fgder)^*$.
\item $H^1(\Gal(K/F), \br(\fgder)^*)=0$.
\end{enumerate}

The case that $\br$ is irreducible was discussed in \cite{fkp:reldef}
so here we assume that $\br$ factors through $P(k)$, where $P$ is a
proper parabolic subgroup of a (split) connected reductive group
$G$. We may and do assume that $P$ is minimal with respect to this
property (though it need not be unique in general).  Let
$\pi: P \to M$ be the Levi quotient of $P$, and
$\br_M := \pi \circ \br$ the induced map $\Gamma_F \to M(k)$. Let
$U$ be the unipotent radical of $P$, $\mf{u} = \Lie(U)$, $U^-$
the unipotent radical of the opposite parabolic and
$\mf{u}^- = \Lie(U^-)$. We let $\br(\mf{u})$ (resp.~$\br(\mf{u}^-)$
be $\mf{u}$ (resp.$~\mf{u}^-$) viewed as a $\Gamma_F$-module (via
the adjoint action of $P(k)$).

Let $\{\mf{u}_i\}$ be the ascending central series of the Lie
algebra $\mf{u}$ and $\{U_i\}$ the ascending central series of the
unipotent group $U$ (over the field $k$). In order to formulate some
of our criteria cleanly, we will use the following:
\begin{lemma} \label{l:exp}%
  If $p \gg_G 0$, there is a $P$-equivariant isomorphism of algebraic
  varieties over $k$ $\exp:\mf{u} \to U$ (with $\mf{u}$ viewed
  as an affine space). For each $i$, this restricts to an isomorphism
  of varieties $\exp: u_i \to U_i$ which induces a
  $P$-equivariant isomorphism of algebraic groups
  $\mf{u}_{i+1}/\mf{u}_{i} \to U_{i+1}/U_i$.
\end{lemma}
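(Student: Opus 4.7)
The plan is to invoke the standard exponentiation theory for unipotent groups in large characteristic. Let $c$ denote the nilpotency class of $U^+$; since $U^+$ is the unipotent radical of a parabolic in the split reductive $G$, $c$ is bounded by a constant depending only on $G$ (for instance, by the Coxeter number of $G^0$, or simply by the length of the longest chain of positive roots appearing in $\mf{u}^+$). We will assume throughout that $p > c$, which is permissible since our hypothesis is $p \gg_G 0$.

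First I would fix a faithful representation $G \hookrightarrow \mr{GL}_N$ defined over $\Z_{(p)}$ (for $p$ not dividing the order of $\pi_0(G)$ and for $p$ good for $G^0$, this is standard; the representation can be chosen with $p$-small enough weights that it still realizes the adjoint action properly), under which $\mf{u}^+$ consists of nilpotent matrices satisfying $X^{c+1}=0$. Since $p > c$, the truncated power series
\[
\exp(X) := \sum_{n=0}^{c} \frac{X^n}{n!}, \qquad \log(g) := -\sum_{n=1}^{c} \frac{(1-g)^n}{n}
\]
are polynomial morphisms on $\mf{u}^+$ and on $1+\mf{u}^+ \subset \mr{GL}_N$ respectively, and the truncated Baker--Campbell--Hausdorff formula shows they are mutually inverse and land in $U^+$ and $\mf{u}^+$; this gives an isomorphism of algebraic varieties $\exp \colon \mf{u}^+ \xrightarrow{\sim} U^+$ over $k$. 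The $P$-equivariance is immediate from the formula: conjugation by $p \in P$ on $\mr{GL}_N$ is a $k$-algebra automorphism of matrices, so it commutes term-by-term with the polynomial defining $\exp$, and the adjoint action of $P$ on $\mf{u}^+$ is by definition the differential of this conjugation action.

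Next I would verify the behavior on the central series. Since $\exp$ is $P$-equivariant and compatible with the (truncated) BCH group law, it sends the Lie-theoretic lower (equivalently upper, by duality of the filtrations for nilpotent Lie algebras) central series $\mf{u}_i^+$ into the corresponding group-theoretic central series $U_i^+$; an inductive dimension count---or the explicit observation that $\exp(X) \equiv 1 + X \pmod{\mf{u}_{i-1}^+}$ when $X \in \mf{u}_i^+$---gives equality $\exp(\mf{u}_i^+) = U_i^+$ and hence an isomorphism of varieties on each step of the filtration. On the successive quotient $\mf{u}_{i+1}^+/\mf{u}_i^+$, the induced bracket is zero, so the BCH formula collapses to addition; thus the induced map $\mf{u}_{i+1}^+/\mf{u}_i^+ \to U_{i+1}^+/U_i^+$ is a $P$-equivariant isomorphism of abelian algebraic groups, as required.

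The only point requiring care is the initial choice of bound on $p$ and of embedding $G \hookrightarrow \mr{GL}_N$: one must ensure that $p$ is large enough that the truncated $\exp$ really lands in $U^+$ (not merely in $1+\mf{u}^+$ inside $\mr{GL}_N$) and that $P$-equivariance transports from $\mr{GL}_N$ back to $G$. This is standard and can be avoided entirely by appealing directly to the intrinsic truncated BCH construction of the group law on $\mf{u}^+$ (valid whenever $p$ exceeds the nilpotency class), which by uniqueness of the connected unipotent group with given Lie algebra in this range of $p$ agrees with the native group law on $U^+$; I would prefer this intrinsic formulation in the final writeup, as it makes the $P$-equivariance manifest.
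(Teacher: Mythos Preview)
Your argument is correct, but it differs from the paper's. The paper's proof is a one-line soft argument: the statement is classical over characteristic-zero fields, and since $G$ is split reductive one may spread out along the model of $G$ over $\Z$ to deduce it for all $p \gg_G 0$. No explicit exponential is written down; the existence of the $P$-equivariant $\exp$ and its compatibility with the central series are simply transported from characteristic zero by standard limit arguments.

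Your approach is the explicit truncated-exponential/BCH construction, valid once $p$ exceeds the nilpotency class of $U^+$. This has the merit of yielding an effective bound on $p$ (in line with the remark immediately following the lemma, which notes that $p>n$ suffices for $\mr{GL}_n$), and it makes the central-series compatibility concrete via $\exp(X)\equiv 1+X$ on successive quotients. The paper's spreading-out proof is shorter and avoids the bookkeeping you flag about choosing an embedding into $\mr{GL}_N$ and checking that $\exp$ lands in $U^+$, but at the cost of giving no explicit bound. Your intrinsic BCH reformulation in the last paragraph is the cleanest way to package your argument and sidesteps that bookkeeping entirely.
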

We denote the inverse of $\exp$ by $\log: U \to \mf{u}$.

\begin{proof}
  The lemma is well-known (for arbitrary unipotent groups) over fields
  of characteristic zero and since $G$ is split reductive it follows
  for all $p \gg_G 0$ by spreading out using the split reductive model
  of $G$ over $\Z$.
\end{proof}

\begin{rmk}
  For classical groups, how large $p$ has to be for the lemma to hold
  can easily be made effective, e.g., for $\mr{GL}_n$ it suffices to
  take $p>n$.
\end{rmk}

Let $\Gamma$ be any group, $\Sigma$ a finite $p$-group and
$f:\Gamma \to \Aut(\Sigma)$ a homomorphism giving rise to an action of
$\Gamma$ on $\Sigma$. If  $\Sigma' \subset \Sigma''$ are normal
$\Gamma$-invariant subgroups of $\Sigma$ such that $\Sigma''/\Sigma'$
is contained in the centre of $\Sigma/\Sigma'$, then $f$ induces the
structure of $\Z[\Gamma]$-module on $\Sigma''/\Sigma'$.

Suppose
\[
  \{1\} = \Sigma_0 \subsetneq \Sigma_1 \subsetneq \Sigma_2 \subsetneq \cdots
  \subsetneq \Sigma_n = \Sigma
\]
is a chain of normal $\Gamma$-invariant subgroups of $\Sigma$ such
that $\Sigma_{i+1}/\Sigma_i$ is contained in the centre of
$\Sigma/\Sigma_i$ for all $i$; note that such chains exist because
$\Sigma$ is nilpotent and any chain can be refined to a maximal
one. If the chain is maximal, then each $\Sigma_{i+1}/\Sigma_i$,
$i=1,\dots,n$, is a simple $\Fp[\Gamma]$-module.

\begin{lemma} \label{l:JH}%
  The Jordan--H\"{o}lder property holds for maximal chains as above, i.e.,
  the set (with multiplicities) of simple $\Fp[\Gamma]$-modules
  occurring as subquotients is independent of the choice of maximal
  chain.
\end{lemma}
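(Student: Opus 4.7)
My plan is to prove Lemma \ref{l:JH} by a Jordan--H\"older style induction on $|\Sigma|$, closely mirroring the classical argument for modules over a ring.

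First I would establish a preliminary structural fact: for any maximal chain $\{\Sigma_i\}$ satisfying the hypotheses, the smallest non-trivial term $\Sigma_1$ is automatically a simple $\Fp[\Gamma]$-module (and in particular elementary abelian). Indeed, $\Sigma_1 \subseteq Z(\Sigma)$ is abelian and $\Gamma$-invariant, and any proper $\Gamma$-stable subgroup $N \subsetneq \Sigma_1$ is automatically normal in $\Sigma$ and yields central, $\Gamma$-invariant successive quotients $N$ and $\Sigma_1/N$; so maximality forces $N = 0$. Applied to $N = p\Sigma_1$, this also yields elementary abelian-ness. More generally, the same reasoning shows that any $\Gamma$-invariant $N$ with $\Sigma_i \subsetneq N \subsetneq \Sigma_{i+1}$ can be inserted into the chain without disturbing the centrality conditions (since $[\Sigma,\Sigma_{i+1}] \subseteq \Sigma_i \subseteq N$ ensures $\Sigma_{i+1}/N$ is still central in $\Sigma/N$), so by maximality each $\Sigma_{i+1}/\Sigma_i$ is a simple $\Fp[\Gamma]$-module.

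I would then perform induction on $|\Sigma|$, the base case $|\Sigma| = 1$ being trivial. Given two maximal chains $\{A_i\}_{i=0}^m$ and $\{B_j\}_{j=0}^n$, I split on whether their first non-trivial terms agree. If $A_1 = B_1$, quotienting by this common term reduces to the inductive hypothesis in $\Sigma/A_1$, as the passage $\Sigma \mapsto \Sigma/A_1$ preserves all three conditions and maximality of both chains. If $A_1 \neq B_1$, the simplicity of both (as $\Fp[\Gamma]$-modules) combined with non-equality forces $A_1 \cap B_1 = 0$; then $A_1 B_1 \subseteq Z(\Sigma)$ is a $\Gamma$-invariant normal subgroup of $\Sigma$ with $A_1 B_1/A_1 \simeq B_1$ and $A_1 B_1/B_1 \simeq A_1$ as $\Fp[\Gamma]$-modules.

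From here I compare both chains to a pair of auxiliary chains passing through $A_1 B_1$: extend $1 \subsetneq A_1 \subsetneq A_1 B_1 \subsetneq \Sigma$ and $1 \subsetneq B_1 \subsetneq A_1 B_1 \subsetneq \Sigma$ to maximal chains of the required type which share a common tail $\{C_i\}_{i \geq 2}$ above $A_1 B_1$. The inductive hypothesis applied in $\Sigma/A_1$ matches the composition factors of $\{A_i\}$ with those of the first auxiliary chain, and analogously in $\Sigma/B_1$ matches $\{B_j\}$ with those of the second; the two auxiliary chains in turn differ only by the swap of $A_1$ and $B_1$ in their first two positions, hence have the same multiset of simple factors. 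The desired equality of multisets for $\{A_i\}$ and $\{B_j\}$ follows. The main point requiring (routine) care throughout is verifying that all the constructions---quotients, products, and refinements---preserve $\Gamma$-invariance, normality in the ambient group, and centrality of successive quotients; once these are in place, the combinatorial skeleton is the classical Jordan--H\"older one.
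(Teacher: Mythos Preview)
Your proof is correct and is precisely the classical Jordan--H\"older/Schreier argument specialized to this setting. The paper does not give an independent argument: it simply cites \cite[Theorem 10.5]{isaacs-algebra}, the Jordan--H\"older theorem for groups with operators (taking the operator set to be $\Gamma$ together with inner automorphisms of $\Sigma$, so that admissible subgroups are exactly the $\Gamma$-invariant normal ones), and your write-up is essentially the proof one finds there.
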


\begin{proof}
  This is proved using standard methods, see, e.g., \cite[Theorem
  10.5]{isaacs-algebra}.
\end{proof}

Now let $\Gamma$ be any subgroup of $P(k)$, $\ov{\Gamma}$ its image in
$M(k)$ and $\Sigma$ the kernel of the surjection
$\Gamma \to \ov{\Gamma}$. Since $\Sigma \subset U(k)$, it is a
$p$-group and we let $\{\Sigma_i\}$ be the subgroups given by the
ascending central series of $\Sigma$. The group $\Gamma$ acts on each
quotient $\Sigma_{i+1}/\Sigma_i$.
\begin{lemma} \label{l:subq}%
  Suppose $p \gg_G 0$.  Then for all $i$, any irreducible
  $\Gamma$-subquotient of $\Sigma_{i+1}/\Sigma_i$ is isomorphic to an
  $\Fp[\Gamma]$-subquotient of $\mf{u}$ (with the $\Gamma$ action
  induced from the natural action of $P(k)$).
\end{lemma}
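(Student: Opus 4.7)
The strategy is to compare the ascending central series $\{\Sigma_i\}$ of $\Sigma$ with the filtration induced on $\Sigma$ by the ascending central series of the ambient unipotent group $U^+$, and then apply the Jordan--H\"{o}lder property of Lemma \ref{l:JH}. Concretely, set $F_i := \Sigma \cap U_i^+(k)$. Since $U_i^+$ is normal in $U^+$ and stable under conjugation by $P$, each $F_i$ is a normal $\Gamma$-invariant subgroup of $\Sigma$; and because $U_{i+1}^+/U_i^+$ lies in the centre of $U^+/U_i^+$, the quotient $F_{i+1}/F_i$ lies in the centre of $\Sigma/F_i$. Thus $\{F_i\}$ is exactly the sort of chain considered in Lemma \ref{l:JH}.

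By Lemma \ref{l:exp} (which is the only place the assumption $p \gg_G 0$ is used), the exponential map induces a $P$-equivariant isomorphism of algebraic groups $\mf{u}_{i+1}^+/\mf{u}_i^+ \xrightarrow{\sim} U_{i+1}^+/U_i^+$, and in particular a $\Gamma$-equivariant isomorphism on $k$-points. The inclusion $F_{i+1}/F_i \hookrightarrow U_{i+1}^+(k)/U_i^+(k)$ therefore identifies $F_{i+1}/F_i$ with an $\Fp[\Gamma]$-submodule of $(\mf{u}_{i+1}^+/\mf{u}_i^+)(k)$, which is itself a $\Fp[\Gamma]$-subquotient of $\mf{u}^+$. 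In particular, every simple $\Fp[\Gamma]$-subquotient of any $F_{i+1}/F_i$ is an $\Fp[\Gamma]$-subquotient of $\mf{u}^+$.

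Now refine both the ascending central series $\{\Sigma_i\}$ and the filtration $\{F_i\}$ to maximal chains of normal $\Gamma$-invariant subgroups of $\Sigma$ with successive quotients central in the corresponding quotient; the refinements between consecutive steps amount to choosing composition series for the relevant $\Fp[\Gamma]$-modules, which exist because everything in sight is finite. By Lemma \ref{l:JH}, the two maximal refinements have the same simple $\Fp[\Gamma]$-subquotients with the same multiplicities. Given any irreducible $\Gamma$-subquotient $V$ of some $\Sigma_{i+1}/\Sigma_i$, we may extend a composition series of $\Sigma_{i+1}/\Sigma_i$ containing $V$ as a factor to a maximal refinement of $\{\Sigma_i\}$, and then Jordan--H\"{o}lder shows that $V$ also appears as a factor in a refinement of $\{F_i\}$, hence is a simple $\Fp[\Gamma]$-subquotient of $\mf{u}^+$ by the previous paragraph. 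The only real input here is Lemma \ref{l:exp}; the remaining obstacle, which is purely formal, is checking that the $\{F_i\}$-filtration is indeed of the type to which Lemma \ref{l:JH} applies and that the refinement/Jordan--H\"{o}lder argument goes through unchanged, both of which follow easily from the unipotence of $\Sigma$.
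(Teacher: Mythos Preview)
Your proof is correct and follows essentially the same approach as the paper: both arguments combine Lemma \ref{l:exp} with the Jordan--H\"{o}lder property (Lemma \ref{l:JH}). The only difference is organizational. The paper first enlarges $\Gamma$ to $\pi^{-1}(\ov{\Gamma})$ so that $\Sigma$ becomes all of $U^+(k)$, and then compares the central series of $U^+(k)$ with the Lie-algebra filtration via $\exp$. You instead keep the original $\Gamma$ and $\Sigma$ and compare the ascending central series $\{\Sigma_i\}$ with the induced filtration $F_i=\Sigma\cap U_i^+(k)$. Your version is arguably a bit more explicit, since the paper's one-line reduction ``we may replace $\Gamma$ with $\pi^{-1}(\ov{\Gamma})$'' implicitly requires exactly the comparison you spell out.
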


\begin{proof}
  Since $U(k)$ is a $p$-group, we may replace $\Gamma$ with
  $\pi^{-1}(\ov{\Gamma})$ and assume that $\Sigma = U(k)$. Let
  $\{\mf{u}_i\}$ be the ascending central series of the $k$-Lie
  algebra $\mf{u}$. By Lemma \ref{l:exp} the exponential map
  identifies $\mf{u}_{i+1}/u_i$ with $U_{i+1}(k)/U_{i}(k)$,
  compatibly with the $P(k)$-actions, hence \emph{a fortiori} with the
  $\Gamma$-actions, so the lemma follows from the Jordan--H\"older
  property (Lemma \ref{l:JH}).
\end{proof}

Properties (C3) and (C4) are quite subtle and not easy to verify in
general, but the following lemma provides a simple sufficient
condition for all $G$ and $P$.
\begin{lemma} \label{l:allP} %
  If $p \gg_G 0$ and the $p^{th}$ roots of unity are not contained in $F(\br(\fgder))$
  then (C3) and (C4) hold.
\end{lemma}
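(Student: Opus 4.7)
Set $L := F(\br(\fgder))$ and $L' := L(\mu_p)$, so $L \subseteq L' \subseteq K$. As an $\Fp[\gal{F}]$-module, $\br(\fgder)^* \cong \br(\fgder)^{\vee} \otimes \Fp(\kbar)$. The observation driving the whole argument is that $\Gal(L'/L)$ acts trivially on $\br(\fgder)$ (which factors through $\Gal(L/F)$) but acts on $\br(\fgder)^*$ via the scalar character $\kbar|_{\Gal(L'/L)}$. The hypothesis $\mu_p \not\subset L$ forces this scalar to be non-trivial, and hence $W^{\Gal(L'/L)} = 0$ for every $\Fp[\gal{F}]$-subquotient $W$ of $\br(\fgder)^*$.

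Property (C3) is then immediate: any $\Fp[\gal{F}]$-homomorphism $\phi \colon \br(\fgder) \to W$ with $W$ a subquotient of $\br(\fgder)^*$ is a fortiori $\Gal(L'/L)$-equivariant, so the image is forced into $W^{\Gal(L'/L)} = 0$, ruling out any surjection onto a non-zero $W$.

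For (C4) I invoke the inflation--restriction sequence for the normal subgroup $H'' := \Gal(K/L') \triangleleft \Gal(K/F)$. Since both $\br(\fgder)$ and $\kbar$ factor through $\Gal(L'/F)$, the module $\br(\fgder)^*$ is a trivial $H''$-module, and the sequence reads
\[ 0 \to H^1(\Gal(L'/F), \br(\fgder)^*) \to H^1(\Gal(K/F), \br(\fgder)^*) \to \Hom(H'', \br(\fgder)^*)^{\Gal(L'/F)}. \]
I then kill both outer terms. For the inflation term, a second inflation--restriction applied to $\Gal(L'/L) \triangleleft \Gal(L'/F)$ suffices: the restriction target vanishes because $|\Gal(L'/L)|$ divides $p-1$ and $\br(\fgder)^*$ is a $p$-primary $\Fp$-module, while the inflation source vanishes by the scalar-action computation above. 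For the restriction term, I first check that $\Gal(L'/F)$ acts trivially on $H''$: the product map $(\br, \kbar) \colon \Gal(K/F) \hookrightarrow G(k) \times \Fp^\times$ (well-defined and injective by definition of $K$) identifies $H''$ with a subgroup of $(Z_G(k) \cap \br(\gal{F})) \times \{1\}$, since an element of $H''$ has trivial image under both $\br^{\mr{ad}}$ and $\kbar$; conjugation in $G(k) \times \Fp^\times$ fixes such central elements. Consequently $\Hom(H'', \br(\fgder)^*)^{\Gal(L'/F)} = \Hom(H'', (\br(\fgder)^*)^{\Gal(L'/F)}) \subseteq \Hom(H'', (\br(\fgder)^*)^{\Gal(L'/L)}) = 0$.

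The only mildly delicate step is this identification of $H''$ with a subgroup of $Z_G(k) \cap \br(\gal{F})$, which is what makes the $\Gal(L'/F)$-action on $H''$ trivial. Everything else is a formal chase of inflation--restriction once the key scalar-action observation has been recorded; in particular the hypothesis $p \gg_G 0$ is needed only to guarantee that the surrounding framework of the paper (e.g., the self-duality of $\br(\fgder)$ via the Killing form) is available.
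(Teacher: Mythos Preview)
Your proof is correct and considerably more direct than the paper's, though it follows a genuinely different route. A brief comparison:

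For (C3), the paper passes to semisimplifications and $k[\gal{F}]$-subquotients, arguing that a common irreducible constituent of $\br(\fgder)$ and $\br(\fgder)^*$ would force $\mu_p \subset F(\br(\fgder))$. Your scalar-action observation---that $\gal{L}$ acts trivially on $\br(\fgder)$ but by the nontrivial character $\kbar|_{\gal{L}}$ on every subquotient of $\br(\fgder)^*$---reaches the same conclusion in one line and in fact shows the stronger statement that there is no nonzero $\Fp[\gal{F}]$-homomorphism at all.

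For (C4), the paper uses the ambient parabolic structure: it sets $F'=F(\br_M,\kbar)$, invokes Guralnick's vanishing theorem for $H^1(\Gal(F'/F),W)$ (using absolute irreducibility of $\br_M$), and then analyzes the $p$-Sylow $Q\subset U^+(k)$ of $\Gal(K/F')$ via Lemma~\ref{l:subq} together with the strong form of (C3). Your double inflation--restriction via $L'=L(\mu_p)$ sidesteps all of this: once $H''=\Gal(K/L')$ is central in $\Gal(K/F)$, the restriction term dies by $(\br(\fgder)^*)^{\Gal(L'/L)}=0$, and the inflation term dies by the order-prime-to-$p$ trick. This avoids Guralnick's theorem, the Levi decomposition, and Lemma~\ref{l:subq} entirely. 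The paper's route does buy the semisimplified strengthening, which it reuses in Lemma~\ref{lem:h^1}; your argument does not give this but is self-contained for the lemma as stated.

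One small correction to your final remark: the hypothesis $p\gg_G 0$ is not only ambient framework. You use it substantively in the centrality step, where ``$\Ad(\br(h))|_{\fgder}=1 \Rightarrow \br(h)\in Z_G(k)$'' relies on the decomposition $\fg=Z(\fg)\oplus\fgder$ (so that centralizing $\fgder$ forces centralizing $\fg$) together with $\ker(\Ad)=Z_G$ for the connected reductive $G$ of the appendix. In bad characteristic this decomposition can fail. Your argument does not, however, use Killing-form self-duality anywhere.
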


\begin{proof}
  We will prove that the stronger version of properties (C3) and (C4)
  hold, with $\br(\fgder)$ replaced by its semisimplification. We also
  note that by enlarging $k$ if necessary (which clearly doesn't
  change the hypothesis) we may assume that $\br_M$ is absolutely
  irreducible and any irreducible
  $k[\Gamma_F]$-subquotient of $\br(\fgder)$ is absolutely
  irreducible.

  Suppose there is a nonzero $\Fp[\Gamma_F]$-linear map from a
  subquotient of $\br(\fgder)$ onto a submodule $W$ of
  $\br(\fgder)^*$, which we may assume is irreducible as an
  $\Fp[\Gamma_F]$-module. Equivalently, $\br(\fgder)$ has an
  $\Fp[\Gamma_F]$-subquotient $V$ which is isomorphic to $W$. Since
  both $\br(\fgder)$ and $\br(\fgder)^*$ are $k$-modules, it follows
  that there exists a $k[\Gamma_F]$-subquotient $V'$ of $\br(\fgder)$
  and a $k[\Gamma_F]$-subquotient $W'$ of $\br(\fgder)^*$ which are
  isomorphic as $\Fp[\Gamma_F]$-modules.  Clearly $W'$ must be
  isomorphic to $V'' \otimes_k \kbar$, where $V''$ is a
  $k[\Gamma_F]$-subquotient of $\br(\fgder)$. Thus, $V''$ and
  $V'' \otimes_k \kbar$ are both subquotients of $\fgder$ for a
  nonzero $V''$, hence the $p^{th}$ roots of unity must be contained
  in $F(\br(\fgder))$. This contradiction proves that (C3) holds.

  Let $F' = F(\br_M, \kbar)$. Clearly, $F \subset F' \subset K$.
  Since $\br_M$ is absolutely irreducible and $p \gg_G 0$, it follows
  from \cite[Theorem A]{guralnick:CR} (see also the argument of
  \cite[Lemma A.1]{fkp:reldef}) that $H^1(\Gal(F'/F), W) = 0$ for any
(absolutely) irreducible 
  $k[\Gamma_F]$-subquotient $W$ of $\br(\fgder)^*$; here we use the
  fact that the kernel of the map $\Gal(F'/F) \to G^{\mr{ad}}(k)$
  (induced by $\br_M$) is of order prime to $p$. By the inflation-restriction sequence, it follows that
  (C4) will hold if we can show that
  $H^1(\Gal(K/F'), W))^{\Gal(F'/F)} = 0$. By the choice of $F'$, the
  Sylow $p$-subgroup $Q$ of $\Gal(K/F')$ is a subgroup of $U(k)$.
  Furthermore, $Q$ acts trivially on $W$ since it is an irreducible
  $\Gal(K/F)$-module and $Q$ is a normal $p$-subgroup. Thus it
  suffices to show that there are no non-trivial
  $\Gal(F'/F)$-equivariant homomorphisms from $Q^{\mr{ab}}$ to $W$. By
  Lemma \ref{l:subq}, any irreducible $\Fp[\Gal(F'/F)]$-quotient of
  $Q$ is isomorphic to an $\Fp[\Gal(F'/F)]$-subquotient of
  $\mf{u}$. Since $\mf{u} \subset \fgder$ and $W$ is a subquotient of
  $\br(\fgder)^*$, it follows from the strong form of property (C3)
  proved above that any such homomorphism must be trivial.
\end{proof}
  
\begin{rmk} \label{r:roots}%
  Whether or not the $p^{th}$ roots of unity are contained in
  $F(\br(\fgder))$ only depends on $\br_M$ since the kernel of the map
  $\pi: P(k) \to M(k)$ is a $p$-group.
\end{rmk}

 \begin{lemma} \label{l:abelian} Let $G$ be a semisimple group, $P$ a
  parabolic subgroup of $G$, $U$ the unipotent radical of $P$ and $M$
  a Levi subgroup. If $p \gg_G 0$, there exists a subgroup $A$ of $U$
  which is a product of root subgroups, $A$ is normalised by $M$, and
  $\mf{z}(M) : = \Lie(Z(M))$ acts faithfully on $\mf{a} := \Lie(A)$.
\end{lemma}

\begin{proof}
  Since $p \gg_G 0$, it suffices to find an abelian subalgebra
  $\mf{a}$ of $\mf{u} := \Lie(U)$ which is an $M$-submodule and on
  which $\mf{z}(M)$ acts faithfully since we can then get $A$ by
  exponentiating $\mf{a}$. We claim that if $\mf{a}$ is any maximal abelian
  subalgebra of $\mf{u}$ which is an $M$-submodule then $\mf{z}(M)$ acts
  faithfully on $\mf{a}$. Such a maximal $\mf{a}$ exists by the finite
  dimensionality of $\mf{u}$, so the claim implies the lemma.

  To prove the claim we consider a maximal $\mf{a}$ and assume that
  the kernel $\mf{k}$ of the map $\mf{z}(M) \to \End(\mf{a})$ is
  nonzero. The action of $\mf{k}$ on $\mf{u}$ is diagonalisable, and
  it acts trivially on $\mf{a}$. Let $R$ be the set of $k$-linear
  functions $\mf{k} \to k$ that occur in a decomposition of $\mf{u}$
  as a sum of one dimensional $\mf{k}$-invariant subspaces. The set
  $R$ does not consist of the singleton $\{0\}$ since the centre of
  $\mf{p}$ is trivial. Let $f \in R$ be such that $2f \notin R$. Such
  an $f$ always exists: the cardinality of $R$ is bounded above by a
  constant depending only on the root system of $G$ while the order of
  $2 \in k^{\times}$ goes to infinity with $p$. Let $\mf{u}_f$ be the
  subspace of $\mf{u}$ on which $\mf{k}$ acts by $f$; it is nonzero
  since $f \in R$ and it is an $M$-submodule of $\mf{u}$ since
  $\mf{k} \subset \mf{z}(M)$.

  The Lie bracket on $\mf{u}$ induces a map
  $\beta:\mf{a} \otimes \mf{u}_f \to \mf{u}$ of $\mf{m}$-modules. The
  image of $\beta$ is contained in $\mf{u}_f$ as
  $\mf{k} \subset \mf{m}$, it acts trivially on $\mf{a}$ and by
  multiplication by $f$ on $\mf{u}_f$. Since the adjoint action of
  $\mf{u}$ on itself is nilpotent, it follows that there is a nonzero
  $\mf{m}$-submodule $\mf{a}' \subset \mf{u}_f$ such that
  $\beta|_{\mf{a} \times \mf{a}'}$ is identically $0$, i.e., $\mf{a}$
  and $\mf{a}'$ commute. Since $\mf{k}$ acts on $\mf{u}_f$ by $f$, it
  acts on $[\mf{u}_f, \mf{u}_f]$ by $2f$, so by the choice of $f$ it
  follows that $[\mf{u}_f, \mf{u}_f]$, hence also
  $[\mf{a}', \mf{a}']$, is $\{0\}$. Since $\mf{a}$ is an abelian Lie
  algebra by assumption, we conclude that $\mf{a} \oplus \mf{a}'$ is
  an abelian subalgebra of $\mf{u}$ which is normalised by $M$,
  contradicting the maximality of $\mf{a}$.
  
\end{proof}

\begin{rmk} \label{r:shahidi} For many $G$ and $P$ one can give
  explicit examples of $A$ as in Lemma \ref{l:abelian}. For a maximal
  parabolic $P$, we say that $P$ is \emph{$L$-faithful} if the Lie
  algebra of any Levi subgroup of $P$ acts faithfully on
  $\mf{z}(U) := \Lie(Z(U))$. For groups of type $A_n$, one easily sees
  that any maximal parabolic is $L$-faithful.  More generally, from
  \cite[Appendix A]{shahidi:eisseriesbook} one sees that for any
  simple $G$ (and $p \gg_G 0$), there always exists at least one
  $L$-faithful parabolic. If $P'$ is any parabolic contained in an
  $L$-faithful parabolic $P$ then it follows that we may take $A$ to be
  $\mf{z}(U)$ (where $U$ is the unipotent radical of $P$). This gives
  another proof of Lemma \ref{l:abelian} for such $P'$; in particular,
  this applies to the Borel subgroup $B$.
\end{rmk}

\subsection{The case $G = \mr{GL}_n$ or $\mc{G}_n$} \label{s:gln}
\subsubsection{$\mr{GL}_n$}
Let $P$ be a standard parabolic of $\mr{GL}_n$ corresponding to a
partition $n = n_1 + n_2 + \dots + n_r$, with $r>1$ and let $M$ be the
Levi subgroup of $P$ consisting of block diagonal matrices, so
$M = \mr{GL}_{n_1} \times \mr{GL}_{n_2} \times \dots \times
\mr{GL}_{n_r}$. Given any representation $\br: \Gamma_F \to \mr{GL}_n$
with $\br(\gal{F}) \subset P(k)$, by projecting to $M(k)$ we obtain
representations $\br_i: \gal{F} \to \mr{GL}_{n_i}$, $i=1,2,\dots,r$.
  
\begin{lemma} \label{l:gln} Let the notation be as above and suppose
  that $p \gg_n 0$ and the following holds:
  \begin{enumerate}
  \item $\br_{i}$ is absolutely irreducible for $i=1,2\dots,r$.
  \item $\br_i$ is not isomorphic to $\br_j \otimes \bar{\psi}$ for
    $\bar{\psi} \in \{1, \kbar, \kbar^{-1}\}$ and all
    $i \neq j \in \{1,2,\dots,r\}$. (In particular, this condition
    always holds if $n_i \neq n_j$.)
  \item Let $\bar{\chi}_{i,j}$ be the character
    $\det(\br_i)^{n_j} \cdot \det(\br_j)^{-n_i}$.
    %where $d_{i,j} = \mr{gcd}(n_i, n_j)$.
    Then $[F(\zeta_p):F(\zeta_p) \cap F(\bar{\chi}_{i,j})]$ is greater
    than a constant $b$ depending only on $n$, for all
    $i \neq j \in \{1,2,\dots,r\}$.
\end{enumerate}
Then $H^0(\gal{F}, \br(\fgder)) \subset \mf{z}(M) \cap \fgder$ and
(C2), (C3) and (C4) hold.
\end{lemma}

\begin{proof}

 We first note that the assumptions of the lemma continue
  to hold if we replace $\br$ by the representation
  $\br' := \br_1 \oplus \br_2 \oplus \dots \oplus \br_r$.  Moreover,
  $H^0(\gal{F}, \br(\fgder)) \subset H^0(\gal{F}, \br'(\fgder))$ so
  one easily sees that if the conclusions of the lemma, except (C4),
  hold for $\br'$ then they also hold for $\br$.  

  The adjoint representation of $\mr{GL}_n$
  and $\br'$ make $\mf{gl}_n$ into a $\gal{F}$-module, and as such it
  is isomorphic to $\oplus_{i, j=1}^r \br_i \otimes \br_j^{\vee}$.
  From this it is clear that (1) and (2) imply that
  $H^0(\gal{F}, \br'(\fgder))$ is contained in $\mf{z}(M) \cap \fgder$,
  and (C2) will hold if the $p^{th}$ roots of unity are not contained
  in $F(\br(\fgder))$.

  We now show that conditions (1) and (3) imply that the $p^{th}$
  roots of unity are not contained in $F(\br(\fgder))$, and then we
  may apply Lemma \ref{l:allP} to conclude that (C3) and (C4) also
  hold.  Since the kernel of the projection from $P(k)$ to $M(k)$ is a
  $p$-group, the condition on the $p^{th}$ roots of unity
  will hold for $\br$ iff it holds for $\br'$, so to check this
  condition we may assume that $\br = \br'$. From (1)
  and \cite[Lemma A.6]{fkp:reldef} it follows that the order of the
  maximal cyclic quotient of $\br_i(\gal{F})$ differs from the order
  of $\det(\br_i(\gal{F}))$ %$\delta_i(\gal{F})$
  by at most a constant depending only on
  $n_i$. This implies that if we set $F'$ to be extension of $F$ cut
  out by the image of $\gal{F}$ in
  $M(k)/ k^{\times}\cdot M^{\mr{der}}(k)$, where $k^{\times}$ is
  embedded in $M(k)$ as the centre of $\mr{GL}_n(k)$, then it suffices
  that $[F(\zeta_p): F' \cap F(\zeta_p)]$ should be larger than a
  constant depending only on $n$. The map
  $\gal{F} \to M(k) \to (M(k)/M^{\mr{der}}(k)) \cong (k^{\times})^r$
  is simply the map given by the product of all the $\det(\br_i)$, and
  the central $k^{\times}$ maps to $(k^{\times})^r$ by the map
  $x \mapsto (x^{n_1}, x^{n_2},\dots,x^{n_r})$. Now since
  $k^{\times}$ is a cyclic group, the kernel of the map
  $(k^{\times})^r$ to $(k^{\times})^{r(r-1)}$ given by
  \[
    (x_1,x_2,\dots,x_r) \mapsto (x_i^{n_j} x_j^{n_i})_{i \neq j}
  \]
  contains the image of the central $k^{\times}$ as above with index
  bounded in terms of $n$. Since $F(\zeta_p)$ is a
  cyclic extension of $F$ (and $r$ is bounded by $n$), it follows that
  there exists a constant $b$ for which (3) implies that 
  $[F(\zeta_p): F' \cap F(\zeta_p)]$ is larger than any fixed constant
  and therefore the $p^{th}$ roots of unity are not contained in
  $F(\br(\fgder))$.
 
\end{proof}

\subsubsection{$\mc{G}_n$}
In this subsection we take the group $G$ to be the group $\mc{G}_n$ as
defined in \cite[\S 2.1]{clozel-harris-taylor} and consider a
representation $\br: \Gamma_F \to G(k)$. The group $G^0$ is isomorphic
to $\mr{GL}_n \times \mbb{G}_m$ and $G/G^0$ is isomorphic to
$\Z/2\Z$. The group $G^{\mr{der}}$ is the $\mr{GL}_n$ factor of $G^0$
and we denote its Lie algebra by $\mf{g}_n$. We let $L$ be the field
cut out by composing $\br$ with the projection to $G/G^0$ and we
assume that it is a quadratic extension of $F$.  We let
$\bar{r}: \Gamma_{L} \to \mr{GL}_n(k)$ be the representation obtained
from $\br|_{\Gamma_L}$ by projecting to the $\mr{GL}_n$ factor of
$G^0$. We assume (for simplicity) that
$\bar{r} \cong \oplus_{i=1}^s \bar{r}_i$, where each $\bar{r}_i$ is an
irreducible representation of $\Gamma_L$ of dimension $n_i$.

\begin{lemma} \label{l:gn} Let the notation be as above and
  $p \gg_n 0$. Suppose that $L$ is a CM extension of $F$ (so the
  character $\Gamma_F \to \Z/2\Z$ induced by $\br$ is totally odd) and
  the following holds:
  \begin{enumerate}
  \item $\bar{r}_{i}$ is absolutely irreducible for $i=1,2\dots,r$.
  \item $\bar{r}_i$ is not isomorphic to $\bar{r}_j \otimes \bar{\psi}$ for
    $\bar{\psi} \in \{1, \kbar, \kbar^{-1}\}$ and all
    $i \neq j \in \{1,2,\dots,s\}$. (In particular, this condition
    always holds if $n_i \neq n_j$.)
  \item Let $\bar{\chi}_{i,j}$ be the character
    $\det(\bar{r}_i)^{n_j} \cdot \det(\bar{r}_j)^{-n_i}$.
    %where $d_{i,j} = \mr{gcd}(n_i, n_j)$.
    Then $[F(\zeta_p):F(\zeta_p) \cap F(\bar{\chi}_{i,j})]$ is greater
    than a constant $b$ depending only on $n$, for all
    $i \neq j \in \{1,2,\dots,s\}$.
\end{enumerate}
Then (C1) (C2), (C3) and (C4) hold.
\end{lemma}

\begin{proof}
  The proof is very similar to that of Lemma \ref{l:gln} so we only
  indicate the modifications that need to be made.

  For (C1), the proof of Lemma \ref{l:gln} shows that
  $(\mf{g}_n)^{\Gamma_L}$ is contained in the diagonal matrices. The
  action of complex conjugation on $\mf{g}_n$ is given by $x \mapsto -
  {}^tx$, so since $p \neq 2$ it follows that (C1) holds.

  The rest of the conditions follow in the same way as in Lemma
  \ref{l:gln}; the condition (C4) follows from the semisimplicity
  assumption on $\bar{r}$ using \cite{guralnick:CR}
  and (3) is not needed for that.

\end{proof}

\subsection{$P$ is a maximal parabolic} \label{s:maxpar} We now give
concrete conditions under which properties (C1) - (C4) hold in the
case that $P$ is a maximal parabolic of a classical $G$ with abelian
unipotent radical.
 
\begin{lemma} \label{lem:h^0} %
  Suppose $[F(\zeta_p):F] \gg_G 0$, $P$ is a maximal parabolic, and $\br_M$ is
  absolutely irreducible. Consider the following cases:
 \begin{enumerate}
 \item $G = \mr{GL}_n$ and $P$ is the parabolic corresponding to the
   partition $n = n' +n''$. We may then view $\br_M$ as a direct sum
   of two representations $\br_M'$ and $\br_M''$ of $\Gamma_F$ of
   dimension $n'$ and $n''$, and $\br$ is an extension of $\br_M''$ by
   $\br_M'$.  Consider the conditions:
   \begin{enumerate}[(a)]
   \item $n$ is odd, or $n' = n''$ and $\br_M' \ncong \br_M''$;
   \item $n$ is odd, or $n' = n''$ and
     $\br_M' \ncong \br_M'' \otimes \kbar^{-1}$.
   \end{enumerate}
 \item $G = \mr{GSp}_{2n}$ and $P$ is the Siegel parabolic, i.e., the
    stabilizer of a Lagrangian subspace: in this case
    $M \cong \mr{GL}_n \times \mathbb{G}_m$ and we let $\br_M'$ be the
    representation of $\Gamma_F$ corresponding to the first
    projection. Consider the conditions:
    \begin{enumerate}[(a)]
    \item $\br_M'$ does not preserve any nondegenerate symmetric bilinear form;
    \item $\br_M'$ does not preserve any nondegenerate
      symmetric bilinear form up to a scalar with multiplier
      character $\kbar$.
    \end{enumerate}
  \item $G = \mr{SO}_n$ and $P$ is the stabilizer of an isotropic
    line in the standard representation of $G$: in this case
    $M \cong \mr{SO}_{n-2} \times \mathbb{G}_m$ and we let $\br_M'$ be
    the $(n-2)$-dimensional linear representation induced from $\br_M$ by the
    tensor product of the standard representations of the two
    factors. Consider the conditions
    \begin{enumerate}[(a)]
    \item  $\br_M'$ does not have an invariant vector;
    \item $\br_M'$ does not have a $\kbar^{\pm 1}$-invariant
      vector.
    \end{enumerate}
  \item $G = \mr{SO}_{2n}$ and $P$ is the stabilizer of a maximal
    isotropic subspace: in this case $M \cong \mr{GL}_n$ so we may
    view $\br_M$ as an $n$-dimensional linear representation. Consider
    the conditions
    \begin{enumerate}[(a)]
    \item  $\br_M$ does not preserve any nondegenerate
      skew-symmetric form;
    \item $\br_M$ does not preserve a nondegenerate skew-symmetric
      form up to a scalar with multiplier character $\kbar$.
    \end{enumerate}
    If $(G,P)$ is as in one of the above cases and the map
    $\br(\Gamma_F) \to M(k)$ induced by $\pi$ is not injective then
    (C1) holds if (a) does and (C2) holds if (b) does.
 \end{enumerate}
\end{lemma}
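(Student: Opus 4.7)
The plan is to prove case (5) as the general statement, and then to deduce cases (1)--(4) by identifying the $M$-module structure of $\mf{u}^{\pm}$ in each specific setting. In each of (1)--(4) the parabolic $P$ is maximal with abelian unipotent radical, so case (5) applies; we only need to match conditions. For (1), $\mf{u}^+ \cong V' \otimes (V'')^*$ as an $M=\mr{GL}_{n'} \times \mr{GL}_{n''}$-module, so $H^0(\Gamma_F, \br(\mf{u}^+)) = \Hom_{\Gamma_F}(\br_M'', \br_M')$, the vanishing of which is equivalent (by absolute irreducibility of $\br_M', \br_M''$) to condition (1)(a). In cases (2), (4), and (3), respectively, $\mf{u}^+$ is $\Sym^2 V$, $\bigwedge^2 V$, and the standard $\mr{SO}_{n-2}$-representation, giving the interpretations of (a) as invariant symmetric forms, invariant skew forms, or invariant vectors. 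The $G$-invariant pairing $\mf{u}^+ \otimes \mf{u}^- \to k$ coming from the Killing form identifies $\br(\mf{u}^-) \cong \br(\mf{u}^+)^*$ as $\Gamma_F$-modules, which produces the $\kbar^{\pm 1}$-twisted variants recorded in the (b) conditions.

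For case (5), I will analyze $\br(\fgder)$ via the two-step filtration of $P$-submodules $0 \subset \mf{u}^+ \subset \mf{p} \cap \fgder \subset \fgder$, whose graded pieces are $\mf{u}^+$, $\mf{m}_0 := \mf{m} \cap \fgder$, and $\mf{u}^-$. The associated long exact sequences in $\Gamma_F$-cohomology reduce (C1) (resp.~(C2)) to the vanishing of $H^0(\Gamma_F, \br(W))$ (resp.~of $H^0(\Gamma_F, \br(W)(1))$) for $W \in \{\mf{u}^+, \mf{m}_0, \mf{u}^-\}$. Condition (a) (resp.~(b)) directly kills $H^0$ on the $\mf{u}^+$-piece and its Tate twist; for the $\mf{u}^-$-pieces, the duality $\br(\mf{u}^-) \cong \br(\mf{u}^+)^*$ gives---for (C1)---equivalence of vanishing of trivial submodules and trivial quotients once $\br(\mf{u}^+)$ is $\Gamma_F$-semisimple (a consequence of $p \gg_{G,F} 0$ together with absolute $M$-irreducibility of $\br_M$), while for (C2) the second condition in (b) handles $\br(\mf{u}^-)(1)$ directly.

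The middle piece is where the non-injectivity hypothesis is essential. By absolute $M$-irreducibility of $\br_M$ and Schur's lemma applied to the simple factors of $\mf{m}^{\mr{der}}$ (well-defined as $M$-modules for $p \gg_G 0$), we have $H^0(\Gamma_F, \br(\mf{m}_0)) = Z(\mf{m}) \cap \fgder =: kZ$, a one-dimensional line (since $P$ is maximal) spanned by the cocharacter dual to the fundamental weight defining $P$, and on which $\Gamma_F$ acts trivially. To exclude a nonzero lift of $kZ$ to $H^0(\Gamma_F, \br(\mf{p} \cap \fgder))$, I will pick a nontrivial $u = \exp(Y_0) \in U^+(k) \cap \br(\Gamma_F)$, which exists by the non-injectivity of $\br(\Gamma_F) \to M(k)$. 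For any putative invariant $X = cZ + Y_1 \in \br(\mf{p} \cap \fgder)$ with $Y_1 \in \mf{u}^+$, commutativity of $\mf{u}^+$ gives $\Ad(u) X - X = c[Y_0, Z] \in \mf{u}^+$. Since $\alpha(Z) \neq 0$ for every root $\alpha$ of $\mf{u}^+$ (as $Z$ defines $P$) and $Y_0 \neq 0$, we have $[Y_0, Z] \neq 0$, forcing $c = 0$. Thus $X \in H^0(\Gamma_F, \br(\mf{u}^+)) = 0$ by (a), proving (C1).

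For (C2) the same argument applies to $\br(\fgder)(1)$: the central contribution $kZ(1) \cong k(\kbar)$ has no $\Gamma_F$-invariants since $\kbar$ is nontrivial (as $\mu_p \not\subset F$ for $p \gg_{G,F} 0$), and vanishing of $H^0$ on the $\mf{m}^{\mr{der}}(1)$-piece reduces to the absence of $\kbar^{\pm 1}$-self-twists of the $M$-irreducible constituents of $\br_M$, which holds for $p \gg_{G,F} 0$ (the $F$-dependence being essential to rule out the finitely many problematic self-twists). The main obstacle in the plan is thus the precise bookkeeping of invariants on the middle piece $\mf{m}_0$ and its Tate twist, for which one must combine absolute $M$-irreducibility of $\br_M$ with the largeness of $p$; the combinatorial matching of (a) and (b) between cases (1)--(4) and case (5), as well as the filtration argument itself, is routine.
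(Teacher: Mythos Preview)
Your proposal is correct and follows essentially the same architecture as the paper's proof: decompose $\fgder$ (via the filtration, or equivalently its semisimplification) into the three pieces $\mf{u}^+$, $\mf{m}_0$, $\mf{u}^-$; identify the $M$-module structure of $\mf{u}^+$ in each of cases (1)--(4) to match conditions (a), (b) with those of case (5); use the Killing-form duality and semisimplicity (from \cite[Theorem A]{guralnick:CR}, $p \gg_G 0$) to handle $\mf{u}^-$; observe that $H^0(\Gamma_F,\br(\mf{m}_0))$ is the one-dimensional centre $kZ$; and finally use the non-injectivity hypothesis to show $Z$ does not lift to a genuine invariant.

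The one genuine point of divergence is how you rule out the lift of $Z$. The paper argues group-theoretically: if $x=z+u^+$ were invariant then a nontrivial $\gamma\in U^+(k)\cap\br(\Gamma_F)$ would satisfy $\Ad(\gamma)z=z$, placing $\gamma$ in the centraliser of $z$ in $G^{\mr{ad}}$, which by Steinberg's theorem on centralisers of semisimple elements is the connected reductive group $M'$ --- a contradiction. You instead compute directly in the Lie algebra: with $\gamma=\exp(Y_0)$ and $\mf{u}^+$ abelian, $\Ad(\gamma)X-X=c[Y_0,Z]$, which is nonzero because $\alpha(Z)\neq 0$ for every root $\alpha$ of $\mf{u}^+$. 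Your argument is more elementary (it avoids the appeal to \cite{steinberg:torsion}) and makes the role of the commutativity of $U^+$ completely transparent; the paper's version is slightly more conceptual and would adapt more readily to non-abelian $U^+$. Both are valid and short.

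Two small remarks on presentation. First, when you write $\br(\mf{u}^-)\cong\br(\mf{u}^+)^*$ you mean the $k$-linear dual, not the paper's Cartier dual $(-)^*=(-)^{\vee}(1)$; it would be clearer to write $\br(\mf{u}^+)^{\vee}$ here. Second, your sketch for $H^0(\Gamma_F,\br(\mf{m}^{\mr{der}})(1))=0$ is phrased as ``absence of $\kbar^{\pm1}$-self-twists of the $M$-irreducible constituents of $\br_M$,'' which is not quite the right formulation: what is actually needed (and what the paper invokes via \cite[Corollary A.6]{fkp:reldef}) is that every one-dimensional $k[\Gamma_F]$-subquotient of $\br(\mf{m}')$ has order bounded by a constant depending only on $M$, so cannot equal $\kbar^{-1}$ once $p\gg_{G,F}0$. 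This is the step where the $F$-dependence of the bound on $p$ genuinely enters.
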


\begin{proof}
  Consider the semisimplification $(\fgder)^{\mr{ss}}$ of $\fgder$ as
  a $P(k)$-module. The $P(k)$-action on $(\fgder)^{\mr{ss}}$ factors
  through an action of $M(k)$ and as an $M(k)$-module
  $(\fgder)^{\mr{ss}}$ has a direct sum decomposition
  $\mf{m}' \oplus \mf{u} \oplus \mf{u}^-$, where $\mf{m}'$ is the Lie
  algebra of a Levi subgroup $M'$ of the
  image of $P$ in $G^{\mr{ad}}$.  Each of the summands is semisimple
  by \cite[Theorem A]{guralnick:CR} since $[F(\zeta_p):F] \gg_G 0$
  implies $p \gg_G 0$. Furthermore, the Killing form induces a duality
  of $\mf{u}$ and $\mf{u}^-$ as $M$-representations.
 
  The above decomposition induces a decomposition of the
  semisimplification of $\br(\fgder)$ as a $\Gamma_F$-module, which we
  write as
  \[
    \br(\fgder)^{\mr{ss}} = \br(\mf{m}') \oplus \br(\mf{u})
    \oplus \br(\mf{u}^-);
  \]
  each of the three summands is semisimple by \cite[Theorem
  A]{guralnick:CR} because $p \gg 0$ and $\br_M$ is absolutely
  irreducible. All the above holds for any $G$ and any parabolic
  $P$. If $P$ is maximal, then the centre $Z(\mf{m}')$ of $\mf{m}'$ is
  one dimensional and is equal to $H^0(\Gamma_F, \br(\mf{m}'))$ by
  \cite[Lemma A.2]{fkp:reldef}.

  Now suppose $G$ is one of the groups in items (1) - (4). In each of
  these cases, one easily works out the action of $M$ on $ \mf{u}$
  and this is given as follows:
  \begin{enumerate}[(i)]
  \item  $G = \mr{GL}_n$: in this case $M = \mr{GL}_{n_1} \times
    \mr{GL}_{n_2}$ and its representation on $\mf{u}$ is
    $\lambda_1 \otimes \lambda_2^{\vee}$ where $\lambda_i$ is the
    standard representation of $\mr{GL}_{n_i}$.
  \item $G= \mr{GSp}_{2n}$: in this case the representation of $M$ on
    $\mf{u}$ is given by the projection to the $\mr{GL}_n$ factor
    and the second symmetric power of the standard representation.
  \item $G = \mr{SO}_n$ and $P$ as in (3): in this case the
    representation of $M$ on $\mf{u}$ is the tensor product of the
    standard representations of the two factors.
  \item $G = \mr{SO}_{2n}$ and $P$ as in (4): in this case the
    representation of $M$ on $\mf{u}$ is the second exterior power
    of the standard representation of $\mr{GL}_n$.
  \end{enumerate}

  From the list above it is clear that
  $H^0(\Gamma_F, \br(\mf{u})) = 0$ in these cases if $\br$
  satisifies condition (a).
  By semisimplicity the same holds for $\mf{u}^-$ since it is dual to
  $\mf{u}$. We thus see that
  $H^0(\Gamma_F, \br(\fgder)^{\mr{ss}}) = Z(\mf{m}')$.  Suppose
  $x \in H^0(\Gamma_F, \br(\fgder))$ is a nonzero element. By the
  foregoing, $x = z + u$, where $ 0 \neq z \in Z(\mf{m}')$ and
  $u \in \mf{u}$. Let $\gamma$ in $\br(\Gamma_F)$ be a nontrivial
  element in $\ker(\pi)$. Since $U$ is commutative, it follows that
  the image of $\gamma$ in $G^{\mr{ad}}(k)$ is in the centralizer of
  $z$ in $G^{\mr{ad}}(k)$.  By \cite[Theorem 3.14]{steinberg:torsion}
  the centralizer in $G^{\mr{ad}}$ of $z$ (in fact any semisimple
  element) is a connected reductive subgroup, so by a standard Lie
  algebra computation (recall $p \gg_G 0$) one sees that the centralizer
  of $z$ in $G^{\mr{ad}}$ is in fact $M'$.  Since the image of
  $\gamma$ in $G^{\mr{ad}}(k)$ is not in $M'(k)$, it follows that we
  must have $H^0(\Gamma_F, \br(\fgder)) = 0$, i.e., (C1) holds.

  For (C2) we first note that it follows from \cite[Corollary
  A.6]{fkp:reldef} that any one dimensional $k[\Gamma_F]$-quotient of
  $\br(\mf{m}')$ must be a character of order bounded by a constant
  depending only on $M$. Given this, the assumption that 
  $[F(\zeta_p):F] \gg_G 0$ implies that
  $H^0(\gal{F}, \br(\mf{m'})(1)) = 0$. As in case (C1), the assumption
  (b) implies that $H^0(\gal{F}, \br(\mf{u})(1)) = 0$. In case (3)
  assumption (b) also implies that
  $H^0(\gal{F}, \br(\mf{u}^-)(1)) = 0$. In the other cases an element
  $u$ of $H^0(\gal{F}, \br(\mf{u}^-)(1))$ may be viewed as an
  invariant in the tensor product of two absolutely irreducible
  representations, so if it is nonzero it must be nondegenerate, i.e.,
  induces an isomorphism of one of the representations with the dual
  of the other one (so both representations must have the same
  dimension). Let $\gamma \in \Gamma_F$ be any element such that
  $\kbar(\gamma) = 1$ and $\br(\gamma)$ is a nontrivial element of
  $U(k)$.  A simple matrix computation shows that in each case
  $\gamma(u) - u$ is a nonzero element of $\mf{m}'(1)$.  On the other
  hand, for any element $u'$ in $\mf{m}'(1) \oplus \mf{u}(1)$,
  $\gamma(u') - u'$ is an element of $\mf{u}(1)$. This implies that
  given a nonzero $u$, there exists no element $u'$ as above such that
  $\gamma(u + u') = u + u'$ so $u$ does not lift to a
  $\Gamma_F$-invariant element, as required.
\end{proof}

\begin{rmk} \label{shahidi}
  One may derive similar, but more complicated, conditions for any
  reductive $G$ and maximal parabolic $P$ using the tables in
  Appendices A and B of \cite{shahidi:eisseriesbook} which give the
  structure of $\mf{u}$ as an $M$-module.
\end{rmk}

For $P$ a parabolic subgroup of $G$, let $\ov{M}$ be the Levi quotient
of the image of $P$ in $G^{\mr{ad}}$, the adjoint group of $G$. If $P$ is
maximal, $\ov{M}$ modulo its derived subgroup is a one dimensional
torus which we identify with $\mathbb{G}_m$. This quotient gives rise
to a character
$\chi: \Gamma_F \to \mathbb{G}_m(k)$ obtained as
the composite
\[
  \Gamma_F \stackrel{\br}{\to} P(k) \to \ov{M}(k) \to
  \mbb{G}_m(k) \ .
\]

\begin{defn} \label{dstN} We say that $\br$ as above satisfies
  property $D(s,t,N)$, where $s,t,N$ are positive integers, if for all
  $\sigma \in \Aut(k)$,
  $(\chi^{m_1})^{\sigma}|_{\Gamma_{F'}} \neq (\chi^{m_2} \otimes
  \kbar^s)|_{\Gamma_F'}$, with $[F':F] \leq N$ and all
  $m_1,m_2 \in \Z$ such that $0 \leq |m_1|,|m_2| \leq t$.
\end{defn}
 
\begin{lemma} \label{lem:h^1} Suppose $p \gg_G 0$, $P$ is a maximal
  parabolic, and $\br_M$ is absolutely irreducible.  For any pair
  $(G,P)$ as above there exist positive integers $b$, $s$, $t$ and $N$
  (depending only on the root datum of $(G,P)$) such that if
  \begin{enumerate}
  \item $[F(\kbar): F(\kbar) \cap F(\chi)] > b$, or
  \item $\br$ satisfies property $D(s,t,N)$,
  \end{enumerate}
  then both (C3) and (C4) hold for $\br$. Furthermore, in Case (1), $\mu_p$ is not contained in $F(\br(\fgder))$.
\end{lemma}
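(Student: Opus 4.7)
The plan is to reduce both (C3) and (C4) to numerical conditions on $\chi$ and $\kbar$, having first used Lemma \ref{l:allP} to dispose of the case $\mu_p \not\subset F(\br(\fgder))$. Assume henceforth that $\mu_p \subset F(\br(\fgder))$.

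I would begin with the semisimple decomposition used in the proof of Lemma \ref{lem:h^0},
\[
\br(\fgder)^{\mr{ss}} = \br(\mathfrak{m}') \oplus \br(\mathfrak{u}^+) \oplus \br(\mathfrak{u}^-),
\]
and, since $P$ is maximal, further refine it by the $\mathbb{Z}$-grading $\mathfrak{u}^{\pm} = \bigoplus_{i=1}^{r} \mathfrak{u}^{\pm}_{\pm i}$ arising from the one-dimensional quotient $\ov{M}/\ov{M}^{\mr{der}} \cong \mathbb{G}_m$. The integer $r$ depends only on the root datum of $(G,P)$, and the center of $\ov{M}$ acts on $\mathfrak{u}^{\pm}_{\pm i}$ via $\chi^{\pm i}$. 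For $p \gg_G 0$, absolute irreducibility of $\br_M$ together with \cite[Theorem A]{guralnick:CR} makes each $\br(\mathfrak{u}^\pm_j)$ and $\br(\mathfrak{m}')$ a semisimple $k[\gal{F}]$-module whose $k$-irreducible constituents have a well-defined ``central character'' that is a single power of $\chi$ (trivial for the constituents of $\br(\mathfrak{m}')$).

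For (C3), a surjection $\br(\fgder) \onto W$ with $W$ a nonzero subquotient of $\br(\fgder)^*$ may be assumed to have $W$ irreducible, and then factors through the maximal semisimple quotient of $\br(\fgder)$, exhibiting $W$ as an irreducible constituent of both $\br(\fgder)^{\mr{ss}}$ and $\br(\fgder)^{*,\mr{ss}}$. Extending scalars to $k$ produces an isomorphism $V_k \simeq W_k^{\sigma}$ of $k[\gal{F}]$-modules for some $\sigma \in \Aut(k)$, with $V_k$ an irreducible constituent of $\br(\fgder) \otimes_{\Fp} k$ and $W_k$ of $\br(\fgder)^* \otimes_{\Fp} k$. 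Restricting to an extension $F'/F$ of degree bounded only in terms of $(G,P)$ (enough to kill the characters of $\ov{M}^{\mr{der}}$ arising in these constituents, by \cite[Corollary A.6]{fkp:reldef}) and comparing the induced central characters yields
\[
(\chi^{m_1})^{\sigma}|_{\gal{F'}} = (\chi^{m_2}\,\kbar)|_{\gal{F'}}
\]
with $|m_1|, |m_2| \leq r$. Taking $s=1$, $t=r$, and $N$ the above degree bound, condition (2) forbids this equation outright; condition (1), with $b$ chosen in terms of $r$, $s$, $N$, forbids it since it would force $\kbar$ to lie in a bounded extension of $F(\chi)$.

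For (C4), I would follow the structure of the proof of Lemma \ref{l:allP}. Setting $F' = F(\br_M, \kbar)$, inflation--restriction together with the vanishing $H^1(\Gal(F'/F), W) = 0$ from \cite[Theorem A]{guralnick:CR} reduces the problem to showing $\Hom_{\Gal(F'/F)}(Q^{\mr{ab}}, W) = 0$ for $Q$ the Sylow $p$-subgroup of $\Gal(K/F')$ and $W$ any irreducible $\Gal(F'/F)$-subquotient of $\br(\fgder)^*$. Lemma \ref{l:subq} identifies the irreducible subquotients of $Q^{\mr{ab}}$ with irreducible subquotients of $\br(\mathfrak{u}^+)$, and the same central-character analysis as above then rules out any such equivariant homomorphism, again under either condition (1) or (2). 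The principal difficulty is the uniform bookkeeping of the constants $b$, $s$, $t$, $N$ in terms of the root datum alone, which rests on the boundedness of $r$ and on the application of \cite[Corollary A.6]{fkp:reldef} controlling the orders of the characters of $\ov{M}/\ov{M}^{\mr{der}}$ that can arise.
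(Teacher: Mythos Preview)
Your overall architecture mirrors the paper's: decompose $(\fgder)^{\mr{ss}}$ as $\mf{m}' \oplus \mf{u}^+ \oplus \mf{u}^-$, reduce (C3) to a character identity over a controlled extension, and handle (C4) by inflation--restriction plus Lemma \ref{l:subq}. But there is a genuine gap in your identification of the ``central characters'' with powers of $\chi$, and hence in your choice $s=1$.

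The character $\chi$ is by definition a character of $\ov{M}/\ov{M}^{\mr{der}}$, pulled back to $\Gamma_F$. The characters by which $Z(\ov{M})$ acts on the graded pieces $\mf{u}^+_i$ are characters of $Z(\ov{M})$, and the map $Z(\ov{M}) \to \ov{M}/\ov{M}^{\mr{der}}$ has kernel of order $s$ (both source and target are $\mathbb{G}_m$, and the map is $z \mapsto z^s$). Thus only the $s^{th}$ power of the $Z(\ov{M})$-character on $\mf{u}^+_i$ is a power of the pullback of $\chi$; the character itself need not be. Concretely, for $G=\mr{GL}_{2n}$ with the $(n,n)$-parabolic one has $s=n$: the $Z(\ov{M})$-character on $\mf{u}^+$ squares (for $n=2$) to $\chi$ but is not itself $\chi$. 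The resulting identity, after raising to the $s^{th}$ power, is of the form $(\chi^{m_1})^{\sigma}|_{\Gamma_{F'}} = (\chi^{m_2}\,\kbar^{s})|_{\Gamma_{F'}}$, which is exactly why $D(s,t,N)$ carries $\kbar^s$ rather than $\kbar$. Your condition $D(1,t,N)$ does not preclude this, so the argument as written does not establish the lemma.

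A smaller but related point: your passage to a bounded-degree $F'$ ``to kill the characters of $\ov{M}^{\mr{der}}$'' is not quite right as stated. The field $L$ cut out by the image in $(\ov{M})^{\mr{ad}}(k)$ has unbounded degree over $F$; the paper first restricts to $\Gamma_L$ (so that the action factors through $Z(\ov{M})(k)$ and the character comparison makes sense), and \emph{then} observes that any resulting identity among $\chi^{\sigma}$, $\chi$, and $\kbar$, being an identity of abelian characters of $\Gamma_F$ trivial on $\Gamma_L$, descends to $\Gamma_{F'}$ with $F' = L \cap F^{\mr{ab}}$ and $[F':F] \leq N$ by \cite[Corollary A.6]{fkp:reldef}. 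Your one-step restriction to bounded $F'$ skips this and leaves the meaning of ``central character'' unclear. For condition (1), the paper takes the more direct route of showing that $[F(\kbar):F(\kbar)\cap F(\chi)] > b$ forces $\mu_p \not\subset F(\br(\fgder))$ (via the same corollary), after which Lemma \ref{l:allP} finishes; your derivation of (1) from the character identity would work too, but only once the identity is correctly derived with the right $s$.
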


\begin{proof}
  We will prove that in both cases (1) and (2), the stronger versions
  of properties (C3) and (C4) with $\br(\fgder)$ and $\br(\fgder)^*$
  replaced by their semisimplifications hold.

  In case (1), we will reduce to Lemma \ref{l:allP}.  By assumption
  $\br_M$ is absolutely irreducible, so it suffices to show that there
  exists $b$ such that if (1) holds then the $p^{th}$ roots of unity are
  not contained in $F(\br(\fgder))$. Using Remark \ref{r:roots}, this
  follows by applying  \cite[Lemma A.6]{fkp:reldef} to the semisimple
  group $M^{\mr{der}}$ and the definition of the
  character $\chi$.
  
  We now prove the existence of $s,t,N$ such if $\br$ satisfies
  $D(s,t,N)$ then (C3) and (C4) hold.
  Let $L$ be the extension of $F$ cut out by the image of $\Gamma_F$
  in $(\ov{M})^{\mr{ad}}(k)$, where $(\ov{M})^{\mr{ad}}$ is the
  adjoint group of $\ov{M}$. To verify property (C3) it suffices to do
  so after replacing $\Gamma_F$ by $\Gamma_{L}$.  The action of
  $\Gamma_F$ on $(\fgder)^{\mr{ss}}$ factors through $\ov{M}(k)$, so
  the action of $\Gamma_L$ factors through the action of
  $Z(\ov{M})(k)$, the centre of $\ov{M}(k)$. Consider the
  decomposition
  $(\fgder)^{\mr{ss}} = \mf{m}' \oplus \mf{u} \oplus \mf{u}^-$ as in
  the proof of Lemma \ref{lem:h^0}.  The group $Z(\ov{M})(k)$ acts
  trivially on $\mf{m}'$, by a sum of non-trivial characters, say
  $\chi_1, \chi_2,\dots, \chi_r$, on $\mf{u}$ (which may be
  determined explicitly), and by the inverses of these characters on
  $\mf{u}^-$.

  By the irreducibility of $\br_M$ and \cite[Corollary
  A.6]{fkp:reldef}, it follows that there exists an integer $N$
  depending only on $G$ such that the degree $[L \cap F^{\mr{ab}}:F]$ is 
  %the image of {\color{red}{$\Gal(L/F)$}}
  %$\Gamma_L$
  %in $\Gal(F^{\mr{ab}}/F)$ is of index 
  bounded above by $N$. Let $s$ be the order of
  the kernel of the map $Z(\ov{M}) \to \ov{M}/\ov{M}^{\mr{der}}$. In
  order for property (C3) to hold, it suffices that the set of
  characters of $\Gamma_L$ induced by the set
  $\{(\chi_1^{\sigma})^{\pm}, (\chi_2^{\sigma})^{\pm}, \dots,
  (\chi_r^{\sigma})^{\pm}\}$ is disjoint from the set obtained by
  tensoring those induced by
  $\{\chi_1^{\pm}, \chi_2^{\pm},\dots, \chi_r^{\pm}\}$ with $\kbar$,
  for all $\sigma \in \Aut(k)$. This in turn follows if we have
  disjointness when we raise each character of $\Gamma_L$ above to its
  $s^{th}$ power. Since each $\chi_i^s$ is a power of the character
  $\chi$ and the exponents that occur only depend on $G$, it follows
  that for a suitable choice of $t$, if $D(s,t,N)$ holds then (C3)
  holds.

  Let $K' = F(\br(\fgder), \kbar)$. It is a subfield of $K$ and
  $p \nmid [K:K']$, so to prove that (C4) holds it suffices to show
  that $H^1(\Gal(K'/F), \br(\fgder)^*) = 0$.  Let $L'$ be the
  extension of $F$ obtained by adjoining $\mu_p$ to the extension cut
  out by $\br_M$. Since $\br_M$ is absolutely irreducible and
  $p \gg_G 0$, it follows from \cite[Theorem A]{guralnick:CR} as in
  \cite[Lemma A.1]{fkp:reldef} that
  $H^1(\Gal(L'/F), (\br(\fgder)^*)^{\mr{ss}}) = 0$. By the
  inflation-restriction sequence it then follows that
  $H^1(\Gal(K'/F), (\br(\fgder)^*)^{\mr{ss}}) = 0$ if
  $\Hom_{\Fp[\Gal(L'/F)]}(\Gal(K'/L'), (\br(\fgder)^*)^{\mr{ss}}) = 0$
  and this holds if
  $\Hom_{\Fp[\Gal(L'/L)]}(\Gal(K'/L'), (\br(\fgder)^*)^{\mr{ss}}) =
  0$, with $L$ as above.

  There is a canonical $\Fp[\Gal(L'/F)]$-equivariant injection from
  $\Gal(K'/L')$ into $U(k)$. The possibly nonabelian group $U(k)$ has
  an invariant filtration with abelian associated graded such that
  $\Gal(L'/L)$ acts via the characters
  $\{\chi_1, \chi_2,\dots, \chi_r\}$ as before. Using this, if $\br$
  satisifies $D(s,t,N)$ then
  $\Hom_{\Fp[\Gal(L'/L)]}(\Gal(K'/L'), (\br(\fgder)^*)^{\mr{ss}})$
  indeed vanishes since the abelianisation of $\Gal(K'/L')$ and
  $(\br(\fgder)^*)^{\mr{ss}}$ have no common
  ${\Fp[\Gal(L'/L)]}$-subquotients (if $t \gg_G 0$).
\end{proof}

\begin{rmk}
  If $\chi$ takes values in $\Fp^{\times}$ then we may ignore $\sigma$
  in condition $D(s,t,N)$. Since $s$, $t$ and $N$ are all independent
  of $p$, there exists a constant $n_G$ such that the condition in (2)
  is satisfied when $\chi = \kbar^n$, for all $n$ (modulo 
  $p-1$) except
  for those in a subset of $\mbb{Z}/(p-1)\mbb{Z}$ of size at most
  $n_G$. In particular, condition (2) can hold even when
  $F(\chi) = F(\kbar)$.
\end{rmk} 

\begin{eg} \label{exgsp4a} %
  Let $G = \mr{GSp_4}$ and let $P$ be the Siegel parabolic of $G$, so
  the Levi quotient $M$ of $P$ is $\mr{GL}_2 \times \mbb{G}_m$, the
  group $\ov{M}$ is $\mr{GL}_2/\{\pm 1\}$ and $\ov{M}^{\mr{ad}}$ is
  $\mr{PGL}_2$. From Dickson's classification of finite subgroups of
  $\mr{PGL_2}(k)$ it follows that the integer $N$ can be taken to be
  $4$, and even $2$ in all cases except the $A_4$ case (where it is
  $3$) and the even dihedral case.
  
  The group $Z(\ov{M})$ acts on $\mf{u}^{\pm}$ by $\mr{det}^{\pm 1}$
  and the integer $s$ occurring in the proof of Lemma \ref{lem:h^1} is
  $1$. If we write $\br_M$ as $(\br_1, \delta)$, the character
  $\chi: \Gamma_F \to \mbb{G}_m(k)$ is $\mr{det} \circ \br_1$. If
  $\chi$ takes values in $\Fp^{\times}$ then using (the proof of)
  Lemma \ref{lem:h^1} we see that the properties (C3) and (C4) hold
  for $\br$ as long as the characters $\chi^{a}\otimes \kbar$
  have order at least $5$ for $|a| \leq 2$.
\end{eg}

\begin{eg}
  Let $G = \mr{GL}_{2n}$ and let $P$ be the parabolic corresponding to
  the partition $2n = n + n$. In this case there is only one character
  of $Z(\ov{M}(k))$ occuring in $\mf{u}$, a generator of the
  character group, and $s = n$. Moreover, $(\ov{M})^{\mr{ad}}$ is
  $\mr{PGL}_n \times \mr{PGL}_n$, so we may take $N$ to be the maximal
  size of the abelianisation of an absolutely irreducible subgroup of
  $\mr{PGL}_n(k) \times \mr{PGL}_n(k)$.
%  {\color{red} Can this be made explicit?}
  Therefore (C3) and (C4) hold if for all
  $\sigma \in \Aut(k)$,
  $(\chi^{\pm 1})^{\sigma}|_{\Gamma_{F'}} \neq (\chi^{\pm 1} \otimes
  \kbar^n)|_{\Gamma_F'}$, with $F'$ any finite extension of $F$ of
  degree $\leq N$.
\end{eg}

\subsection{$P$ is a Borel subgroup} \label{s:borel}

Assume that $P=B$ is a Borel subgroup of $G$. In this case
$\br(\fgder)$ has a filtration with one dimensional subquotients, so
it is not difficult to analyze the conditions under which properties
(C1) - (C4) hold for $\br$. We give examples of such conditions
without attempting to be exhaustive. We note here that in the preprint
\cite{ray:reducible}, A.~Ray has proved a lifting theorem for
representations with values in the Borel subgroup of
$\mr{GSp}_{2g}(k)$; our methods give stronger results even in this
case.

In all that follows we shall assume that $p \gg_G 0$.  Under this
assumption, by Lemma \ref{l:exp}, any element $x \in U(k)$ has a
logarithm $\log(x) \in \mf{u}$. Let
$U(\br) = U(k) \cap \im(\br)$, the Sylow $p$-subgroup of
$\im(\br)$. We let $\mf{u}(\br)$ be the $k$-subspace of $\mf{u}$
spanned by $\{ \log(x)\, |\, x \in U(\br)\}$.  This is a
$k[\Gamma_F]$-submodule of $\br(\fgder)$. Then $Z(\mf{u}(\br))$, the
centralizer in $\fgder$ of $\mf{u}(\br)$, is also a
$k[\Gamma_F]$-submodule of $\br(\fgder)$.

Let $T$ be the Levi quotient of $B$ and let $\br_T$ be the induced
representation $\Gamma_F \to T(k)$. The $\Gamma_F$-action on
$(\fgder)^{\mr{ss}}$ (hence also $Z(\mf{u}(\br))^{\mr{ss}}$),
factors through $\br_T$, so it is a sum of characters.

\begin{lemma} \label{l:borel}%
  Let $\br:\Gamma_F \to G(k)$ be a continuous representation with
  $\im(\br)\subset B(k)$. Consider the following conditions:
\begin{enumerate}[(1)]
\item The trivial character does not appear in
  $Z(\mf{u}(\br))^{\mr{ss}}$.
\item The character $\kbar^{-1}$ does not appear in
  $Z(\mf{u}(\br))^{\mr{ss}}$.
\item For all characters $\chi$ and $\chi'$ occurring in the action of
  $\;\Gamma_F$ on $(\fgder)^{\mr{ss}}$ we always have
  $\sigma(\chi') \ncong \chi \otimes \kbar$, for any automorphism
  $\sigma$ of $k$.
\end{enumerate}
Then (1) implies (C1) holds, (2) implies (C2) holds, and (3)
implies that (C3) and (C4) hold.
\end{lemma}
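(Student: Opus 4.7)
My plan is to treat the four implications in parallel, since they all hinge on the same principle: any $\Gamma_F$-semi-invariant vector in $\br(\fgder)$ is forced into the centralizer $Z(\mf{u}^+(\br))$, and any ``bad'' subquotient involves matching a character to a character twisted by $\kbar$. The only global input I need is Hochschild--Serre for finite group cohomology.

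For (1)$\Rightarrow$(C1) I argue as follows. Suppose $0 \neq v \in \br(\fgder)^{\Gamma_F}$. Then $v$ is fixed by the Sylow $p$-subgroup $U^+(\br)$ of $\im(\br)$. Since $p \gg_G 0$, Lemma \ref{l:exp} gives the identity $\Ad(\exp(X))(v) = \exp(\ad(X))(v)$, and since $\ad(X)$ is nilpotent on $\fgder$ the invertibility of $1 + \tfrac{1}{2}\ad(X) + \cdots$ means that $\exp(\ad(X))(v) = v$ if and only if $\ad(X)(v) = 0$. Applying this to $X = \log(u)$ for every $u \in U^+(\br)$, we conclude $v \in Z(\mf{u}^+(\br))$, so the trivial character appears as a submodule, hence in the semisimplification, of $Z(\mf{u}^+(\br))$, contradicting (1). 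For (2)$\Rightarrow$(C2), identify $\br(\fgder)^* \cong \br(\fgder)(1)$ using the Killing form, which is a perfect duality for $p \gg_G 0$. A trivial submodule of $\br(\fgder)^*$ is a vector $v \in \br(\fgder)$ on which $\Gamma_F$ acts by $\kbar^{-1}$. Evaluating at $\gamma \in \ker(\br)$ shows $\kbar$ is trivial on $\ker(\br)$; hence $\kbar$ descends to $\im(\br)$. For any $\gamma$ with $\br(\gamma) = u \in U^+(\br)$, the unipotence of $\Ad(u)$ forces $\kbar(\gamma) = 1$ and $\Ad(u)(v) = v$, so again $v \in Z(\mf{u}^+(\br))$ and $\kbar^{-1}$ appears in $Z(\mf{u}^+(\br))^{\mr{ss}}$, contradicting (2).

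For (3)$\Rightarrow$(C3), any surjection $\br(\fgder) \onto W$ onto an $\Fp[\Gamma_F]$-subquotient $W$ of $\br(\fgder)^*$ yields an irreducible $\Fp[\Gamma_F]$-module $W'$ that is simultaneously a quotient of $\br(\fgder)$ and a subquotient of $\br(\fgder)^*$. After $\otimes_{\Fp} k$, $W' \otimes k$ decomposes into an $\Aut(k/\Fp)$-orbit of $k$-characters. Such a character $\chi$ appears in $\br(\fgder)^{\mr{ss}}$, while its $\Aut(k)$-conjugates also appear in $(\br(\fgder)^*)^{\mr{ss}}$; by the Killing form identification the latter characters have the form $\chi'\kbar$ where $\chi'$ runs through characters of $\br(\fgder)^{\mr{ss}}$ (using that the root system is symmetric under $\alpha \mapsto -\alpha$). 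Hence $\sigma(\chi') = \chi \cdot \kbar^{-1}$ (equivalently $\sigma(\chi) = \chi' \cdot \kbar$ after renaming), contradicting (3).

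For (3)$\Rightarrow$(C4), set $F' = F(\br_T, \mu_p) \subset K$. Then $\Gal(F'/F)$ has order coprime to $p$ (being a subquotient of $T(k) \times \Gal(F(\mu_p)/F)$), while $\Sigma := \Gal(K/F')$ is a normal $p$-subgroup of $\Gal(K/F)$ contained in $U^+(k)$. A d\'{e}vissage along the composition series of $\br(\fgder)^*$ over $\Fp$ reduces us to proving $H^1(\Gal(K/F), W) = 0$ for each simple $\Fp[\Gamma_F]$-subquotient $W$ of $\br(\fgder)^*$; inflation--restriction together with $|\Gal(F'/F)|$ being coprime to $p$ (and the triviality of $\Sigma$ on the simple module $W$) identifies this group with $\Hom_{\Gal(F'/F)}(\Sigma^{\mr{ab}}, W)$. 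By Lemma \ref{l:subq} applied to $\Gamma = \Gal(K/F) \hookrightarrow P(k)$ and $\Sigma$, every irreducible $\Fp[\Gal(F'/F)]$-subquotient of $\Sigma^{\mr{ab}}$ is an $\Fp[\Gal(F'/F)]$-subquotient of $\mf{u}^+$, whose characters (via $\br_T$) form a subset of the characters of $\br(\fgder)^{\mr{ss}}$. The character-matching argument from the previous paragraph, combined with (3), shows the relevant Hom vanishes.

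The main technical point, and the only real subtlety, is the passage from $\Ad(u)(v) = v$ to $\ad(\log u)(v) = 0$: this requires both the exponential of Lemma \ref{l:exp} and a lower bound on $p$ large enough that the partial sums of $\exp$ converge (in the polynomial sense on the nilpotent $\ad(X)$) and the factor $1 + \tfrac{1}{2}\ad(X) + \cdots$ is invertible. The rest of the argument is bookkeeping: matching characters in $\br(\fgder)^{\mr{ss}}$ against those in $(\br(\fgder)^*)^{\mr{ss}}$ and reducing Galois cohomology to Hom-groups through standard inflation--restriction manipulations using that the pro-prime-to-$p$ piece has vanishing higher cohomology.
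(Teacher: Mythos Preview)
Your argument is correct and follows essentially the same route as the paper's: the exponential identity $\Ad(\exp X) = \exp(\ad X)$ to reduce $\Ad$-invariance to $\ad$-centralization for (C1) and (C2), and inflation--restriction together with Lemma~\ref{l:subq} for (C4). One small inaccuracy: in the last paragraph you write ``$\Gamma = \Gal(K/F) \hookrightarrow P(k)$'', but $\Gal(K/F)$ need not embed in $P(k)$, since the kernel of $\Gal(K/F) \to \im(\br)$ is $\Gal(K/F(\br))$, which is nontrivial unless $\mu_p \subset F(\br)$. Apply Lemma~\ref{l:subq} instead to $\Gamma = \im(\br) \subset B(k)$ with $\Sigma = U^+(\br)$, and then note that the conjugation action of $\Gal(K/F)$ on its normal Sylow $p$-subgroup factors through $\im(\br)$ (the normal subgroups $\Gal(K/F(\br))$ and the Sylow $p$-subgroup have trivial intersection, hence commute).
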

Note that (3) only depends on $\br_T$.
\begin{proof}
  We will use the formula
  \begin{equation} \label{e:ad} \tag{$*$}
    \Ad(\exp(X))(Y) = \sum_{i=0}^{\infty} \frac{\ad(X)^i(Y)}{i!} ,
  \end{equation}
  which holds for all $X \in \mf{u}$ and all $Y \in \fg$; the sum
  is actually finite because $X$ is nilpotent.  This is well-known
  over fields of characteristic zero and so also holds if $p \gg_G 0$
  by spreading out.

  If there exists $Y \in (\fgder)^{\Gamma_F}$ such that
  $Y \notin Z(\mf{u}(\br))$, then there exists $x \in U(\br)$ such
  that $[\log(x),Y] \neq 0$. Formula \eqref{e:ad} and the fact that
  $\log(x)$ is nilpotent, imply that $\Ad(x)(Y) \neq Y$, a
  contradiction. The condition (1) clearly implies that
  $ Z(\mf{u}(\br))^{\Gamma_F} = 0$, so (C1) holds.

  The group $U(\br)$ is also the Sylow $p$-subgroup of $\Gal(K/F)$,
  and as such its action on $\br(\fgder)$ and $\br(\fgder)^*$ is
  identical. Therefore, it follows from the above that the $\Gamma_F$-invariants 
  of $\br(\fgder)^*$ must be contained in
  $ Z(\mf{u}(\br))\otimes \kbar$. This shows that (2) implies that
  (C2) holds.

   It is clear that (3) implies that a stronger form of (C3), with
  $\br(\fgder)$ replaced by $\br(\fgder)^{\mr{ss}}$.

  We now show that (3) implies that
  $H^1(\Gal(K/F), \chi \otimes \kbar)=0$, for $\chi$ any character
  occurring in $\br(\fgder)^{\mr{ss}}$, which clearly implies that
  (C4) holds. Since $U(\br)$ is the Sylow $p$-subgroup of $\Gal(K/F)$,
  it follows from the inflation-restriction sequence that it suffices
  to show that $\Hom(U(\br), \chi \otimes \kbar)^{\Gal(K/F)} = \{0\}$.
  By Lemma \ref{l:subq}, this will follow if we can show that
  $\Hom(\mf{u}, \chi \otimes \kbar)^{\Gal(K/F)} = \{0\}$. But this is
  an immediate consequence of (3), so (C4) holds.
\end{proof}

% \begin{rmk} \label{r:borel}%
%   For condition (1) to hold we must have
%   $\dim(\mf{u}(\br)) \geq \rk(\fgder)$.
% \end{rmk}

\section{} \label{ordappendix}

In this appendix we finish the proof of Proposition \ref{CS} by
providing the promised lemma about generic fibers of ordinary
deformation rings. We can work in somewhat greater generality than
needed in Proposition \ref{CS}, but in essence we follow the proof in
the $\mr{GL}_n$ case due to Geraghty (\cite{geraghty:ordinary}). We
repeat some of our notation here, to make this appendix relatively
self-contained. Let $\mc{O}$ be the ring of integers, with residue
field $k$, in a finite extension $E$ of $\Q_p$. We continue to let $F$
denote a number field, for consistency with the body of this paper,
but in this appendix we will only use the completion $F_v$ of $F$ at
some prime $v \vert p$ and its absolute Galois group $\gal{F_v}$. Let
$G$ be a split connected reductive group over $\mc{O}$ (we emphasize
that in contrast to the bulk of the paper, here $G$ is connected),
equipped with a split torus $T_0$. We will repeatedly use the
following fact:
\begin{prop}[See Corollary 5.2.13 of \cite{conrad:luminy}]\label{borelconj}
For any $\mc{O}$-algebra $A$, any two Borel subgroups $B_1, B_2$ of the base-change $G_A$ are $G$-conjugate Zariski-locally on $\Spec(A)$. Moreover, if $B_1$ and $B_2$ both contain $T_0$, they are $N_G(T_0)$-conjugate Zariski-locally on $\Spec(A)$.
\end{prop}
Let $T_G$ denote the ``canonical torus" of $G$: for any Borel subgroup $B \subset G$, $T_G$ is equal to the quotient $B/R_u(B)$ by the unipotent radical of $B$. The sense in which this is canonical is that any two Borel subgroups $B'$ and $B$ are $G(\mc{O})$-conjugate since $\mc{O}$ is a local ring, and $N_G(B)=B$ (this follows from the analogue over fields and smoothness of the normalizer, and it is valid for Borel subgroups in a reductive group scheme over an arbitrary base: \cite[Corollary 5.2.8]{conrad:luminy}) implies that a choice of $G(\mc{O})$-conjugation $B' \xrightarrow{\sim} B$ induces an isomorphism $B'/R_u(B') \xrightarrow{\sim} B/R_u(B)$ that is independent of the choice, since $B$ acts trivially by conjugation on $B/R_u(B)$. We similarly obtain a canonical identification of the character groups $\Hom(B, \mathbb{G}_m)$, which allows us to define unambiguously dominant regular cocharacters of $T_G$.

To define the ordinary condition we will, roughly speaking, impose the
condition that a representation factors through a Borel subgroup and
has pre-specified projection to the canonical torus; a correct
treatment of the moduli problem will require working over general (not
necessarily local) bases, and even though the projection of a Borel
subgroup modulo its unipotent radical makes sense over an arbitrary
base, we unfortunately don't know how to formulate the condition on
the projection without localizing.  We fix a reference Borel
$B_0 \subset G$ (over $\mc{O}$) containing the split maximal torus
$T_0$, so for any $\mc{O}$-algebra $A$ we can identify
$T_G \times_{\Spec(\mc{O})} \Spec(A)$ with
$B_0/R_u(B_0) \times_{\Spec(\mc{O})} \Spec A$.

Fix a prime $v \vert p$ of $F$ (or more generally, fix a finite extension of $\Q_p$), and fix a collection $\lambda_ \tau \colon \mathbb{G}_m \to T_G$ of co-characters, indexed over $\tau \in \Hom_{\Q_p}(F_v, E)$; here we if necessary enlarge $E= \mr{Frac}(\mc{O})$ so that it contains all $\ov{E}$-embeddings of $F_v$. Let $\rec_v \colon F_v^\times \to \gal{F_v}^{\mr{ab}}$ denote the local Artin reciprocity map, normalized to take uniformizers to geometric Frobenii, and define a character $\chi_{\lambda} \colon I_{F_v} \to T_G(\mc{O})$ by
\[
\chi_{\lambda}(\sigma)= \prod_{\tau \colon F_v \to E} \lambda_{\tau}(\tau(\rec_v^{-1}(\sigma))).
\]

\begin{defn}\label{orddef}
Let $A$ be a finite local $E$-algebra and $\lambda=(\lambda_\tau)_{\tau \colon F_v \to E}$ as above. Fix a finite extension $F_v'/F_v$. Let $\rho \colon \gal{F_v} \to G(A)$ be a continuous representation. We say that $\rho$ is $F_v'$-ordinary of weight $\lambda$ if there is a Borel subgroup $B \subset G_A$ such that $\rho$ factors through $B(A)$, and, choosing $g \in G(A)$ such that $gBg^{-1}= B_0$ the projection 
\[
\gal{F_v} \xrightarrow{\rho} B(A) \xrightarrow{x \mapsto gxg^{-1}} B_0(A) \to T_G(A)
\] 
is equal to $\chi_{\lambda}$ on the open subgroup $I_{F_v'}$ of $I_{F_v}$.
\end{defn}
Here we use Proposition \ref{borelconj} to choose such $g \in G(A)$, and note that the condition thus formulated is independent of the choice of $g$ by the remarks ($N_G(B_0)=B_0$) following the Proposition.
%Again, Zariski-local conjugacy of Borel subgroups in split reductive group (schemes) shows that any two Borel subgroups $B_1, B_2 \subset G_A$ are $G(A)$-conjugate when $A$ is a finite local $E$-algebra, so the above definition makes sense.

While we can formulate this assumption for any $\lambda$, in what follows we assume that $\lambda$ is dominant regular, i.e. that each $\lambda_{\tau}$ is a dominant regular cocharacter of $T_G$. The collection of cocharacters $\lambda$ gives a $p$-adic Hodge type $\mbf{v}_{\lambda}$. Fix a residual representation $\br \colon \gal{F_v} \to G(k)$, and let $R_{\br}^{\square}$ represent the lifting functor $\Lift_{\br}$ of \S \ref{prelims}. Recall from \cite[Theorem A]{bellovin-gee-G} that 
%if we fix an inertial type $\alpha \colon I_{F_v} \to G(E)$, 
the quotient $R_{\br}^{\square, \mbf{v}_{\lambda}}$ of
$R_{\br}^{\square}$ whose points in finite local $E$-algebras
parametrize potentially semistable representations that become
semistable over $F'_v$ and have $p$-adic Hodge-type
$\mbf{v}_{\lambda}$ is equidimensional, with all irreducible
components of $R_{\br}^{\square, \mbf{v}_{\lambda}}[1/p]$ having
dimension $\dim(G)+[F_v: \Q_p]\dim(\mr{Fl}_G)$, and it admits an open
dense regular subscheme (note that we omit $F_v'$ from the notation
for $R_{\br}^{\square, \mbf{v}_{\lambda}}$). We wish to construct a
further quotient parametrizing ordinary representations and show that
it is a union of irreducible components of
$R_{\br}^{\square, \mbf{v}_{\lambda}}$.

Let $\rho^{\lambda} \colon \gal{F_v} \to G(R_{\br}^{\square,
  \mbf{v}_{\lambda}})$ denote the universal lift, and for any $R_{\br}^{\square, \mbf{v}_{\lambda}}$-algebra $A$, let $\rho^\lambda_A$ denote the push-forward to $A$ of $\rho^\lambda$. Let $\mc{G}$ be the subfunctor (on $R_{\br}^{\square, \mbf{v}_{\lambda}}$-algebras) of $\mr{Fl}_G$ such that $\mc{G}(A)$ is the subset of $B\in \mr{Fl}_G(A)$ fixed under the canonical action of $\rho^{\lambda}_A(\gal{F_v})$ on $\mr{Fl}_G(A)$. $\mc{G}$ is clearly representable by a closed subscheme, and it is clear that if $B \in \mc{G}(A)$, then $\rho^{\lambda}_A$ factors through $B(A) \subset G(A)$: indeed, $\rho^{\lambda}_A(\gal{F_v})$ is contained in $N_G(B)(A)=B(A)$. 

Let $\mc{G}_{\lambda} \subset \mc{G}$ be the subfunctor defined by taking $\mc{G}_{\lambda}(A)$ to be the set of $B \in \mc{G}(A)$ such that
\begin{itemize}
\item choosing (by Proposition \ref{borelconj}) a Zariski cover $\Spec(\wt{A}) \to \Spec(A)$ and a $g \in G(\wt{A})$ such that $gB_{\wt{A}}g^{-1}=B_0$, 
for all $\sigma \in I_{F_v'}$, the projection of $g\rho^\lambda_{\wt{A}}(\sigma)g^{-1}$ to $T_G(\wt{A})$ is equal to (the image in $\wt{A}$ of) $\chi_{\lambda}(\sigma)$.
\end{itemize}
We stress again that this condition is independent of the choice of conjugating element $g \in G(\wt{A})$.
\begin{lemma}\label{glambdaclosed}
$\mc{G}_{\lambda}$ is representable by a closed subscheme of $\mc{G}$.
\end{lemma}
\begin{proof}
Let $A$ be an $R_{\br}^{\square, \mbf{v}_{\lambda}}$-algebra, and fix an element $B \in \mc{G}(A)$. It suffices to show that there is an ideal $J \subset A$ such that for any ring homomorphism $f \colon A \to A'$, $f(B)$ belongs to $\mc{G}_{\lambda}(A')$ if and only if $A \to A'$ factors through $A/J$. Let $\Spec(\wt{A}) \to \Spec(A)$ be a Zariski cover such that for some $g \in G(\wt{A})$, $gB_{\wt{A}}g^{-1}= B_0$. Consider the root space decomposition of $\fg$ with respect to $T_0$, $\fg= \Lie(T_0) \oplus \bigoplus_{\beta} \fg_{\beta}$ into a direct sum of the Lie algebra of $T_0$ and free rank one $\mc{O}$-module root spaces $\fg_{\beta}$ with respect to characters $\beta \colon T_0 \to \mathbb{G}_m$. Let $X_{\beta}$ generate $\fg_{\beta}$, and let $p_{\beta}$ be the $\mc{O}$-module projection $\fg \to \fg_{\beta}$ (and likewise for the base-change to $\wt{A}$). For $B_0$-positive roots $\beta$ and $\sigma \in I_{F'_v}$, let $c_{\beta, \sigma} \in \wt{A}$ be the unique element such that 
\[
c_{\beta, \sigma} X_\beta= p_{\beta}(\Ad(g\rho^{\lambda}_{\wt{A}}(\sigma)g^{-1})X_{\beta}))-\beta(\chi_{\lambda}(\sigma))X_{\beta}.
\] 
Note that these constants are independent of the conjugating element $g \in G(\wt{A})$. We let $\wt{J} \subset \wt{A}$ be the ideal generated by $c_{\beta, \sigma}$ for all positive roots $\beta$ and $\sigma \in I_{F'_v}$; it is the smallest ideal of $\wt{A}$ such that $B_{\wt{A}/\wt{J}}$ belongs to $\mc{G}_{\lambda}(\wt{A}/\wt{J})$. This independence of the conjugating element implies that $\wt{J}$ is the extension of an ideal $J$ $A$: concretely, if $\Spec(\wt{A})= \sqcup_i \Spec(A_i)$ for Zariski-open subsets $\Spec(A_i) \subset \Spec(A)$, $\wt{J}$ is a collection of ideals $J_i \subset A_i$, and by independence of choice of conjugating element $J_i$ and $J_j$ generate the same ideal of $\Spec(A_i) \cap \Spec(A_j)$, and therefore the collection $(J_i)_i$ arises by extending an ideal $J \subset A$. 

We now verify the requisite condition on $J$. Suppose that $f \colon A \to A'$ is such that $f(B)$ belongs to $\mc{G}_{\lambda}(A')$. Then by definition there is an Zariski cover $\Spec(\wt{A}') \to \Spec(A')$ and $g' \in G(\wt{A}')$ conjugating $B_{\wt{A}'}$ to $B_0$ such that $g' \rho^{\lambda}_{A'}g^{-1}$ has the correct projection to $T_G(A')$. By definition, the analogues $c'_{\beta, \sigma}$ of the $c_{\beta, \sigma}$ (but now defined using $g'$) are equal to zero in $\wt{A}'$. We can also use the pull-back $\wt{A} \otimes_A A'$ of the Zariski-cover of $\Spec(A)$ used to define $J$. On the common refinement of the covers $\Spec(\wt{A}') \to \Spec(A') \leftarrow \Spec(\wt{A} \otimes_A A')$ of $\Spec(A')$, the $c_{\beta, \sigma}$ must equal the $c'_{\beta, \sigma}$ and therefore are 0. It follows that $f(J)=0$, i.e. $A \to A'$ factors through $A/J$. 

Conversely, if $A \to A'$ factors through $A/J$, then we witness $B_{A'}$ belonging to $\mc{G}_{\lambda}(A')$ by passing to the cover $\Spec(A' \otimes_{A/J} \wt{A}/\wt{J})$ (recall that $\wt{A}/\wt{J}= \wt{A} \otimes_A A/J$, so is still an open cover of $A/J$---this is where it is crucial that the ideal $\wt{J}$ descend to an ideal of $A$).
\end{proof}
We now define $R^{\triangle_{\lambda}}_{\br}$ to be the (global functions on the) scheme-theoretic image of $\mc{G}_{\lambda}[1/p] \to \Spec(R_{\br}^{\square, \mbf{v}_{\lambda}})$; it is the quotient of $R_{\br}^{\square, \mbf{v}_{\lambda}}$ by the kernel of the map $R^{\square, \mbf{v}_{\lambda}}_{\br} \to \mc{O}(\mc{G}_{\lambda}[1/p])$. As in \cite{geraghty:ordinary}, we can now observe the following:
\begin{itemize}
\item The morphism $\Lambda \colon \mc{G}_{\lambda} \to \Spec(R_{\br}^{\square, \mbf{v}_{\lambda}})$ is proper, so $\Lambda[1/p]$ is proper, and its image is therefore closed and equal (as topological space) to the scheme-theoretic image of $\Lambda[1/p]$, i.e. equal to $\Spec(R_{\br}^{\triangle_{\lambda}}[1/p]) \subset \Spec(R_{\br}^{\square, \mbf{v}_{\lambda}}[1/p])$.
\item If $y$ is a closed point of $\Spec(R_{\br}^{\triangle_{\lambda}}[1/p])$, then $y$ lifts uniquely to a closed point $\wt{y}$ of $\mc{G}_{\lambda}[1/p]$: indeed, it lifts by the previous bullet point, and the lift is unique because the co-characters $\lambda_\tau$ are all regular.
% $E'$ is a finite extension of $E$, and $y$ is a closed point of 
\item The induced map on complete local rings 
\[
(R^{\square, \mbf{v}_{\lambda}}_{\br}[1/p])^{\wedge}_y \onto (R^{\triangle_{\lambda}}_{\br}[1/p])^\wedge_{y} \into \mc{O}_{\mc{G}_{\lambda}[1/p], \wt{y}}^\wedge
\]
admits the following moduli interpretation as functors on the category $\mc{C}^f_{E_y}$ of finite local $E_y$-algebras $A$ with residue field $E_y$, where $E_y$ is the residue field of $y$: a local $E_y$-algebra homomorphism $(R^{\square, \mbf{v}_{\lambda}}_{\br}[1/p])^{\wedge}_y \to A$ corresponds to an object $\rho \in R^{\square, \mbf{v}_{\lambda}}_{\br}(A)$ such that $\rho \colon \gal{F_v} \to G(A)$ lifts the representation $\rho_y$ associated to $y$, a local $E_y$-algebra homomorphism $ \mc{O}_{\mc{G}_{\lambda}[1/p], \wt{y}}^\wedge \to A$ corresponds to a pair $(\rho, B) \in \mc{G}_{\lambda}(A)$ where $\rho$ lifts $\rho_y$, $B \in \mr{Fl}_G(A)$ lifts the Borel $B_{\wt{y}} \in \mc{G}_{\lambda}(E_y)$ corresponding to $\wt{y}$, and the composite $(R^{\square, \mbf{v}_{\lambda}}_{\br}[1/p])^{\wedge}_y \to \mc{O}_{\mc{G}_{\lambda}[1/p], \wt{y}}^\wedge$ is induced by the forgetful map $(\rho, B) \mapsto \rho$. Regularity of the characters $\lambda_\tau$ again implies that a choice of $B$ lifting $B_y$ with the specified projection to $T_G$ must be unique; we deduce that $\Hom_{\mc{C}^f_{E_y}}(\mc{O}_{\mc{G}_{\lambda}[1/p], \wt{y}}^\wedge, -)$ is a subfunctor of $\Hom_{\mc{C}_{E_y}^f}((R^{\square, \mbf{v}_{\lambda}}_{\br}[1/p])^{\wedge}_y, -)$, and thus from tangent space considerations the map $(R^{\square, \mbf{v}_{\lambda}}_{\br}[1/p])^{\wedge}_y \to \mc{O}_{\mc{G}_{\lambda}[1/p], \wt{y}}^\wedge$ is a surjection.
\end{itemize}
We omit the details, as they are very similar to the arguments of \cite{geraghty:ordinary}, taking into account the framework established in Lemma \ref{glambdaclosed}. We summarize these results in part (1) of the following lemma, and in part (2) we deduce our desired application to the structure of $R^{\triangle_{\lambda}}_{\br}$:
\begin{lemma}[cf. Lemma 3.10 of \cite{geraghty:ordinary}]\label{ordinary}
\begin{enumerate}
\item Let $A$ be a finite local $E$-algebra, and let $f \colon R_{\br}^{\square, \mbf{v}_{\lambda}} \to A$ be an $\mc{O}$-algebra homomorphism. Then $f$ factors through $R^{\triangle_{\lambda}}_{\br}$ if and only if $f \circ \rho^{\lambda}$ is $F_v'$-ordinary of weight $\lambda$ in the sense of Definition \ref{orddef}. 
\item $R^{\triangle_{\lambda}}_{\br}$ is a union of irreducible components of $R^{\square, \mbf{v}_{\lambda}}_{\br}$. In particular, $R^{\triangle_{\lambda}}_{\br}[1/p]$ admits an open dense regular subscheme, and all of its irreducible components have dimension $\dim(G)+[F_v: \Q_p]\dim(\mr{Fl}_G)$.
\item If we fix a lift $\mu \colon \gal{F_v} \to G/G^{\mr{der}}(\mc{O})$ of the multiplier character $\bar{\mu}$ and replace all the lifting rings with their fixed multiplier $\mu$ analogues, 
% and consider instead the quotient $R^{\triangle_{\lambda}, \mu}_{\br}$ of $R^{\triangle_{\lambda}}_{\br}$ corresponding to lifts with multiplier $\mu$, 
then the same assertions hold with $\dim(G^{\mr{der}})$ in place of $\dim(G)$.
\end{enumerate}
\end{lemma}
\begin{proof}
In (1), the ``if" direction is clear since the Borel witnessing ordinarity of $f \circ \rho^{\lambda}$ gives rise to a point of $\mc{G}_{\lambda}(A)$. The converse follows from the remarks before the proof: Let $y$ be the closed point underlying $f \pmod{\mf{m}_A}$, so that $f$ factoring through $R^{\triangle_{\lambda}}_{\br}[1/p]$ implies it induces a homomorphism $\mc{O}^{\wedge}_{\mc{G}_{\lambda}[1/p], \wt{y}} \to A$ corresponding to a pair $(\rho, B) \in \mc{G}_{\lambda}(A)$, and therefore $\rho=f \circ \rho^{\lambda}$ is ordinary of weight $\lambda$.

To show that $R^{\triangle_{\lambda}}_{\br}$ is a union of irreducible components of $R^{\square, \mbf{v}_{\lambda}}_{\br}$, it suffices now to compute for each closed point $y \in \Spec(R^{\triangle_{\lambda}}_{\br}[1/p])^\wedge_{y})$ a lower bound on the dimension of each irreducible component of $R^{\triangle_{\lambda}}_{\br}[1/p])^\wedge_{y}) \cong \mc{O}_{\mc{G}_{\lambda}[1/p], \wt{y}}^\wedge$ that is greater than or equal to $\dim((R^{\square, \mbf{v}_{\lambda}}_{\br}[1/p])^{\wedge}_y)= \dim(G)+[F_v:\Q_p]\dim(\mr{Fl}_G)$. We begin by observing that $\mc{O}^\wedge_{\mc{G}_{\lambda}[1/p], \wt{y}}$ also pro-represents the functor $\mc{F}_{\wt{y}} \colon \mc{C}^f_{E_y} \to \mbf{Sets}$ such that $\mc{F}_{\wt{y}}(A)$ is the set of pairs $(\rho, B)$ such that $\rho \colon \gal{F_v} \to G(A)$ lifts $\rho_y$, $\rho$ factors through $B(A)$ where $B$ lifts $B_{\wt{y}}$, and, for $g \in G(A)$ such that $gBg^{-1}=B_0$, $I_{F_v'} \xrightarrow{g \rho g^{-1}} B_0(A) \to T_G(A)$ is equal to $\chi_{\lambda}$. The difference from the previous assertion is that we do not assume $\rho|_{\gal{F'_v}}$ is semistable with $p$-adic Hodge type $\mbf{v}_{\lambda}$: but this is automatic (again via \cite[Proposition 1.28]{nekovar:p-adicheight}; to use this for general $G$, see for instance \cite[Lemma 4.8]{stp:exceptional}) from the fact that the above composite $I_{F'_v} \to T_G(A)$ is still $\chi_{\lambda}$. The dimension calculation proceeds as in \cite[Corollary 3.6, Lemma 3.7]{geraghty:ordinary}: there is some $g \in G(\mc{O}_{E_y})$ such that $g \rho_y g^{-1}$ takes values in $B_0(\mc{O}_{E_y})$, and the functor $\mc{F}_{\wt{y}}$ can then be identified with the product of functors of $B_0$-lifts of $g \rho_y g^{-1}$ and the functor of lifts of $B_{\wt{y}}$ that is represented by $\mc{O}_{\mr{Fl}_G, B_{\wt{y}}} \cong E_y[[X_1, \ldots, X_{\dim (\mr{Fl}_G)}]]$ ($\mr{Fl}_G$ is smooth over $\mc{O}$).

We are thus reduced to the simple problem of computing a lower bound on the dimension of the lifting ring $R^{B_0, \lambda}_{g \rho_y g^{-1}}$ parametrizing lifts $\rho \colon \gal{F_v} \to B_0(A)$ of $r:= g \rho_y g^{-1}$ such that the projection of $\rho$ to $T_G(A)$ is equal to $\chi_{\lambda}$ on $I_{F'_v}$. The tangent space of this functor is identified with $\ker(Z^1(\gal{F_v}, r(\mf{b}_0)) \to Z^1(I_{F'_v}, r(\mf{b}_0/\mf{n}_0))$, where $\mf{b}_0= \Lie(B_0)$ and $\mf{n}_0= \Lie(R_u(B_0))$. Via the exact sequence
\[
0 \to \mf{b}_0^{\gal{F_v}} \to \mf{b}_0 \to Z^1(\gal{F_v}, r(\mf{b}_0)) \to H^1(\gal{F_v}, r(\mf{b}_0)) \to 0,
\]
we must compute
\[
-\dim(\mf{b}_0^{\gal{F_v}}) + \dim(\mf{b}_0)+ \dim \left(\ker\left(H^1(\gal{F_v}, r(\mf{b}_0)) \to H^1(I_{F'_v}, r(\mf{b}_0/\mf{n}_0))^{\gal{F_v}/I_{F'_v}}\right)\right),
\]
which is at least
\[
-\dim(\mf{b}_0^{\gal{F_v}}) + \dim(\mf{b}_0) +\dim(H^1(\gal{F_v}, r(\mf{b}_0)))-\left(\dim(H^1(\gal{F_v}, r(\mf{b}_0/\mf{n}_0))- \dim(H^1(\gal{F_v}/I_{F'_v}, r(\mf{b}_0/\mf{n}_0))) \right).
\]
By the local Euler characteristic formula (as well as the vanishing of $H^2(\gal{F_v}, r(\mf{b}_0/\mf{n}_0))$, which follows from local duality, and the vanishing of the cohomology of any finite group with coefficients in a $\Q$-vector space), this last expression equals
\begin{align*}
&\dim(\mf{b}_0)+h^2(\gal{F_v}, r(\mf{b}_0))+\dim(\mf{b}_0)[F_v: \Q_p]-\left(\dim(\mf{b}_0/\mf{n}_0)+[F_v: \Q_p]\dim(\mf{b}_0/\mf{n}_0)-\dim(\mf{b}_0/\mf{n}_0) \right)\\
&=\dim(\mf{b}_0)+h^2(\gal{F_v}, r(\mf{g}_0))+ \dim(\mf{n}_0)[F_v:\Q_p].
\end{align*}
Adding in the $\dim(\mf{n}_0)$ contribution from the dimension of $\mr{Fl}_G$, we conclude that $\mc{O}_{\mc{G}_{\lambda}[1/p], \wt{y}}$ is isomorphic to $E[[X_1, \ldots, X_h]]/(f_1, \ldots, f_{h'})$, where $h\geq \dim(G)+\dim(\mr{Fl}_G)[F_v:\Q_p]+h^2(\gal{F_v}, r(\mf{b}_0))$ and $h'= h^2(\gal{F_v}, r(\mf{b}_0))$. This implies the desired lower bound on the dimension of each irreducible component of this ring, and part (2) of the Lemma follows.

The fixed multiplier case is similar, and we omit the details.
\end{proof}

\bibliographystyle{amsalpha}
 \bibliography{biblio.bib}

\end{document}